\documentclass[a4paper, 11pt]{amsart}

\usepackage[a4paper, scale=0.8,centering]{geometry}
\usepackage{amsmath,amssymb,amsthm,graphicx,bbm,pdfpages}
\usepackage{color,mathrsfs,tikz,hyperref}
\usepackage[font={footnotesize}]{caption}

%font
\usepackage[T1]{fontenc}

\usepackage{enumitem}
\setitemize{itemsep=1pt,topsep=2pt,parsep=1pt,partopsep=1pt}

%%%%%%%%%%%%%%%%%%%%%%%%%%%%%%%%%%%%%%%%%%%%%%%%%%%%%%%%%%%%%%%%%%%%%%%%%%%%%%
%%%%%%%%%%%%%%%%%%%%%%%%%% Blackboard bolds %%%%%%%%%%%%%%%%%%%%%%%%%%%%%%%%%%
%%%%%%%%%%%%%%%%%%%%%%%%%%%%%%%%%%%%%%%%%%%%%%%%%%%%%%%%%%%%%%%%%%%%%%%%%%%%%%

\newcommand{\bbE}{{\ensuremath{\mathbbm E}} }

\newcommand{\bbN}{{\ensuremath{\mathbbm N}} }

\newcommand{\bbP}{{\ensuremath{\mathbbm P}} }

\newcommand{\bbR}{{\ensuremath{\mathbbm R}} }

\newcommand{\bbV}{{\ensuremath{\mathbbm V}} }

\newcommand{\bbZ}{{\ensuremath{\mathbbm Z}} }

%%%%%%%%%%%%%%%%%%%%%%%%%%%%%%%%%%%%%%%%%%%%%%%%%%%%%%%%%%%%%%%%%%%%%%%%%%%%%%
%%%%%%%%%%%%%%%%%%%%%%%%%%%%% Calligrafic %%%%%%%%%%%%%%%%%%%%%%%%%%%%%%%%%%%%
%%%%%%%%%%%%%%%%%%%%%%%%%%%%%%%%%%%%%%%%%%%%%%%%%%%%%%%%%%%%%%%%%%%%%%%%%%%%%%

\newcommand{\cA}{{\ensuremath{\mathcal A}} }

\newcommand{\cD}{{\ensuremath{\mathcal D}} }
\newcommand{\cE}{{\ensuremath{\mathcal E}} }

\newcommand{\cG}{{\ensuremath{\mathcal G}} }
\newcommand{\cH}{{\ensuremath{\mathcal H}} }

\newcommand{\cJ}{{\ensuremath{\mathcal J}} }

\newcommand{\cN}{{\ensuremath{\mathcal N}} }

\newcommand{\cP}{{\ensuremath{\mathcal P}} }

\newcommand{\cR}{{\ensuremath{\mathcal R}} }

\newcommand{\cT}{{\ensuremath{\mathcal T}} }

\newcommand{\cW}{{\ensuremath{\mathcal W}} }
\newcommand{\cX}{{\ensuremath{\mathcal X}} }

\newcommand{\cZ}{{\ensuremath{\mathcal Z}} }

\newcommand{\sL}{\mathscr{L}}

\newcommand{\sP}{\mathscr{P}}

%%%%%%%%%%%%%%%%%%%%%%%%%%%%%%%%%%%%%%%%%%%%%%%%%%%%%%%%%%%%%%%%%%%%%%%%%%%%%%
%%%%%%%%%%%%%%%%%%%%%%%%%%%%% Bold %%%%%%%%%%%%%%%%%%%%%%%%%%%%%%%%%%%%%%%%%%%
%%%%%%%%%%%%%%%%%%%%%%%%%%%%%%%%%%%%%%%%%%%%%%%%%%%%%%%%%%%%%%%%%%%%%%%%%%%%%%

\newcommand{\bE}{{\ensuremath{\mathbf E}} }

\newcommand{\bM}{{\ensuremath{\mathbf M}} }

\newcommand{\bP}{{\ensuremath{\mathbf P}} }

\newcommand{\bY}{{\ensuremath{\mathbf Y}} }

%%%%%%%%%%%%%%%%%%%%%%%%%%%%%%%%%%%%%%%%%%%%%%%%%%%%%%%%%%%%%%%%%%%%%%%%%%%%%%
%%%%%%%%%%%%%%%%%%%%%%%%%%%%% Greek Letters %%%%%%%%%%%%%%%%%%%%%%%%%%%%%%%%%%%%
%%%%%%%%%%%%%%%%%%%%%%%%%%%%%%%%%%%%%%%%%%%%%%%%%%%%%%%%%%%%%%%%%%%%%%%%%%%%%%

\newcommand{\ga}{\alpha}
\newcommand{\gb}{\beta}
            % \gg already exists...
\newcommand{\gd}{\delta}
\newcommand{\gep}{\varepsilon}       % \ge already exists...

\newcommand{\go}{\omega}

\newcommand{\gO}{\Omega}

\newcommand{\gU}{\Upsilon}

%%%%%%%%%%%%%%%%%%%%%%%%%%%%%%%%%%%%%%%%%%%%%%%%%%%%%%%%%%%%%%%%%%%%%%%%%%%%%%
%%%%%%%%%%%%%%%%%%%%%%%%%%%%% Specials Letters %%%%%%%%%%%%%%%%%%%%%%%%%%%%%%%
%%%%%%%%%%%%%%%%%%%%%%%%%%%%%%%%%%%%%%%%%%%%%%%%%%%%%%%%%%%%%%%%%%%%%%%%%%%%%%

\renewcommand{\epsilon}{\varepsilon}
\newcommand{\widebar}{\overline}

\newcommand{\brP}{\widebar{\bP}}

\newcommand{\ent}{\mathrm{Ent}}
\newcommand{\hatent}{\hat{\mathrm{E}}\mathrm{nt}}

\newcommand{\R}{\mathbb{R}}
\newcommand{\Z}{\mathbb{Z}}
\newcommand{\N}{\mathbb{N}}

%%%%%%%%%%%%%%%%%%%%%%%%%%%%%%%%%%%%%%%%%%%%%%%%%%%%%%%%%%%%%%%%%%%%%%%%%%%%%%
%%%%%%%%%%%%%%%%%%%%%%%%%%%%% Math commands %%%%%%%%%%%%%%%%%%%%%%%%%%%%%%%%%%
%%%%%%%%%%%%%%%%%%%%%%%%%%%%%%%%%%%%%%%%%%%%%%%%%%%%%%%%%%%%%%%%%%%%%%%%%%%%%%

\newcommand{\sumtwo}[2]{\sum_{\substack{#1 \\ #2}}}

\newcommand{\inftwo}[2]{\inf_{\substack{#1 \\ #2}}}
\renewcommand{\tilde}{\widetilde}

\newcommand{\ind}{\mathbbm{1}}
\newcommand{\dd}{{\ensuremath{\mathrm d}} }

%%%%%%%%%%%%%%%%%%%%%%%%%%%%%%%%%%%%%%%%%%%%%%%%%%%%%%%%%%%%%%%%%%%%%%%%%%%%%%
%%%%%%%%%%%%%%%%%%%%%%%%%%%%% text colors %%%%%%%%%%%%%%%%%%%%%%%%%%%%%%%%%%%%
%%%%%%%%%%%%%%%%%%%%%%%%%%%%%%%%%%%%%%%%%%%%%%%%%%%%%%%%%%%%%%%%%%%%%%%%%%%%%%

\definecolor{rosso}{RGB}{206,43,55}
\definecolor{blu}{RGB}{140,204,171}
\definecolor{verde}{RGB}{0,146,70}
\definecolor{arancione}{RGB}{255,102,51}
\definecolor{viola}{RGB}{255,0,255}

\renewcommand{\hat}{\widehat}
%\renewcommand{\bar}{\widebar}

%%%%%%%%%%%%%%%%%%%%%%%%%%%%%%%%%%%%%%%%%%%%%%%%%%%%%%%%%%%%%%%%%%%%%%%%%%%%%%
%%%%%%%%%%%%%%%%%%%%%%%%%%%%% Theorem's styles %%%%%%%%%%%%%%%%%%%%%%%%%%%%%%%
%%%%%%%%%%%%%%%%%%%%%%%%%%%%%%%%%%%%%%%%%%%%%%%%%%%%%%%%%%%%%%%%%%%%%%%%%%%%%%

\numberwithin{equation}{section}

\newtheorem{theorem}{Theorem}[section]
\newtheorem{definition}[theorem]{Definition}
\newtheorem{lemma}[theorem]{Lemma}

\newtheorem{corollary}[theorem]{Corollary}
\newtheorem{proposition}[theorem]{Proposition}
\newtheorem{remark}{Remark}[section]

%%%%%%%%%%%%%%%%%%%%%%%%%%%%%%%%%%%%%%%%%%%%%%%%%%%%%%%%%%%%%%%%%%%%%%%%%%%%%%
%%%%%%%%%%%%%%%%%%%%%%%%%%%%% Title, authors, date %%%%%%%%%%%%%%%%%%%%%%%%%%%
%%%%%%%%%%%%%%%%%%%%%%%%%%%%%%%%%%%%%%%%%%%%%%%%%%%%%%%%%%%%%%%%%%%%%%%%%%%%%%

\title[Non-directed polymers in dimension $d\geq 2$]{Non-directed polymers in heavy-tail random environment\\
in dimension $d\geq 2$}

\author[Q. Berger]{Quentin Berger}
\address{Sorbonne Universit\'e, LPSM,
Campus Pierre et Marie Curie, case 158,
4 place Jussieu, 75252 Paris Cedex 5, France}
\email{quentin.berger@sorbonne-universite.fr} 

\author[N. Torri]{Niccol\`o Torri}
\address{Universit\'e Paris-Nanterre, 200 Av. de la R\'epublique, 92001, Nanterre, France and FP2M, CNRS FR 2036}
\email{niccolo.torri@parisnanterre.fr}

\author[R. Wei]{Ran Wei}
\address{Department of Mathematics, Nanjing University, 22 Hankou Road, 210093 Nanjing, China}
\email{weiran@nju.edu.cn}

\thanks{Q. Berger, N. Torri and R. Wei are supported by a public grant overseen by the French National Research Agency, ANR SWiWS (ANR-17-CE40-0032-02).
N. Torri was also supported by the Labex MME-DII}
\date{}

\setcounter{tocdepth}{1} % Show sections

\begin{document}

\begin{abstract}
In this article we study a \emph{non-directed} polymer model in dimension $d\ge 2$:
we consider a simple symmetric random walk on $\mathbb{Z}^d$ which interacts with a random environment,
represented by i.i.d.\ random variables $(\omega_x)_{x\in \mathbb{Z}^d}$.
The model consists in modifying
the law of the random walk up to time (or length) $N$ by the exponential of
 $\sum_{x\in \mathcal{R}_N}\beta (\omega_x-h)$ 
where $\mathcal{R}_N$ is the range of the walk, \textit{i.e.}\ the set of  visited sites up to time $N$, and $\beta\geq 0,\, h\in \mathbb{R}$ are two parameters.
We study the behavior of the model in a weak-coupling regime, that is taking $\beta:=\beta_N$ vanishing as the length $N$ goes to infinity,
and in the case where the random variables $\omega$ have a heavy tail with exponent $\alpha  \in (0,d)$.
We are able to obtain precisely the behavior of polymer trajectories under all possible weak-coupling regimes $\beta_N  = \hat \beta N^{-\gamma}$ with $\gamma \geq 0$:
we find the correct transversal fluctuation exponent
$\xi$ for the polymer (it depends on $\alpha$ and $\gamma$)
and we give the limiting distribution of the rescaled log-partition function. This extends existing works
to the non-directed case and to higher dimensions.
\\[0.1cm]
\textit{2010 Mathematics Subject Classification}:
		82D60, 60K37, 60G70
	\\[0.05cm]
\textit{Keywords}:
	Random polymer, Random walk, Range, Heavy-tail distributions, Weak-coupling limit, Super-diffusivity, Sub-diffusivity\end{abstract}

%\tableofcontents
\maketitle

\section{Introduction}

We consider in this paper a \emph{non-directed polymer model}  in which a simple symmetric random walk on $\mathbb{Z}^d$ (with $d\geq2$) interacts with a random environment. 
This model is closely related to the celebrated \textit{directed polymer model}, for which we refer to \cite{C17} for a complete overview.
The main difference here is that
the random walk is \emph{not} directed: it can come back to a site that has
already been visited, but the interaction is however counted only once (in the spirit of the excited random walk, see~\cite{BW03}).
The model can also be viewed as a randomly perturbed version of random walks penalized or rewarded by their ranges, depending on the positivity or negativity of the external field.
The non-directed polymer model was first introduced by \cite{H19} (with exponential moment conditions on the environment)
and its study was followed in~\cite{BHWT20} with a detailed analysis in dimension $d=1$.
Here, we investigate the case of a heavy-tail random environment,
in analogy with \cite{BT19a,DZ16,HM07} for the directed polymer model --- our results can be viewed as a extension of
\cite{BT19a,DZ16,HM07} to a non-directed setting 
(and to higher dimensions).
%we find that the two models have similar features in that case and the general methods developed in \cite{BT19a,BT19b} appear fruitful here.
%
%the interactions with the environment occur with the range of the random walk (\textit{i.e.}\ on the sites visited by the walk), instead of with all the points visited by a trajectory. Therefore, if a point is visited twice, it counts for one interaction.

\subsection{The non-directed polymer model}

Let $S=(S_{n})_{n\geq0}$ be a simple symmetric random walk on~$\bbZ^{d}$ with $d\geq 1$. We denote the probability and expectation with respect to $S$ by $\bP$ and $\bE$ respectively. 
Let $\omega=(\omega_{x})_{x\in\bbZ^{d}}$ be a field of i.i.d.\ random variables (the environment), independent of $S$. The probability and expectation with respect to $\omega$ are denoted by $\bbP$ and $\bbE$ respectively. 

The \textit{non-directed polymer model} up to time $N$ is defined via the following Gibbs' transform: for a given realization $\go$
of the random environment and for $\beta\geq 0$ (the inverse temperature) and $h\in \bbR$ (an external field), let
\begin{equation}\label{themodel}
\frac{\dd \bP_{N,\beta}^{\omega,h}}{\dd \bP}(S):=\frac{1}{Z_{N,\beta}^{\omega,h}}\exp\bigg(\sum\limits_{x\in\cR_{N}}\beta( \omega_{x}-h)\bigg),
\end{equation}
where $\cR_{N}:=\{S_{0},\dots,S_{N}\}$ is the range of the random walk up to time $N$.
The partition function $Z_{N,\gb}^{\go,h}$ is the constant which makes $\bP_{N,\beta}^{\omega,h}$ a probability measure and is equal to
\begin{equation}\label{def:partition}
Z_{N,\beta}^{\omega,h}:=\bE\Big[\exp\Big(\sum\limits_{x\in\cR_{N}}\beta(\omega_{x}-h)\Big)\Big]\,.
\end{equation}

Note that if $\omega\equiv0$ (\textit{i.e.}\ when there is no disorder) and $\beta h\equiv c\in(0,+\infty)$, then \eqref{themodel} becomes the model of a random walk penalized by its range, defined by 
$\frac{\dd\bP_{N,c}}{\dd \bP}(S):=\frac{1}{Z_{N,c}} e^{-c|\cR_N|}$.
That model has been well-studied (starting with the seminal work~\cite{DV79}) and is now well-understood: with high $\bP_{N,c}$-probability, $\cR_N$ is close to a $d$-dimensional ball with an explicit radius $\rho_{d,c}N^{1/(d+2)}$ without holes (see \cite{Bolt94, BC18,DFSX18}).
	
When $\omega$ is non-trivial, the model \eqref{themodel} describes a self-attracting (if $h>0$) or self-repulsing (if $h<0$) polymer interacting with a random environment. At each site, the polymer chain interacts with the disorder exactly once, which may model 
a \emph{screened} interaction, one monomer ``absorbing'' all the interaction at a specific site.
We have chosen to stick to this setting in order to pursue the study initiated in~\cite{BHWT20,H19}, which considers
this model as a disordered version of a random walk penalized by its range. 

\begin{remark}
One could study a polymer  interacting repeatedly with the disorder, that is
\[
\frac{\dd\tilde \bP_{N,\gb}^{\go, h}}{\dd \bP}(S) :=\frac{1}{\tilde Z_{N,\gb}^{\go, h}} \exp\Big( \sum_{i=1}^N \gb (\go_{S_i} - h) \Big) 
\]
(note that the term $-\gb h N$ in the Hamiltonian is a constant and thus does not change the measure).
In that case, large values of~$\go_x$ will play an overwhelming role in the measure, since their effect can be accumulated by returning repeatedly to a given site. One can therefore expect a very strong localization phenomenon, the polymer staying on one site with the largest possible $\go_x$, with high probability.
In the literature,
this model is referred to as the parabolic Anderson model
and the one-site localization for heavy-tailed environment has been proven in~\cite{CCP12},
see~\cite{KLMS09} for a continuous-time version of the result.
%One could also consider variations of this model, for instance 
%This is somehow clear if one takes $\gb=+\infty$: one does not recover a (non-directed) last-passage percolation type problem, as it is the case in~\eqref{themodel}. 
%This conjecture agrees with the recent work \cite{CCP21}.
\end{remark}

%: the reason comes both from heuristic and 
%The reason for this setting lies in both heuristic and technical levels: heuristically, if disorders can be utilized repeatedly, then the polymer chain may be localized around several sites with extremely large disorders, which leads to an uninteresting phenomenon; technically, it is not convenient to count the local time for each site and thus the ``utilize once'' setting is for feasibility.

%We will write $\bP_{N}$ for $\bP_{N,\beta_{N}}^{\omega,0}$ and $Z_{N}$ for $Z_{N,\beta_{N},0}^{\omega}$ when there is no ambiguity.
%The parameter $\gb_N$ is the coupling parameter which couple the environment with the random walk. 

The main goal is then to understand the shape of typical polymer trajectories
$(S_i)_{0 \leq i \leq N}$ under the measure $\bP_{N,\beta}^{\go,h}$, as $N\to\infty$.
One of the main question is to know if and how the presence of the environment perturbs the structure of the typical trajectories of the walk. 
In particular, one is interested in describing
the  end-to-end (or wandering) exponent $\xi$
and the  fluctuation (or volume) exponent $\chi$, that are loosely speaking defined as:
\[
\bE_{N,\beta}^{\go,h} ( \| S_N \|^2)\approx N^{2\xi}\,,  \qquad
\mathbb{V}\mathrm{ar} ( \log Z_{N,\gb}^{\omega,h} )\approx N^{2\chi} \, .
\]

\smallskip
For comparison, let us stress that the random walk $(S_n)_{n \geq 0}$ in dimension $d$, is replaced in the directed polymer model by a directed random walk $(n,S_n)_{n\geq 0}$ in dimension $1+d$.
In particular, the parameter $h$ does not have any influence on the polymer measure, since the number of visited sites is deterministic (and no site is visited twice).
An important feature of the directed polymer model is that a phase transition is known to occur:  when $\go_x$ have an exponential moment, if $\beta$ is (striclty) smaller than some critical value $\beta_c$, then trajectories are diffusive
($\xi=1/2$, $\chi=0$), whereas if $\beta$ is (stricly) larger than 
$\beta_c$ trajectories exhibit some localization properties and are conjectured to have a super-diffusive behavior (at least in low dimensions) ---~for instance, in dimension $1+1$ it is conjectured that $\xi=2/3$ and $\chi =1/3$.
The critical value $\beta_c$ is known to be $\beta_c =0$ in dimension $1+1$ and $1+2$ (hence there is no phase transition)
and $\beta_c >0$ in dimension $1+d$ with $d\geq 3$.
We refer to \cite{C17} and references therein for more details.
%For future comparison, recall that for the directed polymer model, the random walk $(S_n)_{n\geq 0}$ is $d$-dimensional but the  directed random walk $(n,S_n)_{n\geq 0}$ lives in a space of dimension~$1+d$; this has to be taken into account if one wants to compare the two models.

\smallskip

 In the non-directed case, however, the parameter $h$ plays an important role,
and random walk trajectories with a large range are rewarded or penalized (depending on whether $h$ is positive or negative).
In analogy with the directed polymer model, the presence
of a random environment should still have a stretching
effect.
The competition between the folding effect of range penalties
and the stretching effect of a random environment has been  recently
 investigated in detail in the case of dimension $d=1$, \cite{BHWT20}
 (in particular, it has been found that $\xi=2/3$ when $h=0$).
But such a study appears difficult in higher dimension,
because the range of the simple random walk then
has a complex geometry.
However, in the case of a heavy-tail random environment,
the localization features of the model become more salient,
since a few sites in the environment will have a much
higher value than the others: we
are indeed able to describe quite precisely the behavior
of the non-directed polymers in that case.
This generalizes the study in \cite{BT19a,DZ16} to the case of non-directed polymers and to higher dimensions.
Let us stress that as a by-product of our results we have that in the heavy-tail setting there is no phase transition: $\beta_c=0$ in any dimension.

\subsubsection*{Weak-coupling regime.}

A recent approach for studying disordered system, initiated in~\cite{AKQ14}, and
which has been developed extensively over the past few years,
is to consider weak-coupling regimes, see e.g.\ \cite{BL20,CSZ15,CSZ13,DZ16,H19}.
The idea is to take the inverse temperature $\beta_N$ vanishing as $N$ goes to infinity.
In the papers cited above, the goal is to find the appropriate
scaling for $\beta_N$ in order for the partition function
to converge to a non-trivial random variable.
This regime is called of \emph{intermediate disorder}: loosely speaking, 
it corresponds to a regime in which disorder just \textit{kicks in},
in the sense that it is still felt in the limit, but is not strong enough
to make the (averaged) disordered measure singular with respect to the reference measure.

We will assume that there is some $\gamma\geq 0$ and some 
$\gb \in (0,\infty)$ such that 
\begin{equation}
\label{def:betaN}
 \lim_{N\to\infty} N^{\gamma} \gb_N = \gb \, , \qquad \text{ as } N\to +\infty \, ,
\end{equation}
and we will write $\gb_N \sim \gb N^{-\gamma}$; we may also consider the case $\gb=0$ or $\gb=+\infty$.
We think of $\gamma$ as a parameter one can play with, which tunes the speed at which $\gb_N$ goes to zero.
We could actually work with general sequences $(\gb_N)_{N\geq 1}$
(in particular, the important relation is~\eqref{eq:xi} below),
but we stick to the pure power case \eqref{def:betaN} in order to clarify the exposition ---~it will capture all
 the essential features of the model, see Remark \ref{rem:generalseq}.
 
Let us also stress that we will consider $h$ fixed, seen as a centering term for the disorder~$\omega_x$.

%\begin{remark}\rm
%The assumption~\eqref{def:betaN} could be 
%We could consider generalized version of \eqref{eq:tailOmega} in which we replace the pure power $t^{-\alpha}$ by a slowly varying function $L(n)$. However, it turns out that for our goals the presence of the slowly varying function does not affect the behavior of the model, therefore we have chosen to present the model without slowly varying function to make more clear our computations.
%\end{remark}

%The parameter $\beta_N$ is the so called coupling parameter and $h$ is a recentering parameter of the environment. In particular the quantity $\beta_N h$ couples the penalty of the range with the walk. 
%Let us stress that the parameter $\beta_N$ will depend on the length of the walk $N$, in order to make comparable the energy of the system with the entropy (cf. Section \ref{prediction}), while $h$ is a fixed parameter and we use it to shift the environment.
%
%We now state our assumption on the disorder and on the parameter $\beta_N$ and $h$.

\subsubsection*{Heavy-tail environment.}

Our main assumption in this paper is that $\omega=(\go_x)_{x\in \bbZ^d}$ are i.i.d.\ random variables,
with a pure power tail behavior.
We assume that there exists some $\alpha>0$ such that
\begin{equation}\label{eq:tailOmega}
\bbP(\omega_{0}>t)\sim t^{-\alpha} \, , \qquad \text{ as } t\to+\infty \,.
\end{equation}
Also here,  we could consider a more general asymptotic behavior in~\eqref{eq:tailOmega},
for instance replacing the pure power $t^{-\alpha}$ by  $L(t)t^{-\alpha}$ with $L(\cdot)$ a slowly varying function.
However, we stick to the pure power case as in \eqref{eq:tailOmega}
for the sake of clarity ---~again, it will capture all the essential features of the model.

For simplicity, we also assume that $\omega\geq 0$, but this does not hide anything deep.

\subsection{Heuristics for the phase diagram}
\label{prediction}

Let us present a Flory argument to guess the wandering and fluctuation exponents.
The idea is to find the correct transversal fluctuations $r_N$ (with $N^{1/2}\leq r_N \leq N$) such that 
the entropic cost for the random walk to stretch to a distance $r_N$ 
is balanced by the possible energetic gain from 
high weights $\go_x$ contained in a ball of radius $r_N$.
At the exponential level:
\begin{itemize}
\item the entropic cost is of order $r_N^2/N$ ;
\item the energetic gain is of order $ \beta_N \,  (r_N^d)^{1/\alpha}$, where $(r_N^d)^{1/\alpha}$ is the order of the
maximal weight~$\go_x$ in a ball of radius $r_N$.
\end{itemize}
Hence,  the energy-entropy balance leads to the relation
\begin{equation}\label{eq:xi}
\beta_{N} (r_N)^{d/\ga} \sim \frac{r_N^{2}}{N} \, \qquad ( N^{1/2} \leq r_N \leq N).
\end{equation}
In the case we are interested in, that is if $\gb_N \sim N^{-\gamma}$ with $\gamma\geq 0$, this gives $r_N \sim N^{\xi}$ with $\xi$ verifying
\begin{equation}\label{eq:xigamma}
\frac{\xi d}{\alpha} -\gamma = 2\xi-1 \qquad \Longleftrightarrow
 \qquad \xi=\frac{\alpha(1-\gamma)}{2\alpha-d} \,,
\end{equation}
provided that $\xi \in [\frac12,1]$, that is $ \frac{d-\ga}{\ga}  \leq \gamma \leq \frac{d}{2\ga}$ (note that this range of parameter $\gamma$ is non-empty only  if $\alpha > d/2$).
However, this picture should fail when $\alpha$ is large (in particular when $\ga>d$), because then the strategy of visiting mostly high-energy sites
should be outperformed by a \emph{collective} optimization
(that is still poorly understood, especially in dimension $d\ge 3$). We refer to \cite{BBP07} for a discussion on that matter.
We therefore focus our attention on the case $\alpha<d$. 

When $ 0\leq \gamma < \frac{d-\ga}{\ga}$, then since the transversal fluctuations cannot exceed $N$, the energetic gain should always overcome the entropic cost: we should have $\xi=1$.
On the other hand, when $\gamma > \frac{d}{2\ga}$ for $\alpha>d/2$ or $\gamma >\frac{d-\ga}{\ga}$ for $\alpha<d/2$, then the energetic gain can never
overcome the entropic cost of reaching distance much larger than $N^{1/2}$: we should have $\xi=1/2$.

\smallskip
%\subsubsection*{The phase diagram and scaling limit.}
To summarize, one should have the following three regions when $\ga<d$. 
\begin{enumerate}
\item[\textbf{A}.] If $\alpha\in (0,d)$ and $\gamma < \frac{d-\ga}{\ga}$, then  $\xi=1$.
\item[\textbf{B}.] If $\alpha \in (\frac{d}{2}, d)$ and
 $\gamma\in(\frac{d-\alpha}{\alpha},\frac{d}{2\alpha})$, then 
 $\xi  = \frac{\alpha(1-\gamma)}{2\alpha-d}  \in (\frac12,1)$. 
%  we have that $\xi\in (\frac12,1)$, that is, the random walk can have all the intermediate fluctuations between $\frac{1}{2}$ and $1$. Moreover, since $\frac{d-\alpha}{\alpha}>0$ and $\frac{d-\alpha}{\alpha}<\frac{d}{2\alpha}$ we obtain that $\ga\in (d/2,d)$,
\item[\textbf{C}.] If $\alpha\in (0,d)$ and $\gamma> \max\{ \frac{d}{2\ga} , \frac{d-\ga}{\ga} \}$, then $\xi=\frac12$.
\end{enumerate}
We refer to Figure~\ref{fig1} below for a graphical representation of these regions, and to \cite[Fig.~1]{BT19a} for a comparison with the corresponding picture in the directed polymer in dimension $1+1$ (where conjectures for larger values of $\alpha$ have the merit to exist).
Our main results 
consist in establishing this phase diagram.

\begin{figure}[ht]
\vspace{-0.5\baselineskip}
\captionsetup{width=0.93\textwidth}
\centering
\includegraphics[scale=0.37]{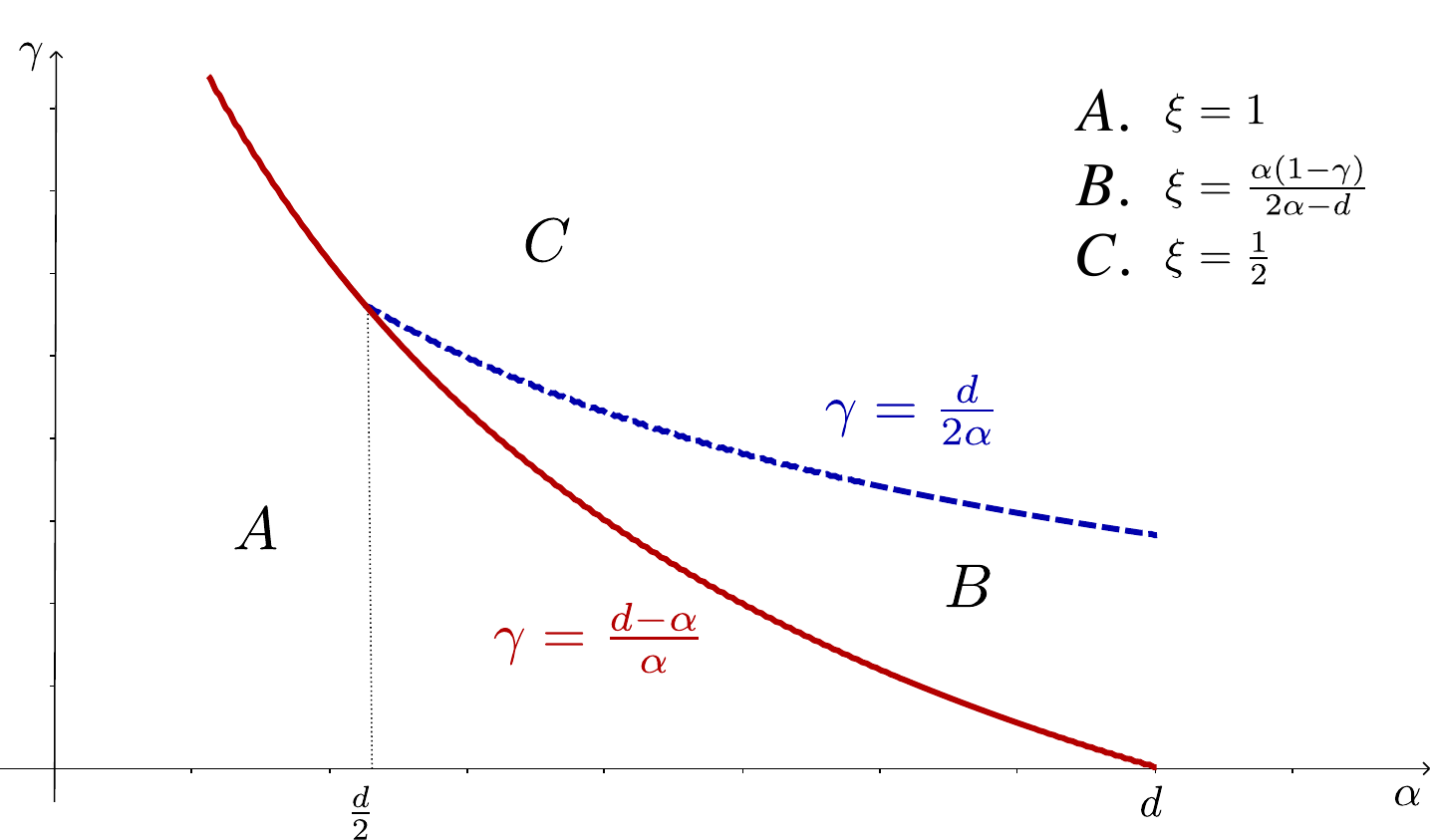}
\caption{Phase diagram for the wandering exponent $\xi$ depending on the parameters $\alpha, \gamma$, with $\alpha\in (0,d)$.
We have identified three regions. In region \textbf{A}, we have $\xi=1$. In region~\textbf{C}, we have $\xi =\frac12$.
In region~\textbf{B}, we have $\xi= \frac{\alpha(1-\gamma)}{2\alpha-d} \in (\frac12,1)$.
Note that when $\alpha \in (\frac{d}{2},d)$, the wandering
exponent $\xi$ in region~\textbf{B} interpolates between regions~\textbf{A}
and~\textbf{C}: all values  $\xi\in (\frac12,1)$ can be attained.
On the other hand,  region~\textbf{B} does not exist
when $\alpha\in(0,\frac{d}{2})$:
the wandering exponent $\xi$
drops abruptly from $1$ to $\frac12$ and no intermediate transversal fluctuations are possible. 
}\label{fig1}
\vspace{-0.5\baselineskip}
\end{figure}

%One can think about the relations~\eqref{eq:xi}-\eqref{eq:xigamma}
%in two ways: either the weak-coupling scaling
%is fixed (\textit{i.e.}\ $\gamma$ is fixed) and 
%one deduces what the wandering exponent $\xi$ is;
%either the wandering exponent~$\xi$ is fixed and one
%deduces how to tune the scaling of $\beta_N$
%in order to get transversal fluctuations of order~$N^{\xi}$.

\subsection{Definition of quantities arising in the scaling limit}
Our results additionally provide the scaling limit of the 
log-partition function in all weak-coupling regimes
when $\alpha\in (0,d)$.
In order to describe the limit, let us introduce the continuum counterparts of the random environment and of random walk trajectories, and define the corresponding notion of energy and entropy.

\smallskip
The continuum environment arises as the extremal field of $(\go_x)_{x\in \bbZ^d}$:
we let $\cP:=\{(x_{i},w_i)\}$ be a Poisson point process on $\bbR^{d} \times (0,+\infty)$ of intensity $\eta(\dd x, \dd w)=\alpha w^{-(1+\alpha)}\ind_{\{w>0\}} \dd x \,\dd w$. 
With a slight abuse of notation (it will not draw any confusion), we denote its law by $\bbP$.
We also let
\begin{equation}
\cD:=\big\{s:[0,1]\to\bbR^{d}: s(0)=0, s~\mbox{continuous and a.e. differentiable} \big\} \, ,
\end{equation}
which represents the set of allowed paths (corresponding to scaling limits of random walk trajectories).
Then, for a path $s\in \cD$,  we define its energy
\begin{equation}
\label{def:energy}
\pi(s) = \pi^{\cP}(s) := \sum_{(x,w) \in \cP} w \ind_{\{ x \in s([0,1])  \}} \, .
\end{equation}

\smallskip
To define the entropic cost, 
let us observe that we have the following large deviation principles for the simple random walk: for $u\in (0,1]$ and $x\in \bbR^d$ (we omit integer parts to lighten notation), 
cf.\ Lemma~\ref{lem:rate} in Appendix
\[
\lim_{N\to\infty} -\frac{1}{N^{2\xi-1}} \log\bP( S_{uN} = x N^{\xi} )
 = \begin{cases}
 u \, \mathrm{J}_d( \frac{x}{u} ) & \quad  \text{ if } \xi=1 \, ,\\
 \frac d2 \frac{\|x\|^2}{u} &\quad  \text{ if } \xi \in (\frac12,1) \,,
 \end{cases}
\]
where $\mathrm{J}_d(\cdot)$ is a given rate function; we stress that $\mathrm{J}(x) < +\infty$ if $\|x\|_1 \leq 1$ and $\mathrm{J}(x)=+\infty$ if $\|x\|_1 >1$, with $\|x\|_1$ the $\ell^1$ norm of $x$. 

We then define two different entropic cost functions for a path
$s\in \cD$ ---~or rather for the image $s([0,1])$--- depending on whether the path comes from the scaling of a random walk trajectory with wandering exponent $\xi=1$ or $\xi \in (\frac12,1)$.
In the case $\xi=1$, we define
\begin{equation}
\label{def:entropyCont2}
\hatent (s) := \inf_{\varphi\in\Phi} \int_0^1 \mathrm{J}_d \big(  (s\circ \varphi )' (t) \big) \dd t\, , 
\end{equation}
where the infimum is over
\begin{equation}\label{phi}
\Phi:=\{\varphi:[0,1]\overset{\text{onto}}{\longrightarrow}[0,1]: \varphi~\text{is non-decreasing and a.e. differentiable}\}.
\end{equation}
In the case $\xi \in (\frac12,1)$, we define analogously
\begin{equation}\label{def:entropyCont}
\ent(s)= \inf_{\varphi\in\Phi} \int_0^1  \frac{d}{2}\|  (s\circ \varphi )' (t) \|^2 \dd t
 =\frac{d}{2} \Big( \int_0^1 \| s'(t)\| \dd t\Big)^2 \, .
\end{equation}
The second identity comes from the fact that: (i) the right-hand side is a lower bound  for the left-hand side (by Cauchy--Schwarz inequality); (ii) the lower bound is attained for the parametrization of $s$ by its length, that is choosing $\varphi$ such that $\int_0^{\varphi(u)} \|s'(t)\| \dd t = u \int_0^{1} \|s'(t)\| \dd t$ for $u\in [0,1]$.

Let us also stress that
$\mathrm{J}_d (x)\geq \frac{1}{2} \|x\|^2$ for all $x\in \bbR^d$,
so that $\hatent(s) \geq \frac1d\ent(s)$ for all $s\in \cD$.

\smallskip
The continuous energy-entropy variational problem that we expect to arise 
as the scaling limit of the log-partition function are $\sup_{ s\in\cD}  \big\{ \beta \pi(s) - \ent(s) \big\}$, the entropy term $\ent(s)$ 
being one of~\eqref{def:entropyCont2}
or~\eqref{def:entropyCont} depending on the scaling considered.
As an important part of the proofs, we will show that these continuous variational problems are well-defined.

\section{Main Results}

Before we state our results, let us define more precisely 
what we intend when we say that 
the polymer has transversal fluctuations~$N^{\xi}$.

\begin{definition}\label{def:transfluct}
We say that $(S_n)_{0\leq n \leq N}$ has  transversal fluctuations of order $r_N$ under $\bP_{N,\gb_N}^{\go,h}$ if for any $\gep>0$ there exists some $\eta\in(0,1)$ such that for all~$N$ large enough
\[
\bP_{N,\gb_N}^{\go,h} \Big(  \max_{1\leq n \leq N} \|S_n\|  \in [\eta,\tfrac1\eta]\,  r_N  \Big) \geq 1- \gep \quad \text{ with $\bbP$-probability larger than $1-\gep$}\, .
\]
\end{definition}
We separate our results according to the different regions we consider.
In all the rest of the paper, we work in dimension $d\geq 2$.

\smallskip
\noindent
{\it Notational disclaimer.} For two sequences $(a_n)_{n\ge 1}, (b_n)_{n\ge 1}$, we write $a_n \sim b_n$ if $\lim_{n\to\infty} a_n/b_n =1$, $a_n \ll b_n$ if $\lim_{n\to\infty} a_n/b_n =0$, and $a_n \asymp b_n$ if $0<\liminf a_n/b_n \le \limsup a_n/b_n <\infty$.

\subsection{Region A: $\ga\in (0,d)$, $\gamma \leq \frac{d-\ga}{\ga}$}

Our first result gives the scaling limit of the model in region~\textbf{A} of Figure~\ref{fig1}.  
This is the analogue of what
Auffinger and Louidor \cite{AL11} proved in the context of the directed polymer model in dimension $1+1$:
the following result therefore generalizes~\cite{AL11}  to the non-directed framework and to higher dimensions.

\begin{theorem}
\label{thm:A}
Let $\ga\in (0,d)$ and let $(\gb_N)_{N\geq 1}$ be such that $\lim_{N\to \infty}  N^{\frac{d-\ga}{\ga}} \gb_N = \gb  \in (0,+\infty]$.
Then for any fixed $h\in \bbR$, we have the following convergence in distribution, as $N\to\infty$,
\begin{equation}
\label{def:varprob1}
\frac{1}{ \gb_N N^{d/\ga}} \log Z_{N,\gb_N}^{\go,h}  \stackrel{(d)}{ \longrightarrow} \hat\cT_{\gb} := \sup_{s\in \cD, \hatent(s) <+\infty} 
\big\{ \pi(s) - \tfrac{1}{ \gb} \hatent(s) \big\} \,, 
\end{equation}
where $\pi(s)$ and $\hatent(s)$ are defined in \eqref{def:energy} and \eqref{def:entropyCont2} respectively.
\end{theorem}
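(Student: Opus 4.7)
The plan is to reduce the partition function to a variational problem over finite subsets of the large peaks of the environment, couple these peaks with the Poisson process $\cP$, and apply the LDP for the range of the simple random walk (Lemma~\ref{lem:rate}).

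\textbf{Truncation and Poisson coupling.} Since $\alpha<d$, the $h$-term is negligible: $\beta_N |h|\,|\cR_N|\le |h|\beta_N(N+1)=o(\beta_N N^{d/\alpha})$. Fix $\epsilon>0$ and split $\omega_x=\omega_x^{>}+\omega_x^{\le}$ according to whether $\omega_x>\epsilon N^{d/\alpha}$. By \eqref{eq:tailOmega}, the rescaled field $\cP_N^\epsilon:=\{(x/N,\omega_x N^{-d/\alpha}):\omega_x>\epsilon N^{d/\alpha}\}$ converges in distribution on any bounded window to $\cP\cap(\bbR^d\times[\epsilon,\infty))$. The small-weight contribution is controlled pathwise by $\Sigma_N:=\sum_{x\in[-N,N]^d}\omega_x^{\le}$, and a Markov bound using $\bbE[\Sigma_N]\le (2N+1)^d\,\bbE[\omega_0\wedge\epsilon N^{d/\alpha}]=O(\epsilon^{1-\alpha} N^{d/\alpha})$ for $\alpha<1$ (and $O(N^d)=o(N^{d/\alpha})$ when $\alpha\in[1,d)$, using $d\ge 2$ and $\alpha<d$) gives $\beta_N\Sigma_N/(\beta_N N^{d/\alpha})$ of order $\epsilon^{1-\alpha}+o(1)$ in probability, which is negligible as first $N\to\infty$ and then $\epsilon\to 0$.

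\textbf{Matching bounds via the range LDP.} Expanding the expectation over subsets $A\subset \cP_N^\epsilon$ of peaks visited by the walk yields the upper bound
\[
Z_{N,\beta_N}^{\omega,h}\le e^{\beta_N\Sigma_N+|\cP_N^\epsilon|\log 2}\max_{A\subset \cP_N^\epsilon}\exp\Big(\beta_N\!\!\sum_{(x,w)\in A}\!\!w\Big)\,\bP(A\subset \cR_N)\,.
\]
Tightness of $|\cP_N^\epsilon|$ makes the prefactor sub-exponential, and in the regime $\xi=1$ Lemma~\ref{lem:rate} gives $\bP(A\subset \cR_N)=\exp(-N(\hatent(s_A)+o(1)))$, where $s_A$ is the piecewise-linear path through $A$ with the optimal permutation and time allocation. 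Using $\beta_N N^{d/\alpha}\sim \beta N$, dividing by $\beta_N N^{d/\alpha}$ and sending $N\to\infty$ yields $\limsup \le \sup_{A\subset \cP^\epsilon}\{\pi(A)-\tfrac{1}{\beta}\hatent(s_A)\}+O(\epsilon^{1-\alpha})$, which tends to $\hat\cT_\beta$ as $\epsilon\to 0$. For the lower bound I would pick $\delta>0$ and a $\delta$-near-maximizer $s_*$ of $\hat\cT_\beta$ realized on a finite set $A_*\subset\cP\cap\{w>\epsilon\}$; under the coupling, $A_*$ transfers to discrete peaks, and $Z_N\ge \exp(\beta_N\pi(A_*))\bP(A_*\subset \cR_N)\ge \exp(\beta_N\pi(A_*)-N(\hatent(s_*)+o(1)))$ yields $\liminf\ge \hat\cT_\beta-\delta$.

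\textbf{Main obstacle.} The technical heart is to show that $\hat\cT_\beta$ is a.s.\ finite and well-approximated by piecewise-linear paths visiting finitely many points of $\cP$. A.s.\ finiteness follows from $\hatent(s)\ge\tfrac{1}{d}\ent(s)\ge\tfrac{1}{2}(\int_0^1\|s'\|\,\dd t)^2$, which confines finite-entropy paths to a ball of bounded radius, combined with a Borel--Cantelli argument on the size/number of Poisson points in $\cP$ to control the energy collected under the $\hatent$-penalty. Density of piecewise-linear paths visiting finitely many Poisson points within the supremum relies on continuity and convexity properties of $\mathrm{J}_d$ and of $\hatent$, and applying Lemma~\ref{lem:rate} with the mild uniformity needed to transfer between the discrete targets in $\cP_N^\epsilon$ and their Poisson limits requires careful control of the rate function on the interior of its domain --- this is where the bulk of the work lies.
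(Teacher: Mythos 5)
Your overall blueprint---reduce to $h=0$, couple the high peaks of $\omega$ with the Poisson process $\cP$, and convert path probabilities to the entropy functional via the range LDP---is the same architecture as the paper's proof. However, the truncation step contains a concrete error, and fixing it turns out to be the main technical content of the paper's argument, which your proposal does not supply.

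The error is in the claim that $\bbE[\Sigma_N]=O(N^d)=o(N^{d/\alpha})$ when $\alpha\in[1,d)$. The inequality $N^d=o(N^{d/\alpha})$ is equivalent to $d<d/\alpha$, i.e.\ $\alpha<1$; for $\alpha\in[1,d)$ one has $d/\alpha\le d$, so in fact $N^d\gg N^{d/\alpha}$. Indeed, $\Sigma_N=\sum_{x\in[-N,N]^d}\omega_x^{\le}$ is, for $\alpha>1$, of order $\mu N^d$ by the law of large numbers, which dwarfs the target scale $N^{d/\alpha}$. The pathwise bound $\sum_{x\in\cR_N}\omega_x^{\le}\le\Sigma_N$ is therefore far too lossy: it ignores that the walk visits at most $N+1$ of the $\asymp N^d$ sites. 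Replacing $\Sigma_N$ by a $|\cR_N|$-based bound is of the right size for a typical trajectory ($\beta_N N\mu=o(\beta_N N^{d/\alpha})$ since $\alpha<d$), but for an upper bound on $\log Z_N$ one needs to control $\sup_{S}\sum_{x\in\cR_N}\omega_x^{\le}$ over \emph{all} $N$-step trajectories, and a trajectory can aim at the largest of the ``small'' weights. Even the sum of the top $N$ values of $\omega^{\le}$ in the box is too big when $\alpha>1$. What saves the day is that the visited set $\Delta\subset\cR_N$ must satisfy the entropy constraint $\ent(\Delta)\le\frac d2 N^2$, and the paper's entropy-controlled LPP estimate (Theorem~\ref{thm1} and Lemma~\ref{lem:tail>ell}) precisely quantifies how much weight such a constrained set can collect from the weights beyond the top $L$. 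The paper's Lemma~\ref{ZLellvanish+}, via H\"older and this E-LPP tail bound, is the replacement for your $\Sigma_N$ estimate, and it works uniformly in $\alpha\in(0,d)$.

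The same machinery is also what underlies the ``main obstacle'' you flag: the a.s.\ finiteness of $\hat\cT_\beta$ and the approximation by finitely many Poisson points is established in the paper via Lemma~\ref{lemma1}, whose proof is again an E-LPP argument (decompose $\pi(s)$ over dyadic blocks of order statistics and bound each block by $\bM_{2^j}\sL_{2^{j+1}}$). A Borel--Cantelli count of Poisson points alone does not control the cumulative energy a single entropy-constrained path can harvest from infinitely many small weights, because the relevant quantity is a maximum over paths, not a count. So while your Poisson-coupling and range-LDP steps (your upper/lower bound matching via piecewise-linear paths) are sound in spirit and mirror Proposition~\ref{logZL} and Proposition~\ref{thm3}, the truncation/tail control is genuinely missing and is not fixable by the elementary bounds you propose.
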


\begin{remark}\label{rem:Skorokhod}
Let us stress that by an extended version of
Skorokhod representation theorem, see \cite[Cor.~5.12]{K97},
one can upgrade the convergence~\eqref{def:varprob1}
to an almost sure one. More precisely, if we look at $Z_{N,\gb_N}^{\go,h}$ as a function of $\mathcal P^{(N)}:=(x, \omega_x)_{x\in [-N,N]^d\cap \mathbb Z^d}$ and the continuous limit $ \hat\cT_{\gb}= \hat\cT_{\gb}(\mathcal P)$, then by \cite[Cor.~5.12]{K97}, the convergence in distribution in Theorem \ref{thm:A} implies that there exist some random elements $\tilde {\mathcal P}^{(N)}$ and $\tilde {\mathcal P}$ defined on the same probability space such that $\tilde {\mathcal P}^{(N)}\overset{(d)}{=}\mathcal P^{(N)}$, $\tilde {\mathcal P}\overset{(d)}{=}\mathcal P$ and such that $\lim_{N\to\infty}\frac{1}{ \gb_N N^{d/\ga}} \log Z_{N,\gb_N}^{\go,h}(\tilde {\mathcal P}^{(N)}) = \hat\cT_{\gb}(\tilde {\mathcal P })$ almost surely.
\end{remark}

It therefore makes sense to work conditionally
on $\hat \cT_{\gb} >0$, even at the discrete level.
We have the following corollary, which says that
conditionally
on $\hat \cT_{\gb} >0$,
 transversal fluctuations are of order $N$.
\begin{proposition}
\label{prop:fluctuationsN}
Let $\ga\in (0,d)$ and let $(\gb_N)_{N\geq 1}$ be such that $\lim_{N\to \infty}  N^{\frac{d-\ga}{\ga}} \gb_N = \gb  \in (0,+\infty]$.
Then $(S_n)_{0\leq n \leq N}$
has transversal fluctuations of order $N$ under $\bP_{N,\gb_N}^{\go,h} \big( \, \cdot \, | \,\hat \cT_{\gb} >0 \big)$.
\end{proposition}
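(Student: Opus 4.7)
The upper bound built into Definition \ref{def:transfluct} is immediate: since $\|S_n-S_{n-1}\|=1$, one has $\max_{n\leq N}\|S_n\|\leq N$, so with $r_N=N$ the bound $\max_n\|S_n\|\leq r_N/\eta$ holds deterministically for every $\eta\in(0,1)$. Thus it suffices to prove that, conditionally on $\{\hat\cT_\gb>0\}$,
\[
\bP_{N,\gb_N}^{\go,h}\big(\max_{n\leq N}\|S_n\|\leq \eta N\big)\xrightarrow[N\to\infty]{} 0
\]
for every $\eta>0$ small enough, with $\bbP$-probability tending to $1$.

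The plan is to write this as a ratio of partition functions. Define
\[
Z_{N,\gb_N}^{\go,h,(\eta)}:=\bE\Big[\exp\Big(\sum_{x\in\cR_N}\gb_N(\go_x-h)\Big)\ind_{\{\max_n\|S_n\|\leq\eta N\}}\Big]\,,
\]
so that the conditional probability in question equals $Z_{N,\gb_N}^{\go,h,(\eta)}/Z_{N,\gb_N}^{\go,h}$. I would argue that the proof of Theorem \ref{thm:A} applies essentially verbatim to $Z_{N,\gb_N}^{\go,h,(\eta)}$ provided we restrict the continuous variational problem to paths contained in the ball $B(0,\eta)$: this yields, jointly with the convergence of Theorem \ref{thm:A},
\[
\frac{1}{\gb_N N^{d/\alpha}}\log Z_{N,\gb_N}^{\go,h,(\eta)}\stackrel{(d)}{\longrightarrow}\hat\cT_\gb^{(\eta)}:=\sup_{s\in\cD_\eta}\big\{\pi(s)-\tfrac{1}{\gb}\hatent(s)\big\}\,,\qquad \cD_\eta:=\{s\in\cD:\|s\|_\infty\leq\eta\}\,.
\]
Using the extended Skorokhod representation mentioned after Theorem \ref{thm:A} one promotes this to an almost sure joint convergence. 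Since $\gb_N N^{d/\alpha}\asymp N\to+\infty$ in Region A, whenever $\hat\cT_\gb^{(\eta)}<\hat\cT_\gb$ we obtain
\[
\log\frac{Z_{N,\gb_N}^{\go,h,(\eta)}}{Z_{N,\gb_N}^{\go,h}}=\gb_N N^{d/\alpha}\big(\hat\cT_\gb^{(\eta)}-\hat\cT_\gb\big)+o(\gb_N N^{d/\alpha})\longrightarrow -\infty\,,
\]
which gives the required vanishing of the conditional probability.

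It remains to establish the key continuum fact: $\hat\cT_\gb^{(\eta)}\to 0$ almost surely as $\eta\downarrow 0$, so that on $\{\hat\cT_\gb>0\}$ the strict inequality $\hat\cT_\gb^{(\eta)}<\hat\cT_\gb$ holds for $\eta$ small enough. Using the inequality $\hatent(s)\geq \frac{1}{d}\ent(s)=\frac{1}{2}\bigl(\int_0^1\|s'(t)\|\dd t\bigr)^2$ noted after \eqref{def:entropyCont}, any path $s\in\cD_\eta$ whose range meets a finite subset $P\subset \cP\cap (B(0,\eta)\times(0,+\infty))$ of Poisson atoms has length at least $L_{\mathrm{TSP}}(P\cup\{0\})$, so
\[
\hat\cT_\gb^{(\eta)}\leq \sup_{P}\Big\{\sum_{(x,w)\in P}w-\tfrac{1}{2\gb}L_{\mathrm{TSP}}(P\cup\{0\})^2\Big\}\,.
\]
The scaling $(x,w)\mapsto (x/\eta,w\eta^{-d/\alpha})$ maps $\cP\cap (B(0,\eta)\times(0,+\infty))$ to an independent copy of $\cP\cap (B(0,1)\times(0,+\infty))$ and converts the right-hand side into an $\eta$-rescaled version of the same problem in the unit ball: the effective inverse temperature becomes $\gb\,\eta^{d/\alpha-2}$, and the Flory balance of Section \ref{prediction}, run now at scale $\eta\to 0$ rather than $N\to\infty$, gives a bound of the form $\hat\cT_\gb^{(\eta)}=O(\eta^{c})$ with $c=c(\alpha,d)>0$ (explicitly $c=2d(d-\alpha)/(d+\alpha(d-2))$, matching the calculation of Section \ref{prediction}).

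The main obstacle is this final step: turning the Flory balance into a rigorous bound on $\hat\cT_\gb^{(\eta)}$ requires handling the two regimes $\alpha>d/2$ (where the gain is carried by a single large weight) and $\alpha\leq d/2$ (where the optimum collects many weights, with the $\alpha\leq 1$ subcase being the most delicate since the total Poisson mass in $B(0,\eta)$ is a.s.\ infinite and one must truncate carefully). Once this is in place, the rest is the ratio-of-partition-functions argument above, which only uses Theorem \ref{thm:A} as a black box together with its Skorokhod upgrade.
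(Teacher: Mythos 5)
Your proposal follows the same route as the paper: (i) observe the upper bound of Definition \ref{def:transfluct} is vacuous since $\max_n\|S_n\|\le N$; (ii) express the conditional probability as a ratio of partition functions and run the Theorem~\ref{thm:A} argument for both $Z_{N,\gb_N}^{\go,h}$ and its restriction to $\{\max_n\|S_n\|\le\eta N\}$, jointly upgraded to a.s.\ convergence via Skorokhod, yielding $\frac{1}{\gb_N N^{d/\ga}}\log\bP_{N,\gb_N}^{\go,h}(\max_n\|S_n\|\le\eta N)\to \hat\cT_{\gb,\eta}-\hat\cT_{\gb}$; (iii) conclude using $\lim_{\eta\downarrow 0}\hat\cT_{\gb,\eta}=0$ a.s.\ so that $\hat\cT_{\gb,\eta}-\hat\cT_\gb<0$ on $\{\hat\cT_\gb>0\}$ for $\eta$ small. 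This is precisely the paper's argument in Section~\ref{sec:transversalA}.

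The one point you flag as ``the main obstacle'' --- proving $\hat\cT_{\gb,\eta}\to 0$ a.s.\ --- is in fact \emph{also} stated without proof in the paper (``Noting that $\lim_{\eta\downarrow 0}\hat\cT_{\gb,\eta}=0$ a.s.''), so you are right to identify it as the only nontrivial continuum ingredient. Your TSP/Flory sketch is essentially sound, and the exponent $c = 2d(d-\alpha)/(d+\alpha(d-2))$ you announce is the correct Flory exponent for $\alpha\in(1,d)$ (one can recover it by optimizing over the length $L$ of a path confined to $B(0,1)$, using that a path of length $L$ can visit at most $O(L\,m^{1/d})$ out of $m$ uniform points, which is exactly the content of Theorem~\ref{thm1}, and that the TSP constant of $m$ points scales like $m^{1-1/d}$). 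A cheaper route --- which only proves $\hat\cT_{\gb,\eta}\to 0$ without a rate, which is all that is needed --- is to bound $\hat\cT_{\gb,\eta}\le\sup_{s\in\cD_\eta,\,\ent(s)\le d/2}\pi(s)$, apply the scaling $\phi_{1/\eta}$ to reduce to the unit ball, and invoke the E-LPP tail bound already established in Lemma~\ref{lemma1} together with the fact that the largest weight in $B(0,\eta)$ vanishes as $\eta\to 0$; the regimes $\alpha\gtrless d/2$ need not be separated for this. Two minor slips in the sketch: the rescaled restricted Poisson process is \emph{equal in law} to the one on $B(0,1)\times(0,\infty)$, not ``an independent copy''; and for $\alpha>d/2$ the effective inverse temperature $\gb\,\eta^{d/\alpha-2}$ diverges, so one cannot use the unrestricted scaling relation~\eqref{eq:scalinrT} there --- the confinement to the unit ball is the mechanism that controls the variational problem, exactly as your TSP bound captures.
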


Let us now give some property on the variational
problem $\hat \cT_{\gb}$, and in particular state that it is well-defined.

\begin{proposition}
\label{prop:transition}
When $\ga\in (0,d)$,
the variational problem $\hat\cT_{ \gb}$
defined in \eqref{def:varprob1}
%\begin{equation}
%\label{def:varprob1}
% = \hat\cT_{\hat \gb} (\cP) := \sup_{s\in \cD, \hatent(s) <+\infty} 
%\big\{ \pi(s) - \tfrac{1}{\hat \gb} \hatent(s) \big\}
%\end{equation}
is a.s.\ finite  for all $\gb \in (0,+\infty]$.
Moreover, letting $\gb_c = \gb_c(\cP):= \inf\{ \gb >0 \colon \hat \cT_{ \gb} >0 \}$, we have that 
$\bbP(  \gb_c  \in (0,\infty) ) =1$  if  $\alpha \in (0,\frac{d}{2})$ and
$\bbP(  \gb_c = 0 ) =1$  if  $\alpha \in (\frac{d}{2},d)$.
\end{proposition}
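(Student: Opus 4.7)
The argument would split into three parts, tied together by the observation that any $s\in\cD$ with $\hatent(s)<+\infty$ satisfies $\|s'(t)\|_1\leq 1$ a.e., so that the image of $s$ lies in the $\ell^1$-unit ball and its Euclidean length $L_2(s):=\int_0^1\|s'(t)\|\,\dd t$ obeys $L_2(s)^2\leq 2\hatent(s)$ (via $\hatent\geq \ent/d$ and $\ent(s)=\frac{d}{2}L_2^2$). The key analytic input is the dichotomy
\[
\int_{B_1}\|x\|^{-2\alpha}\,\dd x<+\infty \iff \alpha<d/2,
\]
coming from $\int_0^1 r^{d-1-2\alpha}\,\dd r<+\infty \Leftrightarrow d-2\alpha>0$, together with a Beardwood--Halton--Hammersley (BHH) type lower bound on the TSP tour through Poisson configurations in $\bbR^d$ with $d\geq 2$.

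\textbf{Finiteness of $\hat\cT_\beta$.} I would decompose $\cP$ into dyadic weight layers $\cP_k:=\cP\cap\{w\in[2^k,2^{k+1})\}$, where $|\cP_k\cap B_1|$ is Poisson of mean $\asymp 2^{-k\alpha}$. The BHH lower bound caps the number of points from each weight layer visitable by a path of length $L_2(s)$; summing the contributions of all weight layers (using $\alpha<d$ and $d\geq 2$) yields $\pi(s)\lesssim \hatent(s)$ uniformly, hence $\hat\cT_\beta<+\infty$ for every $\beta\in(0,+\infty]$.

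\textbf{Case $\alpha\in(d/2,d)$, $\beta_c=0$ a.s.} A single-point strategy suffices. Since $\mathrm{J}_d$ is $C^2$ at $0$ with Hessian $dI$, I choose $\epsilon_0>0$ such that $\mathrm{J}_d(x)\leq d\|x\|^2$ on $\{\|x\|\leq\epsilon_0\}$; then for any $(x,w)\in\cP$ with $\|x\|\leq\epsilon_0$ and $w>d\|x\|^2/\beta$, the straight line $s(t)=tx$ yields $\hatent(s)\leq\mathrm{J}_d(x)\leq d\|x\|^2<\beta w\leq\beta\pi(s)$, so $\hat\cT_\beta>0$. The expected number of such Poisson points equals
\[
\int_{\|x\|\leq\epsilon_0}\!\Big(\tfrac{d\|x\|^2}{\beta}\Big)^{\!-\alpha}\!\dd x =\big(\tfrac{\beta}{d}\big)^{\alpha}\!\int_{\|x\|\leq\epsilon_0}\!\|x\|^{-2\alpha}\,\dd x=+\infty
\]
since the radial integral diverges at $r=0$ exactly when $2\alpha\geq d$. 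Hence such a point exists almost surely for every $\beta>0$, giving $\beta_c=0$.

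\textbf{Case $\alpha\in(0,d/2)$, $\beta_c\in(0,+\infty)$ a.s.} I would set $M:=\sup_{(x,w)\in\cP\cap B_1}w/\|x\|^2$; the expected number of $(x,w)\in\cP\cap B_1$ with $w/\|x\|^2>t$ equals $t^{-\alpha}\int_{B_1}\|x\|^{-2\alpha}\,\dd x<+\infty$, so $M<+\infty$ almost surely. For any path $s$ visiting $T\subset\cP$,
\[
\pi(s)\leq M\sum_{x\in T}\|x\|^2\leq M\,|T|\max_{x\in T}\|x\|^2,
\]
and combined with the BHH inequality $L_2(s)\geq c_d|T|^{(d-1)/d}\max_{x\in T}\|x\|$ together with $\hatent(s)\geq L_2(s)^2/2$, this yields $\pi(s)\leq C_d M\,\hatent(s)$ uniformly in $s$. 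Hence $\hat\cT_\beta\leq 0$ whenever $\beta\leq(C_d M)^{-1}$, giving $\beta_c\geq(C_d M)^{-1}>0$ a.s.; finiteness $\beta_c<+\infty$ follows from $\hat\cT_{+\infty}=\sup_s\pi(s)>0$ a.s. (any Poisson point $(x,w)\in\cP\cap B_1$ with $w>0$ provides a path with $\pi(s)\geq w$). The main technical obstacle is to make the BHH lower bound \emph{uniform} over adversarial subsets of Poisson points: dense-cluster configurations must be controlled by a multi-scale argument balancing intra- and inter-cluster entropy contributions.
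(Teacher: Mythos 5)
Your single–point strategy for the case $\alpha\in(d/2,d)$ is correct and is essentially the same as the paper's: the paper checks the non-emptiness of the shells $\cG_k=\{(x,w)\in\cP: 2^{2k-1}\leq\tfrac{d}{2}\|x\|^2\leq 2^{2k},\ \beta w>2^{2k+1}\}$, whose Poisson means behave like $2^{k(d-2\alpha)}$ and accumulate as $k\to-\infty$ iff $\alpha>d/2$ --- this is precisely your dichotomy $\int_{B_1}\|x\|^{-2\alpha}\dd x<\infty\Leftrightarrow\alpha<d/2$. That part is fine.

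The other two parts --- a.s.\ finiteness and $\beta_c>0$ for $\alpha<d/2$ --- both lean on what you call ``the BHH inequality'' $L_2(s)\geq c_d|T|^{(d-1)/d}\max_{x\in T}\|x\|$, applied to the arbitrary subset $T$ of Poisson points visited by $s$. This is not a valid deterministic statement: if the Poisson configuration happens to drop $n$ points in a tiny cluster at distance $R$ from the origin, a path of length roughly $R$ (negligible TSP cost within the cluster) visits all of them, violating the bound. The BHH theorem is a statistical statement about \emph{typical} i.i.d.\ configurations; turning it into a uniform bound over all admissible paths $s$ for a \emph{fixed} Poisson realization requires an explicit large-deviations / first-moment argument quantifying how many points of a Poisson process a path of given entropy can collect. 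You flag this yourself as ``the main technical obstacle,'' but you do not resolve it, and it is exactly the non-trivial content of the statement. The paper's resolution is the Entropy-controlled LPP machinery (Lemma~\ref{lemma1} and, underneath it, Theorem~\ref{thm1} in Appendix~\ref{sec:ELPP}): the random variable $\sL_m^{(B)}(r)$ --- the maximal number of points among $m$ i.i.d.\ uniforms in $\boldsymbol\Lambda_r$ contained in a path of entropy $\leq B$ --- is shown to have superexponential tails, $\bbP(\sL_m^{(B)}(r)>k)\leq(c_dB^{1/2}m^{1/d}/rk)^{dk}$, via a combinatorial first-moment computation of the volume of low-entropy $k$-tuples. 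Plugged into a dyadic-in-weight decomposition (identical in spirit to your $\cP_k$ layers), this yields $\sup_{\ent(s)\leq 2}\pi(s)<\infty$ a.s.\ with a power tail, which gives finiteness; for $\beta_c>0$ the paper then argues via Lemma~\ref{lemma:prel} and a scaling/union-bound over dyadic entropy scales, rather than via your $M=\sup w/\|x\|^2$. Your $M$-based route to $\beta_c>0$ is a genuinely different and arguably cleaner reduction, but it inherits exactly the same unproved BHH-type input.

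In short: the paper and your proposal agree on the probabilistic dichotomies driving the result, but your argument is incomplete without a rigorous substitute for the uniform-BHH step; the E-LPP tail bound of Theorem~\ref{thm1} is the missing ingredient, and it does not follow from BHH in any easy way.
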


\begin{remark}\rm 
\label{rem:1/beta}
We could have formulated the convergence in distribution in Theorem~\ref{thm:A} as follows:
\[
\frac{1}{ N} \log Z_{N,\gb_N}^{\go,h}  \stackrel{(d)}{ \longrightarrow} \sup_{s\in \cD, \hatent(s) <+\infty} 
\big\{  \gb \pi(s) - \hatent(s) \big\}  =  \gb\,  \hat \cT_{\gb} \,, \quad \text{ as } N\to\infty\, .
\]
However, in the case $ \gb = +\infty$,
this would only give that $\frac{1}{ N} \log Z_{N,\gb_N}^{\go,h}$
goes to $+\infty$.
The formulation of Theorem~\ref{thm:A} allows us to treat
the case $\gb = +\infty$,
in which $\gb_N N^{d/\ga}$ is much larger than $N$ ---~in particular, it
includes the case $\gamma< \frac{d-\ga}{\ga}$.
In that case, the entropy term disappears in $\hat \cT_{\infty}$,
even though the constraint that paths have a finite
entropy $\hatent(s)$ has to remain (for instance to avoid having paths of length larger than $1$).
\end{remark}

\subsection{Region B: $\ga \in (\frac{d}{2}, d)$, $ \frac{d-\ga}{\ga} <\gamma  < \frac{d}{2\ga}$}

In region \textbf{B}, our result is the analogous 
to Theorem~2.5 in \cite{BT19a} in the context of the directed polymer model in dimension $1+1$:
this generalizes~\cite{BT19a} to the non-directed framework and to higher dimensions.
Here, we will fix the parameter $h$ to be equal to $\mu:=\bbE[\go_0] \in (0,\infty)$, which is well defined since $\ga>\frac{d}{2} \geq 1$.

\begin{theorem}
\label{thm:B}
Let $\ga\in (\frac{d}{2},d)$ and
$\gamma  \in ( \frac{d-\ga}{\ga} , \frac{d}{2\ga})$.
Let $(\gb_N)_{N\geq 1}$ verify $\lim_{N\to \infty}  N^{\gamma} \gb_N = \gb  \in (0,+\infty)$.
Then $(S_n)_{0\leq n \leq N}$ has transversal fluctuations of order $N^{\xi}$ under $\bP_{N,\gb_N}^{\go,h=\mu}$, where $\xi := \frac{\alpha (1-\gamma)}{2\ga-d} \in (\frac12,1)$.
Additionally,
we have the following convergence in distribution, as $N\to\infty$,
\begin{equation}
\label{eq:VarPr}
\frac{1}{N^{2\xi -1}}\log Z_{N,\gb_N}^{\go,h=\mu} \overset{(d)}{\longrightarrow}\cT_{ \beta} :=\sup_{s\in \cD, \ent(s) <+\infty} 
\big\{  \gb \pi(s) -  \ent(s) \big\} \,,
\end{equation}
where $\pi(s)$ and $\ent(s)$ are defined in \eqref{def:energy} and \eqref{def:entropyCont} respectively.
\end{theorem}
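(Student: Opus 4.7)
The plan is to derive matching upper and lower bounds for the rescaled log-partition function by reducing it to a discrete energy-entropy variational problem that converges to $\cT_\beta$, and to simultaneously extract the transversal fluctuation bound from the same analysis. Set $\xi:=\frac{\alpha(1-\gamma)}{2\alpha-d}\in(\frac12,1)$ and introduce the weight scale $a_N:=N^{\xi d/\alpha}$, i.e.\ the typical magnitude of $\max_{\|x\|\leq N^\xi}\omega_x$. The energy--entropy balance \eqref{eq:xi} amounts to $\beta_N a_N\asymp N^{2\xi-1}$, so a ``macroscopic'' weight $\omega_x\asymp a_N$ visited by the walk contributes of order $N^{2\xi-1}$ to the exponent, comparable to the entropic cost of a trajectory with fluctuations of size $N^\xi$.

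The first ingredient is the convergence of the rescaled point process $\{(x/N^\xi,\omega_x/a_N):\omega_x>\delta a_N\}$ to $\cP\cap(\bbR^d\times(\delta,\infty))$ in distribution, for any $\delta>0$, together with the a.s.\ finiteness of the truncated continuum problem $\cT_\beta^\delta$ (defined using only Poisson atoms of weight $>\delta$) and the fact that $\cT_\beta^\delta\uparrow\cT_\beta<\infty$ as $\delta\downarrow 0$. Finiteness of $\cT_\beta$ relies crucially on $\alpha>d/2$: for a path confined to a ball of radius $R$ the maximum energy $\beta\pi\lesssim R^{d/\alpha}$ is dominated by the minimum entropy $\ent\gtrsim R^2$ when $R$ is large (since $d/\alpha<2$), and a Borel--Cantelli argument over dyadic shells confines the supremum to a bounded region almost surely.

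For the lower bound on $\log Z$ I would fix $\delta>0$, pick a near-maximizer $s^*$ of $\cT_\beta^\delta$ visiting finitely many Poisson atoms $(x_i^*,w_i^*)_{i\leq k}$ in order along straight segments with arc-length times $t_i^*$, and restrict the expectation defining $Z_{N,\beta_N}^{\omega,\mu}$ to simple random walks that pass through $\lfloor N^\xi x_i^*\rfloor$ at times $\lfloor Nt_i^*\rfloor$. By the Gaussian LDP of Lemma~\ref{lem:rate} (applicable since $\xi<1$), such trajectories have probability $\exp(-N^{2\xi-1}(\ent(s^*)+o(1)))$, and they accumulate a ``big'' energy $N^{2\xi-1}(\beta\pi(s^*)+o(1))$. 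For the upper bound I would split the energy along the path into a macroscopic part ($\omega_x>\delta a_N$) and a residual part; summing the macroscopic contribution over the finite set $A$ of big atoms visited and over the order $\sigma$ in which they are visited, and using the LDP to bound $\bP(A\subset\cR_N\text{ in order }\sigma)$ by $\exp(-N^{2\xi-1}\,\ent^N_{\sigma,A})$, then optimizing over the intermediate visit times (which yields the arc-length formula underlying \eqref{def:entropyCont}), and finally invoking the point process convergence, produces the matching upper bound $\cT_\beta^\delta+o(1)$. Sending $\delta\downarrow 0$ concludes. Transversal fluctuations come as a by-product: the lower bound follows from the construction above, while the upper bound uses that a trajectory exiting a ball of radius $AN^\xi$ pays at least $cA^2N^{2\xi-1}$ in entropy against a gain of at most $CA^{d/\alpha}N^{2\xi-1}$ in energy from the atoms in that ball, which is negligible for $A$ large.

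The hardest step will be controlling, uniformly over all possible ranges $\cR_N$, the residual small-weight contribution $\beta_N\sum_{x\in\cR_N}(\omega_x-\mu)\ind_{\{\omega_x\leq\delta a_N\}}$ and showing it is $o(N^{2\xi-1})$ with $\bbP$-probability tending to $1$. Fuk--Nagaev-type inequalities for truncated heavy-tail sums bound the Gaussian fluctuations over a fixed set of $N$ sites by $\beta_N(Na_N^{2-\alpha})^{1/2}$, which is $o(N^{2\xi-1})$ precisely because $\xi>\tfrac12$ and $d\geq 2$; but promoting this to a bound uniform over the exponentially many possible ranges requires either an entropy/net argument on the space of ranges or, preferably, an annealed estimate using the independence of $\omega$ and $S$ to absorb the sum over $\cR_N$ inside the expectation. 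A secondary technical point is justifying the interchange of $N\to\infty$ and $\delta\downarrow 0$, which is handled via the monotonicity of $\cT_\beta^\delta$ established in the point-process step.
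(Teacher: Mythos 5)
Your overall scheme — point-process convergence, a discrete energy--entropy variational problem, a macroscopic/residual split and a diagonal argument in $(\delta,N)$ — matches the paper's strategy in broad outline, and your finiteness and transversal-fluctuation heuristics are correct. However, there is a genuine gap in the way you propose to control the residual term, and it is exactly the part you flag as ``hardest''. You lump all weights with $\omega_x\leq\delta a_N$ into one residual, where $a_N=N^{\xi d/\alpha}$ and hence $\beta_N\delta a_N\asymp\delta N^{2\xi-1}\to\infty$. A Fuk--Nagaev bound $\beta_N(Na_N^{2-\alpha})^{1/2}$ captures only the \emph{typical Gaussian fluctuation} over a fixed set of sites; with truncation at $\delta a_N$ the residual sum over the ball of radius $qN^\xi$ has a few terms of order $\delta a_N$, so its extremal fluctuations (which is what the quenched optimization over random-walk trajectories picks up) are of order $\delta a_N$, i.e.\ $\delta N^{2\xi-1}$ once multiplied by $\beta_N$, not of Gaussian order. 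Your alternative, an annealed bound on $\bE[\exp(\beta_N\sum_{x\in\cR_N}(\omega_x-\mu)\ind_{\{\omega_x\leq\delta a_N\}})]$, also fails: the moment generating function blows up once $\beta_N\omega_x$ can exceed a constant, because the number of sites with $\omega_x\in(1/\beta_N,\delta a_N]$ in the ball is polynomial in $N$, and the corresponding exponential weight is not summable in the annealed expectation.

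What the paper actually does is a three-way split (see the proof of Lemma~\ref{lem:S<qr}): a very-small regime $\beta_N\omega_x\leq 1$, handled by the annealed second moment you describe; a macroscopic regime $\beta_N\omega_x>Q$ with $Q=(h_N^2/N)^\zeta$, handled via the top-order statistics and the E-LPP tail bounds of Proposition~\ref{tail:disT}; and a crucial \emph{intermediate} regime $\beta_N\omega_x\in(1,Q]$, handled by a multi-scale cascade $(\ell_j,Q_j)_{j\leq\kappa}$ of entropy-controlled last-passage percolation estimates, which quantifies how many sites at each weight level a trajectory of entropy $\lesssim N^{2\xi-1}$ can collect. This intermediate regime sits entirely inside your residual bucket and is not covered by either of your two proposed tools; a naive union bound over the $\sim C^N$ possible ranges also does not close. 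You would need to recognize that the control must be geometric (bounding, for each dyadic weight level, the maximal number of such sites a low-entropy path can visit) rather than probabilistic over a fixed set, and then iterate it over scales — this is precisely the non-directed E-LPP machinery of Appendix~\ref{sec:ELPP} and the multi-scale Step~2 of Lemma~\ref{lem:S<qr}, and it is the key missing ingredient in your proposal.
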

Let us stress that the choice of $h=\mu$ is crucial because the contribution of the small values of the environment (that is $\beta_N\omega_x\le 1$)  to the partition function is negligible only if we center the random variables $\omega_x$. This centering term is also needed in the directed case, cf.\ Section 4.2 Eq. (4.27) of~\cite{BT19a}.

The fact that the variational problem $\cT_{ \gb}$
is well-defined in the case $\alpha\in (\frac{d}{2},d)$ is non-trivial,
and relies on (non-directed) entropic last-passage percolation (E-LPP) estimates, introduced in~\cite{BT19c}.
Properties of the variational problem $\cT_{ \gb}$
are summarized in the following Proposition, which is the analogous of \cite[Thm.~2.4]{BT19b} (and generalizes it to the non-directed case and to higher dimensions).

\begin{proposition}
\label{prop:polym}
If $\alpha\in (\frac{d}{2},d)$, 
the variational problem $\cT_{ \gb}$ defined
in \eqref{eq:VarPr} is a.s.\ positive and finite for all $ \gb \in (0,+\infty)$.
Moreover,  $\bbE[ ( \cT_{\gb})^{\kappa}] <\infty$ for any $\kappa <\ga-d/2$, and we have the scaling relation
\begin{equation}\label{eq:scalinrT}
 \cT_{\gb} \stackrel{(d)}{=} \beta^{\frac{2\alpha}{2\alpha-d}}\,  \cT_{1} \, .
\end{equation}
On the other hand, if $\ga \in (0, \frac{d}{2}]$, we have $\cT_\beta=+\infty$ a.s., for all $\gb >0$.
\end{proposition}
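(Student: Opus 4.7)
The plan is to establish the four assertions in the order: the scaling relation~\eqref{eq:scalinrT}; a.s.\ finiteness and the moment bound when $\ga\in(\tfrac{d}{2},d)$; positivity in the same regime; and divergence when $\ga\in(0,\tfrac{d}{2}]$. For the scaling relation I would exploit the self-similarity of $\cP$: a direct change of variables shows that $(x,w)\mapsto(\gl x,\gl^{d/\ga}w)$ preserves the intensity $\ga w^{-(1+\ga)}\dd x\,\dd w$, so $\{(\gl x_i,\gl^{d/\ga}w_i):(x_i,w_i)\in\cP\}\stackrel{(d)}{=}\cP$. Combining this with the homogeneity $\ent(\gl s)=\gl^2\ent(s)$ and substituting $s\mapsto\gl s$ in~\eqref{eq:VarPr} yields $\cT_\gb\stackrel{(d)}{=}\gl^2\cT_{\gb\gl^{d/\ga-2}}$; choosing $\gl=\gb^{\ga/(2\ga-d)}$ (well-defined since $2\ga>d$) produces~\eqref{eq:scalinrT}.

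For finiteness and the moment bound in the range $\ga\in(\tfrac{d}{2},d)$, I would parametrize paths by their total length $L=\int_0^1\|s'(t)\|\,\dd t$ and use $\ent(s)=\tfrac{d}{2} L^2$ to rewrite
\[
\cT_\gb=\sup_{L>0}\big\{\gb M_L-\tfrac{d}{2} L^2\big\},\qquad M_L:=\sup\Big\{\pi(s):s\in\cD,\,\int_0^1\|s'\|\le L\Big\}.
\]
The heart of the argument is an entropic last-passage percolation estimate, in the spirit of~\cite{BT19c} adapted to non-directed paths in $\bbR^d$: I would aim to prove $\bbP(M_L>u)\le C\,L^d u^{-\ga}$ for $u\ge L^{d/\ga}$, matching the single-atom lower bound coming from Poisson extremes in a ball of radius $L$. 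A union bound over dyadic scales $L\in[2^k,2^{k+1}]$, the sum being dominated by the Flory scale $2^k\asymp t^{1/2}$ at which the constraint $\gb M_L\asymp L^2$ saturates, then delivers $\bbP(\cT_\gb>t)\le C\,t^{-(\ga-d/2)}$, which yields both a.s.\ finiteness of $\cT_\gb$ and $\bbE[\cT_\gb^\gk]<\infty$ for every $\gk<\ga-d/2$.

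For positivity and divergence, I would test against the explicit family of straight segments $s_i(t)=tx_i$ for atoms $(x_i,w_i)\in\cP$: these satisfy $\pi(s_i)\ge w_i$ and $\ent(s_i)=\tfrac{d}{2}\|x_i\|^2$, so $\cT_\gb\ge\gb w_i-\tfrac{d}{2}\|x_i\|^2$. In the annulus $A_k=\{2^{k-1}\le\|x\|\le 2^k\}$, the mean number of atoms with $\gb w\ge d\cdot 4^k$ is of order $2^{k(d-2\ga)}$, and these counts are independent across $k$. When $\ga>d/2$ the mean diverges as $k\to-\infty$, giving a.s.\ infinitely many atoms arbitrarily close to the origin with $\gb w_i>\tfrac{d}{2}\|x_i\|^2$, hence $\cT_\gb>0$. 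When $\ga<d/2$ it diverges as $k\to+\infty$, yielding atoms with $\gb w_i-\tfrac{d}{2}\|x_i\|^2\gtrsim 4^k\to\infty$, so $\cT_\gb=+\infty$ a.s.; in the critical case $\ga=d/2$ the mean is constant in $k$, but Borel--Cantelli applied to the independent shell events still produces such a divergent sequence of atoms.

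The main obstacle is the E-LPP tail estimate for $M_L$ in the second step: extending the directed, one-dimensional estimates of~\cite{BT19c} to non-directed paths in $d\ge 2$ requires controlling the combinatorics of how many high-weight Poisson atoms can be threaded by a curve of prescribed length in $\bbR^d$ (a TSP-type input), and obtaining the correct joint polynomial dependence in $L$ and $u$ in order to recover the sharp moment exponent $\ga-d/2$. The geometric constraint replacing directedness (paths can now self-intersect and backtrack, but are still penalized by total length) is what makes the argument genuinely different from the one-dimensional directed case.
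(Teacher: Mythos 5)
Your plan follows the paper's own proof in all essentials: the same Poisson self-similarity $\phi_\gl(x,w)=(\gl x,\gl^{d/\ga}w)$ for the scaling relation~\eqref{eq:scalinrT}, the same dyadic decomposition over length (equivalently, entropy) scales combined with the E-LPP tail of Lemma~\ref{lemma1} for finiteness and the moment bound, and the same straight-segment test paths with an independent-shells Borel--Cantelli argument for positivity and for divergence when $\ga\le d/2$. The only calibration to note is that the E-LPP input gives tail exponent $a$ for any $a<\ga$ rather than $\ga$ exactly (there is a logarithmic correction in Lemma~\ref{lemma1}), so the bound to aim for is $\bbP(\cT_\gb>t)\le C_a\,t^{-a(1-d/(2\ga))}$ for each $a<\ga$ rather than the sharp power $t^{-(\ga-d/2)}$, which still delivers $\bbE[\cT_\gb^\gk]<\infty$ for every $\gk<\ga-d/2$.
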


\begin{remark}\label{rem:generalseq}\rm
If we consider more general sequences $\gb_N$,
then the corresponding transversal fluctuations $r_N$
are given by the relation~\eqref{eq:xi}, that is $r_N \sim (N \gb_N)^{\frac{\ga}{2\ga-d}}$.
In analogy with \cite[Thms.~2.5--2.7]{BT19a},
we should find three different scaling limits in region \textbf{B},
according to whether $\sqrt{N \log N} \ll r_N$ (corresponding to
the bulk of region \textbf{B} Theorem~\ref{thm:B} above),
$r_N \asymp \sqrt{N \log N}$ or $\sqrt{N} \ll r_N \ll \sqrt{N\log N}$
(corresponding to boundary regions between region \textbf{B} and region~\textbf{C}).
We have chosen here to consider only pure powers
for $\gb_N$ (and for $r_N$) in order to keep the exposition
clearer ---~the arguments from~\cite{BT19a} could be adapted here
but it would significantly lengthen the paper.
%is 
%Anyway, to minimize the technical details and to make the length of paper reasonable, we have to choose to emphasis only on the most interesting regime (from a physical viewpoint) in which we can prove the prediction for $r_N=N^\xi$.
\end{remark}

\subsection{Region C: $\ga \in (0, d)$, $\gamma \geq \max\{ \frac{d-\ga}{\ga} , \frac{d}{2\ga} \}$}
\label{results:C}

In region \textbf{C}, we will show that transversal fluctuations of the polymer are of order $N^{1/2}$.
As a preliminary remark, let us stress that  for any $x\in \bbR^d \setminus \{0\}$ (omitting integer parts 
to lighten notation), 
we have the following asymptotics for $\bP(x\sqrt{N} \in \cR_N)$. Setting $v_n := \log N$ if $d=2$
and $v_n = N^{\frac{d}{2}-1}$ if $d\geq 3$, we have
\begin{equation}
\label{def:f}
\lim_{N\to\infty} v_N \bP\big( x \sqrt{N} \in \cR_N \big) = f(x) :=\begin{cases}
\int_{\|x\|^2/2}^{\infty} u^{-1} e^{-u} \dd u &\quad  \text{ if } d=2 ,\\
2\lambda_d \int_{0}^1 \rho_d(u,x) \dd u & \quad \text{ if } d\geq 3 \, ,
\end{cases}
\end{equation}
where $\lambda_d := \bP(S_n \neq 0 \,, \forall n\geq 1 )$ is the  escape probability and $\rho_d(t,x) := (2\pi t/d)^{-d/2} e^{ - \frac{d}{2t} \|x\|^2}$ is the heat kernel of a $d$-dimensional Brownian motion with covariance matrix $\frac1d \mathrm{Id}$.
The asymptotics~\eqref{def:f} can
be derived from Uchiyama's results~\cite{Uch11}:
see~\cite[Thm.~1.6]{Uch11} for the case of dimension $d=2$
and~\cite[Thm.~1.7]{Uch11} for dimensions $d\geq 3$.

\smallskip

In region \textbf{C}, the cases $\ga\in (0,\frac{d}{2})$, $\gamma \geq \frac{d-\ga}{\ga}$ and $\ga \in (\frac{d}{2},d)$, $\gamma\geq \frac{d}{2\ga}$ need to be treated separately,
similarly to the case of the directed polymer model~\cite{BT19a}.

\subsubsection*{Case $\ga \in (\frac{d}{2},d)$}
We start with the case $\ga \in (\frac{d}{2},d)$, $\gamma\geq \frac{d}{2\ga}$, which  is easier to state. 
Let us introduce some quantities that arise in the limit: we stress that different objects arise depending on the dimension.

\begin{proposition}
\label{prop:finiteSum}
In dimension $d\geq 5$,
define
\begin{equation}
\label{def:Chi}
\cX := \sum_{x\in \bbZ^d} (\go_x -\mu) \bP(x\in \cR_{\infty})  = \bE\Big[   \sum_{x\in \cR_{\infty}} (\go_x -\mu) \Big]\, ,
\end{equation}
where we recall that $\mu=\bbE[\go_0]$.
Then the random variable $\cX$ is well-defined if $\ga> \frac{d}{d-2}$.
\end{proposition}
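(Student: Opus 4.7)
The plan is to view $\cX = \sum_{x \in \bbZ^d} Y_x$ with $Y_x := (\go_x - \mu) p_x$ and $p_x := \bP(x \in \cR_\infty)$ as a sum of independent, $\bbP$-centered random variables, and to apply Kolmogorov's three-series theorem. The two preliminary ingredients are the Green's function asymptotics for transient simple random walk and the exact tail behaviour of $\go_x - \mu$.

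First I would record that, in dimension $d \geq 3$, $p_x \asymp \|x\|^{2-d}$ as $\|x\| \to \infty$; this can be read off from the integral representation of $f(x)$ in \eqref{def:f} (or quoted from \cite{Uch11}). A direct consequence is that for any $q > 0$,
\[
\sum_{x \in \bbZ^d} p_x^q \; \asymp \; 1 + \sum_{r \geq 1} r^{d-1 - q(d-2)} \; < \; \infty \quad \Longleftrightarrow \quad q > \tfrac{d}{d-2}.
\]
The restriction $d \geq 5$ is precisely what ensures $\tfrac{d}{d-2} < 2$, so the hypothesis $\ga > \tfrac{d}{d-2}$ is compatible with both regimes $\ga < 2$ and $\ga \geq 2$.

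I would then verify the three conditions of Kolmogorov's theorem for the $Y_x$, using that $\go_x - \mu \geq -\mu$ is bounded below while $\bbP(\go_x - \mu > t) \sim t^{-\ga}$. For the \emph{tail series}, $\sum_x \bbP(|Y_x| > 1) = \sum_x \bbP(|\go_x - \mu| > 1/p_x) \asymp \sum_x p_x^\ga$, which is finite under the hypothesis. For the \emph{truncated means}, using $\bbE[\go_x - \mu] = 0$ (valid since $\ga > 1$),
\[
\bbE[Y_x \ind_{|Y_x| \leq 1}] = -\, p_x \, \bbE[(\go_x - \mu) \ind_{|\go_x - \mu| > 1/p_x}] = O(p_x^\ga),
\]
by a standard tail-integral estimate for $\ga > 1$, so the series converges absolutely. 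For the \emph{truncated variances}: if $\ga \geq 2$, then $\bbE[Y_x^2 \ind_{|Y_x| \leq 1}] \leq p_x^2 \bbE[(\go_0 - \mu)^2] = O(p_x^2)$ and $\sum_x p_x^2 < \infty$ as $d \geq 5$; if $\ga \in (\tfrac{d}{d-2},2)$, a Karamata-type computation gives $\bbE[(\go_x - \mu)^2 \ind_{|\go_x - \mu| \leq T}] \asymp T^{2-\ga}$, so that $\bbE[Y_x^2 \ind_{|Y_x| \leq 1}] = O(p_x^\ga)$. In both subcases the series is finite, and Kolmogorov's three-series theorem yields that $\cX$ converges $\bbP$-almost surely, hence is well-defined as a random variable.

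The main (mild) obstacle is the truncated-variance estimate in the subcase $\ga \in (\tfrac{d}{d-2}, 2)$, which is also the structural reason for the restriction $d \geq 5$: only then is $\tfrac{d}{d-2} < 2$, so that this subcase can actually occur under the hypothesis. The alternative expression $\bE[\sum_{x \in \cR_\infty}(\go_x - \mu)]$ should be interpreted as the $\bbP$-a.s.\ limit of the finite-$N$ expectations $\bE[\sum_{x \in \cR_N}(\go_x - \mu)] = \sum_x (\go_x - \mu)\, \bP(x \in \cR_N)$, for which Fubini applies unconditionally; the three-series argument above (now with $p_x^{(N)} := \bP(x \in \cR_N) \leq p_x$) then gives convergence of these to $\cX$ as $N \to \infty$.
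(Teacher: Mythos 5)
Your proof takes essentially the same approach as the paper: decompose $\cX$ into independent centered terms $Y_x = (\go_x-\mu)\bP(x\in\cR_\infty)$, apply Kolmogorov's three-series theorem, and reduce each series to $\sum_x \bP(x\in\cR_\infty)^{q}$ via the Green's-function decay $\bP(x\in\cR_\infty)\lesssim\|x\|^{2-d}$. One minor slip: for the truncated-variance series in the case $\ga\geq 2$, your bound $\bbE[Y_x^2\ind_{|Y_x|\le 1}]\le p_x^2\,\bbE[(\go_0-\mu)^2]$ fails at the boundary $\ga=2$ (where the second moment of $\go$ is infinite); the paper handles this by noting that $\bbE[\bar\go^2\ind_{\{|\bar\go|\le u\}}]=O(\log(1/u))$ at $\ga=2$, and $\sum_x p_x^2\log(1/p_x)<\infty$ still holds since $2>d/(d-2)$ for $d\ge 5$.
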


\begin{proposition}\label{prop:finiteW}
In dimensions $d=2,3$ and $\ga\in (\frac d2 ,2)$, define
\begin{equation}\label{eq:defWbeta}
\cW_{\beta}=\int_{\R^d\times\R_+} \frac{1}{\gb} \big(e^{\beta w}-1-\beta w \big) f(x) \cP(\dd x,\dd w)+\int_{\R^d\times\R_+}w f(x)(\cP-\eta)(\dd x,\dd w),
\end{equation}
with $f(x)$ defined in \eqref{def:f} and $\eta(\dd x,\dd w)=\alpha w^{-\alpha-1}\dd x\, \dd w$ the intensity measure for $\cP$.
If $\alpha\in(\frac{d}{2},2)$, then for any $\beta\in (0,\infty)$, the random variable $\cW_\beta$ in \eqref{eq:defWbeta}
%\begin{equation*}
%\cW_\beta=\int_{\bbR^d\times\bbR_+}\frac{1}{\beta}\left(e^{\beta w}-1-\beta w\right)f(x)\cP(\dd x, \dd w)+\int_{\bbR^d\times\bbR_+} w f(x)(\cP-\eta)(\dd x, \dd w)
%\end{equation*}
%(with the first term set to zero when $\gb=0$)
is almost surely finite.
%(Formally the first term in~\eqref{eq:defWbeta} is equal to zero when $\gb=0$.)
\end{proposition}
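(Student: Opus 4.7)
The plan is to decompose $\cW_\beta = I_1 + I_2$ with
\[
I_1 := \int_{\R^d \times \R_+} \tfrac{1}{\beta}\bigl(e^{\beta w}-1-\beta w\bigr) f(x) \, \cP(\dd x,\dd w), \quad I_2 := \int_{\R^d \times \R_+} w f(x) (\cP - \eta)(\dd x, \dd w),
\]
and to prove each summand is a.s.\ finite. As a preliminary step I record the asymptotics of $f$ from \eqref{def:f}: $f$ has Gaussian tails at infinity (so $-\log f(x) \asymp \|x\|^2$ for $\|x\|$ large), while near the origin $f(x) \asymp |\log\|x\||$ in $d=2$ and $f(x) \asymp \|x\|^{-1}$ in $d=3$. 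These give in particular $\int f(x)\dd x < \infty$, $\int f(x)^2 \dd x < \infty$, and $\int f(x)^{\alpha/2} \dd x < \infty$ (for the last, $\alpha/2 < 3$ in $d=3$ is enough near the origin).

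For $I_1$, the integrand $h_1(x,w) := \beta^{-1}(e^{\beta w}-1-\beta w)f(x)$ is non-negative, so the standard Poisson integrability criterion reduces a.s.\ finiteness of $I_1$ to $\int (h_1 \wedge 1)\dd\eta < \infty$. I define $w^*(x)$ by $h_1(x,w^*(x)) = 1$, so that $h_1\wedge 1 = h_1$ on $\{w<w^*(x)\}$ and $h_1\wedge 1 = 1$ on $\{w\geq w^*(x)\}$. Using $e^{\beta w}-1-\beta w \asymp \beta^2 w^2$ for small $w$, the first contribution is bounded by a constant (depending on $\beta$) times $\int f(x)\dd x \int_0^1 w^{1-\alpha}\dd w$, finite because $\alpha<2$. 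The second contribution reduces to $\int_{\R^d} w^*(x)^{-\alpha}\dd x$: near the origin $w^*(x)\asymp f(x)^{-1/2}$, giving $\int f^{\alpha/2}\dd x < \infty$; for $\|x\|$ large, $w^*(x)\asymp \beta^{-1}\log(1/f(x)) \asymp \|x\|^2$, so $\int_{\|x\|\geq R}\|x\|^{-2\alpha}\dd x<\infty$ precisely because $\alpha > d/2$.

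For $I_2$, I split the integration region at $w=1$. On $\{w\leq 1\}$, the truncated integrand $wf(x)\ind_{\{w\leq 1\}}$ is square-integrable against $\eta$:
\[
\int_{\R^d \times (0,1]} (w f(x))^2 \dd\eta = \alpha \int_{\R^d}f(x)^2\dd x \cdot \int_0^1 w^{1-\alpha}\dd w = \tfrac{\alpha}{2-\alpha}\int f^2 < \infty,
\]
so the standard $L^2$/It\^o construction of the compensated Poisson integral produces a centered, finite-variance (hence a.s.\ finite) random variable. On $\{w>1\}$, the integrand is already $\eta$-integrable:
\[
\int_{\R^d \times (1,\infty)} wf(x) \dd\eta = \alpha\int f(x)\dd x\cdot\int_1^\infty w^{-\alpha}\dd w = \tfrac{\alpha}{\alpha-1}\int f<\infty,
\]
using $\alpha > d/2 \geq 1$. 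Hence by Campbell's formula $\sum_{(x,w)\in \cP,\, w>1} wf(x)$ has finite expectation and is a.s.\ finite; subtracting the deterministic quantity $\int_{w>1} wf \dd\eta$ gives an a.s.\ finite compensated part.

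The main delicate point will be the bound on $\int w^*(x)^{-\alpha}\dd x$ for $I_1$: the conditions $\alpha<2$ (which handles the small-$w$ asymptotics and the behavior of $f$ near the origin in $d=3$) and $\alpha > d/2$ (which matches exactly the Gaussian decay of $f$ at infinity against the integrability of $\|x\|^{-2\alpha}$) are both essentially saturated, so the whole argument hinges on exploiting the correct decay of $f$ at both scales simultaneously.
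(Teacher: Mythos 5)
Your decomposition into $I_1+I_2$ and the overall strategy (Poisson integrability criterion for the first integral, compensated-integral construction for the second) is the right one and matches the paper's framework. Your treatment of $I_2$ is fine and in fact takes a slightly different, more elementary route than the paper: you split at $w=1$, use an $L^2$ (It\^o-isometry) argument on $\{w\le 1\}$ and an $L^1$ (Campbell) argument on $\{w>1\}$, the two contributions being $\int f^2$ and $\int f$ respectively. The paper instead verifies Kallenberg's criterion $\int(h^2\wedge|h|)\,\dd\eta<\infty$ directly, which evaluates to a multiple of $\int f^\alpha\,\dd x$ — so the paper needs $\alpha<\tfrac{d}{d-2}$ at this step (automatic here), while yours needs $\alpha>1$; both are satisfied and both arguments are correct.

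There is, however, a genuine gap in your estimate for $I_1$. You split the criterion $\int(h_1\wedge 1)\,\dd\eta$ at the curve $w=w^*(x)$, and you claim the contribution from $\{w<w^*(x)\}$ is bounded by a constant times $\int f(x)\,\dd x\int_0^1 w^{1-\alpha}\,\dd w$, justified by $e^{\beta w}-1-\beta w\asymp\beta^2w^2$ \emph{for small $w$}. That estimate controls only the part of the region with $w\le 1$. But for $\|x\|$ large one has $w^*(x)\asymp\|x\|^2\to\infty$, so the strip $\{1<w<w^*(x)\}$ is nonempty and unbounded in $x$, and there the exponential in $h_1$ is not quadratic. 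A careful computation (using $e^{\beta w^*(x)}f(x)\asymp\beta$) shows that this strip contributes roughly
\[
\int_{\{w^*(x)>1\}} \big(w^*(x)\big)^{-(\alpha+1)}\,\dd x \;\asymp\; \int_{\|x\|\ge R}\|x\|^{-2(\alpha+1)}\,\dd x,
\]
which is indeed finite (since $2(\alpha+1)>d$ follows from $\alpha>d/2$), but is \emph{not} dominated by $\int f(x)\,\dd x$, because $\|x\|^{-2(\alpha+1)}\gg e^{-c\|x\|^2}$ at infinity. So your stated bound is false and this piece needs its own argument. The paper sidesteps this by splitting at $w=1$ rather than at $w^*(x)$: on $\{w\le 1\}$ it uses the quadratic bound, and on $\{w>1\}$ it splits the $x$-integral at $\|x\|^2\approx w$, using the Gaussian tail of $f$ on one side and the $\wedge 1$ cap plus the ball volume $\asymp w^{d/2}$ on the other. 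If you want to keep your $w^*$-based decomposition, you should add the intermediate estimate for $\{1<w<w^*(x)\}$ explicitly.
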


We are now ready to state our results in the case $\ga \in (\frac{d}{2},d)$, $\gamma\geq \frac{d}{2\ga}$.
It is the analogous of Theorem~1.4 in~\cite{DZ16} for the directed polymer model in dimension $1+1$:
it generalizes~\cite{DZ16} to the non-directed case and to higher dimensions. Here again, we need to fix $h=\mu:=\bbE[\mu_0]$.

\begin{theorem}
\label{thm:C1}
Let $\ga\in (\frac{d}{2},d)$ and let $(\gb_N)_{N\geq 1}$
be such that $\lim_{N\to\infty}  N^{\frac{d}{2\ga}} \gb_N = \gb \in [0,+\infty)$.
Then
 $(S_n)_{0\leq n \leq N}$ has transversal fluctuations of order $N^{1/2}$ under $\bP_{N,\gb_N}^{\go,h=\mu}$.
We also have the following convergences in distribution, as $N\to\infty$:
 
\textbullet\ If $\ga \in (2\vee \frac{d}{2} ,d)$ (in particular $d\geq 3$), then setting $a_N= N^{1/4}$ for $d=3$, $a_N= (\log N)^{1/2} $ for $d= 4$, and $a_N=1$ for $d\geq 5$ we have:
\begin{equation}
\label{conv:C11} 
 \frac{1}{a_N \gb_N } \Big(\log Z_{N,\gb_N}^{\go,h=\mu} -  \frac12 \bbV\mathrm{ar}(\go) \gb_N^2 \bE[|\cR_N|] \Big)
\overset{(d)}{\longrightarrow}
\begin{cases}
\cN(0,\sigma_d^ 2 \bbV\mathrm{ar}(\go) )   & \ \ \text{ if } d=3,4,\\
\cX & \ \ \text{ if } d\geq 5\,,
\end{cases}
\end{equation}
for some explicit constant $\sigma_d$; recall $\cX$ has been defined in~\eqref{def:Chi}.

\textbullet\ If $\ga \in (\frac d2,2)$ (in particular $d=2$ or $d=3$), then setting $v_n := \log N$ for $d=2$
and $v_n = N^{1/2}$ for $d= 3$, we have
\begin{equation}
\label{conv:C12}
\frac{v_N}{\gb_N N^{\frac{d}{2\ga}}} \log Z_{N,\gb_N}^{\go,h=\mu}
\overset{(d)}{\longrightarrow}
\cW_{\gb} \,,
\end{equation}
with $\cW_{\gb}$ defined in~\eqref{eq:defWbeta} ---~if $\gb =0$  the first term in~\eqref{eq:defWbeta} is set to zero, see~Proposition~\ref{prop:finiteW0} below.
\end{theorem}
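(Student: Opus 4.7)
To show that $(S_n)$ has transversal fluctuations of order $\sqrt N$ under $\bP_{N,\beta_N}^{\omega,h=\mu}$, it suffices to prove that for every $\varepsilon>0$ there exists $\eta\in(0,1)$ such that both $\bP_{N,\beta_N}^{\omega,\mu}(\max_n\|S_n\|>\tfrac{1}{\eta}\sqrt N)<\varepsilon$ and $\bP_{N,\beta_N}^{\omega,\mu}(\max_n\|S_n\|<\eta\sqrt N)<\varepsilon$ with high $\bbP$-probability. The lower bound follows from the invariance principle for $S$ together with the a priori tightness of the centered log-partition function (which comes out of the two convergence statements). For the upper bound I would run a dyadic decomposition along shells $r_k=2^k\sqrt N$: on $\{\max_n\|S_n\|\sim r_k\}$, the maximal energetic reward $\beta_N\max_{x\in B(0,r_k)}\omega_x$ is at most $O(\beta_N r_k^{d/\alpha}(\log r_k)^{1/\alpha})$ with high $\bbP$-probability, which is crushed by the Gaussian-type entropic cost $e^{-c r_k^2/N}$ exactly because $\gamma\ge d/(2\alpha)$.

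\textbf{Convergence of $\log Z$, case $\alpha\in(2\vee d/2,d)$.} Since $\bbV\mathrm{ar}(\omega)<\infty$, the smallness of $\beta_N$ permits a Taylor expansion of $e^{\beta_N(\omega_x-\mu)}$ and, following the strategy of \cite{DZ16}, one derives
\[
\log Z_{N,\beta_N}^{\omega,\mu}-\tfrac12\bbV\mathrm{ar}(\omega)\beta_N^2\bE[|\cR_N|]=\beta_N\sum_x(\omega_x-\mu)\bP(x\in\cR_N)+\mathrm{err}_N,
\]
where $\mathrm{err}_N$ gathers the quadratic martingale term $\tfrac{\beta_N^2}{2}\sum_x((\omega_x-\mu)^2-\bbV\mathrm{ar}(\omega))\bP(x\in\cR_N)$, shown to be of lower order by an $L^2$-estimate, together with a higher-order Taylor remainder. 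The leading random term is then a sum of independent centered variables of total variance $\beta_N^2\bbV\mathrm{ar}(\omega)\sum_x\bP(x\in\cR_N)^2\asymp\beta_N^2 a_N^2$, the last asymptotics being read directly from~\eqref{def:f}. For $d\ge 5$, the series $\sum_x\bP(x\in\cR_\infty)^2$ converges and an $L^2$-convergence argument (using $\alpha>d/(d-2)$) yields the a.s.\ limit $\cX$ of Proposition~\ref{prop:finiteSum}; for $d=3,4$ a Lindeberg CLT for triangular arrays, after truncating $\omega$ at level $\ll(a_N\beta_N)^{-1}$ to tame the heavy tails, produces the Gaussian limit with variance $\sigma_d^2$ read off from the leading asymptotics of $\sum_x\bP(x\in\cR_N)^2$.

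\textbf{Convergence of $\log Z$, case $\alpha\in(d/2,2)$, and main obstacle.} Here $\omega$ has infinite variance, so the atypically large $\omega_x$ contribute at the same order as the bulk. I would split $\omega_x=\omega_x\ind_{\omega_x\le T}+\omega_x\ind_{\omega_x>T}$ at a fixed threshold $T>0$ and send $T\to 0$ at the end. The small part has all moments and, handled by the expansion above, contributes (after rescaling by $v_N/(\beta_N N^{d/(2\alpha)})$) the truncated compensator $\int_{\{w\le T\}} w f(x)(\cP-\eta)(\dd x,\dd w)$; the large part corresponds to rare Poisson points, and the joint weak convergence $\{(x_i/\sqrt N,\omega_{x_i}/N^{d/(2\alpha)})\}\Rightarrow\cP$ combined with the key asymptotics $v_N\bP(x\sqrt N\in\cR_N)\to f(x)$ converts each $e^{\beta_N\omega_x}\ind_{x\in\cR_N}$ into $\beta^{-1}(e^{\beta w}-1)f(x)$ in the limit, producing $\int_{\{w>T\}}\beta^{-1}(e^{\beta w}-1)f(x)\,\cP(\dd x,\dd w)$. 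Summing both pieces reproduces $\cW_\beta$ in view of \eqref{eq:defWbeta}, and Proposition~\ref{prop:finiteW} ensures the $T\to 0$ limit is well-defined. The main obstacle is precisely this joint convergence uniformly in $T$: one must control the interplay between the random, correlated range $\cR_N$ and the positions of the Poisson extremes in the critical dimensions $d=2,3$, which requires sharp quenched estimates on $\bP(x\in\cR_N)$ at scale $\|x\|\asymp\sqrt N$ and a concentration argument controlling the intermediate-range sum $\sum_{x\in\cR_N}\omega_x\ind_{\omega_x\in[\varepsilon,T]}$ uniformly as $\varepsilon,T\to 0$.
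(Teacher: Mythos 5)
Your overall blueprint—first reduce to trajectories at scale $\sqrt N$, truncate the environment, then extract the leading linear term via a chaos/Taylor expansion à la \cite{DZ16} and read the limit from $v_N\bP(x\sqrt N\in\cR_N)\to f(x)$—is the same one the paper uses. But several steps as you describe them either would not close or are substantially harder than what the paper actually does.

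\textbf{The transversal-fluctuations bound by a single max weight is insufficient.} Comparing the entropic cost $e^{-cr_k^2/N}$ only against $\beta_N\max_{x\in B(0,r_k)}\omega_x$ ignores that a trajectory reaching scale $r_k$ can collect \emph{many} large weights, and the sum of the top $\ell$ weights, not the max, is what must be beaten. The paper handles this (Lemma~\ref{lem:S<qr}, via Proposition~\ref{tail:disT}) through non-directed E-LPP estimates, which carefully balance, across dyadic scales of $\ell$, the number of $\ell$-th largest weights that can be collected under an entropy budget. That is the essential technical input you are missing; the max-weight bound gives the right parameter regime ($\gamma\ge d/(2\alpha)$) but not a complete proof.

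\textbf{Missing the truncation $\tilde\omega$ and the attendant reduction to scale $\sqrt N$.} For $\alpha\in(2\vee d/2,d)$ you propose to "Taylor-expand $e^{\beta_N(\omega_x-\mu)}$" directly. But $\beta_N\omega_x$ is not small: in a box of side $\sqrt N$ its max is of order $\beta_N N^{d/2\alpha}\asymp\beta$. The paper therefore first shows that the walk stays in a ball of radius $\sqrt{N\log N}$ with overwhelming probability (Lemma~\ref{lem:truncate}, relying again on Lemma~\ref{lem:S<qr}), then truncates $\omega$ at level $k_N=(\log N)^\eta N^{d/2\alpha}$ (which is harmless once trajectories are confined), and only then performs the chaos expansion $\prod_x(1+\epsilon_x\ind_{\{x\in\cR_N\}})$ with $\epsilon_x=e^{\beta_N\tilde\omega_x-\lambda_N}-1$. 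The moment bounds \eqref{lambdaN}--\eqref{epsilonN} that make the higher-order terms negligible rely crucially on this truncation; without it the variance computation (your ``err$_N$'') is not under control.

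\textbf{Your ``main obstacle'' in the $\alpha<2$ case is self-inflicted.} You worry about ``quenched estimates on $\bP(x\in\cR_N)$'' and the ``interplay between the random, correlated range $\cR_N$ and the Poisson extremes.'' The paper sidesteps this entirely: the chaos expansion \eqref{polexpansion} integrates out $S$ first (producing deterministic factors $\bE[e^{\lambda_N|\cR_N|}\ind_{\{x\in\cR_N\}}]$), so the leading term $\mathbf{III}$ in \eqref{threetermsd=3} is a pure function of $\omega$. The Poisson convergence (Lemma~\ref{lem:termIII}, adapting~\cite{DZ16}) is then an extreme-value statement about i.i.d.\ weights against the fixed function $f$, not a joint convergence with the range. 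The parts you correctly identify as delicate (intermediate-range weights and the far-away tail) are handled there by Markov/Chernoff bounds on sums weighted by $f_N(x)$, not by quenched range estimates. Also note your threshold must scale as $\epsilon N^{d/2\alpha}$, not be a fixed $T$; and in dimension $d=2$ extra care is needed because $\lambda_N N\not\to 0$ (the paper uses $|\cR_N|\asymp N/\log N$ and concentration of the range to fix this).
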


\begin{remark}\label{rem:betaNERN}
Let us stress that when $\ga>d/2$, in dimension $d\geq 3$
we have $\gb_N^2 \bE[|\cR_N|] \sim c_{\gb} N^{1-d/\ga}$ (assume $\gb>0$ to simplify), so in particular it goes to $0$ as $N\to\infty$.
One can easily check that we always have $(a_N \gb_N)^{-1} \times \gb_N^2 \bE[|\cR_N|] \to +\infty$ when $\ga > 2\vee \frac d2$ (and thus, $d\geq 3$), so the centering in~\eqref{conv:C11}  is non-trivial.
\end{remark}

%Additionally, in Section \ref{sec:wellposedWbeta} we prove that the random variable $\cW_{\beta}$ is almost surely finite. %, as we prove in Proposition \ref{prop:finiteW}.

%The proof of Theorem \ref{thm:C1} is split into two parts (Section \ref{sec:thmCpart1} and \ref{sec:thmCpart2}), by considering separately the case $\ga >2$ and $\ga <2$. The proof of Proposition \ref{prop:finiteW} is given in Section \ref{sec:wellposedWbeta}.
%In Theorem \ref{thm:C1} we focused on the case $\gb>0$ for simplicity, but the result remains true
%when $\gb=0$: the convergence~\eqref{conv:C12} still holds, with . 

\subsubsection*{Case $\ga\in (0,\frac{d}{2})$}
%As far as the case $\ga\in (0,\frac{d}{2})$, $\gamma \geq \frac{d-\ga}{\ga}$ is concerned, 
In this second case, recall Theorem~\ref{thm:A}
and Proposition~\ref{prop:transition}.
If $\ga\in (0,\frac{d}{2})$ and $\gb_N \sim \gb N^{-(d-\ga)/\ga}$ as $N\to\infty$,
then the scaling limit of the log-partition function
has been identified as $\hat \cT_{\gb}$.
However,
when $\ga\in (0,\frac{d}{2})$ we have $\hat \cT_{\gb} =0$ when $\gb \leq \gb_c$, with $\gb_c=\gb_c(\cP) >0$ a.s.: in that case, the scaling limit is thus trivial.
Our next result shows that when $\gb \leq \gb_c$, then transversal fluctuations
are necessarily of order $\sqrt{N}$;
this has to be compared with Proposition \ref{prop:fluctuationsN} which asserts that transversal fluctuations
are of order $N$ when $\gb >\gb_c$.
This shows that when $\alpha\in (0, \frac d2)$ we cannot  have intermediate fluctuations between $N^{1/2}$ and $N$: the system exhibits a sharp phase transition at the critical point $\beta=\beta_c$.
Our result is the analogous of Theorem~2.12 in~\cite{BT19a}, and generalizes it to the non-directed case and to higher dimensions.

Let us define another quantity, analogous to $\cW_{\gb}$ defined in~\eqref{eq:defWbeta}.
\begin{proposition}\label{prop:finiteW0}
For $\ga\in(0,1) \cup(1,2)$, define
\begin{equation}
\label{def:W0}
\cW_0=
\begin{cases}
\int_{\bbR^d\times\bbR_+} w f(x)(\cP-\eta)(\dd x, \dd w),
&\quad \text{if } \alpha\in (1,2), \\
\int_{\bbR^d\times\bbR_+} w f(x)\cP(\dd x, \dd w),
&\quad \text{if } \alpha\in (0,1) \,,
\end{cases}
\end{equation}
with $f(x)$ defined in \eqref{def:f} and $\eta(\dd x,\dd w)=\alpha w^{-\alpha-1}\dd x\, \dd w$ the intensity measure for $\cP$.
If $\ga\in(0,1)$, then  $\cW_{0}$ is a.s.\ finite in any dimension $d$.
If $\ga\in(1,2)$,
then $\cW_0$ is a.s.\ finite if and only if $\ga < \frac{d}{d-2}$.
\end{proposition}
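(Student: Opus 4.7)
The strategy is to reduce the finiteness of $\cW_0$ in both cases to the single deterministic integrability condition $\int_{\bbR^d} f(x)^\alpha \dd x < \infty$, via standard criteria for Poisson integrals, and then to characterize this condition using the asymptotic behavior of $f$ given in \eqref{def:f}.

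First I would establish the relevant two-sided asymptotics of $f$. In dimension $d=2$, the definition gives $f(x) = E_1(\|x\|^2/2)$, the exponential integral, so $f(x) \sim 2\log(1/\|x\|)$ as $\|x\|\to 0$ and $f(x) = O(\|x\|^{-2} e^{-\|x\|^2/2})$ as $\|x\| \to \infty$. For $d\ge 3$, the substitution $s = d\|x\|^2/(2u)$ rewrites
\[
f(x) \;=\; \lambda_d \, d\, \pi^{-d/2}\, \|x\|^{-(d-2)} \int_{d\|x\|^2/2}^{\infty} s^{d/2-2} e^{-s}\dd s ;
\]
since $d/2 - 2 > -1$ the integral converges to $\Gamma(d/2-1) \in (0,\infty)$ as $\|x\|\to 0$, so $f(x) \asymp \|x\|^{-(d-2)}$ near the origin, while Laplace estimates on \eqref{def:f} give Gaussian-type decay at infinity.

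Next I would apply the standard Poisson theory. For $\alpha \in (0,1)$, Campbell's theorem asserts that the non-negative integral $\cW_0 = \int w f(x)\, \cP(\dd x, \dd w)$ is a.s.\ finite iff $\int (1 \wedge w f(x))\, \dd\eta < \infty$; performing the $w$-integration yields $\frac{1}{1-\alpha} f(x)^\alpha$, so the criterion reduces to $\int_{\bbR^d} f^\alpha \dd x < \infty$. For $\alpha \in (1,2)$, the sharp characterization of compensated Poisson integrals states that $\cW_0$ is well-defined and a.s.\ finite iff $\int ((wf)^2 \wedge wf)\, \dd\eta < \infty$; splitting the $w$-integration at $w = 1/f(x)$, a short computation gives $\frac{\alpha}{(2-\alpha)(\alpha-1)} \int f^\alpha \dd x$, so the same integrability condition appears. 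Combining with Step~1, $\int f^\alpha \dd x < \infty$ is equivalent to $\alpha(d-2) < d$ when $d \geq 3$ (the tail at infinity being harmless), and holds unconditionally when $d=2$ because only a logarithmic singularity appears at $0$. For $\alpha \in (0,1)$ we have $\alpha < 1 < d/(d-2)$ in every dimension (with the convention $d/(d-2) = +\infty$ for $d=2$), yielding unconditional finiteness; for $\alpha \in (1,2)$ we recover exactly the stated threshold $\alpha < d/(d-2)$.

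The main obstacle is the ``only if'' direction in the $\alpha \in (1,2)$ case: one must check that $\int f^\alpha \dd x = \infty$ truly forces $\cW_0$ to be ill-defined rather than merely non-$L^2$. This is the necessity part of the Poisson integrability criterion, and can be verified by hand: both the uncompensated sum $\sum_{(x,w) \in \cP,\, wf(x) > 1} wf(x)$ and its compensator $\int_{wf > 1} wf\, \dd\eta$ diverge, so no sequence of truncations can produce a finite limit. The asymptotic analysis of $f$ near $0$ for $d \geq 3$ is the most concrete technical ingredient underlying both directions of the equivalence.
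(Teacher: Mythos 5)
Your proposal is correct and follows essentially the same route as the paper: both invoke the standard if-and-only-if integrability criterion for Poisson integrals (the paper cites Kallenberg, Theorem 10.15, which is exactly the criterion you describe), both reduce the question in each regime to the single condition $\int_{\bbR^d} f(x)^{\alpha}\,\dd x < \infty$ by splitting the $w$-integration at $w = 1/f(x)$, and both then read off the answer from the asymptotics of $f$ near the origin ($f \asymp \|x\|^{2-d}$ for $d \geq 3$, logarithmic for $d=2$) and its Gaussian decay at infinity. The only cosmetic difference is that you derive the small-$x$ asymptotics of $f$ explicitly via the substitution $s = d\|x\|^2/(2u)$, whereas the paper simply records them in \eqref{asympf0}; your remark about the ``only if'' direction is already subsumed in the cited iff-criterion and adds nothing the paper needs to argue separately.
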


Recall that by an extended version of
Skorokhod representation theorem \cite[Cor.~5.12]{K97} (cf. Remark~\ref{rem:Skorokhod}),
it makes sense to work conditionally on $\hat\cT_{\gb}>0$ or $\hat \cT_{\gb} =0$, even at the discrete level.
If $\ga>1$, we need to take $h=\mu:=\bE[\go_0]$ and if $\ga<1$, then $h$ may be any real number.

\begin{theorem}
\label{thm:C2}
Let $\ga\in (0,\frac{d}{2})$, and let $(\gb_N)_{N\geq 1}$
be such that $\lim_{N\to\infty}  N^{\frac{d-\ga}{\ga}} \gb_N = \gb \in [0,+\infty)$. 
Then conditionally on the event $\{\hat \cT_{\gb} =0 \}$ (\textit{i.e.}\ $\gb\leq \gb_c$) the polymer $(S_n)_{0\leq n \leq N}$ has transversal fluctuations of order $N^{1/2}$ under~$\bP_{N,\gb_N}^{\go,h}$.
Moreover, conditionally on  $\{\hat \cT_{\gb} =0 \}$,
we have the following convergences in distribution, as $N\to\infty$.

\textbullet\ If $\ga \in (\frac{d}{d-2},\frac{d}{2})$ (in particular $d\geq 5$), then we have
\begin{equation}
\label{convC2-1}
\frac{1}{\gb_N} \log Z_{N,\gb_N}^{\go,h=\mu}  \overset{(d)}{\longrightarrow} \cX \,, 
\end{equation}
 with $\cX$ defined in \eqref{def:Chi}.

\textbullet\ If $\ga < \min(\frac{d}{2} ,\frac{d}{d-2} )$ (in particular $\ga<2$) and  $\ga\neq 1$, then 
\begin{equation}
\label{convC2-2}
\frac{v_N}{\gb_N N^{\frac{d}{2\ga}}} \log Z_{N,\gb_N}^{\go,h}
\overset{(d)}{\longrightarrow}
\cW_{0},
\end{equation}
where $v_n := \log N$ if $d=2$
and $v_n = N^{\frac{d}{2}-1}$ if $d\geq 3$, with $\cW_0$ defined in~\eqref{def:W0}.
% 
%\textbullet\ If $\ga \in  (\frac{d}{d-2},2 )$ (in particular $\ga>1$ and $d\geq 5$), then 
%\begin{equation}
%\label{convC2-3}
%\frac{1}{\gb_N}  \log Z_{N,\gb_N}^{\go,h=\mu}
%\overset{(d)}{\longrightarrow}  \cX \,,
%\end{equation}
%with $\cX$ defined in \eqref{def:Chi}.
\end{theorem}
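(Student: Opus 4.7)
The strategy has two stages: first, confine the polymer to a box of side $O(\sqrt N)$ by exploiting the hypothesis $\hat\cT_{\beta}=0$, and second, perform a Taylor expansion of the restricted partition function around the simple random walk reference measure. Throughout, let $\bar\omega_x:=\omega_x-h$; the assumption $\alpha<d/2$ together with $\beta_N\sim \beta N^{-(d-\alpha)/\alpha}$ gives $\beta_N N^{d/(2\alpha)}\sim \beta N^{1-d/(2\alpha)}\to 0$, so every individual $\beta_N\omega_x$ is small in a box of side $K\sqrt N$.

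For the confinement, introduce the restricted partition function
\begin{equation*}
Z^{\omega,h,(K)}_{N,\beta_N}:=\bE\Big[\exp\Big(\sum_{x\in\cR_N}\beta_N(\omega_x-h)\Big)\mathbf{1}_{\{\max_{n\le N}\|S_n\|\le K\sqrt N\}}\Big],
\end{equation*}
and I would prove that conditionally on $\{\hat\cT_\beta=0\}$ one has $Z^{\omega,h}_{N,\beta_N}/Z^{\omega,h,(K)}_{N,\beta_N}\to 1$ in $\bbP$-probability, taking first $N\to\infty$ and then $K\to\infty$. This I would do by a multiscale coarse-graining: split the complementary event into shells $\max_n\|S_n\|\in[2^j\sqrt N, 2^{j+1}\sqrt N]$ for $j\ge \log_2 K$. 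On each shell, the entropic cost is at least $c\, 2^{2j}$ by the SRW large deviation principle recalled before \eqref{def:entropyCont2}, while the energetic gain from the range restricted to the corresponding ball is controlled by the continuum variational problem $\hat\cT_\beta$ evaluated on the rescaled Poisson points in that ball, via an entropic last-passage percolation comparison in the spirit of \cite{BT19c}. Since $\hat\cT_\beta=0$ under our conditioning and by scale invariance of the Poisson intensity, this gain is strictly less than the entropic cost, and summing geometrically in $j$ yields the claim. This step parallels the upper bound in the proof of Theorem~\ref{thm:A}, but must be run in the opposite direction: we use $\hat\cT_\beta=0$ to discard large deviations rather than to produce them.

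Once confinement is established, I would expand
\begin{equation*}
\exp\Big(\sum_{x\in\cR_N}\beta_N\bar\omega_x\Big)=1+\sum_x\beta_N\bar\omega_x\mathbf{1}_{\{x\in\cR_N\}}+\tfrac12\Big(\sum_x\beta_N\bar\omega_x\mathbf{1}_{\{x\in\cR_N\}}\Big)^2+R_N,
\end{equation*}
separate big weights via truncation, and check that the quadratic term and $R_N$ are negligible at the scale $v_N/(\beta_N N^{d/(2\alpha)})$: indeed, $\beta_N^2\bE[|\cR_N|]$ and the truncated contributions vanish once $\alpha<2\wedge d/2$. After taking expectation over $S$ and logarithm, the leading term is $\beta_N\sum_x\bar\omega_x\bP(x\in\cR_N)$. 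For $\alpha>d/(d-2)$ (hence $d\ge 5$), Proposition~\ref{prop:finiteSum} gives that $\sum_x(\omega_x-\mu)\bP(x\in\cR_N)$ converges almost surely to $\cX$, yielding~\eqref{convC2-1}. For $\alpha<d/(d-2)$ with $\alpha\ne 1$, the asymptotics $v_N\bP(x\in\cR_N)\to f(x/\sqrt N)$ from~\eqref{def:f} combined with the standard Poisson convergence of $\{(x/\sqrt N,\omega_x/N^{d/(2\alpha)})\}$ towards $\cP$ show that $(v_N/N^{d/(2\alpha)})\sum_x\omega_x\bP(x\in\cR_N)$ converges to $\cW_0$ (compensated by the intensity $\eta$ when $\alpha\in(1,2)$, uncompensated when $\alpha\in(0,1)$, with the parameter $h$ entering only as a bounded drift of smaller order in that case), yielding~\eqref{convC2-2}.

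The main obstacle is the confinement step: transferring the qualitative information $\hat\cT_\beta=0$ from the continuum to a quantitative discrete estimate at every intermediate scale between $\sqrt N$ and $N$ requires sharp control of the Poisson convergence of the environment and of the E-LPP maximal-energy functional over balls of arbitrary radius. A secondary difficulty is matching the compensation present in $\cW_0$ for $\alpha\in(1,2)$ with the truncated Taylor expansion, where one must align the centering of the discrete weights with the stable-law regime of the limiting Poisson integral, uniformly over the truncation level.
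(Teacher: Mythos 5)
Your confinement argument has a genuine gap at the intermediate scales, and the way you propose to close it is not quite right. You suggest that for shells at radius $2^j\sqrt{N}$ with $1\ll 2^j\ll \sqrt{N}$, the energetic gain is controlled ``by the continuum variational problem $\hat\cT_\beta$ evaluated on the rescaled Poisson points in that ball, via an E-LPP comparison,'' and that $\hat\cT_\beta=0$ plus scale invariance rules out a positive gain. But $\hat\cT_\beta$ is built from the entropy $\hatent$, which is the large-deviation rate function at scale $N$: it is finite only for paths of length at most $1$ after rescaling by $N$. At an intermediate scale $r_N$ with $\sqrt{N}\ll r_N\ll N$, the random walk is in the moderate-deviation regime, so the relevant entropy is the quadratic $\ent$, and the relevant continuum functional is $\cT_\beta$ (defined in \eqref{eq:VarPr}), \emph{not} $\hat\cT_\beta$. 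Proposition~\ref{prop:polym} shows that $\cT_\beta=+\infty$ almost surely precisely when $\alpha\le d/2$, which is the regime of Theorem~\ref{thm:C2}; so the inequality you need at intermediate scales does not follow from $\hat\cT_\beta=0$ or from a scale-invariance transfer. Conditioning on $\{\hat\cT_\beta=0\}$ is only used in the paper to rule out scale-$N$ trajectories (via \eqref{notscaleN} and Lemma~\ref{lemma:techHatTbeta}); the intermediate scales are excluded by a purely quantitative estimate that works unconditionally.

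The mechanism that actually does the work at intermediate scales is different and you should make it explicit. With $\beta_N\sim\beta N^{-(d-\alpha)/\alpha}$ and $h_N=N^\zeta$ for $\zeta\in(1/2,1)$, the ratio between the expected energetic gain $\beta_N h_N^{d/\alpha}$ and the entropic cost $h_N^2/N$ is $\epsilon_N=N\beta_N h_N^{d/\alpha-2}\asymp N^{(2\alpha-d)(1-\zeta)/\alpha}$, which tends to $0$ whenever $\alpha<d/2$ and $\zeta<1$. This is the statement that the energy--entropy balance \eqref{eq:xi} has \emph{no} solution with $\sqrt{N}\ll r_N\ll N$ when $\alpha<d/2$ (consistent with the absence of region~\textbf{B} there), and it is a quantitative computation, independent of the realization of the Poisson process. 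The paper packages this in Lemma~\ref{lem:intermscale}, whose proof runs Lemma~\ref{lem:S<qr} at the restricted scales with $h_N$ ranging over $[A_N\sqrt{N},\,\eta N]$ and uses that $\epsilon_N\to 0$; the Cauchy--Schwarz-against-the-free-walk step then shows that each shell $[2^k\sqrt{N},2^{k+1}\sqrt{N}]$ contributes at most $e^{-c\,2^{2k}}$ with high probability, and the tail is geometric. If you want to keep your coarse-graining structure, you should replace the $\hat\cT_\beta=0$ argument at the intermediate shells by exactly this quantitative estimate, and reserve $\hat\cT_\beta=0$ for the single shell at scale $N$ (where the entropy is $\hatent$ and Lemma~\ref{lemma:techHatTbeta} applies). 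A secondary point: rather than the additive Taylor expansion $e^{\Sigma}=1+\Sigma+\tfrac12\Sigma^2+R_N$, the paper uses the multiplicative chaos expansion $\prod_x\big(1+\epsilon_x\ind_{\{x\in\cR_N\}}\big)$ in \eqref{polexpansion}/\eqref{chaos}; the multiplicative form has centered, independent increments across $x$, which makes the second-moment bound on the remainder ($\bY_N$) immediate, whereas your additive remainder mixes orders and would require more delicate $L^2$ bookkeeping. The Poisson-convergence part of your second stage and the centering discussion for $\alpha\in(1,2)$ versus $\alpha\in(0,1)$ otherwise match the paper's treatment (Section~\ref{sec:C2}, Lemma~\ref{lastconv} and \eqref{convergence<1}).
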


%The finiteness of $\cW_0$ is ensured by Proposition~\ref{prop:finiteW0} below,
%proved in Section~\ref{sec:wellposedWbeta}.

%Also, the well-posedness of the limit in~\eqref{convC2-3} (that appears also in~\eqref{conv:C11})
%is ensured by the following Proposition, which is proved in Section~\ref{sec:finiteSum}.

\begin{remark}\label{alpha=1}\rm
The case $\alpha=1$  in \eqref{convC2-2} could be treated similarly to what is done in \cite[Thm~1.4]{DZ16}. In the case $\ga=1$, a centering term for $\log Z_{N,\beta_N}^{\omega,h}$ is needed in~\eqref{convC2-2} and the scaling limit should be $\cW_0=\int_{\bbR^d\times(0,1]} w f(x)(\cP-\eta)(\dd x,\dd w)+\int_{\bbR^d\times(1,\infty)}\omega f(x)\cP(\dd x,\dd w)$. 
%The case $\ga=2$ in~\eqref{convC2-1} could in theory also  be treated, see~\cite[Rem.~1]{DZ16}. 
Since this is fairly technical, we prefer to omit the details for simplicity.
\end{remark}

%{\purple
%We stress here that in Region~\textbf{C},
%our results prove that 
%the partition function goes to $1$, and Theorems~\ref{thm:C1}-\ref{thm:C2}
%provide the fluctuations of the partition function:
%in the two statements, we could replace $\log Z_{N,\gb_N}^{\go,h}$ by $(Z_{N,\gb_N}^{\go,h} -1)$.
%}

%\subsection{Comparison with the literature and further discussion and conjectures}
%\subsection{Comparison with the literature}

\subsection{Comparison with directed polymers}
As we stressed in the introduction of this paper, the model~\eqref{themodel} is closely related to the directed polymer 
model --- our results can be seen as an extension of existing results to a non-directed setting and to higher dimension.
Let us now briefly discuss this relation by comparing the techniques exploited in the present article with the one developed for the directed polymer.

\smallskip
\textbullet\ In Region \textbf{A} our results extend \cite{AL11, HM07}, where the authors considered the directed polymer in dimension $1+1$.
In this region, only a few points give an energy contribution to the variational problem: the random walk linearly interpolates between these points to get the maximal energy reward to compensate the entropy cost. The fact that we can approximate the problem by considering only a finite number of points allows us to extend and exploit the techniques introduced for the directed case: the scheme of the proof
is very close to the one in~\cite{AL11}.
% one simply needs to pay attention to combinatorial factors

\smallskip
\textbullet\ In Region \textbf{B}, our results extend the analysis performed in \cite{BT19a} to the non-directed framework. In particular our techniques are based on the non-directed Entropy-controlled last passage percolation (E-LPP) introduced in \cite{BT19c} (as an extension of the directed E-LPP of~\cite{BT19b}), a crucial step
consisting in showing that we can restrict the partition function to trajectories staying at scale $N^\xi$. The extension to a non-directed setting and to higher dimension is the main novelty here, as discussed in Appendix~\ref{sec:ELPP}.
Overall, the proof follows the same scheme as in~\cite{BT19a},
but several adaptations are needed, with some tedious technicalities.

\smallskip
\textbullet\ In region \textbf{C}, we perform a polynomial expansion analysis, in the same spirit as in \cite{DZ16} for the directed polymer model with heavy-tail disorder. The crucial difference here is that in our non-directed case the geometry of the range plays a central role in the behavior of the model (and so in the polynomial expansion analysis). 
We therefore have to use here a local central limit theorem for the range, considering the probability that a point at scale $\sqrt N$ belongs to the range $\cR_N$, \textit{i.e.}\ $\bP(x\sqrt N\in \cR_N)$. 
Even if the proofs in region \textbf{C} start from the same idea as in~\cite{DZ16}, the technical treatment of the polynomial expansion
requires new ideas and becomes more technical
and dimension dependent.
Let us stress that the case of dimension $d=2$ requires particular care, because then $\bP(x\sqrt N\in \cR_N)$ scales as $\log N$,
compared to the polynomial behavior $N^{\frac{d}{2}-1}$ in dimension $d\ge 3$.

To conclude, let us comment on the
limiting random variables that appear in Theorems~\ref{thm:C1}-\ref{thm:C2}; note that they indeed depend on the dimension.
The random variable $\cW_{\gb}$ (see Propositions~\ref{prop:finiteW}-\ref{prop:finiteW0}) appears when $\ga<2$, in dimensions $d=2,3$ or when $\gb=0$: it is the analogous of~$\cW_{\gb}^{(\alpha)}$ defined in~\cite[p.~4011]{DZ16} (see also \cite[Thm.~1.4]{DZ16}).
When $\ga>2$, then in dimensions $d=3,4$, analogously to~\cite[Thm.~1.2]{DZ16}
a normal random variable appears,
whereas in dimension $d\geq 5$ a new random variable~$\mathcal{X}$ pops up (see Proposition~\ref{prop:finiteSum} for the definition of $\mathcal{X}$),
which to our knowledge has no analogous in the literature.

\subsection{Further comments and conjectures}

Our article solves completely the case $\alpha \in (0,d)$, but there are several aspects that remain to be tackled:
\begin{itemize}
\item In Region~\textbf{B} and in Region~\textbf{C} with $\alpha > \min\{2,\frac{d}{2},\frac{d}{d-2}\}$, the parameter $h$ is needed to be set equal to $\mu$ to center the environment: the case of a general $h$ should be investigated;
\item More precise statements on the convergence of paths  could be extracted from our results, in particular one could try to prove in some cases a localization of the trajectories near an optimal path;
\item The case $\alpha>d$ is still a challenge: the wandering exponent $\xi$ is not known --- except in the intermediate disorder regime (Region \textbf{C}), where $\xi=1/2$ (but one still has to understand the corrrect scaling for $\gb_N$).
\end{itemize}

In this section, we develop further on these aspects, we comment their relation with the literature, and we present some open problems and conjectures.

\subsubsection{About the external field $h$} 

It is not hard to see that in Region \textbf{A}, the parameter~$h$ is unimportant. On the other hand, for Region \textbf{B} and for Region \textbf{C} with $\ga>\min\{2,\frac{d}{2},\frac{d}{d-2}\}$, we can see from the proofs (cf.\ Sections~\ref{sec:convpartfunc}-\ref{sec:C2}) that a centering is needed, so $h$ needs to be fixed equal to $\mu=\bbE[\omega_0]$.
In order to make the proofs more transparent (which are already quite technical) and to simplify some non-central arguments, we have chosen to stick to a non-negative disorder: this is used in particular in Region \textbf{A} and \textbf{B} (they are related to the \textit{discrete energy-entropy variational problems}, cf. Section~\ref{sec:varprofCont}).
This also has the advantage to illustrate % that a centering for the environment is sometimes crucial.
when a centering of the environment is crucial or not.
Without the assumption of non-negativity of the disorder, the proofs may require some extra technical work, and one should still center the $\omega$ by the parameter $h$ when needed (or take $\bbE[\omega_0]=0$ and $h=0$).

Furthermore, we can also consider a more general setting for our non-directed polymer model, by considering the following:
\begin{equation*}
\frac{\dd \bP_{N,\beta_N}^{\omega,h_N}}{\dd \bP}(S):=\frac{1}{Z_{N,\beta_N}^{\omega,h_N}}\exp\bigg(\sum\limits_{x\in\cR_{N}}(\beta_N \omega_{x}-h_N)\bigg),
\end{equation*}
with $h_N=\hat{h}N^{-\zeta}$ and $\zeta\in\bbR$ (note that in this article, $h_N$ is simply $\beta_N h$). The above model can be viewed as a \textit{perturbed version of a random walk penalized by its range} and is more challenging. The case of the dimension $d=1$ has been analyzed thoroughly, see \cite{BHWT20}.
In particular, we can expect that when $h_N$ is large enough (at least compared to $\beta_N$), \textit{i.e.}\ when the penalty by the size of the range dominates, then the polymer folds into a ball (similarly to \cite{BC18,Bolt94,DFSX18}).
However, contrary to the homogeneous model where the center of the ball is random, the location of the ball may be completely determined by the environment, since the polymer tends to maximize the sum of $\omega$'s seen in the ball---see~\cite{Bouchot22} for the one-dimensional case.
On the other hand, there should be a regime where a balance is found between the energy and the entropy (the penalization by the range is negligible---this is somehow the focus of this article), and a regime where a balance is found between the penalization by the range and the energy (the entropic cost of having an unusual range being out-weighted by the range penalty). For $\ga\in(0,d)$, the phase diagram of the above model should be  similar to the one found in the one-dimensional case, cf.\ \cite{BHWT20}.

\subsubsection{About the geometry of the range}

In this paper we consider the scaling limits of the logarithmic partition function and we describe the transversal fluctuations of the walk. Such results are the starting point to push further the analysis of the geometry of the range $\cR_N$. In this setting we conjecture that the limits of the logarithmic partition function contain all the information to describe the geometry of the random walk. 
To be more precise, in Region \textbf{A} and \textbf{B} we conjecture that the supremum of the variational problem $\hat \cT_{\gb}$ and $ \cT_{\gb}$ is attained by some unique continuous path that we call $\hat s^*$ and $s^*$ respectively, by analogy with \cite{AL11,BT19b}. We then conjecture that the typical range of the random walk is concentrated around these paths. More precisely, if we look at a path as a set, \textit{i.e.}\ if we consider its support in $\mathbb R^d$, then we conjecture that under the hypothesis of Theorem \ref{thm:A}, for any $\epsilon>0$,
\begin{equation}
\lim_{N\to\infty} \bP_{N,\gb_N}^{\go,h}\Big( \frac1N\cR_N\subset \mathfrak{B}_\epsilon(\hat s^*) \Big)
\xrightarrow{\bbP}1,
\end{equation}
and under the hypothesis of Theorem \ref{thm:B}, for any $\epsilon>0$,
\begin{equation}
\lim_{N\to\infty} \bP_{N,\gb_N}^{\go,h} \Big(\frac1{N^\xi}\cR_N\subset \mathfrak{B}_\epsilon( s^*) \Big)
\xrightarrow{\bbP}1,
\end{equation}
where $\mathfrak{B}_\epsilon( \gamma)=\{x\in \mathbb R^d \colon d(x,\gamma)<\epsilon\}$ and $\bbP$ is the coupling between the discrete and continuous disorder introduced in Remark \ref{rem:Skorokhod}.

In region \textbf{C} we expect that $\frac1{\sqrt N}\cR_N$ converges to a continuous limit, which should be a random perturbation of a Brownian motion range (but a precise statement is harder to state).

\subsubsection{About the intermediate disorder regime (region \textbf{C})}

In~\cite{H19}, Huang considers the intermediate disorder regime 
of the model
(that is a regime where $\xi=1/2$ but disorder has a non-trivial effect),
in the case of a disorder with finite exponential moment, \textit{i.e.}~$\bbE[e^{\gb \go_x}]<+\infty$ (or roughly speaking, $\alpha=+\infty$).
More precisely, Huang~\cite{H19} shows that, taking 
\begin{equation}
\label{eq:scaleHuang}
\gb_N = \hat \gb N^{-1/4}  \  \text{ if } d=1\,,
\qquad 
\gb_N = \hat \gb N^{-1/2} \log N  \  \text{ if } d=2\,,
\qquad 
\gb_N = \hat \gb N^{-1/4}  \  \text{ if } d=3\,,
\end{equation}
for some $\hat \gb>0$
and taking $h_N = \gb_N^{-1} \log \bbE[e^{\gb_N \go_x}]$,
then $Z_{N,\gb_N}^{\go,h_N}$ converges in distribution
toward a random variable $\mathcal{Z}_{\hat \gb}$, given by an explicit Wiener chaos expansion. 

Our Theorem~\ref{thm:C1} is the analogue of 
this result in the case $\frac d2<\ga<d$ (and any dimension $d\geq 2$).
In particular, \eqref{conv:C11}-\eqref{conv:C12} states that the choice $\gb_N =\hat \gb N^{-d/2\ga}$ is the correct scaling in order to observe non-trivial fluctuations for $Z_{N,\gb_N}^{\go,\mu}$.
Analogously to what is done in~\cite{DZ16} for the directed polymer,
one could try to extend our Theorem~\ref{thm:C2}
to the case $\alpha>d$, \textit{i.e.}\ to study 
the intermediate disorder regime for all values of $\alpha$.
We leave this as an open problem, but let us comment on the expected results.

\smallskip
In dimension $d=2$, we expect that the correct intermediate disorder scaling is $\gb_N =\hat \gb N^{-d/2\ga} = N^{-1/\ga}$ for all $1<\alpha<2$ (the scaling limit is then given by Theorem~\ref{thm:C1}) and that one reaches the scaling $\gb_N = N^{-1/2} \log N$ given in~\eqref{eq:scaleHuang} for all $\ga>2$, with then the scaling limit as in~\cite{H19}.

\smallskip
In dimension $d=3$, we expect that the correct intermediate disorder scaling is $\gb_N =\hat \gb N^{-d/2\ga} = N^{-3/2\ga}$ for all $\frac32<\alpha<6$, with a scaling limit similar to Theorem~\ref{thm:C1}, and that one reaches the scaling $\gb_N = N^{-1/4}$ given in~\eqref{eq:scaleHuang} for all $\ga>6$,
with then the same scaling limit as in~\cite{H19}.
A way to understand this comes from~\eqref{conv:C11}: loosely speaking, it says that in dimension $d=3$, taking $\gb_N  = \hat \gb N^{-3/2\ga}$ we get that
$Z_{N,\gb_N}^{\go,\mu} \approx 1+ \gb_N N^{1/4} \mathcal{Z}$ for some random variable $\mathcal Z$.
This approximation may remain valid for some $\ga>d$ but it must fail when $\gb_N N^{1/4} =\hat \gb N^{-3/2\ga+1/4}$ becomes of order $1$ (that is when $\ga$ reaches the value~$6$),
at which point all terms in the polynomial chaos expansion of the partition function are of the same order.

\smallskip
In dimension $d \geq 4$, 
the correct intermediate disorder scaling should be $\gb_N =\hat \gb N^{-d/2\ga}$ for any $\ga>d/2$, with a scaling limit similar to Theorem~\ref{thm:C1}. 
Indeed, the convergence~\eqref{conv:C11} states that
taking $\gb_N  = \hat \gb N^{-d/2\ga}$ we have
$Z_{N,\gb_N}^{\go,\mu} \approx 1+ \gb_N a_N \mathcal{Z}$ for some random variable $\mathcal Z$:
since $a_N = (\log N)^{1/2}$ in dimension $d=4$ and $a_N=1$ in dimension $d\geq 5$, we  have that $\gb_N a_N \to 0$ for any $\ga<+\infty$.
In that case, we should therefore have that $\lim_{n\to\infty} Z_{N,\gb_N}^{\go,\mu} =1$, with fluctuations of order $\beta_N a_N$.
Additionally, for disorder with exponential moments (\textit{i.e.}\ $\alpha=+\infty$), the approximation $Z_{N,\gb_N}^{\go,\mu} \approx 1+ \gb_N a_N \mathcal{Z}$ suggests that:
in dimension $d=4$, one should take $\beta_N = \hat \beta /(\log N)^{1/2}$ to observe a non-trivial scaling limit for $Z_{N,\gb_N}^{\go,\mu}$ (disorder is marginally relevant);
in dimension $d\geq 5$, one should have that $Z_{N,\gb}^{\go,\mu}$ has a non-trivial limit for small (but fixed) $\beta$ --- in other words, there is a weak disorder phase (disorder is irrelevant), in analogy with the directed polymer model in dimension $d\geq 3$.

%\subsection{Further comments and conjectures on the case $\alpha>d$}

\subsubsection{About the transversal fluctuations in the case $\ga>d$}
Let us now consider the case of $\beta$ fixed and let us recall the Flory argument presented in Section \ref{prediction}:
for $\xi>1/2$,
the energetic gain of a polymer in a box of size~$N^\xi$ is of order $N^{d\xi/\alpha}$, while the entropic cost of going at distance $N^\xi$ is of order $N^{2\xi-1}$. The energy-entropy balance leads to the prediction that
$\xi=\frac{\alpha}{2\alpha-d}$.
This should hold as long as $\xi \in (1/2,1)$.
In particular,  if $\alpha<d$ we should have that $\xi=1$: this is what is proven (in a more general framework) in the present paper,
see Theorem~\ref{thm:A} (Region \textbf{A}).

When $\ga>d$, in comparison with the directed polymer literature (see e.g.~\cite{GLBR15}), we may conjecture that there exists some $\alpha_c$ such that for $\ga\in (d,\ga_c)$ we have $\xi=\frac{\alpha}{2\alpha-d}$
and for $\alpha>\alpha_c$  we have $\xi= \xi_c$,
where $\xi_c=\xi_c(\gb) \geq 1/2$ is  the fluctuation exponent 
obtained for a disorder with finite exponential moments.
Such $\alpha_c$ should therefore solve the equation $\frac{\alpha}{2\alpha-d}=\xi_c$, that is $\ga_c = \frac{d \xi_c}{2\xi_c-1}$; by convention $\ga_c =+\infty$ when $\xi_c=1/2$.
This  fact has indeed been proven in dimension $d=1$ in \cite{BHWT20}, in the non-directed setting of the present paper:
in that case we have $\xi =1$ if $\ga\in (0,1)$, $\xi=\frac{\ga}{2\ga-1}$ if $\ga \in (0,2)$ and $\xi=\frac23$ for all $\ga>2$.

The question in dimension $d\geq 2$ remains quite mysterious
(both for non-directed and directed polymers),
in particular because the fluctuation exponent $\xi_c$
is unknown. 
It is expected that, at least in low dimensions, 
there is some critical value $\gb_c$ below which 
$\xi_c=\frac12$ (which would lead to $\ga_c=+\infty$) and above which $\xi_c >1/2$ (which would lead to $\ga_c<+\infty$).
In view of the previous subsection, one could argue that
in dimension $d=2,3$ we have $\gb_c=0$, but for now this is purely speculative.
Let us also mention that the existence of an upper critical dimension $d_0$
above which $\xi_c(d,\gb) =\frac12$ for all $\gb$
is still controversial, see e.g.~\cite{LK97}.
In order to extend the analysis of the problem to $\alpha > d$, a first step would be to study the existence and dependence on the parameters of the conjectured exponent $\beta_c$.

\subsection{Organization of the rest of the paper}
%In this section we give the main ingredients that we use in the proof. 

We now present the organization of the paper and describe how the proofs are organized.
\smallskip

$\bullet$ In Section \ref{sec:propVarProbl} we prove that the variational problems $\cT_\gb$, $\hat \cT_\gb$ and the random variables $\cX$ and $\cW_\beta,\cW_0$ are well-defined (\textit{i.e.}\ Propositions~\ref{prop:transition}, \ref{prop:polym}, \ref{prop:finiteSum}, \ref{prop:finiteW} and \ref{prop:finiteW0}).
Along the way we discuss their main properties.

\smallskip
$\bullet$ In Section \ref{sec:varprofCont} we present the main auxiliary results about the discrete approximation of the energy-entropy variational problems $\cT_\gb$ and $\hat \cT_\gb$ that we use to prove Theorems~\ref{thm:A}-\ref{thm:B}. These results extend the analogue results obtained in \cite{BT19b, BT19c} to the $d$-dimensional and non-directed case; they are related to the Entropy-controlled Last-Passage Percolation (E-LPP). Some results on the \textit{non-directed} version of the E-LPP are postponed to the Appendix~\ref{sec:ELPP}.
\smallskip

$\bullet$ In Section \ref{sec:regionA} we prove Theorem \ref{thm:A}, \textit{i.e.}\ region \textbf{A}. To prove this, we first reduce to a partition function restricted to the largest $L$
weights $Z_{N,\beta_N}^{(L)}$, see~\eqref{ZL}, by showing that the contribution of all smaller weights is negligible, see Lemma~\ref{ZLellvanish+}. We then show that  $\log Z_{N,\beta_N}^{(L)}$, properly rescaled, converges to the continuous variational problem restricted to the largest $L$ weights, see Lemma \ref{logZL}.

\smallskip

$\bullet$ In Section \ref{sec:convpartfunc} we prove Theorem \ref{thm:B}.
As a crucial first step,  in Section \ref{subsec:step1B}, we show that we can restrict  the partition function to trajectories staying at scale $N^{\xi}$, see~Proposition \ref{prop:Z>Ar}.
To achieve this, we need to show that the entropic cost to reach a distance $h_N \gg N^{\xi}$ cannot be overcompensated by 
an energetic gain at scale $h_n$: this is the purpose of Lemma~\ref{lem:S<qr}. This relies once again on controlling the contribution of different ranges of weights to the partition function; the most technical part actually consists in controlling the contribution of intermediate weights, which requires a detailed analysis to match the energetic gain with the entropic cost.
Once trajectories have been reduced to being at scale $N^{\xi}$, the convergence of the partition function is performed in Section~\ref{subsec:step2B}, see Proposition \ref{prop:convZq}; the strategy of the proof is similar to what is done in Section~\ref{sec:regionA} for region \textbf{A}.

\smallskip

$\bullet$ In Section \ref{sec:regC} we prove Theorem \ref{thm:C1}. The idea of the proof is based on a truncation of the environment \eqref{tildeomega} which allows us to prove that the main contribution comes from trajectories that stay at scale $N^{1/2}$. 
The convergence of the partition function defined on the truncated environment is presented in Lemma \ref{lem:reduction0}. Its proof is based on a polynomial expansion analysis \eqref{polexpansion} in which we show that the convergence is led by the first term.
Different random limits arise depending on the tail exponent $\alpha$ and the dimension $d$.
If $\alpha >2$ the limit is Gaussian in small dimensions ($d=3,4$) and in high dimension ($d\ge 5$) the limit is described  by the averaged (w.r.t.\ the random walk) sum of the environment on $\cR_\infty$, see the definition~\eqref{def:Chi} of $\cX$. If $\alpha<2$ the limit conveys the heavy-tail structure of the environment, see the definition~\eqref{eq:defWbeta} of $\cW_{\beta}$.

\smallskip

$\bullet$ In Section \ref{sec:C2} we prove Theorem \ref{thm:C2}. 
 The most technical issue consists in showing that conditionally on $\{\hat\cT_\gb=0\}$ the main contribution to the partition function comes from trajectories that stay at scale $N^{1/2}$, see Section \ref{sec:regCprel8}; in particular, this shows that trajectories cannot  have intermediate scale.
Then, the strategy is similar to that used in Section~\ref{sec:regC}, combining a truncation of the environment and using a polynomial expansion.
%there is a phase transition between fluctuation of order $\sqrt N$ and $N$ (cf. Section \ref{sec:regCprel8}).

\smallskip
$\bullet$
Finally, we collect in Appendix several technical estimates.
As mentioned above, Appendix~\ref{sec:ELPP} provides useful results 
for the non-directed E-LPP.
In Appendix~\ref{Appendix}, we collect some simple random walk estimates: large and moderate deviations, probabilities that a given set is visited, intersection of ranges of independent walks.
In Appendix~\ref{app:envir}, we give estimates on exponential moments of a truncated version of the environment.

\section{Properties of the limiting variational problems and random variables}
\label{sec:propVarProbl}

In this section, we prove the well-posedness of the variational problems $\hat \cT_{\gb}$ and $\cT_{\gb}$ (in Section~\ref{subsec:variat}, together with some of their properties),
of $\cX$ (in Section~\ref{sec:finiteSum}) and
of $\cW_{\gb}$ and $\cW_0$ (in Section~\ref{sec:wellposedWbeta}).

\subsection{Some notation: order statistics and truncated energy}

We introduce some notation to describe the contribution of large weights to the variational problems $\hat \cT_{\gb}$, $\cT_{\gb}$.
For $q>0$ we define
\begin{equation}\label{def:Dq}
\cD_{q}:=\Big\{s\in\cD\colon \ent(s)<\infty,\ \sup_{0\leq t\leq1}\|s(t)\|\leq q\Big\} \, ,
\end{equation}
and for a realization of $\cP$, we set $\cP_q:= \{ (x,w)\in \cP\,, \, \|x\|\leq q\}$. Then, the energy along a path $s\in\cD_{q}$ is given by
\begin{equation}\label{eq:energy1}
\pi_{q}(s):=\sum\limits_{(x,w)\in\cP_q } w\mathbbm{1}_{\{x\in s([0,1])\}}.
\end{equation}
In the ball $\mathbf\Lambda_q := \{ x \in \bbR^d \,, \|x\|\leq q\}$ we can also label the points of $\cP$ by using the \textit{order statistics}:
\begin{equation}\label{eq:energy3}
(x,w)_{(x,w)\in\cP_q }= \big( \bY_{i}^{(q)} , \bM_{i}^{(q)} \big)_{i\geq1}\,,
\end{equation}
where the distribution of $(\bY_{i}^{(q)} , \bM_{i}^{(q)})_{i\geq1}$
is given as follows:
\begin{itemize}
\item[(i)] $\bM_{i}^{(q)}=(c_d q)^{d/\ga}(\cE_1+\dots+\cE_i)^{-1/\ga}$, with $(\cE_i)_{i\in\N}$ i.i.d.\ exponential random variables of parameter~$1$; this is the \textit{sequence of weights},  $\bM_{i}^{(q)}$ being the $i$-th largest weight in~$\mathbf\Lambda_q$. 
See for instance \cite{D81} or \cite{HM07}.
\item[(ii)]  $(\bY_{i}^{(q)})_{i\ge 1}$ are i.i.d.\ uniform random variables on $\mathbf\Lambda_q$, independent of $(\cE_i)_{i\in\N}$;  these are the \textit{location of the weights};
\end{itemize}
%{\orange We refer to \cite{HM07} for more details}
We then define the truncated analogue of \eqref{eq:energy1}:
\begin{equation}\label{eq:energy2}
\pi_{q}^{(\ell)}(s)=\sum\limits_{i=1}^{\ell}\bM_{i}^{(q)}\mathbbm{1}_{\{\bY_{i}^{(q)}\in s([0,1])\}},
\end{equation}
and we also let $\pi_{q}^{(>\ell)} (s)=\pi_{q}(s)-\pi_{q}^{(\ell)}(s)\, .$

\begin{remark}\rm
\label{rem1}
Notice that if $s\in \cD$ verifies $\ent(s) \leq B$ with $B\geq 1$,
then the length of the path $s$ is bounded
by $\sqrt{2B/d} \leq B$, and so  we have $s\in \cD_B$.
\end{remark}

\subsection{On the variational problems $\cT_{\gb}$ and $\hat \cT_{\gb}$}
\label{subsec:variat}

Recall that $\cT_{\gb}$ and $\hat \cT_{\gb}$
are defined respectively in~\eqref{eq:VarPr} and~\eqref{def:varprob1}.
Here, we prove Propositions~\ref{prop:transition}-\ref{prop:polym}:
We mostly focus on the first one (\textit{i.e.}\ on~$\cT_{\gb}$):  we treat $\hat \cT_{\gb}$
along the way, since the results follow from a simple comparison 
with $\cT_{\gb}$ (recall that $\hatent(s) \geq \frac1d \ent(s)$ for all $s$), or with identical arguments.

Before we start, let us stress that
for almost every realization of $\cP$, the maps $\beta \mapsto \cT_\beta$ and $\beta \mapsto \hat \cT_{\gb}$ are non-decreasing and continuous.
The proof of this fact is identical to that of \cite[Thm.~2.4]{BT19b} (see Section~4.5 in \cite{BT19b}), so we omit it.

%
%We prove first the scaling relation , then the finiteness of $\cT_\beta$.
%The proof of this fact gives also the finiteness of the moments $\kappa < \ga-d/2$. 
%Finally we prove that  $\cT_\beta$. The proof implies also that  $\cT_\beta$ is not finite for $\ga\le d/2$.

\subsubsection*{Proof of the scaling relation~\eqref{eq:scalinrT}}

Consider the Poisson Point Process $\cP$ on
$\R^d \times \bbR_+$ with intensity $\eta(\dd x , \dd w)= \ga  w^{-\ga -1} \dd x \dd w$. 
If we consider, for $a>0$, the scaling tranformation
$\phi_a(x, w)=(ax, a^{d/\ga} w)$, then we have that $\phi_a(\cP)\overset{(d)}{=}\cP$;
notice also that $\pi^{\phi_a \star\mathcal P}(  s )=a^{d/\ga}\pi(s/a)$ and $\ent( as)=a^2\ent(s)$. Therefore, since $\mathcal D=\frac1a \mathcal D$, by applying the scaling $\phi_a$, we get
\[
\sup_{ s\in \cD , \, \ent(s) <\infty}  \big\{  \beta  a^{-d/\ga}\pi(s) -\ent(s) \big\}\overset{(d)}{=}
\sup_{ s\in \cD, \, \ent(s) <\infty}  \big\{  \beta \pi(s) - a^2\ent(s) \big\},
\]
so that $\cT_{\beta a^{- d/\ga }}\overset{(d)}{=} a^{2}\cT_{\beta a^{-2}}$. This implies \eqref{eq:scalinrT}.
\qed

\subsubsection*{Finiteness of $\cT_{\beta}$ (and of $\hat \cT_{\gb}$)}

For any interval $[a,b)$ let us define
\begin{equation}\label{eq:Tab}
\cT_{\beta}([a,b)) := \sup\limits_{s: \ent(s)\in[a,b)}\big\{\beta\pi(s)-\ent(s)\big\}.
\end{equation}
 In such a way, we have that 
 \[
 \cT_{\beta}=\cT_{\beta} \big( [0,1) \big)\vee \sup_{k\ge 0}\cT_{\beta}\big( [2^{k},2^{k+1}) \big)\,.
 \] 
Using the scaling properties of $\cP$
and that having $\ent(s)\le 2$ implies $s\in \cD_2$ (see Remark \ref{rem1}),
 we get that
 %, we have that for any $a>0$, $\pi(as)\overset{(d)}{=} a^{\frac{d}{\ga}}\pi(s)$. Therefore
\begin{equation}\label{Tdec}
\cT_{\beta} \big( [2^{k},2^{k+1}) \big) \overset{(d)}{=} \sup\limits_{s\in \cD \,,  \ent(s)\in [1,2) } \big\{ 2^{\frac{dk}{2\alpha}}\beta \pi(s)-2^{k}\ent(s) \big\}
\le \beta 2^{\frac{dk}{2\alpha}} \sup\limits_{s\in \cD\,, \ent(s)<2 }\{\pi(s)\}-2^{k}\,.
\end{equation}
The following lemma is the key result to prove the finiteness of $\cT_{\gb}$.
Its proof is similar to that of~\cite[Lemma~4.1]{BT19b}
and is based on
entropy-controlled last-passage percolation (E-LPP)
estimates. We postpone its proof to Appendix~\ref{sec:ELPP}.
\begin{lemma}\label{lemma1}
Let $\ga\in (0,d)$.
For any $0<a<\ga$ there is a constant $c=c_{a,\ga}$ such that  for any $t>1$,
\[
\bbP \Big(\sup\limits_{s \in \cD\,, \ent(s)<2 }\{\pi(s)\} > t\Big) \le c \,  t^{-a}\,.
\]
In particular, it shows that for any $\ga<d$ we have that
$\sup_{\ent(s)<2 }\{\pi (s)\}$ is a.s. finite.
\end{lemma}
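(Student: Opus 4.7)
The plan is to leverage the order-statistics representation~\eqref{eq:energy3} of $\cP_2$ together with a non-directed E-LPP estimate proved in Appendix~\ref{sec:ELPP}, in analogy with the proof of~\cite[Lemma~4.1]{BT19b}. Since $\ent(s)<2$ forces $s\in\cD_2$ by Remark~\ref{rem1}, only the points of $\cP_2$ enter $\pi(s)$, so one may write
\[
\pi(s)=\sum_{i\geq 1}\bM_i^{(2)}\ind_{\{\bY_i^{(2)}\in s([0,1])\}},
\]
with $\bM_i^{(2)}=C_d(\cE_1+\cdots+\cE_i)^{-1/\alpha}$ and $(\bY_i^{(2)})_{i\geq 1}$ i.i.d.\ uniform on $\mathbf\Lambda_2$ (independent of $(\cE_i)$). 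A sharp control on the top weights together with an upper bound on the number of $\bY_i^{(2)}$'s that a path can hit will then suffice.

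First I would peel off the largest weights. Fix a cutoff $L=L(t)\to\infty$ slowly as $t\to\infty$ and bound $\pi(s)\leq \sum_{i=1}^L\bM_i^{(2)}+\sup_s\sum_{i>L}\bM_i^{(2)}\ind_{\{\bY_i^{(2)}\in s\}}$. Using the explicit density of $\cE_1+\cdots+\cE_L$, the top sum has a Pareto-type tail $\bbP\bigl(\sum_{i=1}^L \bM_i^{(2)}>t\bigr)\leq C_L\, t^{-\alpha}$, driven by $\bM_1^{(2)}$ which already has tail $\bbP(\bM_1^{(2)}>t)\sim C\, t^{-\alpha}$. For any $a<\alpha$ we can let $L=L(t)$ grow moderately in $t$ so that $C_L t^{-\alpha}\leq C' t^{-a}$.

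For the bulk, I would use a dyadic decomposition: for $k$ with $2^k>L$ set $\Pi_k(s):=\sum_{i\in[2^k,2^{k+1})}\bM_i^{(2)}\ind_{\{\bY_i^{(2)}\in s\}}\leq \bar M_k\,\mathcal{N}_k(s)$, where $\bar M_k:=\max_{i\in[2^k,2^{k+1})}\bM_i^{(2)}$ and $\mathcal{N}_k(s):=|\{i\in[2^k,2^{k+1}): \bY_i^{(2)}\in s([0,1])\}|$. A standard large-deviation bound for sums of i.i.d.\ exponentials gives $\bar M_k\leq C\, 2^{-k/\alpha}$ with sub-exponential exceptional probability, summable in $k$. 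The non-directed E-LPP estimate of Appendix~\ref{sec:ELPP}, applied with $n=2^{k+1}$, yields a tail
\[
\bbP\Big(\sup_{s\in\cD_2,\,\ent(s)<2}\mathcal{N}_k(s)>u\Big)\leq C e^{-cu},\qquad u\geq C\,2^{k/d},
\]
which encodes the heuristic that a path of length $\lesssim 1$ in $\mathbf\Lambda_2$ passes through at most $\sim 2^{k/d}$ of the first $2^{k+1}$ uniform points. Since $\alpha<d$, the exponent $1/d-1/\alpha<0$, so the resulting bound $\sup_s\Pi_k(s)\lesssim 2^{k(1/d-1/\alpha)}$ is summable in $k$, and a union bound gives a sub-exponential tail for $\sup_s\sum_{i>L}\bM_i^{(2)}\ind_{\{\bY_i^{(2)}\in s\}}$. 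Adding the two contributions gives $\bbP(X>t)\leq C\,t^{-a}$ for every $a<\alpha$.

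The main obstacle lies in the non-directed E-LPP statement of Appendix~\ref{sec:ELPP}. Unlike the directed $1$D setting of~\cite{BT19c}, here one must handle arbitrary (possibly self-intersecting) curves in $\R^d$ with $d\geq 2$ of length controlled by $\ent(s)$; the argument amounts to covering the trace of $s$ by a near-optimal number of small balls (dictated by the length) and applying Poisson/binomial concentration to count how many uniform points fall in such a cover. This is the genuinely new geometric input relative to~\cite{BT19b,BT19c}.
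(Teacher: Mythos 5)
Your proposal is correct and follows essentially the same route as the paper: the paper also reduces to $s\in\cD_2$, bounds $\pi_2(s)\le\sum_{j\ge0}\bM_{2^j}^{(2)}\sL_{2^{j+1}}^{(2)}(2)$ via the order statistics, and for each dyadic block combines the polynomial tail of the weight $\bM_{2^j}^{(2)}$ (via $\bbE[(\bM_i^{(2)})^{\ga}]\le c\,i^{-1}$ and Markov) with the super-exponential tail of the non-directed E-LPP $\sL$ (Theorem~\ref{thm1}). Your variation — peeling off a $t$-dependent cutoff $L(t)$ of top weights and arguing the remainder is uniformly small — is a valid reorganization, though the paper avoids it by keeping all dyadic blocks in a single weighted union bound with scale $2^{j(1/d-1/\ga+\delta)}$ and absorbing the resulting $(\log t)^{\ga}$ factor by weakening the exponent from $\ga$ to $a<\ga$; if you do keep the $L=L(t)$ split, note that the constant $C_L\sim L^{\ga}$ from the crude bound $\sum_{i\le L}\bM_i^{(2)}\le L\,\bM_1^{(2)}$ must be tracked against the exceptional probability, and the two are compatible precisely because the latter decays like $e^{-\delta L}$. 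One small inaccuracy worth flagging: the paper's proof of the E-LPP bound (Theorem~\ref{thm1}) is not a covering/concentration argument but a clean first-moment count — it computes $\bbE[\cN_k]$ for the number of $k$-subsets of the $m$ uniform points with entropy $\le B$ via $\binom{m}{k}\cdot\mathrm{Vol}(E_k(B))/(c_d r^d)^k$ and applies Markov — which avoids the union-bound-over-covers issue your sketch would have to resolve.
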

%Let us postpone the proof of Lemma~\ref{lemma1}and first 
Using Lemma~\ref{lemma1} we can conclude the proof of the finiteness of $\cT_{\gb}$ 
when $\ga\in (\frac{d}{2},d)$.
For any $t >0$, using \eqref{Tdec} and Lemma \ref{lemma1} we get that for any $a<\ga$
\begin{equation}\label{eq1:lemma42}
\begin{split}
\bbP\Big(\cT_\beta \big( [2^k,2^{k+1}) \big) >t  \Big)
& \leq \bbP\Big( \sup_{s \in \cD , \ent(s) \le 2} \pi(s) > \gb^{-1} 2^{-\frac{dk}{2\ga}} (t +2^{k})\Big)\\ 
&\le  c\,    \gb^{a} \, 2^{k  \frac{da}{2\ga} }   \big( t+ 2^{k} \big)^{-a} \, .
\end{split}
\end{equation}
%where $a<\ga$.
Then, for any $t\ge 1$, a union bound gives
\begin{equation}
\bbP\big( \cT_{\gb} >t\big) \le \sum_{k=0}^{\infty} \bbP\Big(\cT_\beta \big( [2^k,2^{k+1}) \big) >t  \Big) \le c'_a \gb^{a}\Bigg( t^{-a}  \sum_{k = 0}^{\log_2 t}  2^{ k \tfrac{da}{2\ga}}     +  \sum_{k>\log_2 t} 2^{ -a k ( 1-\frac{d}{2\ga} ) } \Bigg),
\end{equation}
where we split the series at $k=\log_2 t$ and  used that $t+2^k \ge t$ for $k\le \log_2 t$, and $t+2^k \ge 2^k$ for $k> \log_2 t$. Since $\ga >d/2$, we obtain that $\bbP( \cT_{\gb} >t) $ is smaller than
a constant (that depends on $a,\ga,\gb$) times
\[
t^{-a} t^{\frac{da}{2\ga}} + t^{-a ( 1-\frac{d}{2\ga} ) } = 2   t^{-a ( 1-\frac{d}{2\ga}) } \, .
\]
In particular, for any $a<\ga$ there is some constant $c' = c_{a,\ga,\gb}>0$ such that for any $t\ge 1$
\begin{equation}\label{bondTbeta}
\bbP\big( \cT_{\gb} >t\big) \le c' t^{- \frac{a}{\alpha}(\ga -\frac{d}{2})} \, .
\end{equation}
This concludes the proof that~$\cT_{\gb}$
is a.s.\ finite when $\ga\in (\frac{d}{2},d)$.
Also, since $a$ can be chosen arbitrarily close to $\ga$,
this proves that $\bbE[(\cT_{\gb})^{\kappa}]<\infty$ for any $\kappa <\ga -d/2$.\qed

\begin{remark}\rm
Notice that,
from the definition~\ref{def:varprob1} of $\hat \cT_{\gb}$,
we have for all $\gb \in(0,+\infty]$,
\begin{equation}\label{eq:compHatT-T_1}
\hat \cT_{\gb} \leq \sup_{s\in \cD, \hatent(s) <+\infty} \big\{  \pi(s)\big\} 
\leq \sup_{s\in \cD, \ent(s) \leq d/2 } \big\{  \pi(s)\big\} \,.
\end{equation}
For the second inequality we used that $\mathrm{J}_d(x) =+\infty$
if $\|x\|_1 > 1$, and in particular if $\|x\|^2 > 1$:
this shows that if the length of $s$ is larger than $1$ (in particular if $\ent(s) >d/2$), then $\hatent(s) =+\infty$.
%Since $\hatent(s) \geq \frac1d\ent(s)$ for all $s\in \cD$, 
Thanks to Lemma~\ref{lemma1} (up to a scaling), this shows that
for any $\ga\in (0,d)$, $\hat\cT_{\gb}$ is a.s.\ finite for all~$\gb>0$.
\end{remark}

\subsubsection*{Positivity of $\cT_{\beta}$ (and of $\hat \cT_\gb$)}

Let us consider the random set, for any $k\in \bbZ$
\begin{equation}
\cG_k:=\Big\{ (x,w) \in \cP \,,  \,   2^{2k-1}  \leq \frac{d}{2} \|x\|^2 \leq  2^{2k} \, , \,  \gb w > 2^{2k+1}
\Big\} \,.
\end{equation}
If $\cG_k \neq \emptyset$, let $(x,w)\in \cG_k$ and $s$ be the straight line from the origin to $x$, which verifies $\ent(s) = \frac d2 \|x\|^2$. Therefore, on the event $\cG_k \neq \emptyset$, we have
\begin{equation}
\cT_\beta \ge \gb w - \frac{d}{2}\|x\|^2 \ge 2^{2k}\, .
\end{equation}
Notice that $(|\cG_k|)_{k\in \bbZ}$ are independent Poisson random variables
with mean 
$$
\eta(|\cG_k|)= \int_{ 2^{k} \sqrt{1/d}   \leq  \| x\| \leq 2^{k} \sqrt{2/d} } \dd x \int_{\beta^{-1}2^{2k+1}}^\infty \ga w^{-\ga-1} \dd w 
  = c_{\ga,d,\beta}\, 2^{k(d-2\ga)} \, .
$$
Hence, we get that
$\lim_{k\to -\infty}\bbP(\cG_k \neq \emptyset) = 1$
if $\ga> d/2$:
this proves that $\bbP(\exists k , \cT_{\gb} \geq 2^{2k}) =1$ (by $\cT_\gb<2^{2k}\Rightarrow\cG_k=\emptyset$ and the independence of $|\cG_k|$),
so that $\cT_{\gb}>0$ almost surely, for any $\ga\in (\frac{d}{2},d)$.
\qed

\begin{remark}
\rm
The same argument can be used to prove that $\hat \cT_{\gb}>0$ almost surely for any $\ga\in (\frac{d}{2},\ga)$,
using that $\mathrm{J}_d(x) \sim \frac{d}{2} \|x\|^2$
as $\|x\| \downarrow 0$, see Lemma~\ref{lem:rate}.
This proves the last part of Proposition~\ref{prop:transition}, \textit{i.e.}\ that $\gb_c=0$ a.s.\ when $\ga \in (\frac d2, d)$.
\end{remark}

\subsubsection*{Infiniteness of $\cT_\gb$ for $\ga\in(0,\frac{d}{2}]$}
As far as the case $\ga\leq d/2$ is concerned,
since $(|\cG_k|)_{k\geq 1}$ are independent,
 the Borel-Cantelli lemma ensures that
a.s.\ $\cG_k \neq \emptyset$ for infinitely many $k\geq 1$.
This proves that almost surely, $\cT_{\gb} \geq 2^{2k}$ for infinitely many $k \geq 1$, that is $\cT_{\gb} =+\infty$ a.s.\ when $\ga \in (0,\frac d2]$. \qed

\begin{remark}\label{rem:compHatT-T_1}
\rm
Note that we have proven that $\hat \cT_{\gb} <+\infty$
a.s.\ for all $\gb>0$, for any $\ga\in (0,d)$. The reason is that 
in that case trajectories $s$ with $\hatent(s)<+\infty$ cannot exit the ball of radius~$1$; there is no contradiction with the fact that $\cT_{\gb} =+\infty$ when $\ga\leq d/2$.
\end{remark}

\subsubsection*{Case $\ga \in (0,\frac d2)$, proof of $\gb_c >0$}
We now prove the part $\ga\in (0,\frac d2)$ in Proposition~\ref{prop:transition}, \textit{i.e.}\ that  $\hat \cT_{\gb} =0$ for $\gb$ sufficiently small when $\ga\in (0,\frac d2)$, or equivalently $\gb_c>0$ a.s.
We proceed as in \cite[Section 6]{T14}.

Let $\hat s_\beta$ be a maximizer of $\hat\cT_\beta$,
which a.s.\ exists (and is unique) ---~the proof of the existence and uniqueness of a maximizer is identical to that given in~\cite[Sec.~4.6]{BT19b}. In the following preliminary result we show that for any $\ga \in (0,d)$, if $\beta$ is small, then $\hat s_\beta$ has a small entropy  (and is therefore confined in a small ball around the origin). We denote the length of $s$ by $\|s\|_\infty$.

\begin{lemma}\label{lemma:prel}
%Let $\hat s_\beta$ be a maximiser of $\hat\cT_\beta$. 
For any fixed $\alpha\in (0,d)$, $\mathbb{P}$-a.s., for any $\gep>0$ there exists $\beta_0=\beta_0(\gep)\in (0,1)$ such that $\hatent(\hat{s}_{\beta})<\gep$ for all $\beta<\beta_0$; in particular, since $\hatent(\hat{s}_{\beta}) \geq \frac 12 \|\hat s_{\gb}\|_{\infty}^{2}$
we get that $\|\hat s_{\gb}\|_{\infty} \leq \sqrt{2\gep}$.
\end{lemma}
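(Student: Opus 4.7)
The plan is to deduce the statement directly from the optimality of $\hat s_\beta$, once we know that the energy functional $\pi$ is almost surely bounded uniformly over all paths with finite $\hatent$-entropy. In other words, the key is to make $\beta \mapsto \hatent(\hat s_\beta)$ controllable through the linear bound $\hatent(\hat s_\beta) \leq \beta \pi(\hat s_\beta)$, provided $\pi(\hat s_\beta)$ does not blow up.

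First I would set $\pi^{\ast} := \sup_{s \in \cD,\, \hatent(s) < \infty} \pi(s)$ and establish that $\pi^{\ast} < \infty$ almost surely for every $\alpha \in (0,d)$. This has essentially been observed around \eqref{eq:compHatT-T_1}: since $\mathrm{J}_d(x) = +\infty$ whenever $\|x\|_1 > 1$, any admissible reparametrization $\varphi \in \Phi$ with $\int_0^1 \mathrm{J}_d((s \circ \varphi)'(t))\, \dd t < \infty$ must satisfy $\|(s \circ \varphi)'(t)\|_1 \leq 1$ for a.e.\ $t$, hence $\|(s \circ \varphi)'(t)\|_2 \leq 1$ a.e.; integrating gives $\mathrm{length}(s) \leq 1$ and therefore $\ent(s) = \frac{d}{2}(\int_0^1 \|s'(t)\|\,\dd t)^2 \leq d/2$. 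Consequently $\pi^{\ast} \leq \sup_{s \in \cD,\, \ent(s) \leq d/2} \pi(s)$, and combining Lemma~\ref{lemma1} with the scaling invariance of $\cP$ under $\phi_a$ shows that this supremum is a.s.\ finite for every $\alpha \in (0,d)$.

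Next I would exploit optimality. The trivial path $s \equiv 0$ gives $\pi(0) = 0 = \hatent(0)$, hence $\hat\cT_\beta \geq 0$, so that
\[
0 \,\leq\, \pi(\hat s_\beta) - \tfrac{1}{\beta}\hatent(\hat s_\beta) \qquad \Longrightarrow \qquad \hatent(\hat s_\beta) \,\leq\, \beta\, \pi(\hat s_\beta) \,\leq\, \beta\, \pi^{\ast}.
\]
On the full-measure event $\{\pi^{\ast} < \infty\}$, for any $\epsilon > 0$ it then suffices to choose $\beta_0 := \min\{1,\, \epsilon/(1+\pi^{\ast})\}$, which is almost surely strictly positive, to ensure $\hatent(\hat s_\beta) < \epsilon$ for every $\beta < \beta_0$. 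For the final assertion, the chain of inequalities
\[
\hatent(s) \,\geq\, \tfrac{1}{d}\ent(s) \,=\, \tfrac{1}{2}\Big(\int_0^1 \|s'(t)\|\,\dd t\Big)^{\!2} \,\geq\, \tfrac{1}{2}\|s\|_\infty^{2},
\]
which follows from $\|s(t)\| \leq \int_0^t \|s'(u)\|\,\dd u \leq \int_0^1 \|s'(u)\|\,\dd u$, yields $\|\hat s_\beta\|_\infty \leq \sqrt{2\,\hatent(\hat s_\beta)} < \sqrt{2\epsilon}$.

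There is no genuine obstacle: the only non-trivial input is the E-LPP tail bound of Lemma~\ref{lemma1}, which secures $\pi^{\ast} < \infty$ across the full range $\alpha \in (0,d)$, while the rest is a one-line consequence of $\hat\cT_\beta \geq 0$. The only subtle point worth stressing is that the hard constraint $\|x\|_1 \leq 1$ encoded in the rate function $\mathrm{J}_d$ already confines paths of finite $\hatent$ to a family of curves of length at most $1$, so that $\pi(\hat s_\beta)$ is bounded uniformly in $\beta$; without this feature the linear bound $\hatent(\hat s_\beta) \leq \beta \pi^{\ast}$ would be useless.
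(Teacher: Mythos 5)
Your proof is correct and follows essentially the same route as the paper: both use that finite $\hatent$-entropy confines paths to length at most one (hence $\ent(s)\le d/2$), invoke Lemma~\ref{lemma1} together with the scaling $\phi_a$ to get $\sup\pi<\infty$ a.s., and combine this with $\hat\cT_\beta\ge 0$ to obtain $\hatent(\hat s_\beta)\le\beta\sup\pi$. The paper phrases the argument as a proof by contradiction while you argue directly, but the mathematical content is identical.
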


\begin{proof} 
We proceed by contradiction.
Suppose that there exists $\gep>0$ and a sequence $(\gb_k)_{k\geq 1}$ with $\lim_{k\to\infty}\beta_k = 0$  such that $\hatent(\hat{s}_{\beta_k})\geq\gep$.
%for any $k\in\N$, there exists $x_{\beta_k}\in \hat{s}_{\beta_k}[0,1]$ such that $\|x_{\beta_k}\|\ge \frac12\gep$. We let $\hat x_{\beta_k}$ be the straight line from the origin to $x_{\beta_k}$. Then, since $\hatent(\hat s_{\beta_k})\ge \frac12\|\hat x_{\beta_k}\|^2\geq\frac{\gep^2}{8}$, by \eqref{eq:compHatT-T_1} 
%Using that $\hatent(s) < +\infty$ implies that the length of $s$ is larsmaller than $1$, \textit{i.e.} if $\ent(s) \leq d/2$, we get that 
Using \eqref{eq:compHatT-T_1}, we have
\begin{equation}
0\le \hat \cT_{\gb_k} 
\leq  \sup_{s\in \cD, \ent(s) \leq d/2 } \big\{  \pi(s)\big\} -  \frac{\gep}{\gb_k} .
\end{equation}
By Lemma \ref{lemma1}, the right hand side becomes negative as $k$ gets large, which leads to a contradiction.
\end{proof}

We can now deduce that there exists $\beta_c>0$ such that $\hat\cT_\beta=0$ if $\beta<\beta_c$. 
Let $\gep >0$ and $\beta_0=\beta_0(\gep)\in (0,1)$ as in Lemma \ref{lemma:prel}, that is $\hatent( \hat{s}_{\beta})<\varepsilon$ for all $\beta<\beta_0$.
Moreover define $ \hat \gep_{\gb}:=\hatent( \hat{s}_{\beta}) \leq \gep$.
Then, since $\hatent(s)\ge \frac1d\ent(s)$, we get that if $\beta <\beta_0$
\begin{equation}
\label{upperT}
\hat\cT_\gb \le \sup_{s\in \cD, \, \hatent(s)\le \hat \gep_{\gb}} \big\{\pi(s)\big\} -  \frac{\hat{\gep}_\gb}{\beta} \leq \sup_{s\in \cD, \, \ent(s)\le d \hat \gep_{\gb}} \big\{\pi(s)\big\} -  \frac{ \hat \gep_{\gb}}{\beta}  \, .
\end{equation}
For any $\epsilon>0$, since $\bbP(\beta_{0}(\epsilon)>0)=1$, we can find some $\delta>0$ such that $\bbP(\beta_{0}(\epsilon)>\delta)>1-\epsilon$. Then with probability large than $1-\epsilon$,
	\begin{equation}\label{upperT1}
	\hat{\cT}_{\delta}
	\leq\sup\limits_{s\in \cD, \, \ent(s)\le d \hat \gep_{\gd}} \big\{\pi(s)\big\} -  \frac{\hat{\gep}_{\gd}}{\delta}\,.
	\end{equation}
Now,	 if $\hat{\epsilon}_{\gd}=0$, then $\beta_{c}\geq\delta>0$. Otherwise, if $\hat{\epsilon}_{\gd}>0$, we use the following union bound for $\bbP(\hat \cT_{\gd} >0 )$
\begin{equation}
\label{summation}
\begin{split}
	\bbP\Big(\sup\limits_{s, \, \ent(s)\le d \hat \gep_{\gd}} \big\{\pi(s)\big\} > \hat{\epsilon}_{\gd} / \delta \Big)
		 & =\sum\limits_{k=0}^{\infty}\bbP\Big(\sup\limits_{s, \, \ent(s)\le d \hat \gep_{\gd} }\big\{\pi(s)\big\} >  \hat{\epsilon}_{\gd} / \delta \, ,\,  \hat{\epsilon}_{\gd} \in (2^{-(k+1)},2^{-k}]\, \epsilon\Big)\\
	&\leq \sum\limits_{k=0}^{\infty}\bbP\Big(\sup\limits_{s, \, \ent(s)\le d 2^{-k}\epsilon} \big\{\pi(s)\big\} > 2^{-(k+1)} \gep /\gd\Big).
	\end{split}
\end{equation}
Now, setting $t=2^{-1-k(1-d/2\alpha)}\epsilon^{1-d/2\alpha}/\delta$ and $\epsilon_{k}=2^{-k}\epsilon$, we get that for any $k\geq 0$
\begin{equation*}
\bbP\Big(\sup\limits_{s, \, \ent(s)\le d2^{-k}\epsilon} \big\{\pi(s)\big\} > 2^{-(k+1)} \gep /\gd\Big)
 = \bbP\Big(\sup\limits_{s, \, \ent(s)\le d\epsilon_{k}}\big\{\pi(s)\big\}> t (\epsilon_k)^{d/(2\alpha)} \Big) \leq  c t^{-\ga/2} \, ,
\end{equation*}
where for the last inequality we used  the scaling $\phi_{a}$ above with $a=\sqrt{\epsilon_k}$
to get that $\sup_{ \ent(s)\leq d \gep_k}\pi (s)$ has the same law as $\epsilon_k^{d/(2\alpha)} \sup_{\ent(s)\leq d}\pi_{1}(s)$, together with Lemma~\ref{lemma1}.
With the definition of $t$, we end up with
\begin{equation}
\bbP\Big(\sup\limits_{s , \, \ent(s)\le d \hat \gep} \big\{\pi(s)\big\} \geq \frac{\hat{\epsilon}}{\delta}\Big)\leq 
c   \gd^{\ga/2}  \gep^{\frac{\alpha}{2}(\frac{d}{2\alpha}-1)} \sum_{k=0}^{\infty}  2^{k \frac{\ga}{2}(1-\frac{d}{2\ga})}
 \leq  c'_{\ga} \delta^{\ga/2} \epsilon^{ d-\ga/2} \, ,
\end{equation}
where we used that $\alpha<d/2$ to see that the sum is finite.
Finally, we have
\begin{equation}
\bbP(\beta_{c}>0)\geq1-\epsilon- c'_{\ga} \delta^{\ga/2}\epsilon^{d/4-\ga/2} \, .
\end{equation}
Letting first $\delta\to0$ and then $\epsilon\to0$, we conclude that $\bbP(\beta_{c}>0)=1$ for $\alpha\in(0,\frac{d}{2})$.
\qed

\subsection{Well-posedness of $\cX$}
\label{sec:finiteSum}

We now prove Proposition~\ref{prop:finiteSum}.
Notice that the condition $\ga> d/(d-2)$ ensures that $\ga>1$
so in particular $\mu := \bbE[\go_0]$ exists.
Let us denote $\bar \go_x := \go_x -\mu$ for simplicity. We prove that $\cX = \sum_{x\in \bbZ^d} \bar \go_x \bP(x\in \cR_{\infty})$ converges $\bbP$-a.s.,
using Kolmogorov's three-series theorem.

Before we turn to the control of the three series, let us stress that 
when $\ga>d/(d-2)$
we have
\begin{equation}
\label{eq:series1}
\sum_{x\in \bbZ^d} \bP(x\in \cR_{\infty})^{\ga} \leq \sum_{x\in \bbZ^d} \mathrm{G}(x)^{\ga}  <+\infty \, ,
\end{equation}
where $\mathrm{G}(x) = \sum_{n=0}^{\infty} \bP(S_n=x)$ is the Green function.
The fact that the sum is finite is due to the fact that
$\mathrm{G}(x) \sim  c \|x\|^{2-d}$ as $x\to\infty$ in dimension $d\geq 3$ and that $\ga >  d/(d-2)$.

We now control the three series in Kolmogorov's three-series theorem.

(i) The first series is
\[
\sum_{x\in \bbZ^d} \bbP\big( |\bar \go_x| \bP(x\in \cR_{\infty}) \geq 1  \big) \leq C  \sum_{x\in \bbZ^d}  \bP(x\in \cR_{\infty})^{\ga}  <+\infty \, ,
\]
where we used Assumption \eqref{eq:tailOmega} for the first inequailty.

(ii) The second series is
\[
 \sum_{x\in \bbZ^d} \Big| \bbE\Big[ \bar \go_x \bP(x\in \cR_{\infty}) \ind_{\{|\bar \go| \bP(x\in \cR_{\infty}) \leq 1\}} \Big]
\Big|
\leq C \sum_{x\in \bbZ^d}  \bP(x\in \cR_{\infty})^{\ga}  <+\infty \,,
\]
where we used that $\bbE[\bar \go]=0$ and $\bE[ \bar \go \ind_{\{|\go| >u\}}] \leq c u^{1-\ga}$ for any $u\geq 1$.

(iii) The third series is
\[
\sum_{x\in \bbZ^d} \bP(x\in \cR_{\infty})^2 \mathbb{V}\mathrm{ar}\big( \bar \go_x  \ind_{\{|\bar \go| \bP(x\in \cR_{\infty}) \leq 1\}} \big) 
\leq
\begin{cases}
 \sum_{x\in \bbZ^d} \bP(x\in \cR_{\infty})^{\ga}  <+\infty &\   \text{ if } \ga <2 \, ,\\
 \sum_{x\in \bbZ^d} \bP(x\in \cR_{\infty})^2 \log \frac{1}{\bP(x\in \cR_{\infty})}  <+\infty&\ \text{ if } \ga  \geq 2 \, .
\end{cases}
\]
where we used that $\bbE[\bar \go^2 \ind_{\{|\bar \go| \leq u\}} ]$
is bounded by a constant times $u^{2-\ga}$ if $\ga <2$, 
by a constant times $\log (1/u)$ if $\ga=2$ and by a constant if $\ga >2$.
The fact that the second series is finite comes from the fact that  
$\bP(x\in \cR_{\infty}) \leq \mathrm{G}(x)$ with $ \mathrm{G}(x) \sim c \|x\|^{2-d}$ as $x\to\infty$ and $2>\frac{d}{d-2}$ with $d\geq 5$.
This concludes the proof of Proposition~\ref{prop:finiteSum}.
\qed

\subsection{Well-posedness of $\cW_{\gb}$}\label{sec:wellposedWbeta}

In this part, we prove Propositions \ref{prop:finiteW} and \ref{prop:finiteW0}, starting
with the second one.
Let us stress that by \eqref{def:f}, $f$ is a radially decreasing function, with the following asymptotics:
\begin{align}
\label{asympfinfty}
f(x)&=O(e^{-c\|x\|^2}), \qquad\text{ as } x\to\infty \,.\\
f(x)&\sim c\, \begin{cases}
\log(1/\|x\|),&\quad\mbox{for}~d=2,\\
 \|x\|^{2-d}  ,&\quad\mbox{for}~d\geq 3,
\end{cases}
\qquad \text{ as } x\to 0\, .
\label{asympf0}
\end{align}

Recall that $\cW_0$ and $\cW_{\gb}$ are defined in~\eqref{def:W0} and~\eqref{eq:defWbeta} respectively.
 
\begin{proof}[Proof of Proposition~\ref{prop:finiteW0}]
We start with the case $\ga\in (0,1)$.
By \cite[Theorem 10.15]{K97}, $\cW_0$ is finite
if and only if the following integral is finite:
\begin{align*}
\int_{\bbR^d\times\bbR_+}\left( wf(x) \wedge 1\right) w^{-(\alpha+1)}\dd x \, \dd w
 & = \int_{\bbR^d} f(x) \int_{0}^{1/f(x)} w^{-\alpha} \dd x\, \dd w
 + \int_{\bbR^{d}}  \int_{1/f(x)}^{\infty} w^{-(1+\alpha)} \dd x\,  \dd w\\
 & = \Big(\frac{1}{1-\ga} +\frac{1}{\ga}  \Big) \int_{\bbR^d} f(x)^{\ga} \dd x\, ,
\end{align*}
where we used that $\ga\in(0,1)$ to compute both integrals on the first line.
This is always finite, thanks to the asymptotics~\eqref{asympfinfty}-\eqref{asympf0} of $f$.

For the case $\ga\in (1,2)$,thanks again to \cite[Theorem 10.15]{K97}, $\cW_0$ is finite
if and only if
\[
\int_{\bbR^d\times\bbR_+}\left( w^2f(x)^2\wedge w f(x)\right) w^{-(\alpha+1)}\dd x\, \dd w =  \Big(\frac{1}{2-\ga}  + \frac{1}{\ga-1} \Big) \int_{\bbR^d} f(x)^{\ga} \dd x 
\]
is finite (the calculation is the same as above, using $\ga\in (1,2)$).
Thanks to the asymptotics~\eqref{asympfinfty}-\eqref{asympf0} of $f$, this is finite if and only if $\ga< \frac{d}{d-2}$.
\end{proof}

\begin{proof}[Proof of Proposition \ref{prop:finiteW}]
Recall that we deal here with the case $\ga \in (\frac d2, 2)$ with dimensions $d=2,3$.
Recalling the definition~\eqref{eq:defWbeta} of $\cW_{\gb}$,
and using Proposition~\ref{prop:finiteW0},
we simply need to show that 
\begin{equation*}
\int_{\bbR^d\times\bbR_+}\frac{1}{\beta}\left(e^{\beta w}-1-\beta w\right)f(x)\cP(\dd x, \dd w) <+\infty \,.
\end{equation*}
To simplify notation, we only treat the case $\gb=1$;  the case $\gb>0$ is identical.
Now, by \cite[Theorem 10.15]{K97}, we need to show that
\begin{equation}\label{eq:proofproWgb_12}
\int_{\bbR^d\times\bbR_+}\left(\big(e^{w}-1- w\big)f(x)\wedge1\right) w^{-(1+\alpha)}\dd x\dd w <+\infty \,.
\end{equation}
Note that when $w\in [0,1]$ we have $e^{w}-1-w \leq cw^2$: hence, the restriction of the above intergral to $\bbR^d\times [0,1]$ is bounded by
\begin{align*}
\int_{\bbR^d\times [0,1]}  (w^2f(x) \wedge 1) w^{-(1+\alpha)}\dd x\dd w
&\leq  \int_{\bbR^d} f(x) \int_{0}^{1} w^{1-\alpha} \dd w \dd x = \frac{1}{2-\ga}  \int_{\bbR^d} f(x)   \dd x \, ,
\end{align*}
which is finite thanks to the asymptotics~\eqref{asympfinfty}-\eqref{asympf0} of $f$.

On the other hand, using that $e^{w}-1-w \leq e^{w}$, we also get that 
the restriction of the integral in~\eqref{eq:proofproWgb_12} to $\bbR^d\times (1,\infty)$ is bounded by
\begin{equation}
 \label{Wtwointegrals}
 \begin{split}
\int_{\bbR^d\times (1,\infty)} &  (e^{w} f(x) \wedge 1) w^{-(1+\alpha)}\dd x\, \dd w \\
&\leq C \int_{1}^{\infty}  \Big(\int_{\|x\|^2 > \frac{2w}{c}}  e^{w} e^{-c \|x\|^2} \dd x \Big) w^{-(1+\alpha)} \dd w + \int_{1}^{\infty}  \Big(\int_{\|x\|^2\leq  \frac{2w}{c}}  \dd x \Big) w^{-(1+\alpha)} \dd w \\
 & \leq C'  \int_{1}^{\infty} w^{-(1+\alpha)} \dd w  + C'' \int_1^{\infty} w^{\frac{d}{2}-(\ga+1)} \dd w \, .
\end{split}
\end{equation}
For the first inequality we used that there are constants $C,c$ such that $f(x) \leq Ce^{-c\|x\|^2}$ for all $\|x\|^2 \geq 2/c$. 
For the second inequality, we used that: (i) in the first term we have $e^{w} e^{-c \|x\|^2}  \leq e^{-\frac{c}{2} \|x\|^2}$, which is integrable on $\bbR^d$;
(ii) in the second term we bounded the volume of the ball of radius $\sqrt{2w/c}$ by a constant times $w^{d/2}$.
To conclude, we get that the first integral in~\eqref{Wtwointegrals} is always finite, while we use that $\ga>d/2$ to get that the second integral is finite. 
\end{proof}

%For Proposition \ref{prop:finiteW0}
%we note that \eqref{eq:proofproW0_12} holds for any $\alpha \in (1,2)$. We only need to prove the case $\alpha\in (0,1)$. By \cite[Theorem 10.15]{K97}, we need to show that
%\begin{equation}
%\iint_{\bbR^d\times\bbR_+}\left(1\wedge w f(x)\right) w^{-(\alpha+1)}\dd w\dd x\,
%\end{equation}
%is finite. This follows as \eqref{eq:proofproW0_12}, then we omit the details.

\section{Discrete approximation of the variational problems}
\label{sec:varprofCont}

In this section, we present the main auxiliary results that we use to prove Theorems~\ref{thm:A}-\ref{thm:B}; they are also of independent interest. They extend  analogous results obtained in \cite{BT19b, BT19c} to the $d$-dimensional case. 
We start with the definition of  important quantities  that are used throughout the rest of the article, then we state the convergence result.
%In the next section we introduce the discrete entropy and we present the results for the Entropy Controlled Last Passage Percolation (E-LPP).

\subsection{Discrete energy-entropy variational problems}

Let us introduce the discrete analogue of the variational problem 
in~\eqref{eq:VarPr}.
It will appear naturally when considering the log-partition function
with trajectories restricted to staying in a ball of radius $r_N \ll N$; when the scale is $r_N =N$ some other entropy functional need to be considered, analogously to~\eqref{def:entropyCont2} ---~we will treat that case afterwards.

\subsubsection*{Discrete entropy of a set of points.}
For any set $\Delta=(x_{1},\dots,x_{k})\in (\bbR^{d})^k$ of ordered distinct points (with a slight abuse of notation, we sometimes interpret $\Delta$ as a subset of $\R^d$), we define the \textit{entropy} related to the set $\Delta$ by
\begin{equation}\label{eq:entropydef}
\ent(\Delta) =
\frac{d}{2}\bigg(\sum\limits_{i=1}^{k}\|x_{i}-x_{i-1}\|\bigg)^{2},
\end{equation}
where $x_{0}=0$ by convention.
% (recall $\|\cdot\|$ is the Euclidean norm).
Note that the entropy of a set $\Delta$ depends on the order of
the points of $\Delta$. 
Note also that if we
consider $s$ the linear interpolation of the points of $\Delta$,
we recover the continuous entropy~\eqref{def:entropyCont}.

We also introduce the entropy for a $N$-step random walk
to visit $\Delta$ (see Lemma~\ref{AppendixL2}):
\begin{equation}
\label{def:entropyN}
\ent_N(\Delta) :=
\inf_{ 0 =t_0 < t_1 < \cdots < t_k \leq N }  \sum_{i=1}^k \frac{d}{2}  \frac{\| x_i-x_{i-1}\|^2}{t_i-t_{i-1}} 
=  \frac{1}{N} \ent(\Delta) \, ,
\end{equation}
The second identity is due to the fact that the infimum
is attained for $t_i-t_{i-1} = \frac{\| x_i-x_{i-1}\|}{ \sum_{i=1}^k \|x_i-x_{i-1}\|} N$.

\subsubsection*{Energy (and truncated energy) of a set of points.}
For $r>0$, let 
\begin{equation*}%\label{discreteball}
\Lambda_{r}=\{x\in\bbZ^{d}: \|x\|\leq r\}
\end{equation*}
 be the ball in $\bbZ^{d}$ with radius $r$ centered at the origin.
We can write the random environment $\omega$ in $\Lambda_{r}$ using its \textit{ordered statistic}: we let $M_{i}^{(r)}$ be the $i$-th largest value of $(\omega_{x})_{x\in\Lambda_{r}}$ and $Y_{i}^{(r)}$ its position. Then $(Y_{i}^{(r)})_{i=1}^{|\Lambda_{r}|}$ is a random permutation of $\Lambda_{r}$ and
\begin{equation}\label{eq:ordstatdiscr}
(\omega_{x},x)_{x\in\Lambda_{r}}=(M_{i}^{(r)},Y_{i}^{(r)})_{i=1}^{|\Lambda_{r}|}.
\end{equation}

The energy collected by a set $\Delta  =(x_1,\ldots, x_k) \subset  \Lambda_{r}$ and its contribution from the $\ell$ largest weights ($1\leq \ell\leq |\Lambda_r|$) are defined by
\begin{equation}
\label{def:Omegar}
\Omega_{r}(\Delta):=\sum\limits_{i=1}^{|\Lambda_{r}|}M_{i}^{(r)}\mathbbm{1}_{\{Y_{i}^{(r)}\in\Delta\}},\qquad 
\Omega_{r}^{(\ell)}(\Delta):=\sum\limits_{i=1}^{\ell}M_{i}^{(r)}\mathbbm{1}_{\{Y_{i}^{(r)}\in\Delta\}} \,,
\end{equation}
where $\{y\in\Delta\}$ means that there is some $1\leq i \leq k$
such that $x_i=y$, 
with a slight abuse of notation.
Let us also set $\Omega_{r}^{(>\ell)}(\Delta):=\Omega_{r}(\Delta)-\Omega_{r}^{(\ell)}(\Delta)$.

\subsubsection*{Discrete variational problem.}
Let us now define the discrete variational problem
\begin{equation}
\label{def:intDiscrVarProb1}
\cT_{N,r}^{\beta}:=\max\limits_{\Delta\subset\Lambda_{r}} \big\{\beta\Omega_{r}(\Delta)-\ent_N(\Delta) \big\} \, .
\end{equation}
We also define the analogous variational problems
restricted to the $\ell$ largest weights and beyond the $\ell$-th largest weight by
\begin{align}
\label{def:intDiscrVarProb2}
\cT_{N,r}^{\beta,(\ell)}&:=\max\limits_{\Delta\subset\Lambda_{r}}  \big \{\beta\Omega_{r}^{(\ell)}(\Delta)-\ent_N(\Delta)\},\\\label{def:intDiscrVarProb3}
\cT_{N,r}^{\beta,(>\ell)}&:=\max\limits_{\Delta\subset\Lambda_{r}}\big\{\beta\Omega_{r}^{(>\ell)}(\Delta)-\ent_N(\Delta)\big\}.
\end{align}

The following result gives an explicit bound on the tail of the (discrete) variational problem.
It will be useful to prove that small weights have a negligible
contribution to the variational problems, 
uniformly in $N$.
Its proof relies on E-LPP estimates and is very similar to~\cite[Prop.~2.6]{BT19b}; its proof is included in Appendix~\ref{sec:ELPP}.

\begin{proposition}\label{tail:disT}
The following hold true:
\begin{itemize}
\item There exists a constant $c$, such that for any $N\geq1$, $r\geq1$, $\beta>0$, $1\leq\ell\leq|\Lambda_{r}|$ and $t>1$,
\begin{equation}\label{tail:T<ell}
\bbP\left(\cT_{N,r}^{\beta,(\ell)}\geq tN\times(\beta r^{\frac{d}{\ga}-1})^{2}\right) \leq c \, t^{-\frac{\alpha d}{2(\alpha+d)}}.
\end{equation}

\item There exists a constant $c$, such that for any $N\geq1$, $r\geq1$, $\beta>0$, $1\leq\ell\leq|\Lambda_{r}|$ and $t>1$,
\begin{equation}\label{tail:T>ell}
\bbP\left(\cT_{N,r}^{\beta,(>\ell)}\geq tN\times(\beta r^{\frac{d}{\ga}-1}  \ell^{\frac1d -\frac1\ga} )^{2}\right)\leq c\, t^{- \frac{\alpha\ell d}{2(\alpha\ell+d)}}.
\end{equation}
\end{itemize}
\end{proposition}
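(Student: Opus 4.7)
The strategy is to establish both tail bounds via a non-directed Entropy-controlled Last-Passage Percolation (E-LPP) estimate, combined with a dyadic decomposition over the entropy of the optimizing set $\Delta$. The overall scheme mirrors the proof of \cite[Prop.~2.6]{BT19b} in the directed setting, with the directed E-LPP of \cite{BT19c} replaced by the non-directed version developed in Appendix~\ref{sec:ELPP}. The substantive content lies in the appendix; here the reduction to the E-LPP is a standard union-bound argument.

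The first ingredient I would prepare (in Appendix~\ref{sec:ELPP}) is a non-directed E-LPP tail bound for $\max_{\Delta \subset \Lambda_r,\, \ent(\Delta) \leq B} \Omega_r^{(\ell)}(\Delta)$, and an analogous bound for $\Omega_r^{(>\ell)}$. The bound encodes two geometric facts about the uniform points $Y_i^{(r)}$ in $\Lambda_r$: (i) the order of the top weights $M_i^{(r)} \sim r^{d/\alpha} i^{-1/\alpha}$ (for $i \leq \ell$), and (ii) Beardwood--Halton--Hammersley-type estimates for the number of uniform points that can be visited by a sub-tour of squared-length $\leq B$. The scaling and exponents in the E-LPP bound are calibrated so that, after the dyadic sum below, the claimed tail rates emerge exactly.

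Given the E-LPP input, the reduction proceeds by a dyadic decomposition over $\ent(\Delta)$. If $\cT_{N,r}^{\beta,(\ell)} \geq u$, then some near-maximizer $\Delta^*$ has $\ent(\Delta^*) \in [2^k, 2^{k+1})$ for some $k \geq 0$ (the low-entropy case $\ent(\Delta^*) < 1$ is handled separately), and satisfies $\Omega_r^{(\ell)}(\Delta^*) \geq (u + 2^k/N)/\beta$. A union bound yields
\[
\bbP\big(\cT_{N,r}^{\beta,(\ell)} \geq u\big) \leq \sum_{k \geq 0} \bbP\Big(\max_{\Delta \subset \Lambda_r,\, \ent(\Delta) \leq 2^{k+1}} \Omega_r^{(\ell)}(\Delta) \geq (u + 2^k/N)/\beta\Big).
\]
Substituting $u = tN(\beta r^{d/\alpha - 1})^2$ and splitting the sum at the crossover $k_0$ satisfying $2^{k_0}/N \asymp u$, one checks that the two sub-sums both contribute at order $c\, t^{-\alpha d/(2(\alpha + d))}$, yielding \eqref{tail:T<ell}. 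The bound \eqref{tail:T>ell} follows from the identical argument applied to $\Omega_r^{(>\ell)}$: the removal of the top $\ell$ weights changes the effective normalization in the E-LPP scale from $r^{d/\alpha}$ to $r^{d/\alpha}\ell^{-1/\alpha}$, producing the factor $(\beta r^{d/\alpha - 1}\ell^{1/d - 1/\alpha})^2$ in the threshold, while the improved effective tail exponent of the post-$\ell$ weights produces the rate $t^{-\alpha\ell d/(2(\alpha\ell + d))}$.

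The main obstacle is the non-directed E-LPP itself (Appendix~\ref{sec:ELPP}). In the directed setting of \cite{BT19c} paths are one-dimensional curves in a cylinder, naturally parameterized by time, and the combinatorial estimates are relatively direct. Here, in contrast, one must control the maximal weighted sum over \emph{unordered} subsets of uniform points in $\Lambda_r \subset \bbZ^d$ subject to a squared-TSP entropy budget, which requires non-directed BHH-type estimates on subset tours combined with a careful analysis of the top-weight order statistics to recover the correct $\ell$-dependence in the improved exponent of \eqref{tail:T>ell}.
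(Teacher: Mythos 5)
Your proposal matches the paper's proof: the paper also reduces Proposition~\ref{tail:disT} to the two auxiliary E-LPP tail estimates (Lemmas~\ref{lem:tail<ell} and~\ref{lem:tail>ell}) via a dyadic decomposition over entropy levels and a union bound, exactly as you outline, and the weight-order-statistic and E-LPP cardinality inputs you identify are the ones used. One bookkeeping remark worth checking when you carry out the dyadic sum: the paper anchors the decomposition at the balance scale $b = t(N\beta r^{d/\alpha - 1})^2$ rather than at $1$, so that applying Lemma~\ref{lem:tail<ell} on each band $[2^k b, 2^{k+1}b)$ always produces an argument $t' = 2^{k/2-1}t^{1/2} > 1$ independent of $N,\beta,r$; with your base scale $1$, the separate low-entropy piece $\ent(\Delta)<1$ treated via $B=1$ gives $t' = tN\beta r^{d/\alpha-1}$, which can drop below $t^{1/2}$ (precisely when $b<1$) and then fails to yield the uniform bound $c\,t^{-\alpha d/2(\alpha+d)}$, so you would either need to start the dyadics at $b$ or further decompose the interval $[0,1)$ dyadically downwards.
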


\subsubsection*{Adaptation for trajectories at scale $N$.}

We also define entropy arising when 
considering trajectories at a scale $N$ instead of $N^{\xi}$.
For $\Delta = (x_1,\ldots, x_k) \in \mathbb{R}^d$ and $N \geq 1$,
let us define the $N$-step entropy
\begin{equation}
\label{eq:hatentropydef}
\hatent_N(\Delta) := \inf_{ 0 =t_0 < t_1 < \cdots < t_k \leq N }  \sum_{i=1}^{k}  (t_i-t_{i-1})  \mathrm{J} _d\Big(  \frac{x_i-x_{i-1}}{ t_i-t_{i-1} } \Big) \, ,
\end{equation}
where $\mathrm{J}_d$ is the large deviation rate function for the random walk, see Lemma~\ref{lem:rate}.
Also, we let $\hatent (\Delta) := \hatent_1 (\Delta)$.
Note that if we
consider $s$ the linear interpolation of the points of $\Delta$,
$\hatent (\Delta)$ recovers the continuous entropy~\eqref{def:entropyCont}.

Let us stress that we have 
$\hatent_N(\Delta) \geq \frac1d \ent_N(\Delta)$ for all $N\geq 1$,
since $\mathrm{J}_d (x) \geq \frac{d}{2} \|x\|^2$, cf.\ Lemma~\ref{lem:rate}.
Then, we can define the analogous variational problems as
in~\eqref{def:intDiscrVarProb1}, \eqref{def:intDiscrVarProb2}
and~\eqref{def:intDiscrVarProb3},
replacing the entropy $\ent_N(\Delta)$ with $\hatent(\Delta)$.
One difference here is that 
we only consider trajectories at scale $N$: 
we therefore take $r=N$
(notice also that $\hatent_N(\Delta) = +\infty$ if $\Delta$ 
has one point outside~$\Lambda_N$,
recalling that $\mathrm{J}_d(x) = +\infty$ if $\|x\|_1>1$).
We define
\begin{align}
\label{def:intDiscrVarProb1-hat}
\hat \cT_{N}^{\beta}&:=\max\limits_{\Delta\subset\Lambda_{N}} \big\{\beta\Omega_{N}(\Delta)-\hatent_N(\Delta)\big\},\\\label{def:intDiscrVarProb2-hat}
\hat \cT_{N}^{\beta,(\ell)}&:=\max\limits_{\Delta\subset\Lambda_{N}}\big\{\beta\Omega_{N}^{(\ell)}(\Delta)-\hatent_N(\Delta)\big\},\\\label{def:intDiscrVarProb3-hat}
\hat \cT_{N}^{\beta,(>\ell)}&:=\max\limits_{\Delta\subset\Lambda_{N}}\big\{\beta\Omega_{r}^{(>\ell)}(\Delta)-\hatent_N(\Delta\big)\}.
\end{align}
Using that $\hatent_N(\Delta) \geq \frac{1}{d} \ent_N(\Delta)$,
we get that $\hat \cT_{N}^{\beta} \leq  \frac1d \cT_{N,N}^{d\gb}$,
and similarly for $\hat \cT_{N}^{\beta,(\ell)}$ and $\hat \cT_{N}^{\beta,(>\ell)}$.
Proposition~\ref{tail:disT} therefore remains valid
for the variational problems~\eqref{def:intDiscrVarProb2-hat}-\eqref{def:intDiscrVarProb3-hat}, up to a change in the constants.

\subsection{Convergence of the discrete variational problem to the continuous one}

In this section, we prove that the discrete variational problems~\eqref{def:intDiscrVarProb1} and \eqref{def:intDiscrVarProb1-hat},
when properly rescaled, converge to their continuous counterparts
\eqref{eq:VarPr} and~\eqref{def:varprob1}.

Recall the definition~\eqref{def:Dq}
of  $\cD_q$, the set of paths staying in $\mathbf{\Lambda}_q$
(the ball of radius $q$), 
and the definition~\eqref{eq:energy2} of  $\pi_q^{(\ell)}(s)$,
the contribution to $\pi(s)$ from the $\ell$ largest weights in $\mathbf{\Lambda}_q$.
We have the following convergences
%The main results of this section are the following.

\begin{proposition}\label{thm2}
Let $\ga \in (\frac{d}{2}, d)$, let $q>0$ and $\xi >0$.  
Let $\gb_N$ 
be such that $\lim_{N\to\infty} N^{d\xi/\ga-(2\xi-1)} \beta_{N}   = \beta \in (0,+\infty)$.
Then, for any $\ell\in \mathbb N$ we have that 
\begin{equation}
\label{con1}
\frac{1}{N^{2\xi-1}} \cT_{N,qN^{\xi}}^{\gb_N,(\ell)} \xrightarrow[N\to\infty]{(d)} \cT_{\beta,q}^{(\ell)} := \sup\limits_{s\in \cD_q}\big\{\beta\pi_{q}^{(\ell)}(s)-\ent(s)\big\} \, .
\end{equation}
%We also have 
%\[
%\frac{1}{N^{2\xi-1}} \cT_{N,qN^{\xi}}^{\gb_N}\xrightarrow[N\to\infty]{(d)}\cT_{\beta,q} := \sup\limits_{s\in \cD_q}\big\{\beta\pi_{q}(s)-\ent(s)\big\} \, .
%\]
We also have that, for any fixed $q>0$,
\begin{equation}
\label{con2}
\lim_{\ell \to +\infty} \cT_{\beta,q}^{(\ell)} = \cT_{\beta,q} 
:= \sup\limits_{s\in \cD_q}\big\{\beta\pi (s)-\ent(s)\big\}   \qquad \text{a.s.}
\end{equation}
Finally,  by monotonicity, we have $\lim_{q\to\infty} \cT_{\beta,q} = \cT_{\beta}$ almost surely.
\end{proposition}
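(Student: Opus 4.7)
The plan is a three-step reduction, each step relying on the preceding one together with an approximation argument.

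\textbf{Step 1 (fixed $\ell$).} The cornerstone is the joint convergence of rescaled order statistics
\[
\Big(N^{-d\xi/\alpha} M_i^{(qN^\xi)},\, N^{-\xi} Y_i^{(qN^\xi)}\Big)_{1 \leq i \leq \ell} \overset{(d)}{\longrightarrow} \big(\bM_i^{(q)},\bY_i^{(q)}\big)_{1 \leq i \leq \ell},
\]
which is classical extreme-value theory: under the tail~\eqref{eq:tailOmega}, the expected number of sites $x\in\Lambda_{qN^\xi}$ with $\omega_x > N^{d\xi/\alpha}u$ is asymptotically $c_d q^d u^{-\alpha}$, matching the intensity of $\cP$ restricted to $\mathbf\Lambda_q$. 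In the discrete maximum~\eqref{def:intDiscrVarProb2}, any site outside the top-$\ell$ list contributes zero to $\Omega_r^{(\ell)}$ and can only increase $\ent_N$, so the supremum reduces to a maximum over injections $\sigma\colon\{1,\dots,k\}\to\{1,\dots,\ell\}$. Using~\eqref{def:entropyN} and the scaling identity $\beta_N N^{d\xi/\alpha}\sim\beta N^{2\xi-1}$, the continuous mapping theorem applied to the finite combinatorial maximum yields
\[
\frac{1}{N^{2\xi-1}}\cT_{N,qN^\xi}^{\beta_N,(\ell)} \xrightarrow[N\to\infty]{(d)} \max_{k,\sigma}\Big\{\beta\sum_{j=1}^k\bM_{\sigma(j)}^{(q)}-\tfrac{d}{2}\Big(\sum_{j=1}^k\|\bY_{\sigma(j)}^{(q)}-\bY_{\sigma(j-1)}^{(q)}\|\Big)^2\Big\},
\]
which coincides with $\cT_{\beta,q}^{(\ell)}$, since for any ordered subset of points the $s\in\cD_q$ minimizing $\ent$ among curves visiting them in that order is the piecewise linear interpolation.

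\textbf{Step 2 ($\ell\to\infty$).} The sequence $\cT_{\beta,q}^{(\ell)}$ is non-decreasing and bounded above by $\cT_{\beta,q}\leq\cT_\beta<\infty$ (Proposition~\ref{prop:polym}), hence converges to some $L\leq\cT_{\beta,q}$. For the reverse bound, fix $\epsilon>0$ and pick $s^*\in\cD_q$ with $\beta\pi_q(s^*)-\ent(s^*)\geq\cT_{\beta,q}-\epsilon$; replace $s^*$ by the piecewise linear interpolation of the (at most countably many) Poisson points it visits in their order along $s^*$, which preserves $\pi_q(s^*)$ and does not increase $\ent(s^*)$. Let $J^*\subset\bbN$ denote the ranks of these visited points, and for each $\ell$ let $\tilde s_\ell^*$ be the piecewise linear curve through the points with rank in $J^*\cap\{1,\dots,\ell\}$ (in their preserved order). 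By the triangle inequality $\ent(\tilde s_\ell^*)\leq\ent(s^*)$, while $\pi_q^{(\ell)}(\tilde s_\ell^*)=\sum_{j\in J^*\cap\{1,\dots,\ell\}}\bM_j^{(q)}\to\pi_q(s^*)$ as $\ell\to\infty$ by monotone convergence (here we use $\pi_q(s^*)<\infty$, which follows from $\cT_{\beta,q}<\infty$ and $\ent(s^*)<\infty$). Therefore $L\geq\beta\pi_q(s^*)-\ent(s^*)\geq\cT_{\beta,q}-\epsilon$, and letting $\epsilon\downarrow 0$ yields~\eqref{con2}.

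\textbf{Step 3 ($q\to\infty$).} By monotonicity, $\cT_{\beta,q}\nearrow L'\leq\cT_\beta$. Conversely, for $\epsilon>0$ take $s^{**}$ with $\beta\pi(s^{**})-\ent(s^{**})\geq\cT_\beta-\epsilon$; since $\ent(s^{**})<\infty$, the curve $s^{**}$ has length at most $\sqrt{2\ent(s^{**})/d}$ (cf.\ Remark~\ref{rem1}), so $s^{**}\in\cD_q$ for all $q$ sufficiently large, and then $\pi_q(s^{**})=\pi(s^{**})$. This yields $\cT_{\beta,q}\geq\cT_\beta-\epsilon$. The main obstacle is Step~1: establishing the joint extreme-value convergence of the $\ell$ top positions and weights and then passing through the non-linear combinatorial maximum requires care, but becomes routine once one works on a Skorokhod representation upgrading the order-statistics convergence to an almost sure coupling, after which continuity of the finite max functional in its arguments finishes the argument.
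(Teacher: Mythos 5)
Your proposal is correct and follows essentially the same route as the paper: upgrade the joint extreme-value convergence of the top-$\ell$ order statistics to an almost sure coupling via Skorokhod representation, then apply the continuous mapping theorem to the finite combinatorial maximum, and conclude~\eqref{con2} and the $q\to\infty$ limit by monotonicity together with a near-maximizer approximation. One small remark on Step 2: the construction of the auxiliary piecewise-linear paths $\tilde s_\ell^*$ is unnecessary --- since $s^*$ itself already lies in $\cD_q$, you can simply plug it into the definition of $\cT_{\beta,q}^{(\ell)}$ to get $\cT_{\beta,q}^{(\ell)} \ge \beta\pi_q^{(\ell)}(s^*) - \ent(s^*) \to \beta\pi_q(s^*) - \ent(s^*)$ by monotone convergence; this also sidesteps the slightly delicate issue of defining a ``piecewise linear interpolation'' through the countably many Poisson points that $s^*$ may visit.
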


%
%We then define the following continuous variational problem:
%\begin{align}
%\label{ContVP1}
%\cT_{\beta,q}&:= \\
%\label{ContVP2}
%\cT_{\beta,q}^{(\ell)}&:=\sup\limits_{s : s\in \cD_q}\big\{\beta\pi_{q}^{(\ell)}(s)-\ent(s)\big\}, \\
%\label{ContVP3}
%\cT_{\beta,q}^{(>\ell)}&:=\sup\limits_{s :  s\in \cD_q}\big\{\beta\pi_{q}^{(>\ell)}(s)-\ent(s)\big\}.
%\end{align}

We have the analogous statement for the discrete variational
problem~\eqref{def:intDiscrVarProb2-hat}.
\begin{proposition}\label{thm3}
Let $\ga \in (\frac{d}{2}, d)$.
We let $\gb_N$ 
such that $\lim_{N\to\infty} N^{ \frac{d}{\ga} -1} \beta_{N}  =  \beta \in (0,+\infty]$.
Then, for any $\ell\in \mathbb N$ we have that 
\begin{align}
\label{con3}
\frac{1}{\gb_N N^{d/\ga}} \hat \cT_{N}^{\gb_N,(\ell)} \xrightarrow[N\to\infty]{(d)} \hat \cT_{\beta}^{(\ell)}
:= \sup_{s\in \cD_1, \hatent(s) <+\infty} 
\big\{ \pi^{(\ell)}_1(s) - \tfrac{1}{ \gb} \hatent(s) \big\} \, .
\end{align}
We also have that
$ \lim_{\ell \to+\infty} \hat \cT_{\beta}^{(\ell)} = \hat \cT_{\beta}$ almost surely.
\end{proposition}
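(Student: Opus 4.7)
The plan is to turn $\hat\cT_N^{\beta_N,(\ell)}$ into an explicitly rescaled version of $\hat\cT_\beta^{(\ell)}$ and apply continuous mapping, then use monotonicity for the $\ell\to\infty$ limit. First I would observe that only the points $Y_1^{(N)},\dots,Y_\ell^{(N)}$ contribute to $\Omega_N^{(\ell)}(\Delta)$, so adding any other point to $\Delta$ only increases $\hat\ent_N(\Delta)$ without changing the energy. Hence the maximum in~\eqref{def:intDiscrVarProb2-hat} can be restricted to $\Delta$'s obtained by picking a subset $S\subset\{1,\dots,\ell\}$ and an ordering of $\{Y_i^{(N)}\}_{i\in S}$ — a max over a finite collection of configurations.

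Next I would perform the rescaling $y_i:=x_i/N$, using the key homogeneity identity
\[
\hat\ent_N(\Delta) \;=\; N\,\hat\ent(\Delta/N)\,,
\]
which follows from the change of variables $t_i=Ns_i$ in~\eqref{eq:hatentropydef}. Dividing \eqref{def:intDiscrVarProb2-hat} by $\beta_N N^{d/\alpha}$ yields
\[
\frac{1}{\beta_N N^{d/\alpha}}\hat\cT_N^{\beta_N,(\ell)} \;=\; \max_{S,\sigma}\left\{\sum_{i\in S}\frac{M_i^{(N)}}{N^{d/\alpha}}-\frac{1}{\beta_N N^{d/\alpha-1}}\,\hat\ent\!\left(\{Y_{\sigma(i)}^{(N)}/N\}_{i\in S}\right)\right\},
\]
and the hypothesis $\beta_N N^{d/\alpha-1}\to\beta\in(0,+\infty]$ identifies the coefficient of the entropy as $\to 1/\beta$ (with the convention $1/\infty=0$). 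The joint vector $\bigl(Y_i^{(N)}/N,\,M_i^{(N)}/N^{d/\alpha}\bigr)_{i=1}^{\ell}$ converges in distribution to $\bigl(\bY_i^{(1)},\bM_i^{(1)}\bigr)_{i=1}^{\ell}$ by the standard convergence of the order statistics of a uniformly-placed $\alpha$-heavy tailed i.i.d.\ field toward the continuum Poisson process on $\mathbf\Lambda_1\times(0,\infty)$. Since $\hat\ent(\cdot)$ restricted to finite ordered tuples of points in $\mathbf\Lambda_1$ is continuous (it is an infimum of continuous functions over a compact simplex of time allocations, by Berge's theorem, and $\mathrm{J}_d$ is continuous on the interior of the unit $\ell^1$ ball), the whole expression is a continuous function of the top $\ell$ weighted points away from a $\bbP$-null set. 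The continuous mapping theorem, combined with the identification that the maximum over subsets-and-orderings of $\{(\bY_i^{(1)},\bM_i^{(1)})\}_{i\leq\ell}$ equals the supremum over $s\in\cD_1$ with $\hat\ent(s)<\infty$ in the definition of $\hat\cT_\beta^{(\ell)}$ (via linear interpolation, cf.\ the remark below \eqref{eq:hatentropydef}), yields~\eqref{con3}.

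For the limit $\ell\to+\infty$, monotonicity gives $\hat\cT_\beta^{(\ell)}\nearrow L\leq\hat\cT_\beta$ almost surely. For the reverse inequality, fix $\epsilon>0$ and pick a near-maximizer $s^*\in\cD_1$ with $\hat\ent(s^*)<\infty$ and $\pi(s^*)-\frac{1}{\beta}\hat\ent(s^*)\geq\hat\cT_\beta-\epsilon$; such an $s^*$ exists since $\hat\cT_\beta<\infty$ a.s.\ by Remark after \eqref{eq:compHatT-T_1}. Since $\pi(s^*)=\sum_i\bM_i^{(1)}\mathbbm{1}_{\{\bY_i^{(1)}\in s^*([0,1])\}}<\infty$ a.s.\ (again by Lemma~\ref{lemma1} applied to the path $s^*$ of finite entropy, hence bounded length), monotone convergence gives $\pi_1^{(\ell)}(s^*)\uparrow\pi(s^*)$, so
\[
\hat\cT_\beta^{(\ell)} \;\geq\; \pi_1^{(\ell)}(s^*) - \tfrac{1}{\beta}\hat\ent(s^*) \;\xrightarrow[\ell\to\infty]{}\; \pi(s^*) - \tfrac{1}{\beta}\hat\ent(s^*) \;\geq\; \hat\cT_\beta - \epsilon.
\]
Since $\epsilon$ was arbitrary, $L=\hat\cT_\beta$ almost surely.

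The main obstacle I anticipate is the handling of the case $\beta=+\infty$: the entropy penalty disappears in the limit but the \emph{constraint} $\hat\ent(s)<\infty$ must survive. Fortunately the rescaled factor $1/(\beta_N N^{d/\alpha-1})\to 0$ leaves the maximization over a \emph{bounded} class of configurations (the $\ell$ rescaled points $\bY_i^{(1)}\in\mathbf\Lambda_1$ can all be joined by a path of length at most the sum of the pairwise distances, which is bounded), so no path of infinite length is ever competitive. A secondary (minor) technical point is ensuring the a.s.\ continuity of the max-over-orderings functional: this fails only on the null set where two candidate $S,\sigma$ configurations attain the same value, which can be dismissed by standard perturbation arguments thanks to the absolute continuity of the $\bY_i^{(1)}$.
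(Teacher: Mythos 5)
Your proposal follows essentially the same route as the paper: restrict to configurations built from the top $\ell$ order statistics, rescale (the paper also uses the scaling identity $\hat{\mathrm E}\mathrm{nt}_N(\Delta)=N\,\hat{\mathrm E}\mathrm{nt}(\Delta/N)$ implicitly when it states that $N^{-1}\hat{\mathrm E}\mathrm{nt}_N(\Delta_N)$ converges), invoke joint convergence of the rescaled order statistics to the continuum Poissonian ones, and then pass to the limit over a finite maximum; the $\ell\to\infty$ step is monotonicity plus a.s.\ finiteness. The only cosmetic difference is that the paper upgrades the weak convergence to almost sure convergence via Skorokhod representation and then argues deterministically, whereas you apply the continuous mapping theorem directly — these are interchangeable here — and you flesh out the near-maximizer argument for $\ell\to\infty$ and the delicacy of continuity of $\hat{\mathrm E}\mathrm{nt}$ (discontinuous only on a null boundary set where the total $\ell^1$-length equals one), which the paper leaves implicit.
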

Notice that the restriction $s \in \cD_1$  in \eqref{con3} 
is harmless
since we have $\hatent(s) = +\infty$ for all $s\notin \cD_1$.

\begin{proof}[Proof of Propositions~\ref{thm2} and~\ref{thm3}]
Let us define $r_N = q N^{\xi}$ for simplicity.
We label the points $\Lambda_{r_N}$ according to the ordered statistics $(M_{i}^{(r_N)}, Y_{i}^{(r_N)})_{i=1}^{|\Lambda_{r_N}|}$, see \eqref{eq:ordstatdiscr}. 
In the same way, in the ball $\mathbf\Lambda_q$ we label the points of $\cP$ according to the ordered statistic $(\bM_{i}^{(q)}, \bY_{i}^{(q)})_{i\geq1}$, cf. \eqref{eq:energy3}.
Then, by the Skorokhod representation theorem (cf. \cite[Section 9.4]{D81}, also in the same spirit of Remark \ref{rem:Skorokhod}), we have that 
for any fixed $\ell \in \N$,
\begin{align*}
\frac{1}{N^{\xi}} (Y_{1}^{(r_N)},\dots, Y_{\ell}^{(r_N)})&\xrightarrow[N\to\infty]{a.s.}( \bY_{1}^{(q)},\dots, \bY_{\ell}^{(q)}),\\
\frac{1}{ N^{\xi d/\ga}} (M_{1}^{(r_N)},\dots, M_{\ell}^{(r_N)})&\xrightarrow[N\to\infty]{a.s.}( \bM_{1}^{(q)},\dots, \bM_{\ell}^{(q)}) \, .
\end{align*} 
Then, 
for each $\Delta_N = (Y_{i_1}^{(r_N)}, \ldots, Y_{i_k}^{(r_N)})$,
thanks to 
the continuous mapping theorem, we get that 
$N^{-(2\xi-1)} \gb_N \Omega_{r_N}^{(\ell)} (\Delta_N)$
converges to $ \gb \pi_q^{\ell}( s_\Delta)$, where $s_{\Delta}$
is the linear interpolation of $\Delta := ( \bY_{i_1}^{q}, \ldots, \bY_{i_k}^{q})$ (see the definitions~\eqref{def:Omegar} of $\Omega_{r_N}^{(\ell)}$ and \eqref{eq:energy2} of $\pi_q^{(\ell)}(s)$).
We also obtain that $N^{-(2\xi-1)} \ent_N (\Delta_N)$
converges to $\ent(s_{\Delta})$.
Since the maxima  in~\eqref{def:intDiscrVarProb2}
and~\eqref{con1} are over finitely many terms (in fact $2^{\ell}$),
we get~\eqref{con1}.
The convergence~\eqref{con3} in Theorem~\ref{thm3} follows from exactly the same argument taking $r_N=N$ (here $q=1$),  and using that $N^{-1} \hatent_N(\Delta_N)$ converges
to $\hatent(\Delta)$.

Finally, \eqref{con2}  simply follows 
from the monotonicity in $\ell$, 
and the fact that $\cT_{\gb,q} \leq \cT_{\gb}$ which is finite almost surely. The limits $\lim_{q \to\infty} \cT_{\gb,q} = \cT_{\gb}$ 
and $\lim_{\ell\to\infty} \hat \cT_{\gb}^{(\ell)} = \hat \cT_{\gb}$
also follow by monotonicity.
\end{proof}

\section{Region~A: proof of Theorem~\ref{thm:A} and Proposition~\ref{prop:fluctuationsN}}
\label{sec:regionA}

In this section, we prove Theorem \ref{thm:A}.
First of all, notice that for any $h\in \R$ we have
\begin{equation*}
e^{-N\beta_{N}|h|}Z_{N,\gb_N}^{\go,h=0} \leq Z_{N,\gb_N}^{\go,h}  \leq e^{N\beta_{N}|h|} Z_{N,\gb_N}^{\go,h=0} .
\end{equation*}
Hence, we get
\begin{equation*}
-\frac{|h|}{N^{\frac{d-\alpha}{\alpha}}}+\frac{1}{\beta_{N}N^{\frac{d}{\alpha}}}\log Z_{N,\gb_N}^{\go,h=0} \leq\frac{1}{\beta_{N}N^{\frac{d}{\alpha}}}\log Z_{N,\gb_N}^{\go,h}  \leq\frac{|h|}{N^{\frac{d-\alpha}{\alpha}}}+\frac{1}{\beta_{N}N^{\frac{d}{\alpha}}}\log Z_{N,\gb_N}^{\go,h =0} \, .
\end{equation*}
Since $d>\ga$, the terms  $|h| N^{-(d-\ga)/\ga}$ go to $0$ and we therefore only need to prove the result for $Z_{N,\gb_N}^{\go,h=0}$.
In the rest of this section, we will write $Z_{N,\gb_N} :=Z_{N,\gb_N}^{\go,h=0}$ for simplicity.
We only treat the case where $\lim_{N\to\infty} \beta_{N}N^{(d-\ga)/\ga}  = \gb\in (0,+\infty)$ for notational clarity;
the case where $\gb = +\infty$ follows in a similar manner.

\subsection{Convergence of the rescaled log-partition function}

Let us prove Theorem~\ref{thm:A}.
Our strategy is based on three steps: \textit{Step 1}. most of the contribution to $Z_{N,\gb_N}$ comes from some large disorder weights; \textit{Step 2}. the log-partition function, when restricted to finitely many weights and rescaled by $\gb_N N^{d/\ga}$, has a weak limit; \textit{Step 3}. combine the first two steps to conclude the proof of the convergence.

%The treatments for $\beta_{N}N^{(d-\alpha)/\alpha}\overset{N\to\infty}{\longrightarrow}\infty$ and $\beta_{N}N^{(d-\alpha)/\alpha}\overset{N\to\infty}{\longrightarrow}\beta\in(0,\infty)$ are different and we state them separately.

Let us introduce some notation. Recall the order statistics defined in~\eqref{eq:ordstatdiscr}.
% (recall also the definition~\eqref{discreteball} of $\Lambda_N$).
 For any $L\in\bbN$, we define, for $\rho \in \bbR_+$
\begin{align}
Z_{N,\rho\beta_{N}}^{(L)}&:=\bE\bigg[\exp\bigg(\rho\beta_{N} \sum\limits_{i=1}^{L}M_{i}^{(N)}\mathbbm{1}_{\{Y_{i}^{(N)}\in\cR_{N}\}}\bigg)\bigg],\label{ZL}\\
Z_{N,\rho\beta_{N}}^{(>L)}&:=\bE\bigg[\exp\bigg(\rho\beta_{N} \sum\limits_{i=L+1}^{|\Lambda_N|}M_{i}^{(N)}\mathbbm{1}_{\{Y_{n}^{(N)}\in\cR_{N}\}}\bigg)\bigg].\label{ZLell}
\end{align}

\smallskip
\noindent
{\bf Step 1. }
For any fixed $L$ and $\eta\in(0,1)$, by H\"{o}lder's inequality, we have
\begin{equation*}
Z_{N,\gb_N}\leq\left(Z_{N,(1+\eta)\beta_{N}}^{(L)}\right)^{\frac{1}{1+\eta}}\left(Z_{N,(1+\eta^{-1})\beta_{N}}^{(>L)}\right)^{\frac{\eta}{1+\eta}},
\end{equation*}
so that for any fixed $L<N$, since the disorder is positive, we get
\begin{equation}\label{ineq:logZ}
\log Z_{N,\beta_{N}}^{(L)}\leq \log Z_{N,\gb_N}\leq\frac{1}{1+\eta}\log Z_{N,(1+\eta)\beta_{N}}^{(L)}+\frac{\eta}{1+\eta}\log Z_{N,(1+\eta^{-1})\beta_{N}}^{(>L)} \, .
\end{equation}

We now show that the last term in \eqref{ineq:logZ} 
can be made arbitrarily small compared with $\gb_N N^{d/\ga}$ by choosing $L$ large:
this is the content of the following lemma.
\begin{lemma}
\label{ZLellvanish+}
Assume that $\lim_{N\to\infty} \gb_N N^{\frac{d}{\ga} -1}  =\gb>0$.
For any $\gep>0$ and any $\rho>0$, there are some $L_0>0$ and some $N_0$ such that for any $L>L_0$ and $N\geq N_0$ we have
\[
\bbP\Big(\frac{1}{\beta_{N}N^{d/\ga}} \log Z_{N,\rho\beta_{N}}^{(>L)}>\epsilon\Big) \leq \gep \, .
\]
\end{lemma}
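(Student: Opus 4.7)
The plan is to bound $\log Z_{N,\rho\gb_N}^{(>L)}$ by the discrete variational problem $\hat\cT_N^{\rho\gb_N,(>L)}$ from~\eqref{def:intDiscrVarProb3-hat} up to a negligible correction, and then apply Proposition~\ref{tail:disT}(ii). The key observation is that under $\gb_N\sim\gb N^{-(d-\ga)/\ga}$ (so $\gb_N N^{d/\ga-1}\to \gb$ and $\gb_N N^{d/\ga}\asymp \gb N$), the tail threshold in Proposition~\ref{tail:disT}(ii) features a factor $L^{2(1/d-1/\ga)}$ that vanishes as $L\to\infty$, since $\ga<d$.

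The first step is a H\"older truncation at a fixed threshold $K>0$. Writing $\omega_x=\omega_x\ind_{\omega_x\le K}+\omega_x\ind_{\omega_x>K}$ and using H\"older's inequality with conjugate exponents $p,q>1$, I get $Z_{N,\rho\gb_N}^{(>L)}\le (Z^{\le K})^{1/p}(Z^{>K})^{1/q}$, where the two factors correspond to the two pieces of the decomposition. The small-weight factor is controlled deterministically: $\sum_{x\in\cR_N}\omega_x\ind_{\omega_x\le K}\le KN$, so $\log Z^{\le K}\le p\rho\gb_N KN = o(\gb_N N^{d/\ga})$ since $d>\ga$ gives $\gb_N N\ll \gb_N N^{d/\ga}$.

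The second step, which is the heart of the argument, is to show $\log Z^{>K}\le \hat\cT_N^{q\rho\gb_N,(>L)}+o(\gb_N N^{d/\ga})$. I would group the random walk trajectories by the subset $\Delta$ of large non-top-$L$ weights visited:
\begin{equation*}
Z^{>K}=\sum_\Delta \bP\big(\cR_N\cap\{Y_i^{(N)}: i>L,\ M_i^{(N)}>K\}=\Delta\big)\,e^{q\rho\gb_N\Omega_N^{(>L)}(\Delta)}\,,
\end{equation*}
and use the random walk large-deviation estimate $\bP(\Delta\subseteq\cR_N)\lesssim e^{-\hatent_N(\Delta)}$ (up to polynomial corrections). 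The lower bound $\hatent_N(\Delta)\ge \tfrac{d}{2N}\big(\sum_i\|x_i-x_{i-1}\|\big)^2\ge \tfrac{d|\Delta|^2}{2N}$ (from Cauchy--Schwarz and $\mathrm{J}_d(x)\ge\tfrac{d}{2}\|x\|^2$), together with the sparsification produced by the threshold $K$ (at most $O(N^d K^{-\ga})$ candidates $Y_i^{(N)}$), allows one to truncate to $\Delta$ of moderate size while keeping the combinatorial factor below $\exp(o(\gb N))$.

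Finally, applying Proposition~\ref{tail:disT}(ii) with $r=N$, $\beta=q\rho\gb_N$ and $\ell=L$ gives
\begin{equation*}
\bbP\Big(\hat\cT_N^{q\rho\gb_N,(>L)}\ge t\,q^2\rho^2\gb\, L^{\frac{2}{d}-\frac{2}{\ga}}\cdot \gb_N N^{d/\ga}\Big)\le c\,t^{-\frac{\ga L d}{2(\ga L+d)}}
\end{equation*}
for $N$ large. Since $L^{2/d-2/\ga}\to 0$ as $L\to\infty$, I pick $t=t(L)\to\infty$ so that $t\,L^{2/d-2/\ga}\le \epsilon/(q^2\rho^2\gb)$; the tail exponent tends to $d/2>0$, so the right-hand side becomes $<\epsilon$ for $L$ sufficiently large. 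The main obstacle is the reduction in the second step: a naive sum over subsets $\Delta$ has an exponential-in-$N$ combinatorial factor that is not automatically dominated by $e^{\hat\cT_N^{(>L)}}$, and both the sparsification from $K$ and the quadratic entropy lower bound $\hatent_N(\Delta)\gtrsim |\Delta|^2/N$ are essential to handle it.
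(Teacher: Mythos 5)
Your approach tries to bound $\log Z^{(>L)}_{N,\rho\gb_N}$ by the discrete variational problem $\hat\cT_N^{\rho\gb_N,(>L)}$ (which carries the entropy \emph{subtraction}), and you correctly identify the heart of the argument — the subset expansion with its combinatorial factor — as the main obstacle, but you do not actually resolve it. Controlling $\sum_\Delta C_1^{|\Delta|} e^{\rho\gb_N\Omega^{(>L)}(\Delta)-C_2\hatent_N(\Delta)}$ against $e^{\hat\cT_N^{(>L)}}$ is genuinely delicate: the candidate set has size polynomial in $N$, the weights $\Omega^{(>L)}(\Delta)$ can be large, and one would need an argument of the same order of difficulty as the proof of Lemma~\ref{lem:S<qr}. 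As written, step~2 is a gap, and the preliminary H\"older truncation at $K$ does not repair it.

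The paper's proof avoids all of this by never introducing the variational problem. The key observation is a \emph{deterministic} (in $S$, given $\omega$) bound on the exponent itself: any set $(x_1,\dots,x_k)$ visited by an $N$-step walk satisfies $\sum_i\|x_i-x_{i-1}\|\le N$, hence $\ent_N(\Delta)\le\tfrac{d}{2}N$, so for every realization of the range
\[
\sum_{i>L} M_i^{(N)}\ind_{\{Y_i^{(N)}\in\cR_N\}}\;\le\;\sup_{\Delta\subset\Lambda_N,\ \ent_N(\Delta)\le\frac{d}{2}N}\Omega_N^{(>L)}(\Delta)\,.
\]
This yields $\log Z^{(>L)}_{N,\rho\gb_N}\le \rho\gb_N\sup_\Delta\Omega_N^{(>L)}(\Delta)$ immediately — no chaos expansion, no union over subsets, and hence no combinatorial factor to control and no entropy subtraction to match. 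One then applies the tail estimate on $\sup_\Delta\Omega_N^{(>L)}(\Delta)$ alone (Lemma~\ref{lem:tail>ell}), which already carries the crucial decaying factor $L^{1/d-1/\ga}$ and the exponent $\tfrac{\ga Ld}{\ga L+d}\to d$; choosing $t$ then $L$ finishes the proof. Notice also that this argument does not use the normalization $\gb_N N^{d/\ga-1}\to\gb$ at all, since $\gb_N$ cancels when dividing by $\gb_N N^{d/\ga}$ — another indication that the lemma is more elementary than the route you were attempting.
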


\begin{proof}
Notice that for any realization $\cR_N$ of the range,
we have
\[
\sum_{i=L+1}^{|\Lambda_N|} M_i^{N} \ind_{\{Y_i^{N} \in \cR_N\}} \leq \sup_ {\Delta \subset \Lambda_N,\,  \ent_N(\Delta) \leq  \frac{d}{2} N } \gO_N^{(>L)} (\Delta) \, ,
\]
where we recall the notation~\eqref{def:Omegar}
for $\gO_N^{(>L)}(\Delta)$.
We have used here that any set of points $(x_1,\ldots, x_k)$
visited by the random walk before time $N$
must verify $\sum_{i=1}^k \|x_i-x_{i-1}\| \leq N$
and hence have a $N$-step entropy
$\ent_N(\Delta) \leq \frac{d}{2} N$ (recall the definitions \eqref{eq:entropydef}-\eqref{def:entropyN}). 
We therefore get that
\[
 \frac{1}{\gb_N N^{d/\ga}} \log Z_{N,\rho\beta_{N}}^{(>L)} 
\leq  \frac{\rho}{N^{d/\ga}}  \sup_{\Delta\,, \ent_N(\Delta) \leq  \frac{d}{2} N } \gO_N^{(>L)} (\Delta) \, .
\]
Now, we simply have to use Lemma~\ref{lem:tail>ell} (noting that $\ent_N(\Delta) = N^{-1} \ent(\Delta)$),
which states that for any $L$ and any $t>1$
\[
\bbP\bigg( \sup_{\Delta\,, \ent(\Delta) \leq  \frac{d}{2} N^2 } \gO_N^{(>L)} (\Delta)  \geq t L^{ \frac{1}{d} - \frac{1}{\ga}}  N^{\frac{d}{\ga}}  \bigg) \leq t^{- \frac{\alpha L d}{\alpha L+d} }\, .
\]
Fixing $t$ large enough so that the upper bound is smaller than $\gep$ (uniformly for $L\geq 1$) and then
choosing $L$ large enough so that $t  L^{ \frac{1}{d} - \frac{1}{\ga}}  \leq \gep/\rho$, 
we obtain the conclusion of Lemma~\ref{ZLellvanish+}.
\end{proof}

\smallskip
\noindent
{\bf Step 2. }
Once the number of weights is fixed, we can prove
the following convergence in distribution.
\begin{proposition}\label{logZL}
For any positive integer $L$ and real number $\rho>0$, we have the following convergence in distribution
\begin{equation*}
\frac{1}{\beta_{N}N^{d/\ga}} \log Z_{N,\rho\beta_{N}}^{(L)}\overset{(d)}{\longrightarrow}\hat{\cT}_{\rho\beta}^{(L)} \, ,
\end{equation*}
where $\hat{\cT}_{\beta}^{(L)} $ is defined in~\eqref{con3}.
\end{proposition}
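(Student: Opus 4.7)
The plan is to expand $Z_{N,\rho\beta_N}^{(L)}$ combinatorially using the identity $e^{\sum_i a_i \ind_{E_i}} = \sum_{I} \prod_{i\in I}(e^{a_i}-1) \prod_{i\in I} \ind_{E_i}$, which after taking $\bE$ gives
\begin{equation*}
Z_{N,\rho\beta_N}^{(L)} = \sum_{I \subseteq \{1,\ldots,L\}} T_I(N), \qquad T_I(N) := \prod_{i\in I}\bigl(e^{\rho\beta_N M_i^{(N)}}-1\bigr)\, \bP\bigl(Y_i^{(N)}\in \cR_N \text{ for all } i\in I\bigr).
\end{equation*}
Since this sum has only $2^L$ non-negative terms, one has $\log Z_{N,\rho\beta_N}^{(L)} = \max_I \log T_I(N) + O(1)$, so the task reduces to identifying the limit of $(\beta_N N^{d/\ga})^{-1} \log T_I(N)$ for each fixed $I$ and then taking the maximum over $I$.

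For each fixed $I$, I invoke the Skorokhod representation used in the proof of Proposition~\ref{thm2}, so that $(M_i^{(N)}/N^{d/\ga},\, Y_i^{(N)}/N)_{i=1}^L \to (\bM_i^{(1)},\, \bY_i^{(1)})_{i=1}^L$ almost surely. Since $\beta_N M_i^{(N)} \sim \beta N \bM_i^{(1)} \to \infty$, we have $e^{\rho\beta_N M_i^{(N)}} - 1 \sim e^{\rho\beta_N M_i^{(N)}}$, and therefore
\begin{equation*}
\frac{1}{\beta_N N^{d/\ga}} \log \prod_{i\in I}\bigl(e^{\rho\beta_N M_i^{(N)}}-1\bigr) \longrightarrow \rho \sum_{i\in I} \bM_i^{(1)}.
\end{equation*}
The probabilistic factor is handled by the random-walk large-deviation estimate
\begin{equation*}
-\frac{1}{N}\log \bP\bigl(Y_i^{(N)}\in \cR_N \text{ for all } i\in I\bigr) \longrightarrow \inf\bigl\{\hatent(s)\colon s\in \cD_1,\ \{\bY_i^{(1)}\}_{i\in I} \subset s([0,1]) \bigr\}.
\end{equation*}
Combining these two asymptotics with $\beta_N N^{d/\ga} \sim \beta N$, the term $T_I(N)$ contributes in the limit
\begin{equation*}
\sup\Bigl\{\rho\, \pi_1^{(L)}(s) - \tfrac{1}{\beta}\hatent(s)\colon s\in \cD_1,\ \{\bY_i^{(1)}\}_{i\in I}\subset s([0,1]) \Bigr\}.
\end{equation*}
Maximising over $I\subseteq\{1,\ldots,L\}$ and noting that for each $s\in\cD_1$ with $\hatent(s)<\infty$ the optimal choice is $I=\{i\colon \bY_i^{(1)}\in s([0,1])\}$ (which gives $\pi_1^{(L)}(s)=\sum_{i\in I}\bM_i^{(1)}$), the overall maximum coincides with the claimed continuous variational problem.

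The main technical obstacle is the large-deviation estimate for the walk to visit a prescribed finite collection of points at scale $N$. The upper bound follows by summing, over orderings $\sigma$ of $I$ and over increasing times $0=t_0<t_1<\cdots<t_{|I|}\leq N$, the joint probabilities $\bP(S_{t_j}=Y_{\sigma(j)}^{(N)}\ \forall j)$, which factorise by the Markov property into $\prod_j \bP(S_{t_j-t_{j-1}}=Y_{\sigma(j)}^{(N)}-Y_{\sigma(j-1)}^{(N)})$; each factor is bounded by $\exp(-(t_j-t_{j-1})\mathrm{J}_d(\cdot))$ up to subexponential corrections via Lemma~\ref{lem:rate}. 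Minimising over $t_j$ and $\sigma$ produces precisely $\hatent_N(\{Y_i^{(N)}\}_{i\in I}) = N\,\hatent(\{Y_i^{(N)}/N\}_{i\in I})$, which by continuity of $\hatent$ at its argument converges to the right-hand side above. The matching lower bound is obtained by fixing one ordering and the LDP-optimal time allocation, and concatenating local lower bounds of Lemma~\ref{lem:rate} on each straight segment; the required uniformity holds because the limit points $\bY_i^{(1)}$ almost surely lie in the open ball $\mathbf{\Lambda}_1$. Once this LDP is in hand, the convergence of each $\log T_I(N)$ is a clean Laplace-type computation, and optimising over $I$ yields Proposition~\ref{logZL}.
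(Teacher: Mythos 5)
Your proposal is correct and reaches the same conclusion by a mildly different combinatorial route. The paper expands $Z_{N,\rho\beta_N}^{(L)}$ by conditioning on exactly which high-weight sites are visited: it writes $Z_{N,\rho\beta_N}^{(L)}=\sum_{\Delta\subset\Upsilon_L}e^{\rho\beta_N\Omega_N^{(L)}(\Delta)}\bP(\cR_N\cap\Upsilon_L=\Delta)$, then compares $\log Z^{(L)}$ with the discrete variational problem $\hat\cT_N^{\rho\beta_N,(L)}$ of~\eqref{def:intDiscrVarProb2-hat} via the bounds~\eqref{writevaritaional}, and invokes Proposition~\ref{thm3} for the convergence of that variational problem. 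You instead use the product identity $e^{a\ind_E}=1+(e^a-1)\ind_E$ to decompose $Z^{(L)}$ into $2^L$ non-negative terms $T_I(N)=\prod_{i\in I}(e^{\rho\beta_N M_i^{(N)}}-1)\,\bP(Y_i^{(N)}\in\cR_N\ \forall i\in I)$ (a ``contains $I$'' rather than ``equals $\Delta$'' parametrization), and argue $\log Z^{(L)}=\max_I\log T_I+O(1)$. This is algebraically equivalent but has the merit of cleanly separating the weight factor, which only depends on $\omega$, from the hitting probability, and of bypassing the intermediate discrete variational problem. Both routes then rely on the same two ingredients: the Skorokhod coupling for the order statistics (as in the proof of Proposition~\ref{thm2}--\ref{thm3}) and the LDP of Lemma~\ref{AppendixL1bis} for visiting finitely many macroscopically separated points. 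Your formulation of the LDP for $\bP(Y_i^{(N)}\in\cR_N\ \forall i\in I)$, as an infimum of $\hatent$ over paths hitting the limit points in any order, is correct (summing over the finitely many orderings only costs subexponential factors), and your remark about well-separatedness of the $\bY_i^{(1)}$ plays the same role as the paper's estimate $\bbP(\exists\, i\neq j,\ \|Y_i^{(N)}-Y_j^{(N)}\|<\epsilon N)\leq C_L\epsilon^d$.

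One small point worth flagging: your calculation, like the paper's own argument via Proposition~\ref{thm3} applied to $\rho\beta_N$, actually yields the limit $\sup_{s}\{\rho\,\pi_1^{(L)}(s)-\tfrac{1}{\beta}\hatent(s)\}=\rho\,\hat\cT_{\rho\beta}^{(L)}$, which differs by a factor $\rho$ from the displayed statement $\hat\cT_{\rho\beta}^{(L)}$. You assert that ``the overall maximum coincides with the claimed continuous variational problem'' without noticing this discrepancy, but your derivation is the correct one and the discrepancy is a typo in the Proposition (harmless downstream, since in Step~3 the normalisation $\tfrac{1}{1+\eta}\log Z_{N,(1+\eta)\beta_N}^{(L)}$ absorbs the factor either way and one still obtains $\hat\cT_\beta$ as $\eta\to 0$, $L\to\infty$). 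It is worth being alert to such a mismatch: if your derived limit does not literally match the claimed one, that is a signal to recheck rather than to assert coincidence.
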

\begin{proof}
Let us denote $\Upsilon_{L}:=\{Y_{1}^{(N)},\cdots,Y_{L}^{(N)}\}$ the set of the (random) positions of the $L$ largest weights
in $\Lambda_N$.
Then, we can write
\begin{align*}
Z_{N,\rho\beta_{N}}^{(L)} = \sum_{\Delta \subset \Upsilon_L}   \exp\left(\rho\beta_{N}\Omega_{N,N}^{(L)}(\Delta)\right) \bP\left(\cR_{N}\cap\Upsilon_{L}=\Delta\right)\, ,
\end{align*}
where we used the convention that the points of $\Delta$ are ordered, \textit{i.e.} $\Delta=(x_1,\ldots, x_k)$, 
and used some abuse of notation in writing $\Delta\subset \gU_L$.
Also, let us note that we use the notation $\cR_{N}\cap\Upsilon_{L}=\Delta$ to state that the points in $\Delta$ are visited in the correct order by the random walk, and that no other point in $\gU_L$ is visited.
We then have the following lower and upper bounds:
\begin{align*}
 Z_{N,\rho\beta_{N}}^{(L)} 
& \geq \exp\bigg(  \sup_{\Delta\subset \gU_{L} } \Big\{  \rho \gb_N \Omega_{N,N}^{(L)}  + \log \bP\left(\cR_{N}\cap\Upsilon_{L}=\Delta\right)\Big\}  \bigg) \,,  \\
 Z_{N,\rho\beta_{N}}^{(L)} 
& \leq 2^L L! \exp\bigg(  \sup_{\Delta\subset \gU_{L} } \Big\{  \rho \gb_N \Omega_{N,N}^{(L)}  + \log \bP\left( \Delta \subset \cR_{N}  \right)\Big\} \bigg) \, ,
\end{align*}
where again we used the conventions
that $\Delta = (x_1,\ldots, x_k)$ are ordered points in $\gU_L$ and that we denoted $\Delta \subset \cR_N$ the fact  that the points in $\Delta$ are visited in the correct order by the random walk.
Recalling the definition~\eqref{def:intDiscrVarProb2}
of the discrete variational problem~$\hat \cT_N^{\gb,(\ell)}$,
we can therefore rewrite
\begin{equation}
\label{writevaritaional}
\begin{split}
\log  Z_{N,\rho\beta_{N}}^{(L)}  - \hat \cT_N^{\rho\gb_N,(L)}  
& \geq  \inf_{\Delta \subset \gU_L}  \big\{ \hatent_N(\Delta) + \log \bP\left(   \cR_{N} \cap \gU_L =\Delta  \right) \big\} \,, \\
\log  Z_{N,\rho\beta_{N}}^{(L)}  - \hat \cT_N^{\rho\gb_N,(L)} & \leq  \log(2^L L!) + \sup_{\Delta \subset \gU_L}  \big\{ \hatent_N(\Delta) +\log \bP\left( \Delta \subset \cR_{N}  \right) \big\}\, .
\end{split}
\end{equation}
In view of the convergence in Proposition~\ref{thm3},
we therefore only have to prove that the upper and lower bounds are negligible.
More precisely, since there are only finitely many terms in the infimum (or in the supremum) and since $\gb_N N^{d/\ga} \sim \gb N$ with $\gb>0$, we only have to prove that
for any distinct indices $i_1,\ldots, i_k$, the (random) subset $\Delta  = (Y_{i_1}^{(N)}, \ldots, Y_{i_k}^{(N)})$ of $\gU_L$ verifies
\begin{equation}
\label{convPent}
\frac{1}{N}\big| \hatent_N(\Delta) + \log \bP\left(   \cR_{N} \cap \gU_L =\Delta  \right) \big|  \xrightarrow{\bbP} 0
\quad \text{ and } \quad
\frac{1}{N} \big| \hatent_N(\Delta) + \log \bP\left(   \Delta \subset \cR_{N} \right) \big|  \xrightarrow{\bbP} 0 \, .
\end{equation}
Note that for any fixed $L$,
for any $\epsilon>0$ we have
\begin{equation*}
\bbP\Big(\exists\, 1\leq i,j \leq L,\ \mbox{s.t.}~\|Y_i^{(N)}-Y_j^{(N)}\|<\epsilon N \Big)\leq\binom{L}{2}\bbP\left(\|Y_{1}^{(N)}-Y_{2}^{(N)}\|<\epsilon N\right)\leq C_{L}\epsilon^{d}.
\end{equation*}
The points in $\gU_L$ are therefore at distance at least $\gep N$
from each other, with $\bbP$-probability larger than $1-c \gep^d$.
Thanks to Lemma~\ref{AppendixL1bis} ---~and to the definition~\eqref{eq:hatentropydef} of $\hatent_N(\Delta)$~---,
this shows that 
\begin{equation}
\label{approxentropy}
\frac{1}{N} \big| \hatent_N(\Delta) + \log \bP\left(   \Delta \subset \cR_{N} \right) \big|  \to 0
\end{equation}
with high $\bbP$-probability.
Notice also that $\bP( \cR_{N} \cap \gU_L =\Delta) =  \bP( \Delta\subset \cR_N, \cR_{N} \cap (\gU_L \setminus \Delta ) =\emptyset )$
and that $\bP(\cR_{N} \cap (\gU_L \setminus \Delta  ) =\emptyset  \mid \Delta\subset \cR_N) \to 1$ with high $\bbP$-probability 
(using Lemma \ref{AppendixL2} and that the points in $\gU_L$ are distant by at least $\gep N$
with high $\bbP$-probability).
From \eqref{approxentropy}, we therefore also get that
\[
\frac{1}{N}\big| \hatent_N(\Delta) + \log \bP\left(   \cR_{N} \cap \gU_L =\Delta  \right) \big|  \to 0 \, ,
\]
with high $\bbP$-probability.
This proves~\eqref{convPent} and concludes the proof.
\end{proof}

\smallskip
\noindent
{\bf Step 3. Conclusion. }
Note that, as stressed in Proposition~\ref{thm3}, 
we have $\lim_{L\to\infty}\hat{\cT}_{\rho\beta}^{(L)} =\hat{\cT}_{\rho\beta}$ almost surely.
Due to the continuity in $\gb$ (see Section~\ref{subsec:variat}) we  also have that $\lim_{\rho\to 1}\hat{\cT}_{\rho\beta} = \hat{\cT}_{\beta}$ a.s.

The conclusion of the proof is then just a matter of combining the different steps in the correct order. For any fixed $\gep>0$,
(i) we choose $\eta =\eta_{\gep} \in (0,1)$ sufficiently small (in \eqref{ineq:logZ}) and then $L_0>0$ so that
for any $L\geq L_0$ both $\hat{\cT}^{(L)}_{\beta}$
and $ \frac{1}{1+\eta} \hat{\cT}^{(L)}_{ (1+\eta)\beta}$ are at distance less than $\gep$ from $\hat \cT_{\gb}$ with $\bbP$-probability larger than $1-\gep$;
(ii) we fix some $L$ large enough so that the conclusion of Lemma~\ref{ZLellvanish+} holds, with $\rho=1+\eta^{-1}$.
The conclusion then follows by applying Proposition~\ref{logZL}
to both sides of the inequality~\eqref{ineq:logZ}
(by Skorokhod's representation theorem, cf.\ Remark \ref{rem:Skorokhod}, we can work as if the convergence in~Proposition~\ref{logZL}  is an almost sure convergence).
\qed

\subsection{Transversal fluctuations: proof of Proposition~\ref{prop:fluctuationsN}}
\label{sec:transversalA}

%Let us show in this section that if $\hat \cT_{\gb}>0$
%the the transversal fluctuations are of order $N$ (recall that by Skorokhod's representation theorem we can upgrade the convergence~\eqref{def:varprob1} with an almost sure one).
Since the transversal fluctuations are at most $N$, we therefore only have to prove that for any $\gep>0$ there exists some $\eta>0$ such that for all $N$ large enough
\[
\bP_{N,\gb_N}^{\go,h}\Big(  \max_{1\leq n \leq N} \|S_n\| \leq \eta\, N\Big) \leq  \gep  \,,
\]
with large $\bbP$-probability conditionally on $\hat\cT_{\gb}>0$.

The same proof as above can easily be adapted to show that
we have the following convergence in distribution
(it can be upgraded to an almost sure convergence by Skorokhod's representation theorem)
\begin{equation}
\label{convergenceeta}
\frac{1}{ \gb_N N^{d/\ga}} \log Z_{N,\gb_N}^{\go,h}\big( \max_{1\leq n \leq N} \|S_n\| \leq \eta\, N \big)  \stackrel{(d)}{ \longrightarrow} \hat\cT_{\gb,\eta} := \sup_{s\in \cD_{\eta}, \hatent(s) <+\infty} 
\big\{ \pi(s) - \tfrac{1}{ \gb} \hatent(s) \big\} \,, 
\end{equation}
where we set $Z_{N,\gb_N}^{\go,h} (\cA) = \bE\big[\exp\big(\sum_{x\in\cR_{N}}\beta_N (\omega_{x}-h) \big) \ind_{\cA}\big]$,
and $\cD_{\eta}$ is defined in~\eqref{def:Dq}.

This shows in particular, using Skorokhod's representation theorem (see Remark \ref{rem:Skorokhod}),
that
\begin{equation}
\label{convforfluct}
\begin{split}
&\frac{1}{ \gb_N N^{d/\ga}}  \log \bP_{N,\gb_N}^{\go,h}\Big(  \max_{1\leq n \leq N} \|S_n\| \leq \eta\, N\Big) \\
 & = \frac{1}{ \gb_N N^{d/\ga}}  \log Z_{N,\gb_N}^{\go,h}\big( \max_{1\leq n \leq N} \|S_n\| \leq \eta\, N \big)
 - \frac{1}{ \gb_N N^{d/\ga}}  \log Z_{N,\gb_N}^{\go,h} \longrightarrow  \hat\cT_{\gb,\eta}  - \hat\cT_{\gb} \, \quad \text{a.s.}\, 
 \end{split}
\end{equation}
Noting that $\lim_{\eta\downarrow 0} \hat\cT_{\gb,\eta} =0$ a.s.,
then conditionally on having $\hat \cT_{\gb} >0$
we can choose
$\eta$ small enough so that
$\hat\cT_{\gb,\eta}  - \hat\cT_{\gb}$ is negative, with high (conditional) $\bbP$-probability.
From the convergence in~\eqref{convforfluct}, on the event $\hat\cT_{\gb,\eta}  - \hat\cT_{\gb}<0$
we have that $\log \bP_{N,\gb_N}^{\go,h} (  \max_{1\leq n \leq N} \|S_n\| \leq \eta\, N )$ goes to $-\infty$,  that is
$\bP_{N,\gb_N}^{\go,h} (  \max_{1\leq n \leq N} \|S_n\| \leq \eta\, N)$ goes to $0$ (exponentially fast).
This concludes the proof.
\qed

\section{Region B: proof of Theorem \ref{thm:B}}
\label{sec:convpartfunc}

In this section we prove Theorem \ref{thm:B}.
Recall that we choose $h=\mu$:
we denote $\bP_{N,\gb_N}^{\go,h=\mu}$ and $Z_{N,\beta_{N}}^{\omega,h=\mu}$ by $\brP_{N,\gb_N}$ and $\bar{Z}_{N,\gb_N}$ respectively.
Recall that $\ga\in (\frac d2, d)$ and that $\lim_{N\to\infty} \gb_N N^{-\gamma} = \gb \in (0+\infty)$ with $\gamma \in (\frac{d-\ga}{\ga}, \frac{d}{2\ga})$.
Define $\xi := \frac{\ga (1-\gamma)}{2\ga -d}$, which turns out to be the end-to-end critical exponent,
and note that we have  $\xi\in (\frac12,1)$ for the range of parameters considered.

Let
$M_{N}:=\max_{1\leq n\leq N}\|S_{n}\|_{\infty}$ (this notation is used in the rest of the paper).
For any~$q>0$, we split $\bar{Z}_{N,\gb_N}$ as follows
\begin{equation}
\label{splitbarZ}
\bar{Z}_{N,\gb_N}=\bar{Z}_{N,\gb_N}\big(M_{N}>q N^{\xi} \big)+\bar{Z}_{N,\gb_N} \big(M_{N}\leq q N^{\xi} \big)\,,
\end{equation}
where $\bar{Z}_{N,{\gb_N}} (\cA) = \bE\big[\exp\big( \sum_{x\in\cR_{N}}\beta_N(\omega_{x}-\mu) \big) \ind_{\cA}\big]$.
%---~so that $\bar \bP_{N,\gb_N} (\cA) = \bar{Z}_{N,\gb_N} (\cA) /\bar Z_N$.

We divide the proof into three parts:
\textit{Step~1}.\ we show that $\bar Z_{N,\gb_N}(M_{N}> q N^{\xi} )$ is small for $q$ large with high $\bbP$-probability, which shows that $\bar{Z}_{N,\gb_N}(M_{N}> q N^{\xi})$ is negligible compared with $\bar Z_{N,\gb_N}$; \textit{Step~2}.\ we show that $ \log \bar{Z}_{N,\gb_N} (M_{N}\leq q N^{\xi})$,
 %is comparable to $Z_{N}(M_{N}\leq qN^{\xi})$ and 
when suitably rescaled,
converges in distribution to~$\cT_{\gb,q}$;
\textit{Step~3}.\ we let $q\to\infty$ and we conclude our main result.

The main difference here with respect to Section~\ref{sec:regionA} is the fact that we need to control the partition function with trajectories $M_N \geq q N^{\xi}$   (we had $\xi=1$ in the previous section): this brings many additional technical difficulties and makes the first step much more difficult.

\subsection{Step 1. Controlling $\bar Z_{N,\gb_N}(M_{N}\geq q N^{\xi})$} \label{subsec:step1B}

We prove the following estimate, slightly more general than what we need.
%on the probability, under $\brP_{N,\gb_N}$, that $M_N$ is large.

\begin{proposition}\label{prop:Z>Ar}
Suppose that $\gb_N N^{d\xi/\ga} \leq c N^{2\xi-1}$ for all $N$, for some constant $c$.
There exist positive constants $c_{1},c_{2}$ and $\nu>0$, such that for any sequence $A_{N}\geq1$, we can find $N_{0}$ such that for any $N\geq N_{0}$ we have 
\[
\bbP\Big( \bar Z_{N,\gb_N} \big(M_{N}\geq A_{N} \, N^{\xi}\big)\geq e^{-c_{1}A_{N}^{2} N^{2\xi-1}} \Big)  \leq c_2 A_N^{-\nu} 
\]
\end{proposition}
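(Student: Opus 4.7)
The target bound $\exp(-c_1 A_N^2 N^{2\xi-1})$ is exactly the entropic cost for a simple random walk to reach distance $A_N N^{\xi}$, so I would organize the proof as a scale-by-scale energy-entropy comparison. The hypothesis $\gb_N N^{d\xi/\alpha} \leq c N^{2\xi-1}$ encodes precisely that, at scale $h = A_N N^{\xi}$, the maximal energetic gain $\gb_N h^{d/\alpha} \asymp A_N^{d/\alpha} N^{2\xi-1}$ is dominated by the entropic cost $h^2/N = A_N^2 N^{2\xi-1}$ as soon as $A_N$ is large, since $d/\alpha < 2$ (using $\alpha > d/2$); moreover the gap widens as the scale grows.

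\textbf{Dyadic decomposition in the end-to-end distance.} I would decompose
$$\bar Z_{N,\gb_N}(M_N \geq A_N N^{\xi}) = \sum_{k=0}^{K_N} \bar Z_{N,\gb_N}(h_k \leq M_N < h_{k+1}),$$
with $h_k = 2^k A_N N^{\xi}$ and $K_N = O(\log N)$ (since $M_N \leq N$). On $\{M_N < h_{k+1}\}$ the range lies in $\Lambda_{h_{k+1}}$, so the $k$-th summand is bounded by
$$\bE\Bigl[\exp\Bigl(\gb_N \sum_{x \in \cR_N \cap \Lambda_{h_{k+1}}} (\omega_x - \mu)\Bigr) \ind_{\{M_N \geq h_k\}}\Bigr],$$
and the simple random walk estimate $\bP(M_N \geq h_k) \leq C\exp(-c\, h_k^2/N)$ (Gaussian regime of the moderate/large deviation bound from Lemma~\ref{lem:rate}; for $h_k$ comparable to $N$ one has the even stronger $\exp(-c N)$ bound) provides the essential entropic factor.

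\textbf{Per-scale bound via three-layer H\"older.} Fixing $h = h_{k+1}$, I would apply H\"older's inequality with three conjugate exponents to split the disorder contribution into three magnitude classes separated by thresholds $T^- \ll T^+ \asymp h^{d/\alpha}$: large weights $\omega_x > T^+$, intermediate $T^- < \omega_x \leq T^+$, and small $\omega_x \leq T^-$. For large weights (only finitely many are present in $\Lambda_h$ with overwhelming probability) I would invoke the tail estimate~\eqref{tail:T<ell} on $\cT_{N,h}^{\beta_N,(\ell)}$ combined with the pointwise bound $\gb_N \Omega_h^{(\ell)}(\cR_N \cap \Lambda_h) \leq \cT_{N,h}^{\beta_N,(\ell)} + \ent_N(\cR_N \cap \Lambda_h)$; this produces a factor absorbable in a small fraction of $h^2/N$. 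For small weights, the truncated centered variables $(\omega_x - \mu)\ind_{\{\omega_x \leq T^-\}}$ have finite exponential moments up to order $\gb_N$, and a Laplace/Markov estimate combined with $|\cR_N| \leq N$ makes their contribution negligible. For intermediate weights, the E-LPP tail~\eqref{tail:T>ell} on $\cT_{N,h}^{\beta_N,(>\ell)}$ together with a further dyadic splitting of $[T^-, T^+]$ in weight magnitude provides the needed matching of energy to entropy.

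\textbf{Main obstacle and conclusion.} The most delicate step is the intermediate range: one must split $[T^-,T^+]$ into sub-layers and apply~\eqref{tail:T>ell} at each sub-layer with a carefully balanced share of the entropic budget, in the spirit of \cite{BT19a,BT19b} but adapted to the non-directed setting. Once each scale yields $\bar Z_{N,\gb_N}(h_k \leq M_N < h_{k+1}) \leq \exp(-2 c_1 h_k^2/N)$ with $\bbP$-probability at least $1 - c\, A_N^{-\nu} 2^{-k\nu}$ for some $\nu > 0$, summing over the $O(\log N)$ scales and using the geometric dominance $2^{2k} A_N^2 \gg 2^{kd/\alpha} A_N^{d/\alpha}$ delivers the stated bound, the residual probabilities summing to at most $c_2 A_N^{-\nu}$.
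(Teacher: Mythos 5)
Your proposal follows essentially the same route as the paper's proof: a dyadic decomposition in $M_N$, a random-walk moderate-deviation bound supplying the entropic factor $e^{-c\,2^{2k}N^{2\xi-1}}$, and a three-way split of the disorder by magnitude, treated via the E-LPP tail estimates \eqref{tail:T<ell}--\eqref{tail:T>ell} for the large/intermediate classes and via exponential-moment/Markov bounds for the small class --- this is precisely the content of the paper's Lemma~\ref{lem:S<qr}, including your ``most delicate step'' of further dyadically refining the intermediate range $[T^-,T^+]$. The one step you leave implicit is the decoupling of the indicator $\ind_{\{M_N\geq h_k\}}$ from the disorder exponential: the paper does this by a preliminary Cauchy--Schwarz step, $\bigl(\bar Z_{N,\gb_N}(M_N\in B_{k,N})\bigr)^2 \leq \bP(M_N\geq 2^{k-1}N^\xi)\,\bar Z_{N,2\gb_N}(M_N\leq 2^k N^\xi)$, and your three-way H\"older alone would not pull out the probability factor, so you should make a decoupling of that kind explicit (a fourth H\"older factor for the indicator would also work).
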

\begin{proof}
We partition the interval $[A_{N}N^{\xi},N]$ into blocks
\begin{equation}
B_{k,N}:=[2^{k-1}N^{\xi},2^{k}N^{\xi}],\quad k=\log_{2}A_{N}+1,\cdots,\log_{2}(N^{1-\xi})
\end{equation}
and we divide the partition function according
to the value of $M_N$:
\begin{equation}\label{Zbardec}
\bar{Z}_{N,\gb_N}\left(M_{N}\geq A_{N}N^{\xi}\right)=\sum\limits_{k=\log_{2}A_{N}+1}^{\log_{2}(N^{1-\xi})}\bar{Z}_{N}\left(M_{N}\in B_{k,N}\right).
\end{equation}
By applying Cauchy--Schwarz inequality, we have
\begin{equation}
\label{CSineq1}
\left(\bar{Z}_{N,\gb_N}\left(M_{N}\in B_{k,N}\right)\right)^{2}\leq\bP\big(M_{N}\geq 2^{k-1}N^{\xi}\big)\times\bar{Z}_{N,2\beta_{N}}\left(M_{N}\leq 2^{k} N^{\xi}\right).
\end{equation}
Then, a simple random walk estimate gives that
there are some constants $C_d,c_d$ such that 
for all $N\geq 1$ and all $k\geq 1$
\begin{equation}
\label{CSineq2}
\bP\big(M_{N}\geq2^{k-1}N^{\xi}\big)\leq C_{d}\,  e^{-c_{d}2^{2k+1}N^{2\xi-1}} \, .
\end{equation}
To obtain this, we can for instance use a union bound on the different coordinates, to reduce to one-dimensional random walk estimates, for which such an inequality is 
classical, see e.g.~\cite{Feller1}.
%
%We therefore get that
%\begin{equation}\label{CSineq}
%\big( \bar{Z}_{N}\left(M_{N}\in B_{k,N}\right) \big)^2 \leq
%C_{d}\,  e^{-c_{d}2^{2k+1} N^{2\xi-1}}  \bar{Z}_{N,2\beta_{N}}\big(M_{N}\leq2^{k}N^{\xi}\big).
%\end{equation}

Let $c_{1}>0$ sufficiently small such that
\begin{equation}
\sum\limits_{k=\log_{2}A_{N}+1}^{+\infty} C_{d}^{1/2} \, e^{-c_{d}2^{2k-1}N^{2\xi-1} }\leq e^{-c_{1}A_{N}^{2} \, N^{2\xi-1}}.
\end{equation}
By a union bound, we obtain 
\begin{equation}
\begin{split}
\bbP \Big( \bar Z_{N,\gb_N}  \big(M_{N}\geq A_{N}N^{\xi}\big) &\geq  e^{-c_{1}A_{N}^{2} \, N^{2\xi-1} }\Big) \\
& \leq  \sum\limits_{k=\log_{2}A_{N}+1}^{\log_{2}(N^{1-\xi})}\bbP\left(\bar{Z}_{N,\gb_N}\left(M_{N}\in B_{k,N}\right)> C_{d}^{1/2} \, e^{- c_d 2^{2k-1} N^{2\xi-1}}\right) \\
& \leq \sum\limits_{k=\log_{2}A_{N}+1}^{+\infty}\bbP\left(\bar{Z}_{N,2\beta_{N}}\big(M_{N}\leq 2^{k}N^{\xi}\big)>e^{ c_d 2^{2k-1}N^{2\xi-1}}\right) \, ,
\end{split}
\end{equation}
where we used~\eqref{CSineq1} and~\eqref{CSineq2} for the last inequality.
%
%Also, since $\brP_{N,\gb_N}\left(M_{N}\geq A_{N}N^{\xi}\right)=\bar Z_{N,\gb_N}\left(M_{N}\geq A_{N}N^{\xi}\right) / \bar Z_{N,\gb_N} $, we similarly get that
%\begin{multline}\label{unionbound}
%\bbP \Big(\brP_{N} \big(M_{N}\geq A_{N}N^{\xi}\big)\geq N^{d}e^{-c_{1}A_{N}^{2} \, N^{2\xi-1} }\Big)\\
%\leq  \bbP\big(\bar{Z}_{N,\gb_N}\leq  N^{-d} \big)+ \sum\limits_{k=\log_{2}A_{N}+1}^{+\infty}\bbP\left(\bar{Z}_{N,2\beta_{N}}\big(M_{N}\leq 2^{k}N^{\xi}\big)>e^{ c_d 2^{2k}N^{2\xi-1}}\right) \,.
%\end{multline}
The result then follows directly from Lemma~\ref{lem:S<qr} below.
\end{proof}
%
%\begin{lemma}\label{lem:S<qr}
%Suppose that $\gb_N N^{d\xi/\ga} \leq c N^{2\xi-1}$ for all $N$.
%Let $c_{0}>0$. There exist some constant $q_{0} \geq 1$ and $\nu>0$, such that for any $q\geq q_{0}$, we have
%\begin{equation}
%\bbP\left( \bar Z_{N,2\beta_{N}}\big(M_{N}\leq qN^{\xi}\big)>e^{c_{0}q^{2}N^{2\xi-1} }\right)\leq q^{-\nu}.
%\end{equation}
%\end{lemma}

We formulate the following result in a general manner, since it will also be useful when~$\ga \in (0,\frac d2)$.
We still write $\bar Z_{N,\beta_{N}} = Z_{N,\gb_N}^{\go, h=\mu}$,
with $\mu=\bbE[\go_0]$ when $\ga>1$ and $\mu$  any real number when $\ga\in (0,1)$.

\begin{lemma}\label{lem:S<qr}
Assume that $\ga\in (0,d)$ and
let $(h_N)_{N\geq1}$ be a sequence
verifying
$\lim_{N\to\infty} h_N^2/N =+\infty$
%{\blue with $h_N^2/N \geq \log \log N$ in dimension $d=2$}, 
and $h_N\leq N$.
Define $\gep_N := N \gb_N h_N^{d/\ga-2}$.
For any constant $c_0$,
there exist constants $c>0$ and $\nu>0$ such that for $N$ sufficiently large
%if there is some $\gep_{N} \in (0,1)$ such that  $\gb_N h_N^{d/\ga} \leq \gep_N h_N^2 /N$ then
\begin{equation}
\bbP\left( \bar Z_{N,\beta_{N}}\big(M_{N}\leq h_N\big)> e^{ c_0 h_N^2 /N } \right) \leq  c\, \gep_N^{\nu}  + \Big(\frac{h_N^2}{N}\Big)^{-\nu}.
\end{equation}
As a consequence, when $\alpha>\frac{d}{2}$, if $\gb_N N^{\xi d/\ga} \leq c N^{2\xi-1}$  then letting $h_N = q N^{\xi}$ we get that $\gep_N \leq c q^{\frac{d}{2} -\ga}$.
Hence, one can choose $q_{0} \geq 1$ such that for any $q\geq q_{0}$ we have
\begin{equation}\label{adapt-a}
\bbP\left( \bar Z_{N,2\beta_{N}}\big(M_{N}\leq qN^{\xi}\big)>e^{ c_{0}q^{2}N^{2\xi-1} }\right)\leq c \, q^{-\nu (2-\frac{d}{\ga}) }.
\end{equation}
\end{lemma}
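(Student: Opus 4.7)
The strategy is a truncation-plus-H\"older argument, treating the contributions of large and small weights separately.

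First, I would fix a threshold $T_N$ (to be chosen of order $h_N^{d/\alpha}$, i.e.\ the typical scale of the maximal weight in $\Lambda_{h_N}$) and write $\omega_x - \mu$ as a sum of its truncated, centered part $\omega_x^{\le} := \omega_x\ind_{\{\omega_x\le T_N\}} - \bbE[\omega_0\ind_{\{\omega_0\le T_N\}}]$ and its overshoot part $\omega_x^{>} := (\omega_x-\mu) - \omega_x^{\le}$. Using H\"older's inequality with conjugate exponents $1+\eta$ and $1+\eta^{-1}$, one gets
\begin{equation*}
\bar Z_{N,\beta_N}(M_N\le h_N) \le \big(Z^{\le}_{N,(1+\eta)\beta_N}(M_N\le h_N)\big)^{\frac{1}{1+\eta}}\big(Z^{>}_{N,(1+\eta^{-1})\beta_N}(M_N\le h_N)\big)^{\frac{\eta}{1+\eta}}.
\end{equation*}

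For the small-weights factor $Z^{\le}$, I would use the exponential moment estimates on the truncated environment from Appendix~\ref{app:envir} combined with the deterministic bound $|\cR_N| \le N$, which gives $\log Z^{\le}_{N,(1+\eta)\beta_N} \le c N \beta_N^2 T_N^{(2-\alpha)\vee 0}$ (with a logarithmic correction when $\alpha=2$). With $T_N \asymp h_N^{d/\alpha}$, an elementary computation rewrites this bound as $c \cdot \gep_N^{\kappa} \cdot h_N^2/N$ for some $\kappa>0$, which is much smaller than $c_0 h_N^2/N$ on an event of $\bbP$-probability at least $1 - c(h_N^2/N)^{-\nu}$ via a Chebyshev/concentration argument; this is where the hypothesis $h_N^2/N \to\infty$ enters.

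For the large-weights factor $Z^{>}$, I would bound it crudely by
\begin{equation*}
Z^{>}_{N,(1+\eta^{-1})\beta_N}(M_N\le h_N) \le |\Lambda_{h_N}|^{|\cR_N|}\exp\Big(\sup_{\Delta\subset\Lambda_{h_N}}\big\{(1+\eta^{-1})\beta_N \Omega_{h_N}^{(>T_N)}(\Delta) - \ent_N(\Delta)\big\}\Big),
\end{equation*}
using the random-walk entropy bound $\log\bP(\Delta\subset\cR_N) \le -\ent_N(\Delta)(1+o(1))$ of Lemma~\ref{AppendixL2}. The calibration $T_N \asymp h_N^{d/\alpha}$ ensures that only the top $\cO(1)$ weights in $\Lambda_{h_N}$ exceed $T_N$ with non-vanishing probability, so the variational problem inside the supremum is essentially $\cT_{N,h_N}^{(1+\eta^{-1})\beta_N,(>\ell)}$ for a well-chosen $\ell$. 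Applying Proposition~\ref{tail:disT} with the identification $(\beta_N h_N^{d/\alpha - 1})^2 \asymp \gep_N \cdot h_N^2/N$ yields $\bbP(\log Z^{>} \ge c_0 h_N^2/N) \le c\,\gep_N^{\nu}$ for some $\nu>0$.

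Combining these two bounds and choosing $\eta$ small (depending on $c_0$) gives the desired estimate. The main obstacle is the delicate tuning of the truncation level $T_N$ and the H\"older parameter $\eta$: the small-weights exponent must stay strictly below the entropic scale $h_N^2/N$, while the large-weights variational problem must still be absorbed at the same scale via Proposition~\ref{tail:disT}. When $\alpha$ is close to $d/2$ one may need to split the intermediate weights further into dyadic ranges between $T_N$ and the maximal weight, applying Proposition~\ref{tail:disT} in each range and summing a geometric series of tail bounds---this is the step whose combinatorics would need to be carried out carefully to get the final exponent $\nu$ in $\gep_N^\nu + (h_N^2/N)^{-\nu}$.
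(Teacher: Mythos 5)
There is a genuine gap in the treatment of the ``small'' weights, and it is the central difficulty of this lemma, not a side issue. Your truncation level $T_N\asymp h_N^{d/\alpha}$ gives
\[
\beta_N T_N \;\asymp\; \beta_N h_N^{d/\alpha} \;=\; \gep_N\,\frac{h_N^2}{N}\;,
\]
which diverges under the standing hypothesis $h_N^2/N\to\infty$ whenever $\gep_N$ stays bounded away from $0$ (and in the intended application~\eqref{adapt-a} one has $\gep_N\asymp q^{d/2-\alpha}$, a constant). Consequently the argument $\beta_N\omega_x^{\le}$ inside the exponential is \emph{not} uniformly small, the Taylor expansion underlying $\log Z^{\le}_{N,(1+\eta)\beta_N}\lesssim N\beta_N^2 T_N^{(2-\alpha)\vee 0}$ is illegitimate, and Lemma~\ref{lem:moments} applied with $k_N=T_N$ instead yields a factor $e^{\beta_N T_N}=e^{\gep_N h_N^2/N}$ which cannot be absorbed at scale $h_N^2/N$. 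This is precisely why the paper truncates the genuinely small weights at $\beta_N\omega_x\le 1$, i.e.\ $\omega_x\le 1/\beta_N$ (where the expansion \emph{is} valid, see~\eqref{Z<1} and~\eqref{eq:Zbar<1}--\eqref{eq:Zbar<1_E}), and then isolates the window $\beta_N\omega_x\in(1,Q]$ as a separate third term~\eqref{Z(1,Q)}, handled by the dyadic decomposition into ranges $(Q_j,Q_{j-1}]$ together with the E-LPP estimate on $L_{\ell_j}^{(NB_k)}(h_N)$ from Theorem~\ref{thm1}. You mention this dyadic step as a refinement that might be needed when $\alpha$ is close to $d/2$; in fact it is essential for all $\alpha\in(0,d)$, because your two-factor H\"older split leaves a multiplicative gap of size $\asymp\gep_N h_N^2/N$ between the highest weight you can Taylor-expand and the lowest weight that fits into a finite-$\ell$ variational problem, and nothing in your argument bridges it.

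A secondary but nontrivial point: the large-weight factor should be bounded via the variational problem restricted to the \emph{top} $\ell$ weights, $\cT_{N,h_N}^{\,\cdot\,,(\ell)}$ (as in~\eqref{modpartfuncl1}--\eqref{main014}), and its tail via Proposition~\ref{tail:disT}--\eqref{tail:T<ell}; you invoke $\cT^{\,\cdot\,,(>\ell)}$, which controls the contribution \emph{beyond} the $\ell$-th largest weight, i.e.\ the opposite part of the order statistics. The identification $N\big(\beta_N h_N^{d/\alpha-1}\big)^2=\gep_N^2\,h_N^2/N$ and the choice $t\asymp\gep_N^{-2}$ in~\eqref{tail:T<ell} are what produce the $\gep_N^\nu$ factor in the final bound.
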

%
%
%\begin{lemma}\label{lem:Z_bar}
%Assume that $\ga \in (0,d)$ and let $(h_N)_{N\geq 1}$ be a sequence satisfying $N^{1/2} \leq h_N \leq N$ and $\gb_N  h_N^{d/\ga}  \leq c h_N^2 /N$ for all $N$.
%There exist some $\zeta \in (0,1)$ 
%and some constants $c',c''>0$ 
%such that for 
%%any sequence $1\geq\epsilon_{N}\geq N^{-\zeta (2\xi-1)}$ and 
%$N$ large enough,
%\begin{equation*}
%\bbP\left(\bar{Z}_{N,\gb_N}\leq N^{-d}\,  e^{ c' (h_N^2/N)^{\zeta} } \right)\leq  \exp\Big( - \frac12  (h_N^2/N)^{\zeta} \Big) + 
%\exp\Big( - c''  (h_N^2/N)^{ \zeta (\frac{d}{2}-\ga) } \Big) \,.
%\end{equation*}
%If $\gb_N N^{d\xi /\ga} \leq c N^{2\xi-1}$ with $\xi\in (\frac12,1)$, then the right-hand side goes to~$0$.
%\end{lemma}

The proof is analogous to that of Lemma~4.1 in \cite{BT19a};
let us warn the reader 
that it is quite technical.

\begin{proof}[Proof of Lemma~\ref{lem:S<qr}]
Let us note that the bound is trivial if $\gep_N \geq 1$. We will therefore assume that $\gep_N \leq 1$ for all $N$; in particular we have $\gb_N h_N^{d/\ga} \leq  h_N^2/N$.

Let us split $\bar Z_{N,\beta_{N}}(M_{N}\leq h_N)$
into three pieces, that we will control separately.
By H\"{o}lder's inequality (using also that $\mu>0$),
% and since $\mu>0$,
we have
\begin{equation}
\log \bar Z_{N,\beta_{N}}\big(M_{N}\leq h_N\big)\leq\frac{1}{3}\log Z_{N,3\beta_{N}}^{(>Q)}+\frac{1}{3}\log Z_{N,3\beta_{N}}^{((1,Q])}+\frac{1}{3}\log \bar Z_{N,3\beta_{N}}^{(\leq1)}
\end{equation}
where we defined, for any $\rho\in \R_+$,
\begin{align}
\label{Z>Q}
Z_{N,\rho\beta_{N}}^{(>Q)}&:=\bE\Big[\exp\Big(\sum\limits_{x\in\cR_{N}}\rho\beta_{N}\omega_{x}\mathbbm{1}_{\{\beta_{N}\omega_{x}>Q\}}\Big)\mathbbm{1}_{\left\{M_{N}\leq h_N \right\}}\Big],\\
\label{Z(1,Q)}
Z_{N,\rho\beta_{N}}^{(1,Q]}&:=\bE\Big[\exp\Big(\sum\limits_{x\in\cR_{N}}\rho\beta_{N}\omega_{x}\mathbbm{1}_{\{\beta_{N}\omega_{x}\in(1,Q]\}}\Big)\mathbbm{1}_{\left\{M_{N}\leq h_N\right\}}\Big],\\
\label{Z<1}
\bar Z_{N,\rho\beta_{N}}^{(\leq1)}&:=\bE\Big[\exp\Big(\sum\limits_{x\in\cR_{N}}\rho\beta_{N}(\omega_{x}-\mu)\mathbbm{1}_{\{\beta_{N}\omega_{x}\leq 1\}}\Big)\mathbbm{1}_{\left\{M_{N}\leq h_N\right\}}\Big],
\end{align}
where $Q:=Q_{N}$ is a constant depending on $N$ and will be chosen later (it will be of the form $(h_N^2/N)^{\zeta}$ for some $\zeta \in (0,1)$).
We now deal with the three terms separately.

\subsubsection*{First term: \eqref{Z>Q}. }
We want to show that by properly choosing $Q$ of the form $(h_N^2/N)^{\zeta}$ for a well chosen $\zeta \in (0,1)$, 
we have for $N$ large enough
\begin{equation}\label{bound:Z>Q}
\bbP\Big(\log Z_{N,3\beta_{N}}^{(>Q)}\geq c_0 \frac{h_N^2}{N}\Big)\le    c \gep_N^{\nu} + \Big(\frac{h_N^2}{N}\Big)^{-\nu}\, .
\end{equation}

%First, note that $\bar Z_{N,6\beta_{N}}\leq Z_{N,6\beta_{N}}$ since $\mu>0$. Hence, we only need to prove \eqref{bound:Z>Q} for $Z_{N,6\beta_{N}}$. 
Let $\ell:=\ell_{N}= (h_N^2/N)^{1-\gd}$, where $\gd$ is fixed (small enough). We consider the partition function restricted to the $\ell$ largest weights, as follows
\begin{equation}\label{def:truncatedpart}
Z_{N,3\beta_{N}}^{(\ell)}:=\bE\bigg[\exp\bigg(\sum\limits_{i=1}^{\ell}3\beta_{N} M_{i}^{(h_N)}\mathbbm{1}_{\{ Y_{i}^{(h_N)}\in\cR_{N}\}}\bigg)\bigg],
\end{equation}
where $( M_{i}^{(h_N)}, Y_{i}^{(h_N)})_{i\ge 0}$ is the ordered statistics introduced in \eqref{eq:ordstatdiscr}.

Now, since $\gb_N h_N^{d/\ga} \leq c h_N^2/N$, we have that
\begin{equation}\label{modpartfuncl1}
\begin{split}
\bbP \Big(\beta_{N} & M_{\ell}^{(h_N)} >Q\Big)\leq \bbP\Big( M_{\ell}^{(h_N)}>c_{\gb} Q  N h_N^{-2} h_N^{d/\ga}\Big) \leq \Big( c \ell^{1/\alpha}  N  Q /h_N^2 \Big)^{-\alpha\ell}.
\end{split}
\end{equation}
where we have used Lemma~5.1 in \cite{BT19b}
for the last inequality (see~\eqref{tail:orderstat} in Appendix).
We now choose $Q=\ell^{-\frac{1}{\ga}(1-\delta)^{1/2}}\frac{h_N^2}{N}$; recall that $\ell=\big(\frac{h_N^2}{N}\big)^{(1-\delta)}$. In particular, note that $Q= (h_N^{2}/N)^{\zeta}$ for some $\zeta \in (0,1)$, and so it goes to infinity as $N\to\infty$.

Since $\ell^{1/\alpha}  N  Q /h_N^2 \leq \ell^{\delta/3\alpha}$ if $\delta$ has been fixed small enough, we therefore get that with probability larger than $1-(c\ell^{\delta/3\alpha})^{-\alpha\ell}$  we have
\begin{equation}\label{modpartfuncl2}
\big\{x\in\cR_{N}\colon \beta_{N}\omega_{x}>Q\big\}\subset\Upsilon_{\ell}^{(h_N)}=\big\{Y_{1}^{(h_N)},\ldots,Y_{\ell}^{(h_N)}\big\}\,.
\end{equation}
We stress that on this event we have $Z_{N,3\beta_{N}}^{(>Q)}\leq Z_{N,3\beta_{N}}^{(\ell)}$, and we now turn to $Z_{N,3\beta_{N}}^{(\ell)}$. Recall the notation~\eqref{def:Omegar} of $\gO^{\ell}(\Delta)$. By Lemma \ref{AppendixL2},
we have that 
\begin{equation*}
Z_{N,3\beta_{N}}^{(\ell)}=\sum\limits_{\Delta \subset \gU_{\ell}^{(h_N)}}e^{3\beta_{N}\Omega_{h_N}^{(\ell)}(\Delta)}\bP( \Delta \subset \cR_N  )\leq
\sum\limits_{\Delta \subset\gU_{\ell}^{(h_N)}} (C_{1}  )^{ |\Delta|} \exp\Big( 6\beta_{N}\Omega_{h_N}^{(\ell)}(\Delta)-C_{2}\ent_N(\Delta) \Big)  
\end{equation*}
%{\blue with $\iota_N =1$ in dimension $d\geq 3$ and $\iota_N = \log \log N$ in dimension $d=2$}
(recall that we view $\Delta=(x_1,\ldots, x_k)$ as an ordered subset of $\gU_{\ell}^{(h_N)}$).
Therefore, recalling the definition~\eqref{def:intDiscrVarProb2} of the 
variational problem $\cT_{N,r}^{\gb,(\ell)}$, we have, setting $C_3:=6/C_2$
\begin{equation}
\label{decom:Z<ell0}
Z_{N,3\beta_{N}}^{(\ell)} \leq \ell ! (2C_{1}  )^{ \ell} 
\exp\Big( C_2 \cT_{N,h_N}^{ C_3 \beta_{N},(\ell)} \Big) \, .
\end{equation}
Then for $N$ large enough, using the definition of $\ell = (h_N^2/N)^{1-\gd}$,
% {\blue and that of $\Quentin$ (recall we assumed $h_N^2/N \geq \log \log N$ in dimension $d=2$)}, 
 we get 
\begin{equation}
\label{decom:Z<ell}
\log Z_{N,3\beta_{N}}^{(\ell)} \leq \ell  \big( \log \ell +\log(2C_{1}) \big)+ C_{2} \cT_{N,h_N}^{C_3 \beta_{N},(\ell)}
  \leq  \frac{c_{0}}{2}\frac{h_N^2}{N}+ C_2 \cT_{N,h_N}^{C_3 \gb_N ,(\ell)} \, ,
\end{equation}
where $c_0$ is a fixed constant (appearing
in~\eqref{bound:Z>Q}).
Recalling the definition $\gep_{N}:=N\gb_N h_N^{d/\ga-2}$, we therefore get that 
\begin{equation}\label{main014}
\begin{split}
\bbP\Big(\log Z_{N,3\beta_{N}}^{(\ell)}\geq c_{0} \frac{h_N^2}{N}\Big)
& \leq  \bbP\Big(\cT_{N,h_N}^{C_3 \beta_{N},(\ell)}\geq\frac{c_{0}}{2C_{2}}\frac{h_N^2}{N}\Big)\\
& \leq\bbP\Big(\cT_{N,h_N}^{C_3 \beta_{N},(\ell)}\geq \frac{c_0}{2C_2C_3^2} \gep_N^{-2}  N \Big( C_3\beta_{N} (h_N)^{\frac{d}{\ga} -1 } \Big)^{2}\Big) \, .
\end{split}
\end{equation}
%where we have used for the second inequality that $\gb_N h_N^{d/\ga} \leq \gep_N \frac{h_N^2}{N}$. 
Then,
one simply needs to use
Proposition \ref{tail:disT} to get that 
the last term is bounded by a constant times
$\gep_N^{ \ga d/(\alpha+d)}$. This, together with \eqref{modpartfuncl1}, establishes \eqref{bound:Z>Q}.

\subsubsection*{Second term:~\eqref{Z(1,Q)}.}
We now show that  there exists some $\zeta\in(0,1)$, such that for any $C>0$ and $N$ large enough,
\begin{equation}\label{eq:Z_bar(1,Q)}
\bbP\left(\log Z_{N,3\beta_{N}}^{((1,Q])}\geq C\Big(\frac{h_N^2}{N}\Big)^\zeta \right)\leq  c \Big(\frac{h_N^2}{N}\Big)^{-\nu}.
\end{equation}

We again decompose $Z_{N,3\beta_{N}}^{((1,Q])}$, according to the contribution of different ranges of values in the environment.
We fix $\gd$ (appearing in the definition of $Q$ and $\ell$ above) small enough so that $\theta:=(1-\delta) \frac{d}{\alpha}>1$ (recall $\alpha<d$). Then, for $j\geq0$, we set  
\begin{align}
\label{def:ell}
\ell_{j} &:=\Big( \frac{h_N^2}{N}\Big)^{\theta^{j}(1-\delta)}=(\ell)^{\theta^{j}},\\
\label{def:Q}
Q_{j}&:=\Big( \frac{h_N^2}{N}\Big) (\ell_{j})^{-\frac{1}{\ga}(1-\delta)^{1/2}}=\Big( \frac{h_N^2}{N}\Big)^{1- \frac{\theta^j}{\ga}(1-\delta)^{3/2}},
\end{align}
where we used the definition of $\ell$ above.
Note that $\ell_0=\ell$, $Q_0 =Q$ and that each pair $(\ell_{j},Q_{j})$ has a similar form to $(\ell,Q)$, and let us stress that $\ell_{j-1}<\ell_j$ and $Q_{j-1} > Q_j$ for any $j\geq 1$.
Now, let $\kappa$ be the smallest integer such that $\theta^{\kappa}(1-\delta)^{3/2}\geq  \ga$, and note that for such $\kappa$ we have $Q_{\kappa}\leq 1$. 
%Moreover let us note that $Q_0=Q$ and $Q_{j-1} > Q_j$ for $j\le \gk$. 

Then, thanks to H\"{o}lder's inequality, we have
\begin{equation}
\log Z_{N,3\beta_{N}}^{((1,Q])}\leq\frac{1}{\kappa}\sum\limits_{j=1}^{\kappa}\log Z_{N,3\kappa\beta_{N}}^{(j)},
\end{equation}
where we have set
\begin{equation}
Z_{N,3\kappa\beta_{N}}^{(j)}:=\bE\bigg[\exp\bigg(\sum\limits_{x\in\cR_{N}}3\kappa\beta_{N}\omega_{x}\mathbbm{1}_{\{\beta_{N}\omega_{x}\in(Q_{j},Q_{j-1}]\}}\bigg)\mathbbm{1}_{\left\{M_{N}\leq h_N\right\}}\bigg] \, .
\end{equation}
%
%Since $\ell_{j}>\ell=\big( \frac{h_N^2}{N}\big)^{1-\delta}$, 
Thanks to an union bound we only need to show that for any $j=1,\dots, \kappa$,

\begin{equation}\label{logZj}
\bbP\left(\log Z_{N,3\kappa\beta_{N}}^{(j)} \geq  C\Big(\frac{h_N^2}{N}\Big)^\zeta \right)\leq c\Big(\frac{h_N^2}{N}\Big)^{-\nu}.
\end{equation}

 to get \eqref{eq:Z_bar(1,Q)}.

By the same argument used for $Z_{N,3\gb_N}^{(>Q)}$, see \eqref{modpartfuncl1}-\eqref{modpartfuncl2}, we get that 
\begin{equation}\label{step2:subset}
\left\{x\in\cR_{N}:\beta_{N}\omega_{x}>Q_{j}\right\}\subset\Upsilon_{\ell_{j}}:=\big\{Y_{1}^{(h_N)},\cdots,Y_{\ell_{j}}^{(h_N)}\big\} \, .
\end{equation}
with probability larger than $1-(c\ell_{j}^{\delta/3\alpha})^{-\alpha\ell_{j}}$.
Hence, as above, 
we are reduced to controlling
\begin{equation}\label{eq:Z_ell}
Z_{N,3\kappa\beta_{N}}^{(j)}\leq\bE\bigg[\exp\bigg(3\kappa Q_{j-1}\sum\limits_{k=1}^{\ell_{j}}\mathbbm{1}_{\{Y_{k}^{(h_N)}\in\cR_{N}\}}\bigg)\bigg] 
 = \sum\limits_{k=0}^{\ell_{j}}\sum\limits_{\Delta\subset\Upsilon_{\ell_j},|\Delta|=k}e^{3\kappa Q_{j-1}k}\bP(\cR_{N}\cap\Upsilon_{\ell_j}=\Delta). 
\end{equation}
%Let us set $K:=q^{2-\frac{d}{\alpha}}(\ell_{j})^{1/d+\delta/6d}$,
%so that recalling that 
%$Q_{j-1} = q^{d/\ga} N^{2\xi-1} (\ell_{j})^{- \theta^{-1} (1-\gd)^{1/2} /\ga}$ with $\theta (1-\gd) \frac{\gd}{\ga}$, see~\eqref{def:ell}-\eqref{def:Q}, we have for $\gd$ (fixed) sufficiently small
%\begin{equation}\label{eq:boundQjK}
%Q_{j-1}K \le q^{2}N^{2\xi-1}(\ell_{j})^{-\delta/6d}\, .
%\end{equation}
We split the sum in~\eqref{eq:Z_ell} at some level $K =K_j:= (\ell_{j})^{1/d+\delta/6d} $ (this is needed in~\eqref{entropicestimate} below) and
we use Lemma~\ref{AppendixL2} to bound the probability $\bP(\Delta\subset \cR_{N})\geq\bP(\cR_{N}\cap\Upsilon_{\ell_j}=\Delta)$ for $k\geq K$. We get 
\begin{equation}
\begin{split}
Z_{N,3\kappa\beta_{N}}^{(j)} & \leq 
e^{3 \kappa Q_{j-1} K } + 
\sum_{k=K}^{\ell_j} \sum_{\Delta \subset \gU_{\ell_j} \,, |\Delta| =k } (C_{1}  )^{k}  \exp\Big( 3 \kappa Q_{j-1} k  - C_2 \ent_N(\Delta) \Big)  \\
 &
 \le e^{3 \kappa Q_{j-1} K } +
 \cH_j \, ,
 \end{split}
 \label{boundZj}
\end{equation}
where we have set
\[
\cH_j := \sum\limits_{k=K}^{\ell_j}k! \binom{\ell_{j}}{k} (C_{1} )^{k}\exp\Big(6\kappa Q_{j-1}k-C_{2} \inf\limits_{\Delta\subset\Upsilon_{\ell_{j}},|\Delta|\geq k }\ent_N(\Delta) \Big).
\]
We now use Entropic LPP estimates to control the last infimum. We let 
\[
B_{k}:=\frac{12\kappa}{C_{2}}Q_{j-1}k=\frac{12\kappa}{C_{2}}\frac{h_N^2}{N}\ell_{j}^{-\frac{1}{d}(1-\delta)^{-1/2}} k\,,
\]
where we used the definition~\eqref{def:Q} and the fact that $\ell_{j-1} = \ell_j^{1/\theta}$ together with the definition $\theta =(1-\gd) \frac{d}{\ga}$.
Then, thanks to Theorem~\ref{thm1} in Appendix, we obtain
\begin{align*}
\bbP\Big(\inf\limits_{\Delta\subset\Upsilon_{\ell_{j}},|\Delta|\geq k}\ent_N(\Delta)\leq B_{k}\Big)  =  \bbP\left(L_{\ell_{j}}^{(N B_{k})}(h_N)\geq k\right)&  \leq 
 \bigg(C'\frac{ ( N B_k )^{1/2} \ell_{j}^{1/d}}{h_Nk }\bigg)^{dk} \, ,
\end{align*}
so that using the definition of $B_k$, we end up with
\[
\bbP\Big(\inf\limits_{\Delta\subset\Upsilon_{\ell_{j}},|\Delta|\geq k}\ent_N(\Delta)\leq B_{k}\Big)  
\le \Big(c (\ell_j)^{\frac1d-\frac{1}{2d(1-\delta)^{1/2}}}k^{-1/2} \Big)^{dk}\,
\leq (c \ell_j)^{-k \delta /4}\,  ,
\]
where we have used in the last inequality that 
$k\geq K := (\ell_{j})^{1/d+\delta/6d} $ (and took $\gd$ small enough).
By a union bound, this leads to
\begin{equation}
\label{entropicestimate}
\bbP\Big( \inf_{\Delta\subset\Upsilon_{\ell_{j}},|\Delta|\geq k}\ent(\Delta)>B_{k}  \, ,  \forall \, K \leq k \leq \ell_j \Big) \geq 1-\sum\limits_{k= K}^{\infty}(c\ell_{j})^{-k \gd /4}\geq1-(c\ell_{j})^{- c_\delta\ell_{j}^{1/d} }\, .
\end{equation}
Moreover, on this event,
we get that for $N$ large enough
\[
\cH_{j} \leq\sum\limits_{k=K }^{\infty}k!\binom{\ell_{j}}{k} (C_{1}  )^{k}   e^{-6\kappa Q_{j-1}k}
\leq \sum\limits_{k=1}^{\infty} \ell_{j}^k  \exp\Big( - 4\kappa  k (h_N^2/N)^{\vartheta} \Big) \, ,
\]
where we used in the last inequality that $Q_{j-1} \geq Q_{\kappa-1} = (h_N^2/N)^{\vartheta}$ for some $\vartheta = \vartheta_{\kappa} >0$ (recall~\eqref{def:Q}),
and that $(h_N^2/N)^{\vartheta}$ goes to infinity.
Now using that $\ell_j \leq \ell_{\kappa} \leq (h_N^2/N)^{\theta^{\kappa} (1-\delta)}$, we get that for $N$ large enough
\begin{equation}\label{Hjestimate}
\cH_j \leq \sum\limits_{k=1}^{\infty} \exp\Big( - 2 \kappa  k (h_N^2/N)^{\vartheta} \Big)  \leq  \exp\Big(- \kappa   (h_N^2/N)^{\vartheta}   \Big) \, .
\end{equation}
Note also that $K Q_{j-1}\le (h_N^2/N)\ell_j^{-\delta/4d}\leq(h_N^2/N)\ell^{-\delta/4d}$ goes to $0$ as $N\to\infty$: going back to~\eqref{boundZj}, we therefore get that on the event considered in~\eqref{entropicestimate}, for $N$ large enough we have $Z_{N,3\kappa \gb_N}^{(j)} \leq  c(h_N^2/N)^\zeta$.

Combining this with \eqref{step2:subset} and \eqref{entropicestimate}, we therefore obtain
that
\begin{equation}
\bbP\left( Z_{N,3\kappa\beta_{N}}^{(j)}\geq 2 \right)\leq(c\ell_{j})^{-\delta\ell_{j}/3}+(c\ell_{j})^{-c_\delta\ell_j^{1/d}},
\end{equation}
which proves \eqref{logZj}, recalling that $\ell_{j}\geq \ell=\big( h_N^2/N\big)^{1-\delta}$.

\subsubsection*{Third term: \eqref{Z<1}}
We now show that there exists a constant $C$ such that 
\begin{equation}\label{logZ_bar<1}
\bbP\Big(\log \bar Z_{N,3\beta_{N}}^{(\leq1)}>c_{0}\frac{h_N^2}{N}\Big)<C  \Big(\frac{h_N^2}{N} \Big)^{-\nu}.
\end{equation}

To bound \eqref{logZ_bar<1} we note that
there is some $C>0$ such that $e^{x}\leq1+x+Cx^2$ for $|x| \leq 4$.
Therefore, for $N$ large enough (so that $3\gb_N \mu \leq 1$), we obtain
\begin{equation}\label{eq:Zbar<1}
\bar Z_{N,3\beta_{N}}^{(\leq1)}\leq 
\bE\Big[\prod\limits_{x\in\cR_{N}}\left(1
+3\beta_N(\omega-\mu)\mathbbm{1}_{\{\beta_{N}\omega_{x}\le 1\}}
+C'\beta_{N}^2(\omega_{x}-\mu)^2\mathbbm{1}_{\{\beta_{N}\omega_{x}\leq1\}}\right)\Big].
\end{equation}
Note that $\bbE[(\omega-\mu)\mathbbm{1}_{\{\beta_{N}\omega\le 1\}}]\le 0$ as soon as $\beta_N^{-1}\ge \mu$ (our assumption implies that $\gb_N \to 0$).
Hence, using also that $|\cR_N|\leq N$, we get that 
\begin{equation}\label{eq:Zbar<1_E}
\bbE\big[ \bar Z_{N,3\beta_{N}}^{(\leq1)}\big]
\leq \Big(1+ C' \gb_N^2 \bbE\big[ (\omega-\mu)^2\mathbbm{1}_{\{\beta_{N}\omega\le 1\}} \big] \Big)^N 
\leq 
\begin{cases}
\exp\left(CN\beta_{N}^{\alpha}\right)  & \text{ if } \ga<2 \, , \\
\exp\left(CN\beta_{N}^{2}\log(1/\gb_N) \right) & \text{ if } \ga\geq 2  \, ,\\
\end{cases}
\end{equation}
where we used that the truncated expectation is bounded
by a constant times $ (1/\gb_N)^{2-\ga}$ if $\ga<2$, 
by a constant times $ \log(1/\gb_N)$ if $\ga=2$,
and by a constant if $\ga>2$.
% $\bbE[(\omega-\mu)^2\mathbbm{1}_{\{\beta_{N}\omega\le 1\}}] \leq \mathbb{V}\mathrm{ar}(\go) <\infty$; if $\ga< 2$, with an explicit computation we obtain that $\bbE[(\omega-\mu)^2\mathbbm{1}_{\{\beta_{N}\omega\le 1\}})]\le c\beta_N^{\ga-2}$;
%if $\ga=2$ we get $\bbE[(\omega-\mu)^2\mathbbm{1}_{\{\beta_{N}\omega\le 1\}})]\le c \log (1/\beta_N)$.
%All together, we get that
%\begin{equation}\label{eq:Zbar<1_E}
%\bbE\big[ \bar Z_{N,6\beta_{N}}^{(\leq1)}\big]\leq
%\begin{cases}
%\left(1+C\beta_{N}^{\alpha} \right)^{N}\leq\exp\left(CN\beta_{N}^{\alpha}\right)  & \text{ if } \ga<2 \\
%\left(1+C\beta_{N}^{2} \log(1/\gb_N) \right)^{N}\leq\exp\left(CN\beta_{N}^{\alpha}\log(1/\gb_N) \right) & \text{ if } \ga\geq 2 \\
%\end{cases}
%\end{equation}

Then by Markov's inequality and \eqref{eq:Zbar<1_E}, we have that

\begin{equation}\label{eq:Zbar<1_last}
%\bbP\Big(\log \bar Z_{N,3\beta_{N}}^{(\leq1)}>c_{0}\frac{h_N^2}{N}\Big) \leq c_{0}^{-1}\frac{N}{h_N^2} \bbE\big[\log Z_{N,6\beta_{N}}^{(\leq1)}\big]
%\leq C_0 \frac{N^2}{h_N^2} \times 
%\begin{cases}
%\beta_{N}^{\alpha}  & \text{ if } \ga<2\,,\\
%\beta_{N}^{2} \log(1/\gb_N)  & \text{ if } \ga \geq 2\, .
%\end{cases}
\bbP\Big(\log \bar Z_{N,3\beta_{N}}^{(\leq1)}>c_{0}\frac{h_N^2}{N}\Big)\leq e^{-c_0\frac{h_N^2}{N}}\bbE\big[Z_{N,3\beta_N}^{(\leq1)}\big]\leq e^{-c_0\frac{h_N^2}{N}}\times\begin{cases}
\exp(CN\beta_N^\ga),\quad&\text{if}~\ga<2,\\
\exp(CN\beta_N^2\log(1/\beta_N)),\quad&\text{if}~\ga\geq2.
\end{cases}
\end{equation}
It suffices to show that $h_N^{-2}N^2\beta_N^{2\wedge\ga}\log(1/\beta_N)^{\mathbbm{1}_{\{\ga\geq2\}}}\overset{N\to\infty}{\longrightarrow}0$.
Since $\gb_N h_N^{d/\alpha}  \leq c h_N^2 /N$, 
we get that:

$\ast$ If $\ga<2$ then
\begin{equation}
\label{tailstep3}
\frac{N^2}{h_N^2} \beta_{N}^{\alpha} 
\leq c'  \Big(\frac{h_N^2}{N}\Big)^{\alpha-2} h_N^{2-d} \leq   c'\Big(\frac{h_N^2}{N}\Big)^{\alpha-2} \, , 
\end{equation}
 because $d\geq 2$.

$\ast$ If on the other hand $\ga\geq 2$,  we can choose $\epsilon>0$ sufficiently small, such that
\begin{equation}
\frac{N^2}{h_N^2} \beta_{N}^{2} \log(1/\gb_N)\leq c\frac{N^2}{h_N^2}\beta_N^{2-\epsilon}\leq ch_N^{2(1-\frac{d}{\alpha})+\frac{d}{\ga}\epsilon}\Big(\frac{h_N^2}{N}\Big)^{-\epsilon}\leq N^{1-\frac{d}{\alpha}+\frac{d}{2\alpha}\epsilon}\leq\Big(\frac{h_N^2}{N}\Big)^{1-\frac{d}{\alpha}+\frac{d}{2\alpha}\epsilon}
\end{equation}
using that $h_N\gg N^{1/2}$ for the third inequality and that $h_N^2/N \leq N$ for the last inequality. All together, this proves \eqref{logZ_bar<1}.

%At this stage we observe that if $d\ge 3$, then providing $\gz$ small enough, there exists $\delta=\delta_{d,\ga, \gz}$ such that the last term is smaller then $N^{-\delta}$.
%If $d=2$ we observe that 
%$$
%\Big(\frac{r_N^2}{N}\Big)^{1-\ga-\gz}\frac{N}{r_N^{d(1-\frac{\gz}{\ga})}}=N^{\alpha(2\xi-1)-d\xi+2(1-\xi)}N^{-\zeta(2\xi-1-\frac{d}{\ga})},
%$$
%with $\xi\in(\frac{1}{2},1)$, $\alpha<d$. Since
%\begin{equation*}
%\begin{split}
%&\alpha(2\xi-1)-d\xi+2(1-\xi)<d(2\xi-1)-d\xi+2(1-\xi)=(d-2)(\xi-1)< 0,
%\end{split}
%\end{equation*}
%\eqref{logZ_bar<1} is proved by choosing $\zeta$ sufficiently small.

\smallskip
The conclusion of Lemma~\ref{lem:S<qr} then simply follows by
combining~\eqref{bound:Z>Q}, \eqref{eq:Z_bar(1,Q)} and \eqref{logZ_bar<1}.
\end{proof}

\subsection{Step 2: convergence of $\log \bar Z_{N}(M_{N}\leq qN^{\xi})$}\label{subsec:step2B}

%we show that $\log \bar Z_{N,\gb_N}(M_{N}\leq qN^{\xi})$ converges in distribution.
We now prove the following convergence.

\begin{proposition}
\label{prop:convZq}
Assume that $\lim_{N\to\infty} \gb_N N^{d\xi/\ga} N^{-(2\xi-1)} = \gb \in (0,+\infty)$.
Then  for all $q\in(0,+\infty)$ we have the following convergence in distribution
\[
\frac{1}{N^{2\xi-1}} \log \bar Z_{N,\gb_N}(M_{N}\leq qN^{\xi}) 
\overset{(d)}{\longrightarrow}\cT_{ \beta,q} \, ,
\]
where $\cT_{ \beta,q}$ is defined in~\eqref{con2}.
\end{proposition}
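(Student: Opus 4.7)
The strategy I plan to follow closely mirrors the three-step proof of Theorem~\ref{thm:A} in Section~\ref{sec:regionA}, with two essential modifications: paths now live at scale $N^\xi$ with $\xi\in(\tfrac12,1)$ rather than $N$, and the centering by $h=\mu$ is indispensable because $N\beta_N\mu$ is no longer negligible at scale $N^{2\xi-1}$. First I reduce $\bar Z_{N,\beta_N}(M_N\leq qN^\xi)$ to the contribution of the $L$ largest values of $\omega$ in $\Lambda_{qN^\xi}$; then I show that this reduced log-partition function, once rescaled by $N^{2\xi-1}$, converges in distribution to the truncated continuous variational problem $\cT_{\beta,q}^{(L)}$ from Proposition~\ref{thm2}; finally I send $L\to\infty$, invoking the a.s.\ limit $\cT_{\beta,q}^{(L)}\to\cT_{\beta,q}$ from~\eqref{con2}.

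For the decomposition, I set
\[
Z^{(L)}_{N,\rho\beta_N;q}:=\bE\Big[\exp\Big(\rho\beta_N\sum_{i=1}^{L}M_i^{(qN^\xi)}\ind_{\{Y_i^{(qN^\xi)}\in\cR_N\}}\Big)\ind_{\{M_N\leq qN^\xi\}}\Big],
\]
and define $\bar Z^{(>L)}_{N,\rho\beta_N;q}$ analogously, summing over $i\geq L+1$ and with centered weights $\rho\beta_N(M_i^{(qN^\xi)}-\mu)$. H\"older's inequality yields the upper bound; the simple positivity argument from~\eqref{ineq:logZ} is no longer available for the lower bound (the $-\mu$ centering makes the $(>L)$ sum of indeterminate sign), so I plan to use reverse H\"older with conjugate exponent $-(1-\eta)/\eta$ for small $\eta\in(0,1)$. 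Absorbing the $O(L\beta_N\mu)=o(N^{2\xi-1})$ error arising from removing the centering on the top $L$ sites, this produces a sandwich
\begin{equation*}
\tfrac{1}{1-\eta}\log Z^{(L)}_{N,(1-\eta)\beta_N;q}+R_N^{-}\;\leq\;\log\bar Z_{N,\beta_N}(M_N\leq qN^\xi)\;\leq\;\tfrac{1}{1+\eta}\log Z^{(L)}_{N,(1+\eta)\beta_N;q}+R_N^{+},
\end{equation*}
where the remainders $R_N^{\pm}$ involve $\log\bar Z^{(>L)}_{N,\rho'\beta_N;q}$ for appropriate signed coefficients $\rho'$. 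Convergence of the top-$L$ piece is then obtained as in Proposition~\ref{logZL}: writing $Z^{(L)}_{N,\rho\beta_N;q}$ as a sum over ordered subsets $\Delta\subset\Upsilon_L^{(qN^\xi)}:=\{Y_1^{(qN^\xi)},\ldots,Y_L^{(qN^\xi)}\}$ and sandwich-bounding it against $\exp\sup_\Delta\{\rho\beta_N\Omega_{qN^\xi}^{(L)}(\Delta)-\ent_N(\Delta)\}$, matters reduce to the discrete variational problem $\cT_{N,qN^\xi}^{\rho\beta_N,(L)}$ plus an error $\sup_\Delta|\ent_N(\Delta)+\log\bP(\Delta\subset\cR_N,\,M_N\leq qN^\xi)|=o(N^{2\xi-1})$. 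That error is negligible thanks to the moderate deviation random walk estimates of Appendix~\ref{Appendix}, since $\bbP$-a.s.\ the points of $\Upsilon_L^{(qN^\xi)}$ are pairwise at distance $\asymp N^\xi$ with $N^{1/2}\ll N^\xi\ll N$, the scale on which the Gaussian rate $\tfrac{d}{2}\|x\|^2/u$ governs $-N^{-(2\xi-1)}\log\bP(S_{uN}=xN^\xi)$; Proposition~\ref{thm2} then identifies the limit as $\cT_{\rho\beta,q}^{(L)}$.

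The main obstacle, and most delicate step, will be showing that $\log\bar Z^{(>L)}_{N,\rho'\beta_N;q}$ is $o(N^{2\xi-1})$ with high $\bbP$-probability, uniformly for $L$ chosen large (and for both signs of $\rho'$, as produced by the reverse H\"older step). Unlike in Region~\textbf{A} where Lemma~\ref{ZLellvanish+} followed from positivity, here I plan to follow the three-range splitting used in the proof of Lemma~\ref{lem:S<qr} applied with $h_N=qN^\xi$. Writing the $(>L)$ exponent as a sum over disjoint ranges of $\beta_N\omega_x$ and applying H\"older once more, I control: (i) the very large range $\beta_N\omega_x>Q:=(h_N^2/N)^\zeta$ via the tail bound~\eqref{tail:T>ell} on $\cT^{\beta,(>\ell)}_{N,r}$ from Proposition~\ref{tail:disT}, combined with E-LPP input from Appendix~\ref{sec:ELPP}; (ii) the intermediate range $\beta_N\omega_x\in(1,Q]$ via the iterated H\"older/E-LPP scheme built around the sequences~\eqref{def:ell}--\eqref{def:Q}; (iii) the range $\beta_N\omega_x\leq 1$ via the expansion $e^x\leq 1+x+Cx^2$, where the $\mu$-centering annihilates the linear term and the bound $N\beta_N^\alpha\ll N^{2\xi-1}$ (which holds throughout Region~\textbf{B} under the scaling $\beta_N N^{\xi d/\alpha}\asymp N^{2\xi-1}$) controls the quadratic one. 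The case of negative $\rho'$ is in fact easier since large positive weights are then suppressed. Finally, combining the above with the earlier steps, letting $\eta\to 0$ and $L\to\infty$, and using the continuity of $\beta\mapsto\cT_{\beta,q}^{(L)}$ together with~\eqref{con2}, concludes the proof.
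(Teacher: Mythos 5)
Your proposal is a valid restructuring of the argument, resting on the same key ingredients (Lemma~\ref{lem:S<qr}, Proposition~\ref{tail:disT}, Proposition~\ref{thm2}) that the paper uses, but the \emph{order} in which you apply the reductions differs, and this is where a few details need more care.

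The paper first splits by the \emph{threshold} $\beta_N\omega_x\leq 1$ vs.\ $>1$ (Step~2.(i)-a), applying reverse H\"older at that stage; it then \emph{decenters} the $>1$ part by absorbing the $\mu$-correction into a change of $\rho$ (valid since $\beta_N\omega_x>1\gg\beta_N\mu$), after which everything is positive, and the subsequent truncations — to $>Q$, then to the top $L$ ordered statistics — use only ordinary H\"older and monotonicity. You instead decompose by \emph{ordered statistics} first (top $L$ vs.\ $>L$), and apply reverse H\"older there; this forces you to run the full three-range scheme of Lemma~\ref{lem:S<qr} on the $(>L)$ term, for \emph{both signs} of the H\"older coefficient $\rho'$, and with the $(>L)$ constraint superimposed on each of the three threshold ranges. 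This can be made to work (the $>L$ restriction is essentially vacuous on the $\leq 1$ and $(1,Q]$ ranges when $L$ is fixed and $N$ is large, since with high probability $\beta_N M_L^{(qN^\xi)}\gg Q$; and the $>Q$ range intersected with $>L$ is what Lemma~\ref{Z_L_fix} controls), but it duplicates effort: you end up effectively re-proving the paper's Step~2.(i)-a and Step~2.(i)-b inside your single reduction of the $(>L)$ term.

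One concrete point your sketch understates: for $\rho'<0$ (the reverse-H\"older exponent), the $\mu$-centering does \emph{not} annihilate the linear term on the $\leq 1$ range. One has $\bbE[\rho'\beta_N(\omega-\mu)\ind_{\{\beta_N\omega\leq 1\}}]=-\rho'\beta_N\,\bbE[(\omega-\mu)\ind_{\{\beta_N\omega>1\}}]$, which for $\rho'<0$ is \emph{positive} of order $|\rho'|\beta_N^{\alpha}$. The crude a priori bound $\bar Z^{(>L)}_{\rho'\beta_N;q}\leq e^{|\rho'|\beta_N\mu N}$ does \emph{not} give $o(N^{2\xi-1})$ uniformly in Region~\textbf{B} (compute $N^{2-\gamma-2\xi}$ and observe it is not always negative on $\gamma\in(\tfrac{d-\alpha}{\alpha},\tfrac{d}{2\alpha})$), so you genuinely need the Taylor expansion with the leftover $O(\beta_N^\alpha)$ linear term tracked explicitly — this is exactly the ``slight modification'' the paper makes after~\eqref{eq:Zbar<1_last}. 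With that fixed, your plan goes through; it is just less streamlined than the paper's decenter-first sequence, which confines all negative exponents to the $\leq 1$ range where the expansion is tailored to handle them.
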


Once trajectories are restricted to staying in a ball of radius $qN^{\xi}$, the proof of the convergence is very similar to
the proof in Section~\ref{sec:regionA}.
We follow the same steps: (i) first we show that most of the contribution to the partition function comes from large disorder weights; (ii) we prove that the log-partition function, when restricted to finitely many weights, converges in distribution;
(iii) we send the number of weights to infinity.

\smallskip
\noindent
{\bf Step 2.(i)-a.}
First of all, we show that we can restrict the partition function
to weights $\go_x \geq 1/\gb_N$.
Recall the notations  $\bar{Z}_{N,\rho \beta}^{(\leq1)}$ in \eqref{Z<1},
and define analogously $\bar{Z}_{N,\rho\beta}^{(>1)}$ by replacing $\ind_{\{\gb_N \go_x \leq 1\}}$ with $\ind_{\{\gb_N \go_x > 1\}}$ inside~\eqref{Z<1}; take $h_N =q N^{\xi}$. For any $\eta\in(0,1)$, by H\"{o}lder's inequality, we have
\begin{equation}\label{upper:Z<}
\bar{Z}_{N,\gb_N}(M_{N}\leq qN^{\xi})\leq\left(\bar{Z}_{N,(1+\eta)\beta_{N}}^{(>1)}\right)^{\frac{1}{1+\eta}}\left(\bar{Z}_{N,(1+\eta^{-1})\beta_{N}}^{(\leq1)}\right)^{\frac{\eta}{1+\eta}},
\end{equation}
and by applying H\"{o}lder's inequality to $\bar{Z}_{N,(1-\eta)\beta_{N}}^{(>1)}$,  we also have for any $\eta\in (0,1)$
\begin{equation}\label{lower:Z>}
\bar{Z}_{N,\gb_N}(M_{N}\leq qN^{\xi})\geq\left(\bar{Z}_{N,(1-\eta)\beta_{N}}^{(>1)}\right)^{\frac{1}{1-\eta}}\left(\bar{Z}_{N,(1-\eta^{-1})\beta_{N}}^{(\leq1)}\right)^{\frac{\eta}{\eta-1}}.
\end{equation}
Note that if $\beta_{N}\omega>1$ and $N$ large enough, we have $$(1-2\eta)\beta_{N}\omega<(1-\eta)\beta_{N}(\omega-\mu)\quad \text{and}\quad (1+\eta)\beta_{N}(\omega-\mu)\leq(1+\eta)\beta_{N}\omega, $$
 so that we can replace $\bar{Z}_{N,(1+\eta)\beta_{N}}^{(>1)}$ and $\bar{Z}_{N,(1-\eta)\beta_{N}}^{(>1)}$ by $Z_{N,(1+\eta)\beta_{N}}^{(>1)}$ and $Z_{N,(1-2\eta)\beta_{N}}^{(>1)}$ respectively,
where
\begin{equation}
Z_{N,\rho\beta_{N}}^{(>1)} := \bE\bigg[\exp\bigg( \rho\beta_{N}\sum\limits_{x\in\cR_{N}} \omega_{x}\mathbbm{1}_{\{\beta_{N}\omega_{x}>1\}}\bigg)\mathbbm{1}_{\left\{M_{N}\leq qN^{\xi}\right\}}\bigg].
\end{equation}

 We need to control the upper bound for \eqref{upper:Z<} and the lower bound for \eqref{lower:Z>} (note that $1-\eta^{-1}, \eta-1<0$). Now notice that adapting~\eqref{eq:Zbar<1_last},
we easily get that for any $\rho>0$, any $\gep>0$
%\[
%\bbP \big( N^{-(2\xi-1)}\log\bar{Z}_{N,\rho\beta_{N}}^{(\leq1)}  > \gep \big) \leq C_{\rho} \gep^{-1} N^{2-2\xi} \gb_N^{\ga\wedge 2} \log(1/\gb_N)  \,,
%\]

\begin{equation*}
\bbP \big( N^{-(2\xi-1)}\log\bar{Z}_{N,\rho\beta_{N}}^{(\leq1)}  > \gep \big)\leq\exp(C_\rho N\beta_N^{2\wedge\ga}\log(1/\beta_N)-\epsilon N^{2\xi-1}),
\end{equation*}
which goes to $0$ as $N\to\infty$ (the proof of~\eqref{eq:Zbar<1_E} is carried out for $\rho=3$ but it is easily adapted to any $\rho>0$).  For $\rho<0$, a slight modification in \eqref{eq:Zbar<1}-\eqref{eq:Zbar<1_E} is needed. Note that we have $\bE[-\beta_N(\omega-\mu)\mathbbm{1}_{\{\beta_N\omega_x\leq1\}}]=\bE[\beta_N(\omega-\mu)\mathbbm{1}_{\{\beta_N\omega_x>1\}}]\leq C\beta_N^\ga$, which gives an extra term in \eqref{eq:Zbar<1_E} for $\rho<0$ and it follows that
%\[
%\bbP \big( -N^{-(2\xi-1)}\log\bar{Z}_{N,\rho\beta_{N}}^{(\leq1)}  < -\gep \big) \leq %C_{\rho} \gep^{-1} N^{2-2\xi} \gb_N^{\ga\wedge 2} \log(1/\gb_N)  \,.
%\]
\begin{equation*}
\bbP \big( -N^{-(2\xi-1)}\log\bar{Z}_{N,\rho\beta_{N}}^{(\leq1)}  < -\gep \big)\leq\exp(C_\rho N\beta_N^{2\wedge\ga}\log(1/\beta_N)-\epsilon N^{2\xi-1})\overset{N\to\infty}{\longrightarrow}0.
\end{equation*}

From~\eqref{upper:Z<}-\eqref{lower:Z>} we simply need to prove the
convergence in distribution of
$N^{-(2\xi-1)} \log Z_{N,\rho\beta_{N}}^{(>1)}$ to $\cT_{\rho\gb,q}$.

Notice also that thanks to~\eqref{eq:Z_bar(1,Q)},
we also have that for any $\rho>0$
\[
\frac{1}{N^{2\xi-1}} \log Z_{N,\rho\gb_N}^{(1,Q]} \longrightarrow 0\,, \quad \text{in probability}.
\]
Using H\"older's inequality as in~\eqref{upper:Z<} and~\eqref{lower:Z>},
we are therefore reduced to showing that for any $\rho>0$ we have the following convergence in distribution
\begin{equation}
\frac{1}{N^{2\xi-1}} \log Z_{N,\rho\gb_N}^{(>Q)}   \overset{(d)}{\longrightarrow}\cT_{ \rho\beta,q} \, .
\end{equation}

\smallskip
\noindent
{\bf Step 2.(i)-b.} We now show that we can restrict the partition function
to a finite number of weights.
Analogously to Section~\ref{sec:regionA} (see~\eqref{ZL}-\eqref{ZLell}), we define for any $L\in \N$,
\begin{align}
Z_{N,\rho\beta_{N}}^{(L)}&:=\bE\bigg[\exp\bigg(\rho\beta_{N} \sum\limits_{i=1}^{L}M_{i}^{(qN^{\xi})}\mathbbm{1}_{\{Y_{i}^{(qN^{\xi})}\in\cR_{N}\}}\bigg)\bigg],\label{ZL2}\\
Z_{N,\rho\beta_{N}}^{(L,\ell)}&:=\bE\bigg[\exp\bigg(\rho\beta_{N} \sum\limits_{i=L+1}^{\ell} M_{i}^{(qN^{\xi})}\mathbbm{1}_{\{Y_{n}^{(qN^{\xi})}\in\cR_{N}\}}\bigg)\bigg]\, ,\label{ZLell2}
\end{align}
where we consider the order statistics $(M_i^{(qN^{\xi})},Y_i^{(q N^{\xi})})$
in the (discrete) ball of radius $qN^{\xi}$.
Recall that $\ell := (h_N^2/N)^{1-\gd} = (q^2 N^{2\xi-1})^{1-\gd}$.

Thanks to H\"{o}lder's inequality, we get that for any $\eta\in (0,1)$, analogously to~\eqref{ineq:logZ},
then for $N$ large enough we have
\begin{equation}
\log Z_{N,\rho\beta_{N}}^{(L)}\leq \log Z_{N,\rho\gb_N}^{(>Q)} \leq\frac{1}{1+\eta}\log Z_{N,(1+\eta)\beta_{N}}^{(L)}+\frac{\eta}{1+\eta}\log Z_{N,(1+\eta^{-1})\beta_{N}}^{(L,\ell)} 
\end{equation}
with probability going to $1$ as $N\to\infty$ (more precisely on the
event  $\{\beta_N M_L^{(qN^\xi)}\geq Q>\beta_N M_\ell^{(qN^\xi)}\}$. Recall \eqref{modpartfuncl1} and $N^{-d\xi/\ga}M_L^{(qN^\xi)}{\longrightarrow}\bM_L^{(q)}$ in Proposition \ref{thm3}).
We now prove the analogous of Lemma~\ref{ZLellvanish+} to show that the last term is negligible. 

\begin{lemma}\label{Z_L_fix}
For any $\epsilon>0$  and any $\rho>0$, we have
\begin{equation*}
\limsup_{L\to\infty} \limsup_{N\to+\infty}\bbP\Big(\frac{1}{N^{2\xi-1}}\log Z_{N,\rho\beta_{N}}^{(L,\ell)}>\epsilon\Big) =0 \, .
\end{equation*}
\end{lemma}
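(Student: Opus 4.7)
The plan is to mimic the proof of Lemma \ref{ZLellvanish+}, reducing $Z_{N,\rho\beta_N}^{(L,\ell)}$ to a truncated discrete variational problem and then applying the E-LPP tail estimate of Proposition \ref{tail:disT} with the scaling adapted to the ball $\Lambda_{qN^{\xi}}$ instead of $\Lambda_N$. The key input will be the strict inequality $\alpha<d$, which makes the prefactor $L^{2(1/d-1/\alpha)}$ appearing in \eqref{tail:T>ell} tend to $0$ as $L\to\infty$.

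First, I would expand the partition function as a sum over ordered subsets $\Delta$ of $\Upsilon_{L+1,\ell}:=\{Y_{L+1}^{(qN^{\xi})},\dots,Y_{\ell}^{(qN^{\xi})}\}$ and upper bound $\bP(\cR_N\cap\Upsilon_{L+1,\ell}=\Delta)\leq\bP(\Delta\subset\cR_N)\leq C_{1}^{|\Delta|}e^{-C_{2}\ent_{N}(\Delta)}$ using Lemma~\ref{AppendixL2}. Writing $\rho\beta_{N}\Omega_{qN^{\xi}}^{(L,\ell)}(\Delta)-C_{2}\ent_{N}(\Delta)=C_{2}\big[(\rho\beta_{N}/C_{2})\Omega_{qN^{\xi}}^{(L,\ell)}(\Delta)-\ent_{N}(\Delta)\big]$ and taking the supremum over $\Delta$ exactly as in \eqref{decom:Z<ell0}-\eqref{decom:Z<ell}, and using $\Omega_{qN^{\xi}}^{(L,\ell)}\leq\Omega_{qN^{\xi}}^{(>L)}$, I would obtain
\[
\log Z_{N,\rho\beta_{N}}^{(L,\ell)}\leq C\,\ell\log\ell+C_{2}\,\cT_{N,qN^{\xi}}^{\rho\beta_{N}/C_{2},(>L)}.
\]
Since $\ell=(q^{2}N^{2\xi-1})^{1-\delta}$, one has $\ell\log\ell/N^{2\xi-1}\to 0$, so the combinatorial term is negligible and it suffices to control the variational problem on the right.

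Second, I would apply \eqref{tail:T>ell} with $r=qN^{\xi}$ and $\beta=\rho\beta_{N}/C_{2}$. The scaling hypothesis $\beta_{N}N^{d\xi/\alpha}\sim\beta N^{2\xi-1}$ gives
\[
N\bigl((\rho\beta_{N}/C_{2})(qN^{\xi})^{d/\alpha-1}\bigr)^{2}\asymp K_{\rho,q}\,N^{2\xi-1},
\]
so \eqref{tail:T>ell} becomes
\[
\bbP\Bigl(\cT_{N,qN^{\xi}}^{\rho\beta_{N}/C_{2},(>L)}\geq t\,K_{\rho,q}L^{2(1/d-1/\alpha)}N^{2\xi-1}\Bigr)\leq c\,t^{-\alpha Ld/(2(\alpha L+d))}.
\]
Choosing $t_{L}:=\tfrac{\epsilon}{2C_{2}K_{\rho,q}}L^{2(1/\alpha-1/d)}$, which tends to $+\infty$ as $L\to\infty$ since $\alpha<d$, the threshold becomes $\epsilon N^{2\xi-1}/(2C_{2})$ and the tail bound is of order $L^{-(d/\alpha-1)(1+o(1))}$ as $L\to\infty$.

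Combining the two steps, for every $\epsilon>0$ and $L$ sufficiently large,
\[
\limsup_{N\to\infty}\bbP\bigl(N^{-(2\xi-1)}\log Z_{N,\rho\beta_{N}}^{(L,\ell)}>\epsilon\bigr)\leq c\,L^{-(d/\alpha-1)(1+o(1))},
\]
which vanishes as $L\to\infty$, giving the claim. The only delicate point is the bookkeeping to check that the scaling in Proposition \ref{tail:disT} for the ball $\Lambda_{qN^{\xi}}$ matches the target scale $N^{2\xi-1}$; once this is verified, the convergence is driven entirely by the fact that $\alpha<d$, reflecting the heuristic that removing the top $L$ weights strictly deflates the available energy-entropy surplus as $L\to\infty$.
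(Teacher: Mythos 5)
Your proof is correct and follows essentially the same route as the paper: expand $Z_{N,\rho\beta_N}^{(L,\ell)}$ over ordered subsets, bound the visiting probability by Lemma~\ref{AppendixL2}, absorb the $\ell!\,(2C_1)^\ell$ combinatorial factor (negligible since $\ell=(q^2N^{2\xi-1})^{1-\delta}$ and $\xi>\tfrac12$), reduce to the truncated variational problem $\cT_{N,qN^\xi}^{\cdot,(>L)}$, and apply Proposition~\ref{tail:disT}--\eqref{tail:T>ell} with $t\asymp L^{2(1/\alpha-1/d)}\to\infty$ using $\alpha<d$. Your final exponent $L^{-(d/\alpha-1)(1+o(1))}$ matches the paper's $L^{-a}$ with $a>0$, since $\tfrac{\alpha Ld}{2(\alpha L+d)}\to\tfrac{d}{2}$ as $L\to\infty$.
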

\begin{proof}
Note that we easily have, as in the calculation leading to~\eqref{decom:Z<ell0},
\begin{equation}
Z_{N,\rho\beta_{N}}^{(L,\ell)} \leq\sum\limits_{\Delta\subset\Upsilon_{\ell}^{(qN^{\xi})} } e^{\rho\beta_{N}\Omega_{qN^{\xi}}^{(>L)}}\bP\left(\Delta\subset\cR_{N}\right) \leq \ell! (2 C_{1}  )^{\ell}
 \exp\left( C_2 \cT_{N,qN^{\xi} }^{C_{\rho} \beta_{N},(>L)}\right).
\end{equation}
Then
\begin{equation*}
 N^{-(2\xi-1)} \log Z_{N,\rho\beta_{N}}^{(L,\ell)}\leq \ell  N^{-(2\xi-1)} \log( \ell 2 C_{1}  )+ C_2 N^{-(2\xi-1)} \cT_{N,qN^{\xi}}^{\frac{\rho\beta_{N}}{C_{2}},(>L)}.
\end{equation*}
Note that since $\ell = (q^2 N^{2\xi-1})^{1-\gd}$ with $q$ fixed,
the first term goes to $0$ as $N\to\infty$ (recall $\xi >1/2$).
For the second term, 
%we apply Proposition \ref{tail:disT}:
using that $\gb_N \sim \gb N^{2\xi-1 - d\xi/\ga}$
we get that for any $\gep>0$
\begin{align*}
\bbP\left(\cT_{N,qN^{\xi}}^{ C_\rho \beta_{N},(>L)}> (2C_2)^{-1} \gep N^{2\xi-1} \right) 
&= \bbP\bigg( \cT_{N,qN^{\xi}}^{ C_\rho\beta_{N} ,(>L)}> C' \epsilon L^{2(\frac{1}{\alpha}-\frac{1}{d})} N  \times \Big( C_{\rho} \gb_N  (qN^{\xi})^{\frac{d}{\ga}-1}  L^{\frac{1}{d}-\frac{1}{\alpha}} \Big)^2 \bigg)\,,
\end{align*}
where the constant $C'$ depends on $\rho,q,\gb$. Then,
using Proposition~\ref{tail:disT}-\eqref{tail:T>ell}, provided that $L$
is large enough so that $C' \epsilon L^{2(\frac{1}{\alpha}-\frac{1}{d})} >1$,
we get
\[
\bbP\left(C_2 \cT_{N,qN^{\xi}}^{ C_\rho \beta_{N},(>L)}>  \tfrac12 \gep N^{2\xi-1} \right) 
\leq c\left(C' \epsilon L^{2(\frac{1}{\alpha}-\frac{1}{d})} \right)^{ {- \frac{\alpha L d}{2(\alpha L+d)}} } \leq c_{\gep} L^{-a},
\]
for some exponent $a>0$.
This concludes the proof of Lemma~\ref{Z_L_fix}.
\end{proof}

\smallskip
\noindent
{\bf Step 2.(ii).} Once the number of weights is fixed, we can prove
the following convergence in distribution. The proof is identical to
that of Proposition~\ref{logZL} (replacing $\hatent$ with $\ent$)
and is omitted.

\begin{proposition}
\label{prop:convergence}
For any positive integer $L$ and any $\rho>0$, we have the following convergence
\begin{equation*}
\frac{1}{N^{2\xi-1}} \log Z_{N,\rho\gb_N}^{(L)}\overset{(d)}{\longrightarrow}\cT_{\rho\beta,q}^{(L)} \, ,
\end{equation*}
where $\cT_{\beta,q}^{(L)}$ is defined in \eqref{con1}.
\end{proposition}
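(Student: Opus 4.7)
The plan is to mirror the three-step argument used for Proposition~\ref{logZL}, adapted to the scale $N^\xi \ll N$: the entropy $\hatent_N$ is replaced by $\ent_N$ (because large deviations give way to moderate deviations at scale $N^\xi$), and the normalisation $\gb_N N^{d/\ga} \sim \gb N$ is replaced by $\gb_N (qN^{\xi})^{d/\ga} \sim \gb q^{d/\ga} N^{2\xi-1}$. The discrete variational problem has already been identified as the correct limit candidate in Proposition~\ref{thm2}, so the task is to match $\log Z_{N,\rho\gb_N}^{(L)}$ to $\cT_{N,qN^\xi}^{\rho\gb_N,(L)}$.

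First I would decompose, as in Proposition~\ref{logZL},
\[
Z_{N,\rho\gb_N}^{(L)} = \sum_{\Delta \subset \Upsilon_L} \exp\bigl(\rho\gb_N \gO_{qN^\xi}^{(L)}(\Delta)\bigr)\, \bP(\cR_N \cap \Upsilon_L = \Delta),
\]
where $\Upsilon_L := \{Y_1^{(qN^\xi)},\ldots,Y_L^{(qN^\xi)}\}$ and the sum runs over ordered tuples of points of $\Upsilon_L$. Bounding the sum below by its largest term and above by $2^L L!$ times the largest term, and invoking the definition~\eqref{def:intDiscrVarProb2} of $\cT_{N,r}^{\gb,(\ell)}$, yields the two-sided estimate (exactly as in~\eqref{writevaritaional})
\begin{align*}
\log Z_{N,\rho\gb_N}^{(L)} - \cT_{N,qN^\xi}^{\rho\gb_N,(L)} &\geq \inf_{\Delta \subset \Upsilon_L} \bigl\{\ent_N(\Delta) + \log\bP(\cR_N \cap \Upsilon_L = \Delta)\bigr\},\\
\log Z_{N,\rho\gb_N}^{(L)} - \cT_{N,qN^\xi}^{\rho\gb_N,(L)} &\leq \log(2^L L!) + \sup_{\Delta \subset \Upsilon_L} \bigl\{\ent_N(\Delta) + \log\bP(\Delta \subset \cR_N)\bigr\}.
\end{align*}

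Since Proposition~\ref{thm2} already gives $N^{-(2\xi-1)}\,\cT_{N,qN^\xi}^{\rho\gb_N,(L)} \xrightarrow{(d)} \cT_{\rho\gb,q}^{(L)}$, it remains to show that the two bracketed error terms are $o(N^{2\xi-1})$ with $\bbP$-high probability. The infimum and supremum range over at most $2^L L!$ ordered tuples, so it suffices to prove that for every fixed choice of $k\leq L$ distinct indices $(i_1,\ldots,i_k)$ and the associated tuple $\Delta_N = (Y_{i_1}^{(qN^\xi)},\ldots,Y_{i_k}^{(qN^\xi)})$,
\[
N^{-(2\xi-1)}\bigl|\ent_N(\Delta_N) + \log\bP(\Delta_N \subset \cR_N)\bigr| \xrightarrow{\bbP} 0,
\]
together with its analogue for the joint event $\{\cR_N \cap \Upsilon_L = \Delta_N\}$. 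A union bound on pairs yields $\bbP(\exists i\neq j,\,\|Y_i^{(qN^\xi)}-Y_j^{(qN^\xi)}\| \leq \gep N^\xi) \leq C_L \gep^d$, so the $L$ points are separated by at least $\gep N^\xi$ with arbitrarily high probability; then Lemma~\ref{AppendixL2} handles the removal of the extra points of $\Upsilon_L$, since $\bP(\cR_N \cap (\Upsilon_L \setminus \Delta_N) = \emptyset \mid \Delta_N \subset \cR_N) \to 1$ under the separation event.

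The main obstacle is the moderate-deviation random walk estimate: one needs to show that if $\Delta_N=(x_1,\ldots,x_k)$ with consecutive increments of order $N^\xi$ and $\xi \in (\tfrac12,1)$, then $\log\bP(\Delta_N \subset \cR_N) = -\ent_N(\Delta_N) + O(\log N)$, which is the scale-$N^\xi$ analogue of Lemma~\ref{AppendixL1bis}. This follows from the local central limit theorem applied to each leg of the trajectory, followed by optimisation over time allocations exactly as in the derivation of the second identity in~\eqref{def:entropyN}; the Gaussian prefactors and polynomial corrections produce a logarithmic error which is $o(N^{2\xi-1})$ since $\xi > \tfrac12$, and the leading exponential contribution recovers $\ent_N(\Delta_N)$. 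Combining this with the convergence in Proposition~\ref{thm2} via Skorokhod's representation concludes the proof.
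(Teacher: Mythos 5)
Your proposal is correct and follows essentially the same route as the paper, which states that the proof of this proposition ``is identical to that of Proposition~\ref{logZL} (replacing $\hatent$ with $\ent$) and is omitted.'' The only thing worth noting is that the moderate-deviation estimate you flag as ``the main obstacle''~--- namely $\log\bP(\Delta_N\subset\cR_N)=-\ent_N(\Delta_N)+o(N^{2\xi-1})$ when consecutive points are separated at scale $N^\xi$ with $\xi\in(\tfrac12,1)$~--- is already supplied by Lemma~\ref{AppendixL1} in Appendix~\ref{Appendix}, so no new argument is needed there; it is the exact $\xi<1$ counterpart of Lemma~\ref{AppendixL1bis}.
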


\smallskip
{\bf Step 2.(iii). }
As mentioned in Proposition~\ref{thm2}, 
we have that $\cT_{\rho\beta,q}^{(L)} $ converges to 
$\cT_{\rho\beta,q}$ as $L\to\infty$. Thanks to the continuity in $\gb$,
we also have that $\cT_{\rho\beta,q}$ converges to $\cT_{\rho\beta,q}$
as $\rho\to 1$.
The conclusion of the proof of Proposition~\ref{prop:convZq}
then follows from combining Proposition~\ref{prop:convergence}
with Lemma~\ref{Z_L_fix} (and Step 2.(i)-a), letting $L\to\infty$ then $\rho$ to $1$.

\subsection{Step 3: letting $q\to\infty$}

Going back to~\eqref{splitbarZ},
the conclusion of the convergence in Theorem~\ref{thm:B}
simply follows from Step 1 (Proposition~\ref{prop:Z>Ar}) and Step 2 (Proposition~\ref{prop:convZq}).
Indeed, thanks to Proposition \ref{prop:Z>Ar},
we get that if $q$ has been fixed large enough,  then with probability $1-c q^{-\nu}$ we have for $N$ large enough
\[
\bar{Z}_{N,\gb_N}(M_{N}\leq qN^{\xi}) \leq  \bar{Z}_{N,\gb_N} \leq   \bar{Z}_{N,\gb_N} (M_{N}\leq qN^{\xi}) + 1 \, .
\]
Using Proposition~\ref{prop:convZq} and noting that $\cT_\beta>0$ a.s. and then letting $q\to\infty$
concludes the proof of~\eqref{def:varprob1} (recall that $\cT_{\gb,q}$ converges to $\cT_{\gb}$ as $q\to\infty$, by monotonicity).

\subsection{Transversal fluctuations}

%
%Let us stress that Proposition~\ref{prop:Z>Ar}
%already shows that the transversal fluctuations are \emph{at most} of order $N^{\xi}$.

Let us first prove that the transversal fluctuations are \emph{at least}
of order $N^{\xi}$, analogously to what is done in Section~\ref{sec:transversalA}.
Notice that for any $\eta>0$ we have thanks to  Proposition~\ref{prop:convZq} and~\eqref{def:varprob1} (using Skorokhod's representation theorem, cf.\ Remark \ref{rem:Skorokhod})
\begin{equation}
\label{convforfluct2}
\frac{1}{ N^{2\xi-1}}  \log \overline{\bP}_{N,\gb_N}\big( M_N \leq \eta\, N^{\xi}\big)  = \frac{1}{ N^{2\xi-1}} \Big( \log \bar Z_{N,\gb_N}\big( M_N \leq \eta\, N^{\xi} \big)
 -  \log \bar Z_{N,\gb_N}  \Big)\longrightarrow \cT_{\gb,\eta}  - \cT_{\gb} \, \quad \text{a.s.}
\end{equation}
Since we have $ \cT_{\gb} >0$ and  $\lim_{\eta\downarrow 0} \cT_{\gb,\eta} =0$ a.s.,
we can choose
$\eta$ small enough so that
$\cT_{\gb,\eta}  - \cT_{\gb}$ is negative with high $\bbP$-probability.
From~\eqref{convforfluct2}, on the event $\cT_{\gb,\eta}  -\cT_{\gb}<0$
we have that $\bar \bP_{N} ( M_N \leq \eta\, N^{\xi})$ goes to $0$:
this concludes the proof that transversal fluctuations are at least of order $N^{\xi}$.

Then we prove that the transversal fluctuations are \emph{at most} of order $N^{\xi}$. We have that
\begin{equation*}
\frac{1}{ N^{2\xi-1}}  \log \overline{\bP}_{N,\gb_N}\bigg( M_N \geq\frac{1}{\eta} N^{\xi}\bigg) = \frac{1}{N^{2\xi-1}}\log \bar{Z}_{N,\beta_N}\bigg(M_N\geq \frac{1}{\eta}N^\xi\bigg) - \frac{1}{N^{2\xi-1}}\log \bar{Z}_{N,\beta_N}.
\end{equation*}
The second term above converges to $\cT_{ \beta}$, which is positive a.s.. For the first term, we have that $\bar{Z}_{N,\beta_N}(M_N\geq \eta^{-1}N^\xi)<\exp(-c_1\eta^{-2}N^{2\xi-1})$ with $\bbP$-probability larger than $1-\eta^\nu$ by Proposition \ref{prop:Z>Ar}. Therefore, $\overline{\bP}(M_N\geq \eta^{-1}N^\xi)$ vanishes exponentially fast with $\bbP$-probability close to $1$ for small enough $\eta$, which concludes the result.

\section{Region~C: proof of Theorems~\ref{thm:C1}}
\label{sec:regC}

We will prove the result in the case $\lim_{N\to\infty} \gb_N N^{\frac{d}{2\ga}} = \gb \in [0,+\infty)$.
%The case $\gb=0$ can be treated similarly
%We only treat the case when $\gb_N$ decays at most polynomially, the case in which it decays faster is even simpler.
%We will  focus on the case $\gb>0$, the case $\gb =0$ being technically more intricate: one needs to keep track of the dependence of $\gb$ in all the computations. Let us simply observe that $\cW_{\gb}$ converges to $\cW_0$ when $\gb \to 0$.
We start with the case 
$\ga>2$ before we turn to the case $\ga<2$.
We again use the notation $\bar Z_{N,\gb_N}$ for $Z_{N,\gb_N}^{\go, h=\mu}$.

\subsection{The case $\ga \in ( 2\vee \frac{d}{2}, d)$}\label{sec:thmCpart1}

We first focus on the convergence~\eqref{conv:C11}
of Theorem~\ref{thm:C1}.
For simplicity, we treat the case where $\gb_N$ decays at most polynomially; the case where $\gb_N$ decays faster (and thus $\beta=0$) is even simpler.
Note that the condition $\ga \in ( 2\vee \frac{d}{2}, d)$ implies in particular that $d\ge 3$, and recall the definition~\eqref{def:Chi} of~$\cX$ in dimension $d\geq 5$.

To simplify the notation, let us suppose that $\bbV\mathrm{ar}(\omega_x)=1$ and observe that the convergence~\eqref{conv:C11} 
is equivalent to 
\begin{equation}
\label{removelog}
\frac{1}{a_N \gb_N} \Big( e^{-\frac 12 \gb_N^2 \bE[|\cR_N|]} \bar Z_{N,\gb_N} -  1 \Big) \xrightarrow{(d)}
 \begin{cases}
\cN(0,\sigma_d^2) & \text{ if } d =3,4 \,,\\
\cX & \text{ if } d\geq 5\, .
\end{cases} 
\end{equation}
since both imply that $e^{-\frac12 \gb_N^2 \bE[|\cR_N|]}  \bar Z_{N,\gb_N}$ converges to $1$ in probability.

%\begin{proposition}
%\label{prop:Calpha>2}
%Assume that $\ga \in (2\vee\frac d2,d)$  and that $\lim_{N\to\infty} \gb_N N^{\frac{d}{2\ga}} = \gb  \in [0,\infty)$. Assume also that $\bbV ar(\go)=1$. Then, \begin{equation}\label{eq:C222}
%\frac{1}{a_N \gb_N } \Big(\log Z_{N,\gb_N}^{\go,h=\mu} -  \tfrac12 \gb_N^2 \bE\big[  | \cR_{N} |\big]\Big)
%\overset{(d)}{\longrightarrow}
% \begin{cases}
%\cN(0,\sigma_d^2) & \text{ if } d =3,4 \,,\\
%\sum\limits_{x\in \bbZ^d} (\go_{x} -\mu) \bP(x\in \cR_{\infty} ) & \text{ if } d\geq 5\, , 
%\end{cases}
%\end{equation}
%where $a_N= N^{1/4}$ if $d=3$, $a_N= (\log N)^{1/2} $ if $d= 4$ and $a_N=1$ if $d\ge 5$.
%\end{proposition}

\smallskip
{\bf Step 1. Truncation of the environment.}
Let us set
\begin{equation}
\label{def:kN}
k_N:= (\log N)^\eta N^{\frac{d}{2\ga}}  
\end{equation}
where $\eta \in (\frac{d}{2\ga},1)$ and let us define the truncated environment 
\begin{equation}
\label{tildeomega}
\tilde\go=(\go-\mu) \mathbbm 1_{\{\go\le k_n\}} \,.
\end{equation}
Let us also define $\lambda_N = \log \bbE[ e^{\gb_N \tilde \go_0}]$.
The next lemma compares the partition function $\bar Z_{N,\gb_N}$ with its counterpart with the truncated environment $\tilde \go$.
% note that we also include the case $\ga\in (\frac d2, 2)$.

\begin{lemma}
\label{lem:truncate}
Let $\ga \in( 2\vee\frac d2, d)$ .
Assume that
$ \gb_N  \leq  c N^{-d/2\ga}$ for some constant $c$.
Then for any $a>0$, we have that $N^{a} (\bar Z_{N,\gb_N} - Z_{N,\gb_N}^{\tilde \go}) \to 0$ in $\bbP$-probability.
\end{lemma}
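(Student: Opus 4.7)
The starting point is the pointwise identity $\bar Z_{N,\gb_N} - Z^{\tilde\go}_{N,\gb_N} \geq 0$, which follows from $\go_x - \mu \geq \tilde\go_x$ for every $x$. Setting $A_\go(S) := \gb_N \sum_{x \in \cR_N}(\go_x - \mu)$, $A_{\tilde\go}(S) := \gb_N \sum_{x \in \cR_N}\tilde\go_x$ and $H(S) := A_\go(S) - A_{\tilde\go}(S) = \gb_N \sum_{x \in \cR_N,\, \go_x > k_N}(\go_x - \mu) \geq 0$, one has
\[
0 \leq \bar Z_{N,\gb_N} - Z^{\tilde\go}_{N,\gb_N} = \bE\bigl[e^{A_{\tilde\go}(S)}(e^{H(S)} - 1)\,\ind_{\cR_N \cap \mathbf{X} \neq \emptyset}\bigr] \leq \bE\bigl[e^{A_\go(S)}\,\ind_{\cR_N \cap \mathbf{X} \neq \emptyset}\bigr],
\]
where $\mathbf{X} := \{x \in \Lambda_N : \go_x > k_N\}$ and the last inequality uses $e^{A_{\tilde\go}}(e^H - 1) \leq e^{A_{\tilde\go} + H} = e^{A_\go}$. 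The plan is a two--scale localization: first confine the ``big'' sites to a thin annulus the walk seldom reaches, then tame the exponential moments by a coarser environmental truncation.

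I pick an intermediate spatial scale $R_N := N^{1/2}(\log N)^c$ with $c \in (\tfrac12, \tfrac{\alpha\eta}{d})$; this interval is non--empty thanks to the assumption $\eta > d/(2\alpha)$. Introduce two $\bbP$-events $\cG_N := \{\mathbf{X} \cap \Lambda_{R_N} = \emptyset\}$ and $\cA_N := \{\max_{x \in \Lambda_N}\go_x \leq L_N\}$ with $L_N := N^{d/\alpha + \kappa}$ for a small $\kappa > 0$. Union bounds give
\[
\bbP(\cG_N^c) \leq C R_N^d k_N^{-\alpha} = C(\log N)^{cd - \alpha\eta} \xrightarrow[N\to\infty]{} 0 \quad \text{and} \quad \bbP(\cA_N^c) \leq C N^{-\kappa\alpha} \xrightarrow[N\to\infty]{} 0,
\]
while the random walk estimate $\bP(M_N > R_N) \leq C e^{-c'(\log N)^{2c}}$ is superpolynomially small in~$N$ because $2c > 1$. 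On $\cG_N$ the condition $\cR_N \cap \mathbf{X} \neq \emptyset$ forces $M_N > R_N$, hence
\[
(\bar Z_{N,\gb_N} - Z^{\tilde\go}_{N,\gb_N})\,\ind_{\cG_N} \leq \bE\bigl[e^{A_\go(S)}\,\ind_{\{M_N > R_N\}}\bigr]\,\ind_{\cG_N}.
\]

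To convert this into a bound in $\bbP$-probability at any polynomial rate, I further restrict to $\cA_N$, on which $\bar Z$ coincides with the $L_N$-truncated partition function (so all exponential moments are finite). Cauchy--Schwarz gives
\[
\bE\bigl[e^{A_\go}\ind_{\{M_N > R_N\}}\bigr] \leq \bE[e^{2A_\go}]^{1/2}\,\bP(M_N > R_N)^{1/2},
\]
and a careful log-moment computation --- using $\alpha > 2$ and the consequence $\gb_N^2 N \to 0$ of the standing assumption $\gb_N \leq c N^{-d/(2\alpha)}$ --- shows that $\bE[e^{2A_\go}]\,\ind_{\cA_N}$ has polynomial-in-$N$ size in $\bbP$-probability. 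Combined with the super-polynomial decay of $\bP(M_N > R_N)^{1/2}$, Markov's inequality yields
\[
\bbP\bigl(N^a\,|\bar Z_{N,\gb_N} - Z^{\tilde\go}_{N,\gb_N}| > \epsilon\bigr) \leq \bbP(\cG_N^c) + \bbP(\cA_N^c) + o(1)
\]
for every $a > 0$ and $\epsilon > 0$, proving the claim.

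The main obstacle is that the heavy tail of $\go$ makes $\bbE[e^{\gb_N(\go - \mu)}]$ infinite, so Cauchy--Schwarz cannot be applied globally. The two-scale localization $\cG_N \cap \cA_N$ circumvents this: the fine spatial truncation at scale $R_N$ isolates the big sites to a region the walk almost never reaches, while the coarse environmental truncation at scale $L_N$ keeps the exponential moments finite and polynomially-bounded. The balance between these scales is precisely enabled by the assumption $\eta > d/(2\alpha)$ in the definition of $k_N$, which gives just enough room to pick $R_N$ both large enough that $\bP(M_N > R_N)$ is superpolynomially small and small enough that $\cG_N$ has $\bbP$-probability tending to one.
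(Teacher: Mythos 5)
Your decomposition --- confine the big sites to a thin annulus via $\cG_N$, so that $\bar Z_{N,\gb_N}-Z^{\tilde\go}_{N,\gb_N}$ is supported on walks with $M_N>R_N$ --- is in the right spirit and roughly parallels the paper's restriction to $M_N\le A_N\sqrt{N\log N}$. The union bounds for $\cG_N^c$ and $\cA_N^c$ and the super-polynomial decay of $\bP(M_N>R_N)$ are all fine. The genuine gap is the final step: the assertion that $\bE[e^{2A_\go}]\ind_{\cA_N}$ is polynomially bounded in $\bbP$-probability, which you propose to obtain by a ``careful log-moment computation using $\alpha>2$ and $\gb_N^2 N\to0$.''

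That computation does not go through with the truncation at level $L_N=N^{d/\alpha+\kappa}$. Under $\gb_N\approx N^{-d/2\alpha}$ one has $\gb_N L_N\approx N^{d/2\alpha+\kappa}\to\infty$, so Lemma~\ref{lem:moments} (with $k_N$ replaced by $L_N$) only gives $\lambda'_N:=\log\bbE\big[e^{2\gb_N(\go-\mu)\ind_{\{\go\le L_N\}}}\big]\le C e^{2\gb_N L_N}\gb_N^\alpha$, which makes $\lambda'_N N$ exponentially large rather than small. The facts $\alpha>2$ and $\gb_N^2N\to0$ are exactly what yield $\lambda_N N\to0$ for the much finer truncation at $k_N=(\log N)^\eta N^{d/2\alpha}$ defining $\tilde\go$; they are of no help at the coarse level $L_N$. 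Consequently the annealed moment $\bbE\big[\bE[e^{2A_\go}]\ind_{\cA_N}\big]\approx\bE\big[e^{\lambda'_N|\cR_N|}\big]$ blows up and Markov's inequality yields nothing. What is required is a \emph{quenched}, high-$\bbP$-probability bound on $\bar Z_{N,2\gb_N}(M_N>R_N)$, and this is precisely what the paper produces via the dyadic multi-scale decomposition of Proposition~\ref{prop:Z>Ar} together with the entropy-controlled LPP estimate of Lemma~\ref{lem:S<qr}. Your single application of Cauchy--Schwarz at scale $R_N$ cannot bypass that machinery: the factor $\bE[e^{2A_\go}]^{1/2}$ carries the contribution of trajectories stretching all the way out to $\Lambda_N$, and showing that this factor is (at most slightly super-)polynomial with high $\bbP$-probability requires exactly the multi-scale energy-entropy analysis you are implicitly assuming. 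Once that input is supplied, the remainder of your argument is correct and close in spirit to the paper's.
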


\begin{proof}
First, we show that
we can reduce the partition function to  trajectories with transversal fluctuations at most $\sqrt{N\log N}$. 
Assume that $\gb_N N^{d/2\ga} \leq c$, with $\ga> d/2$, then
for any $A_N\geq 1$
\begin{equation}
\label{reduction0}
\bbP\left(  \bar Z_{N,\gb_N} \big( M_{N}\geq A_N \,\sqrt{N\log N} \big)\geq e^{-c_{1}A_N^{2} \log N}  \right)\leq c_{2} A_N^{-\nu} \, .
\end{equation}
This result is identical to Proposition~\ref{prop:Z>Ar}, replacing $N^{\xi}$ with $\sqrt{N\log N}$ (so $N^{2\xi-1}$ is replaced with $\log N$): the proof is identical and relies on Lemma~\ref{lem:S<qr} (in which we can take $h_N = \sqrt{N\log N}$), so we omit it.
Thanks to~\eqref{reduction0}, we can therefore choose a sequence $A_N$ going to infinity (let us take $A_N =\log\log N$) in order to get that for any $a>0$, $N^a Z_{N,\gb_N}^{\go,h=\mu}(M_{N}\geq A_N \,\sqrt{N\log N})$
goes to $0$ in probability.
 
On the other hand, using Cauchy-Swartz inequality we also have that 
\begin{equation}
\label{reduction3}
\begin{split}
\bbE \big[ Z_{N,\gb_N}^{\tilde \go}\big(M_N > A_N\sqrt{N\log N}\big) \big] 
&  = \bE\big[ e^{\lambda_N |\cR_N|} \ind_{\{M_N > A_N \sqrt{N\log N}\}}\big] \\
&\leq  \bE\big[ e^{2\lambda_N |\cR_N|} \big]^{1/2} \bP \big(M_N > A_N\sqrt{N\log N}\big)^{1/2} \, .
\end{split}
\end{equation}
Now, since $\ga>2$, $\lambda_N \sim \frac12 \gb_N^2$ (see Lemma \ref{lem:moments}).
In particular, since $\gb_N \leq c N^{-d/2\ga}$, we get that 
 $|\lambda_N | \leq C' N^{-d/\ga}$,
so $\lambda_N |\cR_N| \leq \lambda_N N $ goes to $0$ as $N\to\infty$ since $\ga<d$.
Also, the last probability in~\eqref{reduction3} is bounded by $\exp(- cA_N^2 \log N) = N^{-c A_N^2}$.  
By Markov's inequality,
we therefore get that $N^a  Z_{N,\gb_N}^{\tilde \go}\big(M_N > A_N\sqrt{N\log N}\big)$ goes to $0$ in probability, using again that $A_N\to \infty$.

All together, it remains to prove that
\begin{equation}
\label{truncated}
N^a \Big(  \bar Z_{N,\gb_N}  \big(M_N \leq A_N\sqrt{N\log N}\big) -  Z_{N,\gb_N}^{\tilde \go}\big(M_N \leq A_N\sqrt{N\log N}\big) \Big) \to 0 \qquad \text{in $\bbP$-probability}.
\end{equation}
But a union bound simply gives that
\begin{equation}
\label{probatrunc}
\begin{split}
\bbP\Big( \exists \, x , \|x\| \leq A_N\sqrt{N\log N} \,, 
\tilde \go_x \neq \go_x-\mu \Big)
&\leq A_N^d (N \log N)^{d/2} \bbP(\go >k_N)\\
&\leq c' A_N^d (N \log N)^{d/2}   (\log N)^{- \ga \eta} N^{-d/2} \, .
\end{split}
\end{equation}
where we used that $k_N  =  (\log N)^{\eta} N^{d/2\ga}$ and our assumption~\eqref{eq:tailOmega}. 
Since $\eta >d/2\ga$, choosing~$A_N=\log \log N$ we have that this probability goes to~$0$.
%We therefore get that the l.h.s.\ of~\eqref{truncated} is equal to $0$ with $\bbP$-probability going to $1$.
This proves~\eqref{truncated} and concludes the proof of Lemma~\ref{lem:truncate}
\end{proof}

\smallskip
{\bf Step 2. Convergence of the partition function with truncated environment.}
To conclude the proof of Theorem~\ref{thm:C1}-\eqref{conv:C11}, and in view of~\eqref{removelog} and Lemma~\ref{lem:truncate} (recall we assumed that~$\gb_N$ decays at most polynomially),
we simply need to show the following lemma.
For simplicity, let us assume that $\bbV\mathrm{ar}(\go)=1$ and let us denote
$\Xi_N := \frac12 \bbV\mathrm{ar}(\go) \gb_N^2 \bE[|\cR_N|]$ the centering term in Theorem~\ref{thm:C1}-\eqref{conv:C11}; 
note that (cf. Remark \ref{rem:betaNERN}) $\lim_{N\to\infty} e^{\Xi_N} =1$.
Recall the definition~\eqref{def:Chi} of $\cX$, and recall that 
we have set $a_N= N^{1/4}$ for $d=3$, $a_N =(\log N)^{1/2}$ for $d=4$ and $a_N =1$ for $d\geq 5$.
\begin{lemma}
\label{lem:reduction0}
Assume that $\gb_N \leq c N^{-d/2\ga}$.
If $\ga\in (2\vee \frac{d}{2}, d)$, then we have the following convergence in distribution
\begin{equation}
\label{reduction1}
 \frac{1}{a_N \gb_N } \Big( Z_{N,\gb_N}^{\tilde \go} -  e^{\Xi_N} \Big)
\overset{(d)}{\longrightarrow}
\begin{cases}
\cN(0,\sigma_d^2)  &\quad \text{ if } d=3,4, \\
\cX & \quad \text{ if } d\geq 5 \, .
\end{cases}
\end{equation}
If $d\geq 5$ and $\ga \in (\frac{d}{d-2}, \frac{d}{2})$, then we have 
$\frac{1}{\gb_N }( Z_{N,\gb_N}^{\tilde \go}  - 1 )
  \overset{(d)}{\longrightarrow}  \cX $.
\end{lemma}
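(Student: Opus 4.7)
The plan follows the \emph{polynomial expansion} strategy: writing
\[
\prod_{x\in\cR_N} e^{\gb_N\tilde\go_x} = \sum_{A\subset\bbZ^d\,,\,|A|<\infty}\ind_{\{A\subset\cR_N\}}\prod_{x\in A}\big(e^{\gb_N\tilde\go_x}-1\big)
\]
and taking the $\bE$-expectation produces the decomposition
\[
Z_{N,\gb_N}^{\tilde\go} = 1 + T_1 + T_{\geq 2}\,, \qquad T_1 := \sum_{x\in\bbZ^d}\bP(x\in\cR_N)\big(e^{\gb_N\tilde\go_x}-1\big)\,,
\]
where $T_{\geq 2}$ collects the terms with $|A|\geq 2$. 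The limit is expected to be driven entirely by the linear term $T_1$. To handle the centering, I would use the identity $\bbE[Z_{N,\gb_N}^{\tilde\go}]=\bE[e^{\lambda_N|\cR_N|}]$ with $\lambda_N:=\log\bbE[e^{\gb_N\tilde\go_0}]$: the truncation estimates of Appendix~\ref{app:envir} give $\lambda_N=\tfrac12\bbV\mathrm{ar}(\go)\gb_N^2(1+o(1))$ under $\ga>2$, and the simple-random-walk bound $\bbV\mathrm{ar}_S(|\cR_N|)\lesssim N$ from Appendix~\ref{Appendix} yields $\bbE[Z_{N,\gb_N}^{\tilde\go}]=e^{\Xi_N}(1+o(\gb_N))$. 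One then checks directly that $e^{\Xi_N}-1=o(\gb_N)$ in the regime $\ga<d/2$ of the second part of the lemma, so centering by~$1$ or by $\bbE[Z_{N,\gb_N}^{\tilde\go}]$ are equivalent to leading order. The analysis therefore reduces to proving convergence of $(\gb_N a_N)^{-1}\big[(T_1-\bbE[T_1])+T_{\geq 2}\big]$ to the claimed limit.

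For the main term I set $Y_x^{(N)}:=\gb_N^{-1}(e^{\gb_N\tilde\go_x}-e^{\lambda_N})$, so that $\gb_N^{-1}(T_1-\bbE[T_1])=\sum_x\bP(x\in\cR_N)Y_x^{(N)}$; the family $(Y_x^{(N)})_x$ is i.i.d.\ and centered. When $d\geq 5$ and $\ga>\tfrac{d}{d-2}$ (covering both parts of the lemma in high dimension), Proposition~\ref{prop:finiteSum} ensures that $\cX$ is well defined. If additionally $\ga>2$, so that $\bbV\mathrm{ar}(Y_x^{(N)})\to\bbV\mathrm{ar}(\go)$, a truncate-and-tail $L^2(\bbP)$ argument based on $\bP(x\in\cR_N)\uparrow\bP(x\in\cR_\infty)\leq\mathrm{G}(x)$ and $\sum_x\mathrm{G}(x)^2<\infty$ (since $d\geq 5$) yields $\gb_N^{-1}(T_1-\bbE[T_1])\to\cX$; if $\ga\in(\tfrac{d}{d-2},2)$ I would instead prove convergence in probability, observing that with high probability no site $x\in\cR_N$ has $\go_x>1/\gb_N$ (the expected count being $\bE[|\cR_N|]\gb_N^{\ga}=O(N^{1-d/2})\to 0$) and concluding via the three-series construction of Proposition~\ref{prop:finiteSum}. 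In dimensions $d=3,4$ (where $\sum_x\bP(x\in\cR_N)^2$ diverges), I apply the Lindeberg--Feller CLT to the triangular array $\{a_N^{-1}\bP(x\in\cR_N)Y_x^{(N)}\}_x$: a Riemann-sum calculation based on~\eqref{def:f} gives $\sum_x\bP(x\in\cR_N)^2\sim\sigma_d^2 a_N^2/\bbV\mathrm{ar}(\go)$, and the Lindeberg condition follows from $|Y_x^{(N)}|\leq\gb_N^{-1}e^{\gb_N k_N}=N^{o(1)}$ together with $a_N\geq(\log N)^{1/2}$.

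It remains to show $T_{\geq 2}=o_{\bbP}(\gb_N a_N)$. By the independence of $(\tilde\go_x)_x$,
\[
\bbE[T_{\geq 2}^2]=\sum_{A,B:|A|,|B|\geq 2}\bP(A\subset\cR_N)\bP(B\subset\cR_N)\prod_{x\in A\cup B}m_N(n_x(A,B))\,,
\]
where $n_x(A,B)\in\{1,2\}$ counts how many of $A,B$ contain $x$, and $m_N(1)=e^{\lambda_N}-1\asymp\gb_N^2$, $m_N(2)=\bbE[(e^{\gb_N\tilde\go}-1)^2]\asymp\gb_N^2$ (the factor $\gb_N^2$ for $m_N(1)$ comes from $\lambda_N\asymp\gb_N^2$, not $\gb_N$). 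Every site in $A\cup B$ therefore contributes a factor $\asymp\gb_N^2$; combined with the constraint $|A|,|B|\geq 2$ and the intersection-of-ranges estimates of Appendix~\ref{Appendix}, this leads to $\bbE[T_{\geq 2}^2]\lesssim\gb_N^4\bE[|\cR_N\cap\cR'_N|]^2$ for an independent copy $\cR'_N$ of $\cR_N$, which is $o((\gb_N a_N)^2)$ in every regime considered. The main obstacle will be carrying out this bookkeeping cleanly, especially in the marginal dimension $d=4$ where $\bE[|\cR_N\cap\cR'_N|]\asymp\log N=a_N^2$ and only a logarithmic slack is available; a secondary subtlety is that $e^{\gb_N\tilde\go_x}$ is not uniformly bounded (since $\gb_N k_N=(\log N)^{\eta}\to\infty$), so the Lindeberg verification relies on the precise truncation level chosen in~\eqref{def:kN}.
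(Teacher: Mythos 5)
Your high-level strategy matches the paper's: a polynomial chaos expansion, isolating a linear term that drives the limit, a second-moment bound on the quadratic-and-higher part, and a separate replacement of the expectation $\bE[e^{\lambda_N|\cR_N|}]$ by $e^{\Xi_N}$. The substantive difference is where you center. The paper first pulls out $e^{\lambda_N|\cR_N|}$ and expands over $\gep_x:=e^{\gb_N\tilde\go_x-\lambda_N}-1$, which is \emph{centered}, so that $\bY_N$ is a sum of products of centered i.i.d.\ factors and $\bbE[\bY_N^2]$ is automatically the variance. You instead expand directly over $e^{\gb_N\tilde\go_x}-1$, whose mean is $m_N(1)=e^{\lambda_N}-1\asymp\gb_N^2\neq 0$; this is where the argument breaks.

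Concretely, your claim $\bbE[T_{\geq 2}^2]\lesssim\gb_N^4\,\bE[|\cR_N\cap\cR'_N|]^2$ is false. In the double sum over $A,B$ the pairs with $A\cap B=\emptyset$ do \emph{not} cancel (precisely because $m_N(1)\neq 0$), and the smallest such contribution, $|A|=|B|=2$ with $A\cap B=\emptyset$, is of order
$m_N(1)^4\big(\sum_{|A|=2}\bP(A\subset\cR_N)\big)^2\asymp\gb_N^{8}\,\bE[|\cR_N|^2]^2\asymp\gb_N^8 N^4$,
which is \emph{not} bounded by $\gb_N^4 J_N^2$: in $d=3$ the ratio is $\gb_N^4 N^4/a_N^4=\gb_N^4 N^{3}\leq N^{3-6/\ga}$, which diverges for $\ga\in(5/2,3)$, and comparing instead to the target $(\gb_N a_N)^2$ one finds $\gb_N^6 N^{7/2}\leq N^{7/2-9/\ga}\to\infty$ for $\ga>18/7$. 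So $\bbE[T_{\geq 2}^2]=o((\gb_N a_N)^2)$ genuinely fails on part of the parameter range, and only $\mathbb{V}\mathrm{ar}(T_{\geq 2})$ --- in which the disjoint pairs cancel against $\bbE[T_{\geq 2}]^2$ --- has the small bound you wrote down. The fix is either (a) to explicitly write $Z-e^{\Xi_N}=(T_1-\bbE T_1)+(T_{\geq 2}-\bbE T_{\geq 2})+(\bbE Z-e^{\Xi_N})$, bound $\mathbb{V}\mathrm{ar}(T_{\geq 2})$ rather than $\bbE[T_{\geq 2}^2]$, and fold the surviving mean $\bbE T_{\geq 2}$ into the $\bbE Z-e^{\Xi_N}$ estimate (this is what the $o(\gb_N)$ statement has to absorb, and you should check it quantitatively rather than asserting it), or (b) to adopt the paper's pre-centered expansion with $\gep_x$, which removes the issue structurally and gives $\bbE[\bY_N^2]\leq C\sum_{k\geq 2}(\bbE[\gep_0^2]J_N)^k$ directly. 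The rest of your plan --- $L^2$/three-series for $\cX$ in $d\geq 5$, Lindeberg for $d=3,4$, and the $\ga\in(\frac{d}{d-2},2)$ case via an $L^1$ estimate rather than $L^2$ --- is sound and essentially what the paper does.
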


%
%\begin{lemma}
%\label{lem:reduction0bis}
%Assume $d\geq 5$ and  $\ga \in (\frac{d}{d-2},d)$. If $\gb_N \leq c N^{-d/2\ga}$ then we have the following convergence in distribution
%\begin{equation}
%\label{reduction1bis}
%  \frac{1}{\gb_N }\Big( Z_{N,\gb_N}^{\tilde \go}  - 1 - \sum_{k=2}^{k_{\ga}} \frac{1}{k!} \, \Xi_N^{k} \Big)
%  \overset{(d)}{\longrightarrow}  \cX   \,   , 
%\end{equation}
%with $\cX$ defined in~\eqref{def:Chi}, and $k_{\ga} := \min\{ k ,  k(\frac{d}{\ga} -1) >1-\frac{d}{2\ga} \}$.
%Note that when $\ga \leq 2$ the quantity $\Xi_N$ is not well-defined (or $\Xi_N =+\infty$),
%but in that case we have $k_{\ga}=1$ so the sum in~\eqref{reduction1bis} is empty. 
%\end{lemma}

\begin{remark}We included here the result for $\ga \in (\frac{d}{d-2},\frac{d}{2})$ since the proof is identical to that of~\eqref{reduction1}; this will be useful to treat the case $\ga<d/2$. \end{remark}

As a preliminary to the proof of Lemma~\ref{lem:reduction0}, let us collect some important estimates.
Thanks to Lemma~\ref{lem:moments}, we get that
$|e^{\lambda_N} -1| \leq C \gb_N^2$ and $\lambda_N \sim \frac12 \gb_N^2$ when $\ga >2$ and
$|e^{\lambda_N} -1| \leq C  e^{c (\log N)^{\eta}} \gb_N^\ga$
when $\ga \in (1,2]$.
In particular, we have 
\begin{equation}
\label{lambdaN}
\lambda_N \leq C \gb_N^2  \ \text{ if } \ga \in (2,d) \,, \qquad
\lambda_N \leq C e^{\gb_N k_N} \gb_N^\ga  \ \text{ if } \ga \in (1,2] \, . 
\end{equation}
%In fact, we have that $\lambda_N \sim \frac12 \gb_N^2$  when $\ga >2$.
Note that~\eqref{lambdaN} implies that $\lim_{N\to\infty} \lambda_N N =0$.

Also, define $\epsilon_x:=\exp(\beta_N\tilde{\omega}_x-\lambda_N)-1$, which will be used in both lemmas.
Note that $\bbE[ \gep_x ]=0$ and that thanks to Lemma~\ref{lem:moments}, we get as for $\lambda_N$
\begin{equation}
\label{epsilonN}
\bbE[(\gep_0)^2] \leq C \gb_N^2  \ \text{ if } \ga \in (2,d) \,, \qquad
\bbE[(\gep_0)^2]  \leq C e^{\gb_N k_N} \gb_N^\ga  \ \text{ if } \ga \in (1,2] \, . 
\end{equation}
In fact, we have that $\bbE[(\gep_0)^2] \sim \gb_N^2$ when $\ga >2$.
The bounds~\eqref{lambdaN}-\eqref{epsilonN}, combined 
with the fact that  $\gb_N k_N \leq c (\log N)^{\eta}$,  will be used extensively throughout the proof. 

\begin{proof}[Proof of Lemma~\ref{lem:reduction0}]\label{proofLemreduction0}
Note that $e^{u\ind_{\{x\in \cR_N\}}}=1+(e^u-1)\ind_{\{x\in \cR_N\}}$ for any $u\in \bbR$.
Writing this with $u = \gb_N \tilde \go_x-\lambda_N$ so that $e^{u}-1 =\gep_x$,
we expand the product in the partition function as follows:
\begin{align}\label{polexpansion}
 Z_{N,\gb_N}^{\tilde\go}  &=
 \bE\Big[e^{\lambda_N |\cR_{N} |}\prod_{x\in  \bbZ^d  }(1+\epsilon_x \mathbbm{1}_{\{x\in\cR_{N}\}})\Big]
 =\bE \big[  e^{\lambda_N |\cR_{N} |} \big]  + \sum_{x\in \bbZ^d} \gep_x \bE\big[ e^{\lambda_N |\cR_{N}|}  \ind_{\{x\in \cR_{N}\}}  \big]  +\bY_{N}\,, 
\end{align}
where we set 
\begin{equation}\label{def:RN}
\bY_{N}:=\sum_{k\ge 2}\sumtwo{(x_1,\dots,x_k)\in ( \bbZ^d )^k}{x_i\neq x_j,\,i,j=1,\dots,k}\Big(\prod_{i=1}^k\epsilon_{x_i}\Big)\bE\Big[ e^{\lambda_N |\cR_{N}  |}\ind_{\{ (x_1,\dots,x_k)\in\cR_{N} \}}\Big] \,,
\end{equation}
where $(x_1,\dots,x_k)\in \cR_{N}$ means that the points $x_1, \ldots, x_k$ are visited in this given order; in particular, we do not have the combinatorial term $k!$.

We now control all terms, starting with $\bY_N$.

\smallskip
{\it Last term in~\eqref{polexpansion}.} We show that 
$\frac{1}{\gb_N a_N} \bY_N$ 
converges in probability to zero.
Since $\bbE[\gep_x]=0$, we only need to control the second moment of $\bY_{N}$.
Because the $\gep_x$ are independent, 
we easily get that 
\begin{equation}
\label{firstERN2}
\bbE\big[( \bY_{N} )^2\big]=\sum_{k\ge 2}
\sumtwo{(x_1,\dots,x_k)\in ( \bbZ^d )^k}{x_i\neq x_j,\,i,j=1,\dots,k}\bbE \big[ (\gep_{0})^2 \big]^k \bE\Big[ e^{\lambda_N |\cR_{N}|}\ind_{\{ (x_1,\dots,x_k)\in\cR_{N} \}}\Big]^2  \, .
\end{equation}
Now, using the fact that $|\cR_{N}|\leq N$ and Markov's property, we get
\begin{align*}
\sumtwo{(x_1,\dots,x_k)\in ( \bbZ^d)^k}{x_i\neq x_j,\,i,j=1,\dots,k}\bE\Big[ e^{\lambda_N |\cR_{N}|}\ind_{\{ (x_1,\dots,x_k)\in\cR_{N} \}}\Big]^2 
& \leq e^{2 N\lambda_N} \sumtwo{(x_1,\dots,x_k)\in (\bbZ^d)^k}{x_i\neq x_j,\,i,j=1,\dots,k}\bP ( (x_1,\dots,x_k)\in\cR_N  )^2 \\
& \leq  C \Big( \sum_{x\in \Z^d} \bP(x\in \cR_N)^2 \Big)^k  \, ,
\end{align*}
where we also used that $\lambda_N N$ goes to $0$, as seen above.
We denote $J_N := \sum_{x\in \Z^d} \bP(x\in \cR_N)^2$.
Then, we have $J_N \sim c_d a_N^2$ as $N\to\infty$,
as outlined in Lemma~\ref{lem:JN} in the appendix.
Using the bound~\eqref{epsilonN} on $\bbE[(\gep_0)^2]$, we conclude that by $\bbE[(\gep_x)^2]J_N\to0$,
\begin{equation}
\label{secondERN2}
\frac{1}{\gb_N^2 a_N^2}\bbE\big[(\bY_{N})^2\big]\le \frac{C}{\gb_N^2 a_N^2}\sum_{k\ge 2}  \big(\bbE \big[ (\gep_{0})^2 \big] J_N \big)^k
\le C 
\begin{cases}
\gb_N^2  J_N \  &\text{ if } \ga \in (2,d) \, ,\\
 e^{2c (\log N)^{\eta}} \gb_N^{2(\ga-1)}  J_N \  &\text{ if } \ga \in (1,2]\, .
\end{cases}
\end{equation}
This readily goes to $0$ as $N\to\infty$.

\smallskip
{\it First term in~\eqref{polexpansion}.}
We  now show that 
\begin{equation}
\label{replacelambdaN}
\lim_{N\to\infty} \frac{1}{a_N \gb_N}\Big|  \bE\big[ e^{\lambda_N \cR_N} \big] - e^{\Xi_N \ind_{\{\ga >2\}}}  \Big| =0 \,.
\end{equation}
Let us start with the case $\ga>2$.
 Lemma~\ref{lem:moments}  gives in that case that $|\lambda_N - \tfrac12 \gb_N^2|\leq C  e^{ c' (\log N)^{\eta}}  \gb_N^{3\wedge \ga} $.
Hence, we get
\begin{align*}
\frac{1}{\gb_N a_N} \Big| \bE\big[e^{\lambda_N | \cR_{N} |} \big]  - \bE\big[e^{ \frac12 \gb_N^2 | \cR_{N} |} \big] \Big|
& \leq \frac{1}{\gb_N a_N}   \big| \lambda_N - \tfrac12 \gb_N^2 \big| \, N \leq C e^{c'(\log N)^{\eta}} \frac{1}{a_N} \gb_N^{2\wedge (\ga-1)} N  \,,
\end{align*}
which goes to $0$ as $N\to\infty$.
Indeed, if $\ga\in (2,3)$, we have  that $\gb_N^{\ga-1} \leq c N^{- \frac{d(\ga-1)}{2\ga}}$: in dimension $d \geq4$ we get that
$\gb_N^{\ga-1} N \leq c N^{1- 2 (\ga-1)/\ga}$ which goes to $0$ since $\ga>2$;
in dimension $d=3$, using that $a_N=N^{1/4}$ we get that $a_N^{-1}\gb_N^{\ga-1} N \leq N^{\frac{3}{4} (1 - 2(\ga-1)/\ga)}$ which also  goes to $0$ since $\ga>2$.
If $\ga \in [3,d)$, we simply use that $\gb_N^2 N \leq C N^{1-d/\ga}$, which goes to $0$ since $\ga<d$.
Additionnally, recalling that $\Xi_N = \frac12 \gb_N^2 \bE[|\cR_N|]$, we have
\begin{align*}
\frac{1}{\gb_N a_N} \Big| \bE\Big[e^{ \frac12 \gb_N^2 | \cR_{N} |} \Big] -  e^{ \Xi_N }  \Big|
&\leq \frac{C}{\gb_N a_N}  e^{ \frac12 \gb_N^2 \bE[ | \cR_{N} | ] }  \times \gb_N^4 \mathbf{V}\mathrm{ar}\big( |\cR_N|  \big)  \notag \\
&\leq C' \gb_N^3 N \log N \,,
\end{align*}
where we expanded $e^{\frac{1}{2} \gb_N^2 (|\cR_N| - \bE[|\cR_N|])}-1$ to get the first inequality (recall that $\gb_N^2 N$ goes to $0$), 
and used that $\mathbf{V}ar(|\cR_N|)  \sim c_3 N \log N$ in dimension $d=3$ and $\mathbf{V}ar(|\cR_N|) \sim c_4 N$ in dimension $d\geq 4$ (see~\cite{JP71}), together with the definition of $a_N$.
This proves~\eqref{replacelambdaN} in the case $\ga \in(2,d)$.
%Let us also stress that in dimension $d\ge 5$ then $\gb_N^{-1} (e^{\Xi_N}-1) \sim c \gb_N N$, so this goes to $0$ if $\ga \in (2,\frac d2)$ and the centering in~\eqref{replacelambdaN} can simply be replaced by $1$.

In the case $\ga \in (\frac{d}{d-2},2]$ (in particular $d\geq 5$), we use~\eqref{lambdaN} to get that
\[
\frac{1}{\gb_N}  \Big| \bE\big[e^{\lambda_N | \cR_{N} |} \big]  - 1 \Big|
\leq c \gb_N^{-1} \lambda_N  N \leq C e^{c(\log N)^{\eta}} \gb_N^{\ga-1} N \, ,
\] 
which goes to $0$ as $N\to\infty$, because $\frac{d}{2\ga} (\ga-1) >1$.
This proves~\eqref{replacelambdaN} in the case $\ga \in (\frac{d}{d-2},2]$.

\smallskip
{\it Second term in~\eqref{polexpansion}.}
Let us rewrite it as
\begin{equation}
\label{convnormal1}
 \frac{1}{a_N} \sum_{x\in \bbZ^d} \frac{\gep_x}{\gb_N}  \bP(x\in \cR_N) +  \frac{1}{\gb_N a_N}\sum_{x\in  \bbZ^d} \gep_x \bE\Big[\big (e^{\lambda_N |\cR_{N}|} -1 \big) \ind_{\{x\in \cR_{N}\}}  \Big] \, .
\end{equation}

The second term in~\eqref{convnormal1} goes to zero in probability. Indeed, it is centered in expectation, and its second moment is bounded by
\[
\frac{1}{\gb_N^2 a_N^2}   \sum_{x\in \bbZ^d} \bbE[ (\gep_0)^2 ] \big (e^{\lambda_N N} -1 \big)^2  \bP(x\in \cR_N)^2 \leq 
C \frac{\bbE[ (\gep_0)^2] (\lambda_N N)^2 J_N}{\gb_N^2 a_N^2} \, .
\]
Using that  $J_N \sim c_d a_N^2$ as $N\to\infty$ (cf.\ Lemma~\ref{lem:JN}) and that $\lambda_N ,\bbE [ (\gep_{0})^2 ] \leq C e^{c (\log N)^{\eta}}\gb_N^{\ga\wedge 2}$, see~\eqref{lambdaN} and~\eqref{epsilonN}, this is bounded by a constant times $ e^{c(\log N)^{\eta}} \gb_N^{3 (\ga\wedge 2) -2} N^2$.
If $\ga>2$ this is bounded by $N^{o(1)} (\gb_N^2 N)^2$ so this goes to $0$; if $\ga \in (\frac{d}{d-2}, 2]$ this is bounded by $N^{o(1)}(\gb_N^{\ga-1} N)^2$, so this also goes to~$0$, as seen above.

We now
rewrite the first term in~\eqref{convnormal1} as
\begin{equation}
\label{gotozero}
\frac{1}{a_N} \sum_{x\in \bbZ^d} ( \go_x -\mu) \bP(x\in \cR_N)
 + \frac{1}{a_N \gb_N}  \sum_{x\in \bbZ^d} \big( \gep_x -\gb_N( \go_x -\mu) \big) \bP(x\in \cR_N) \, ,
\end{equation}
and we now show that the second term goes to $0$ in probability.

If $\ga \in (\frac{d}{d-2},2]$ ($d\geq 5$), we control the expectation of its absolute value. Using that $|\gb_N \tilde \go_x|\leq \gb_N k_N \leq c (\log N)^{\eta}$ and recalling that $\gep_x = e^{\gb_N \tilde \go_x -\lambda_N} -1$,
we can perform a Taylor expansion to get that 
\begin{align*}
\frac{1}{\gb_N} \bbE\big[ \big| \gep_x - \gb_N( \go_x -\mu) \big| \big]
&\leq \frac{1}{\gb_N} \Big(  \gb_N \bbE[ |\go_x-\mu| \ind_{\{\go_x>k_N\}}  ]  + \lambda_N+ Ce^{2c (\log N)^{\eta}} ( \gb_N^2\bbE[ \tilde \go_x^2] + \lambda_N^2 ) \Big) \\
& \leq k_N^{1-\ga} +  e^{c' (\log N)^\eta} \gb_{N}^{\ga -1}  \, ,
\end{align*}
where we used that $\bbE[ \tilde \go_x^2]\leq C k_N^{2-\ga} \log k_N \leq C' \gb_N^{\ga-2}  (\log N)^{C} $, together with a similar bound on $\lambda_N$, see~\eqref{lambdaN}.
Therefore, the expectation of the absolute value of the second term in~\eqref{gotozero}
is bounded by a constant times $k_N^{1-\ga} N + e^{c' (\log N)^\eta}  \gb_{N}^{\ga -1} N$: this goes to $0$ as $N\to\infty$, thanks to the condition $\ga > d/(d-2)$.
By Markov's inequality, the second term in~\eqref{gotozero} goes to $0$ in probability.

If $\ga\in (2,d)$, since the expectation of the second term in~\eqref{gotozero} is $0$, we control its second moment: by independence of the $\go_x$, it is equal to
\[
\frac{1}{a_N^2 \gb_N^2} \sum_{x\in \bbZ^d} \bbE\Big[ \big( \gep_x -\gb_N ( \go_x -\mu) \big)^2 \Big]\bP(x\in \cR_N)^2 \leq \frac{C}{ \gb_N^2} \bbE\Big[ \big( \gep_x -\gb_N ( \go_x -\mu) \big)^2 \Big]\,, 
\]
where we have used that $J_N = \sum_{x\in \bbZ^d} \bP(x\in \cR_N) \leq c a_N^2$, see Lemma~\ref{lem:JN}.
Now, we can use the same Taylor expansion as above, to get that 
\begin{align*}
\frac{1}{\gb_N^2}\bbE\Big[  \big( \gep_x - \gb_N (\go_x -\mu) \big)^2\Big] &\leq \frac{C}{\gb_N^2} \Big(  \gb_N^2 \bbE\big[ (\go_x-\mu)^2 \ind_{\{\go_x>k_N\}} \big]  +  \lambda_N^2 + Ce^{4c(\log  N)^{\eta}}  \big( \gb_N^4 \bbE[\tilde \go_x^4] +\lambda_N^4 \big)\Big) \\
&\leq  C k_N^{2-\ga} + Ce^{c'(\log N)^{\eta}}  \gb_N^{\ga\wedge 4 -2} \, .
\end{align*}
where we used here that $\lambda_N \sim \frac12 \gb_N^2$ as $N\to\infty$ and that $\bbE[ \tilde \go_x^4]$ is bounded by $C k_N^{4-\ga} \log k_N \leq C \gb_N^{\ga-4} (\log N)^{C'}$
if $\ga\leq 4$ and by a constant if $\ga>4$.
This shows that the variance of the second term in~\eqref{gotozero}
goes to $0$, so this term goes to $0$ in probability.

\smallskip
All together,  combining~\eqref{polexpansion} with~\eqref{secondERN2}-\eqref{replacelambdaN} and \eqref{convnormal1}-\eqref{gotozero}, we have shown that 
\[
\frac{1}{a_N\gb_N}\Big( Z_{N,\gb_N}^{\tilde \go} - e^{\Xi_N \ind_{\{\ga>2\}}}  - \sum_{x\in \bbZ^d}  \gb_N ( \go_x -\mu) \bP(x\in \cR_N) \Big)
\xrightarrow{\bbP} 0 \, .
\]
It therefore only remains to show that
\begin{equation}
\label{interm}
\frac{1}{a_N} \sum_{x\in \bbZ^d} ( \go_x -\mu) \bP(x\in \cR_N)  \xrightarrow{(d)}
\begin{cases}
\cN(0,\sigma_d^2) &\  \text{ if }  d=3,4,\\
\cX & \ \text{ it } d\geq 5 \, .
\end{cases}
\end{equation}
Note that in~Lemma~\ref{lem:reduction0}, when $d\geq 5$ and $\ga \in (2,\frac d2)$ we used a centering equal to $1$ instead of~$e^{\Xi_N}$.
This is not an issue since in that case we have that $\gb_N^{-1}(e^{\Xi_N} -1) \leq   c \gb_N N$ goes to $0$ as $N\to\infty$.

\smallskip
In dimension $d\geq 5$, \eqref{interm}
follows simply by monotone convergence, recalling that $\cX$
is a.s.\ finite if $\ga> \frac{d}{d-2}$, see Proposition~\ref{prop:finiteSum}.

In dimension $d=3,4$, 
the random variables $a_N^{-1} ( \go_x -\mu) \bP(x\in \cR_N)$ 
are independent, centered and have variance $a_N^{-2} \bP(x\in \cR_N)^2$.
%where we used that $\bbV\mathrm{ar}(\gep_0) \sim \gb_N^2$ as $N\to\infty$ (recall we assumed $\bbV\mathrm{ar}(\go)=1$).
Since $a_N \to\infty$, this goes to $0$ uniformly for $x\in \bbZ^d$.
Hence, we directly get~\eqref{interm}, for instance with a characteristic function analysis
with the constant $\sigma_d^2:= \lim_{N\to\infty} a_N^{-2}\sum_{x\in \bbZ^d} \bP(x\in\cR_N)^2$, recall Lemma~\ref{lem:JN}.
\end{proof}

\subsection{The case $\ga \in (\frac{d}{2},2)$}
\label{sec:thmCpart2}
Note that this regime is nonempty only for $d=2,3$.
Also, since $\ga>\frac{d}{2}\geq1$, we have $\mu:=\bbE[\go_0]<+\infty$.
Let us stress that, similarly as~\eqref{removelog} above, the convergence~\eqref{conv:C12}
is equivalent to the convergence in distribution
\begin{equation}
\label{removelog2}
\frac{v_N}{ \gb_N N^{\frac{d}{2\ga}}} \big( \bar Z_{N,\gb_N}  -1\big) \xrightarrow{(d)} \cW_{\gb} \,.
\end{equation}
The steps are the same as in Section~\ref{sec:thmCpart1}. The case of the dimension $d=3$ is very similar, but some adaptations are needed in dimension $d=2$.

\smallskip
{\bf Step 1. Truncation of the environment.}
Let us set
\begin{equation}
\label{def:kN2}
k_N := \begin{cases}
 (\log N)^{\eta} N^{\frac{d}{2\ga}}  &\quad \text{if } d=3 \, ,\\
 (\log \log N)^{\eta} N^{\frac{d}{2\ga}}  &\quad \text{if } d=2 \, ,
\end{cases}
\end{equation}
where $\eta \in (\frac{d}{2\ga},1)$.
We again define the truncated environment 
$\tilde\go=(\go-\mu) \mathbbm 1_{\{\go\le k_n\}}$
and  we set $\lambda_N = \log \bbE[ e^{\gb_N \tilde \go_0}]$.
The next lemma is analogous to Lemma~\ref{lem:truncate}.

\begin{lemma}
\label{lem:truncate2}
Let $\ga \in (2,\frac d2)$.
Assume that $\gb_N  \leq c  N^{- d/2\ga}$ for some constant $c$.
Then for any $a>0$, we have that:
\begin{itemize}
\item[(i)] if $d=3$, $N^a ( \bar Z_{N,\gb_N} - Z_{N,\gb_N}^{\tilde \go}) \to 0$ in $\bbP$-probability;
\item[(ii)] if $d=2$, $(\log N)^a (\bar Z_{N,\gb_N} - Z_{N,\gb_N}^{\tilde \go})\to 0$ in $\bbP$-probability.
\end{itemize}
\end{lemma}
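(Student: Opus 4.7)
My plan is to follow the same three-step strategy as the proof of Lemma~\ref{lem:truncate}: (i) reduce $\bar Z_{N,\gb_N}$ to trajectories with $M_N \leq h_N$ for a threshold $h_N$ to be chosen depending on the dimension; (ii) do the same reduction for $Z_{N,\gb_N}^{\tilde \go}$ via a Cauchy--Schwarz bound followed by Markov's inequality; (iii) apply a union bound on the environment to show that $\tilde \go_x = \go_x - \mu$ for every $\|x\|\leq h_N$ with $\bbP$-probability tending to $1$, so that on this event the two restricted partition functions coincide exactly. The difference $\bar Z_{N,\gb_N} - Z_{N,\gb_N}^{\tilde \go}$ will then be bounded by the two tail pieces plus a term supported on an event of vanishing probability.

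For $d = 3$ the argument will be a direct transcription of Lemma~\ref{lem:truncate}, with the choice $h_N = (\log\log N) \sqrt{N\log N}$. Proposition~\ref{prop:Z>Ar} applied with $\xi = 1/2$ (whose hypothesis $\gb_N N^{d\xi/\ga}\leq c N^{2\xi-1}$ reduces to our assumption $\gb_N \leq c N^{-d/2\ga}$) yields $\bar Z_{N,\gb_N}(M_N > h_N) \leq N^{-c_1(\log\log N)^2}$ with high probability, which easily beats any $N^{-a}$. The union bound gives $\bbP(\exists \|x\|\leq h_N,\, \go_x > k_N) \leq c(\log\log N)^3 (\log N)^{3/2 - \ga\eta}$, vanishing because $\ga\eta > d/2 = 3/2$. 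For the truncated tail, Cauchy--Schwarz gives $\bbE[Z_{N,\gb_N}^{\tilde\go}(M_N > h_N)] \leq e^{\lambda_N N} \bP(M_N > h_N)^{1/2}$, and the bound $\lambda_N \leq C e^{\gb_N k_N} \gb_N^{\ga}$ from Lemma~\ref{lem:moments} combined with $\gb_N k_N \leq c(\log N)^{\eta}$ ($\eta < 1$) yields $\lambda_N N \leq e^{c(\log N)^\eta} N^{-1/2} \to 0$, leaving ample slack.

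The case $d = 2$ is more delicate because the finer truncation level $k_N = (\log\log N)^\eta N^{1/\ga}$ forces a much smaller transversal scale. I would take $h_N = \sqrt{CN \log\log N}$ for a constant $C$ fixed sufficiently large depending on $a$. The union bound then reads $\bbP(\exists \|x\|\leq h_N,\, \go_x > k_N) \leq c C (\log\log N)^{1-\ga\eta}$, vanishing because $\ga > d/2 = 1$ and $\eta > 1/\ga$ together guarantee $\ga\eta > 1$. Proposition~\ref{prop:Z>Ar} with $\xi = 1/2$ then gives $\bar Z_{N,\gb_N}(M_N > h_N) \leq (\log N)^{-c_1 C}$ with high probability, beating $(\log N)^{-a}$ as soon as $C > a/c_1$. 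For the truncated tail the Cauchy--Schwarz argument still applies: now $\gb_N k_N \leq c(\log\log N)^\eta$ yields $e^{\lambda_N N} \leq e^{(\log N)^{o(1)}} = (\log N)^{o(1)}$, which combined with $\bP(M_N > h_N)^{1/2} \leq c (\log N)^{-c_2 C/2}$ and Markov's inequality gives the required polylogarithmic smallness for $C$ large. The main obstacle will be precisely this choice of $C$ in $d=2$: we need $C$ large enough so that the tails of both $\bar Z$ and $Z^{\tilde\go}$ beat $(\log N)^{-a}$, which is possible for any fixed $a$ exactly because $\ga\eta > 1$ makes the union bound on the environment vanish regardless of the value of $C$.
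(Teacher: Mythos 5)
Your step (i) for $d=3$ and step (iii) (the environment union bound) match the paper's proof; the genuine gap is in step (ii) for $d=2$, precisely at the Cauchy--Schwarz/Markov control of $Z_{N,\gb_N}^{\tilde\go}(M_N > h_N)$.

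Your deduction ``$e^{\lambda_N N} \leq e^{(\log N)^{o(1)}} = (\log N)^{o(1)}$'' is false. In $d=2$ we have $\gb_N k_N \leq c(\log\log N)^\eta$ and $\lambda_N \leq Ce^{\gb_N k_N}\gb_N^\ga \leq C e^{c(\log\log N)^\eta}N^{-1}$, hence $\lambda_N N \leq C e^{c(\log\log N)^\eta}$. This quantity is indeed $(\log N)^{o(1)}$, but its \emph{exponential} is not: since $e^{c(\log\log N)^\eta} \gg b\log\log N$ for any $b>0$, we get $e^{\lambda_N N} \geq e^{Ce^{c(\log\log N)^\eta}} \gg (\log N)^b$ for every fixed $b$. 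So $e^{\lambda_N N}$ is super-polylogarithmic and swamps $\bP(M_N>h_N)^{1/2} \leq (\log N)^{-c_1C/2}$ no matter how large you take $C$; the product does not go to zero, let alone beat $(\log N)^{-a}$. The underlying problem is that, unlike $d=3$, the crude bound $\lambda_N|\cR_N| \leq \lambda_N N$ is too lossy in $d=2$ because $\lambda_N N \not\to 0$.

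The paper fixes this by exploiting that $|\cR_N|$ is typically of order $N/\log N$ rather than $N$: it splits $\bE[e^{2\lambda_N|\cR_N|}]$ according to whether $|\cR_N|\leq 2C'N/\log N$ or not, uses the concentration estimate $\bP(|\cR_N|\geq 2C'N/\log N)\leq e^{-c(\log N)^4}$ (from \cite[Thm.~8.5.1]{X15}), and then $\lambda_N|\cR_N| \leq 2C'\lambda_N N/\log N \to 0$ on the typical event. The atypical event contributes $e^{2\lambda_N N}e^{-c(\log N)^4}$, which vanishes because $\lambda_N N = o(\log N)$. You need this extra $1/\log N$ factor from the range of a 2d walk to make the argument close; without it the Cauchy--Schwarz bound alone does not suffice.
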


Before we start the proof, let us stress that Lemma~\ref{lem:moments} gives some estimates on~$\lambda_N$: since $\ga\in(1,2)$, we have
\begin{equation}
\label{lambdabound}
\lambda_N \leq C e^{\gb_N k_N} \gb_N^{\ga}
\leq C e^{\gb_N k_N} N^{-d/2}\,,
\quad 
\text{with } \gb_N k_N =
\begin{cases}
(\log N)^{\eta}  & \text{ if } d=3 \,, \\
(\log \log N)^{\eta} & \text{ if } d=2 \, .
\end{cases}
\end{equation}

\begin{proof}
In the case of dimension $d=3$, the proof is identical  to that of Lemma~\ref{lem:truncate}:
the only difference here is that we do not have $\lambda_N \leq C\gb_N^2$. Instead, we use~\eqref{lambdabound} instead which gives that $\lambda_N N \leq Ce^{(\log N)^{\eta}}  N^{1-\frac{d}{2}}$  goes to $0$ as $N\to\infty$; this was used to bound~\eqref{reduction3}.

We therefore focus on the case of dimension $d=2$;
the idea of the proof is identical, with some adaptation.
First of all, analogously to~\eqref{reduction0}, we get that 
for any sequence $A_N\geq 1$
\begin{equation}
\label{reduction01}
\bbP\left(  \bar Z_{N,\gb_N} \big( M_{N}\geq A_N \,\sqrt{N\log \log N} \big)\geq e^{-c_{1}A_N^{2} \log \log N} = (\log N)^{-c_1 A_N^2} \right)\leq c_{2} A_N^{-\nu} \, .
\end{equation}
Again, this is identical to Proposition~\ref{prop:Z>Ar}, replacing $N^{\xi}$
with $\sqrt{N \log \log N}$: in particular, this relies on Lemma~\ref{lem:S<qr} in which we can take $h_N = \sqrt{N\log N \log N}$.
Therefore, choosing a sequence $A_N \to \infty$ (we take $A_N = \log \log \log N$), 
we get that $(\log N)^a Z_{N,\gb_N}^{\go,h=\mu}(M_{N}\geq A_N \,\sqrt{N\log \log N})$
goes to $0$ in probability.

Then, as in~\eqref{reduction3},
we have that 
\begin{equation}
\label{reduction31}
\begin{split}
\bbE \big[ Z_{N,\gb_N}^{\tilde \go}\big(M_N > A_N\sqrt{N\log \log  N}\big) \big]
&\leq  \bE\big[ e^{2\lambda_N |\cR_N|} \big]^{1/2} \bP \big(M_N > A_N\sqrt{N\log \log  N}\big)^{1/2} \\
& \leq \bE\big[ e^{2\lambda_N |\cR_N|} \big]^{1/2} e^{ - c A_N^2 \log \log N}
 \, .
 \end{split}
\end{equation}
The main difference here is that we cannot use the bound
$\lambda_N |\cR_N| \leq \lambda_N N$
since we do not have $\lambda_N N \to 0$ as $N\to\infty$, see~\eqref{lambdabound}.
However, we may use the fact that $|\cR_N|$ is of the order $N/\log N$  to obtain that $\lambda_N |\cR_N| \to 0$ with high $\bP$-probability.
More precisely, using that $\bE[|\cR_N|] \leq C N/\log N$ (see e.g.~\cite[Eq.~(5.3.39)]{X15})
we have that, for a sufficiently large $C'$
\begin{equation}\label{probadev}
\bP\Big( |\cR_N| \geq   2C' \frac{N}{\log N}\Big) \leq 
\bP\Big( |\cR_N|-\bE[|\cR_N|]\geq C'\frac{N\log\log N}{(\log N)^{2}} \Big) \leq e^{- c (\log N)^4} \, .
\end{equation}
where we used~\cite[Thm.~8.5.1]{X15} for the last inequality.
We therefore get that 
\[
\bE\big[ e^{2\lambda_N |\cR_N|} \big] \leq e^{4C' \lambda_N N/\log N } + e^{2\lambda_N N} e^{- c (\log N)^4} \, .
\]
Now, thanks to~\eqref{lambdabound}, we get that $\lambda_N N = o(\log N)$  as $N\to\infty$.
Hence, from~\eqref{reduction31}, we get
that
\[
\bbE \big[ Z_{N,\gb_N}^{\tilde \go}\big(M_N > A_N\sqrt{N\log \log  N}\big) \big] \leq C (\log N)^{- cA_N^2}
\]
so that $(\log N)^a Z_{N,\gb_N}^{\tilde \go}\big(M_N > A_N\sqrt{N\log \log  N}\big) \to 0$ in $\bbP$-probability thanks to Markov's inequality.

Finally, notice that similarly to~\eqref{probatrunc}, we have that
\begin{equation}
\bbP\Big( \exists \, x , \|x\| \leq A_N\sqrt{N\log \log  N} \,, 
\tilde \go_x \neq \go_x-\mu \Big)
\leq c' A_N^d (N \log \log N)^{d/2}   (\log \log N)^{- \ga \eta} N^{-d/2} \, ,
\end{equation}
which goes to $0$ as $N\to\infty$ since $\ga \eta >d/2$, and $A_N = \log \log\log N$.
Therefore, we get that 
$\bar Z_{N,\gb_N}\big(M_N > A_N\sqrt{N\log \log  N}\big)$ is equal to
$Z_{N,\gb_N}^{\tilde \go}\big(M_N > A_N\sqrt{N\log \log  N}\big)$
with $\bbP$-probability going to $1$.
This concludes the proof of Lemma~\ref{lem:truncate2}.
\end{proof}

\smallskip
{\bf Step 2. Convergence of the partition function with truncated environment.}
To conclude the proof of Theorem~\ref{thm:C1}-\eqref{conv:C12},
and in view of~\eqref{removelog2} and Lemma~\ref{lem:truncate2},
it remains to prove the following lemma.
(In dimension $d=2$, we need to assume that $\gb_N N^{d/2\ga}$ decays slower than any power of $\log N$, but this does not hide anything deep; one simply needs to use a more restrictive truncation, but this actually simplifies many of the arguments so we do not treat this case.)

\begin{lemma}
\label{lem:reduction11}
Let $\ga\in (\frac{d}{2},2)$.  If $\lim_{N\to\infty} \gb_N N^{d/2\ga} =\gb \in[0,+\infty)$ then we have the following convergence in distribution
\begin{equation*}
\frac{v_N}{\beta_N N^{d/2\alpha}} \big(Z_{N,\gb_N}^{\tilde \go} -1 \big)\overset{(d)}{\longrightarrow}\cW_{\beta}\,.
\end{equation*}
\end{lemma}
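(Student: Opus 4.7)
The plan is to follow the template of the proof of Lemma~\ref{lem:reduction0}: expand $Z_{N,\beta_N}^{\tilde\omega}$ via the polynomial identity~\eqref{polexpansion},
\[
Z_{N,\beta_N}^{\tilde\omega} - 1 = \big(\bE[e^{\lambda_N|\cR_N|}] - 1\big) + \sum_{x\in\Z^d}\epsilon_x\,\bE[e^{\lambda_N|\cR_N|}\ind_{\{x\in\cR_N\}}] + \bY_N,
\]
with $\epsilon_x=e^{\beta_N\tilde\omega_x-\lambda_N}-1$ and $\bY_N$ as in~\eqref{def:RN}, then analyze each piece. The central new feature in the regime $\alpha\in(\tfrac{d}{2},2)$ is that the relevant weights $\omega_x\sim N^{d/2\alpha}$ make $\beta_N\omega_x$ of order $1$, so $\epsilon_x$ cannot be Taylor-expanded past linear order; the limit is therefore an integral against the heavy-tail Poisson process, and the two terms of $\cW_\beta$ arise respectively from the exponential excess $(e^{\beta w}-1-\beta w)/\beta$ and the compensated linear piece $w$.

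I would first control the remainder $\bY_N$. The computation leading to~\eqref{secondERN2} still yields $\bbE[\bY_N^2]\leq C(\bbE[\epsilon_0^2]J_N)^2$, where via $v_N\bP(x\in\cR_N)\to f(x/\sqrt N)$ (cf.~\eqref{def:f}) one has $J_N\sim c_d N^{d/2}/v_N^2$ in the relevant dimensions $d=2,3$, and Lemma~\ref{lem:moments} gives $\bbE[\epsilon_0^2]\leq C\beta_N^\alpha e^{c\beta_Nk_N}$. Since the choice $\eta<1$ makes $e^{c\beta_Nk_N}$ grow only like $N^{o(1)}$ for $d=3$ (resp.\ $(\log N)^{o(1)}$ for $d=2$), this yields $\big(v_N/(\beta_NN^{d/2\alpha})\big)^2\bbE[\bY_N^2]\to 0$. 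A similar second-moment estimate, combined with $\lambda_N N\to 0$, shows that $\bE[e^{\lambda_N|\cR_N|}\ind_{\{x\in\cR_N\}}]$ can be replaced by $\bP(x\in\cR_N)$ in the main term with negligible error.

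The heart of the proof is identifying the distributional limit of
\[
\frac{v_N}{\beta_NN^{d/2\alpha}}\Big[(\bE[e^{\lambda_N|\cR_N|}]-1)+\sum_{x}\epsilon_x\bP(x\in\cR_N)\Big].
\]
Using $v_N\bP(x\in\cR_N)\to f(x/\sqrt N)$ together with the vague convergence of the marked process $\cP_N:=\{(x/\sqrt N,\beta_N\omega_x):x\in\Z^d\}$ to the Poisson process of intensity $\beta^\alpha\alpha w^{-(\alpha+1)}\dd y\dd w$ (equivalently, the image of $\cP$ under $(y,w)\mapsto(y,\beta w)$), the sum is approximately $\tfrac1\beta v_N^{-1}\sum_{(y,w)\in\cP_N}(e^{w-\lambda_N}-1)f(y)$. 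Writing $e^{w-\lambda_N}-1=(e^w-1-w)+w+O(\lambda_N e^w)$ and combining the $O(\lambda_N)$ part with the deterministic term $\bE[e^{\lambda_N|\cR_N|}]-1\approx\lambda_N\bE[|\cR_N|]$, one is left with: (i) the heavy-tail piece $\sum_{(y,w)\in\cP_N}(e^w-1-w)f(y)$, whose distributional limit via $\cP_N\Rightarrow\cP_\beta$ is $\beta^{-1}\int(e^{\beta w}-1-\beta w)f\dd\cP$ (the integrand is $\eta$-integrable near $w=0$ precisely because $\alpha<2$); and (ii) the compensated linear piece $\sum_{(y,w)\in\cP_N}wf(y)-(\text{discrete mean})$, converging to $\int wf\dd(\cP-\eta)$, which is a.s.\ finite for $\alpha\in(1,2)$ with $d=2,3$ by Proposition~\ref{prop:finiteW0}.

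The main obstacle is this compensated-limit convergence. The sum $\sum_x\beta_N\tilde\omega_x\bP(x\in\cR_N)$ has variance $\asymp\beta_N^2\bbV\mathrm{ar}(\tilde\omega_0)J_N\asymp\beta^2/v_N^2$ matching the target scale, but cannot converge in $L^2$ since its Poisson limit is not in $L^2$ (indeed $\int w^2\dd\eta=\infty$ for $\alpha<2$). Convergence in distribution must therefore proceed through Laplace-functional convergence of $\cP_N\to\cP_\beta$, while simultaneously matching the multiple discrete compensators $\lambda_N\bE[|\cR_N|]$, $\beta_N\bE[\tilde\omega_0]\bE[|\cR_N|]$, the truncation error beyond $w=\beta_Nk_N$, and the approximation error $|v_N\bP(x\in\cR_N)-f(x/\sqrt N)|$, to the single continuous compensator $\eta$ in $\cW_\beta$. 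This is analogous to the analysis carried out in~\cite{DZ16} for the directed case in $d=1$: one performs a truncation-at-height-$M$ argument, letting $N\to\infty$ first and then $M\to\infty$, using that each discrete correction is of strictly lower order than the scale $v_N^{-1}\asymp\beta_NN^{d/2\alpha}/v_N$ appearing in the normalization.
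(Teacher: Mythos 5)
Your overall strategy is the right one and matches the paper's: expand via~\eqref{polexpansion}, kill $\bY_N$ by a second-moment bound, and identify the limit of the remaining two terms by Poisson approximation with a small-weight / large-$\|x\|$ truncation as in \cite{DZ16}. The heart of the matter (that $\beta_N\omega_x$ is order~$1$, so no Taylor expansion past linear order is possible, and the limit lives on the Poisson process) is correctly identified, and your remark that $L^2$ convergence is unavailable is also correct.

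There is, however, one concrete gap: you assert that ``a similar second-moment estimate, combined with $\lambda_N N\to 0$, shows that $\bE[e^{\lambda_N|\cR_N|}\ind_{\{x\in\cR_N\}}]$ can be replaced by $\bP(x\in\cR_N)$''. In dimension $d=2$ this premise is false. With $k_N=(\log\log N)^\eta N^{d/2\alpha}$ and $\beta_N\asymp N^{-d/2\alpha}$, Lemma~\ref{lem:moments} gives $\lambda_N \leq C e^{\beta_N k_N}\beta_N^\alpha$, hence $\lambda_N N\leq C e^{(\log\log N)^\eta}N^{1-d/2}$; in $d=3$ this goes to~$0$, but in $d=2$ it is $e^{(\log\log N)^\eta}$, which diverges (and one can only say $\lambda_N N=o(\log N)$, cf.~\eqref{lambdabound}). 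So the blunt bound $e^{\lambda_N|\cR_N|}\leq e^{\lambda_N N}$ that implicitly underlies your second-moment estimates for $\bY_N$, for $\bE[e^{\lambda_N|\cR_N|}]-1$, and for the replacement step, does not close the argument in $d=2$. The fix the paper uses is to exploit that $|\cR_N|$ is typically of size $N/\log N$ in $d=2$: one splits on the event $\{|\cR_N|\leq 2C' N/\log N\}$, on which $\lambda_N|\cR_N|\to 0$, and bounds the complement by the concentration inequality~\eqref{probadev}, $\bP(|\cR_N|\geq 2C'N/\log N)\leq e^{-c(\log N)^4}$. This splitting has to be threaded through every place where $e^{\lambda_N|\cR_N|}$ appears (the $\bY_N$ estimate, the deterministic term $\bE[e^{\lambda_N|\cR_N|}-1-(1-e^{-\lambda_N})|\cR_N|]$, and the replacement of $\bE[e^{\lambda_N|\cR_N|}\ind_{\{x\in\cR_N\}}]$ by $\bP(x\in\cR_N)$), so it is not a cosmetic adjustment. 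As written, your proposal only works for $d=3$.

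A secondary remark: the compensated-limit step you flag as ``the main obstacle'' is indeed where the work lies, but the mechanism the paper uses is not Laplace functionals. Rather, as in \cite{DZ16}, one introduces the cutoff $\tilde\omega^\epsilon_x=(\omega_x-\mu)\ind_{\{(\omega_x-\mu)\leq\epsilon N^{d/2\alpha}\}}$ and writes the five-term decomposition~\eqref{decomp:main}: the large-weight sum over $\|x\|\leq K\sqrt N$ converges to a Poisson integral over $B_K\times(\epsilon,\infty)$ by partitioning into $\delta$-patches (sending $N\to\infty$ then $\delta\to 0$), the mean term reconstructs the compensator, and the centered small-weight term has variance $\lesssim \epsilon^{2-\alpha}$, which vanishes as $\epsilon\to 0$ because $\alpha<2$. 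Your sketch captures the idea, but the cutoff must be on the weight height $\epsilon N^{d/2\alpha}$ (not the abstract $M$ you mention), because that is what makes both the Poisson approximation and the vanishing of the residual variance scale correctly. If you develop your proposal, use this explicit cutoff and check that the $\epsilon^{2-\alpha}$ factor (and not just finiteness) controls the small-weight residual.
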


\begin{proof}
We use the same notation as in Step~2 of Section~\ref{sec:thmCpart1}.
Defining $\epsilon_x:=\exp(\beta_N\tilde{\omega}_x-\lambda_N)-1$, and using a polynomial chaos expansion similar  to~\eqref{polexpansion}, we get
\begin{equation}\label{polychaos}
Z_{N,\beta_N}^{\tilde{\omega}} -1=
\bE \big[  e^{\lambda_N |\cR_{N} |} -1 \big]  + \sum_{x\in \bbZ^d} \gep_x \bE\big[ e^{\lambda_N |\cR_{N}|}    \ind_{\{x\in \cR_N\}} \big]
+ \bY_{N}.
\end{equation}
where we defined $\bY_N$ as in~\eqref{def:RN}. We first prove that the last term is negligible: the computation in $d=3$ follows closely the computations \eqref{firstERN2}--\eqref{secondERN2}, while dimension $d=2$ is more delicate. In the second part of the proof we show that the first two terms give the main contribution to the convergence. This part is quite technical and we split it into three steps.

\smallskip
{\it Last term in~\eqref{polychaos}.}
Let us prove that $v_N \bY_{N} /(\beta_N N^{d/2\alpha})$ 
goes to $0$ in probability.

We start with the case of dimension $d=3$, which we essentially already treated.
Reasoning as in~\eqref{firstERN2}--\eqref{secondERN2}, we computed the second moment of $\bY_N$
in the case $\ga \in (1,2)$: we have 
that 
\begin{equation}
\label{YNd=3}
\bbE[\bY_N^2]  \leq C \sum_{k\geq 2} \big( \bbE[ \gep_0^2 ] J_N \big)^{k} \leq  C e^{2c(\log N)^{\eta}} \gb_N^{2\ga} N \,,
\end{equation}
where we bounded $|\cR_N|\le N$, used that  $J_N \leq c N^{1/2}$ in dimension $d=3$ (see Lemma~\ref{lem:JN}) and the bound~\eqref{epsilonN}.  Since $v_N = N^{1/2}$, we obtain
\begin{align}
\Big(\frac{v_N}{\beta_N N^{d/2\alpha}} \Big)^2\bbE[\bY_{N}^{2}]
\leq C  e^{2(\log N)^{\eta}}  N^{1- \frac{d}{\ga}}  \gb_N^{2(\ga-1)} 
\leq C' e^{c(\log N)^{\eta}} N^{2-d} \,.
\end{align}
This term goes to $0$, which concludes the proof in the case of dimension $d=3$.

Turning to the case of dimension $d=2$, we need to be more careful: we cannot simply bound $|\cR_N|$ by $N$ since we do not have $\lambda_N N \to 0$ anymore.
%, we then use that $\bE[|\cR_N|] \leq C N/\log N$ (see e.g.~\cite[Eq.~(5.3.39)]{X15}).
The calculation of $\bbE[\bY_N^2]$ in~\eqref{firstERN2} remains valid. Then, decomposing the expectation according to whether $|\cR_N| \leq 2C' N /\log N$ or not, we obtain
\begin{equation}
\label{xinRd=2}
\begin{split}
\bE & \Big[ e^{\lambda_N |\cR_{N}|}\ind_{\{ (x_1,\dots,x_k)\in\cR_{N} \}}\Big]^2  \\
& \leq \bigg( e^{2C' \frac{\lambda_N N}{\log N}} \bP\big((x_1,\dots,x_k)\in\cR_{N} \big)   + e^{\lambda_N N}  \bP\Big((x_1,\dots,x_k)\in\cR_{N} ; |\cR_N| \geq  \frac{2C' N}{\log N}\Big) \bigg)^2 \\
& \leq  2 e^{ 4C'  \frac{ \lambda_NN}{\log N}} \bP\big((x_1,\dots,x_k)\in\cR_{N} \big)^2 + 2 e^{2\lambda_N N} \bP\big((x_1,\dots,x_k)\in\cR_{N} \big) \bP\Big( |\cR_N| \geq  \frac{2C' N}{\log N}\Big) \,,
\end{split}
\end{equation}
where for the last inequality we used that $(a+b)^2 \leq 2 (a^2 + b^2)$ together with Cauchy--Schwarz inequality.
Now, we have that  $\lambda_N N = o(\log N)$  (recall~\eqref{lambdabound}) and also that $ \bP( |\cR_N| \geq2 C' N/\log N )\leq e^{- c (\log N)^4}$ if $C'$ has been fixed large enough, thanks to~\eqref{probadev}.
Hence, summing~\eqref{xinRd=2} over $(x_1,\ldots,x_k)$ and using Markov's property, we finally get that 
\begin{align*}
\sumtwo{(x_1,\dots,x_k)\in ( \bbZ^d)^k}{x_i\neq x_j,\,i,j=1,\dots,k}\bE\Big[ e^{\lambda_N |\cR_{N}|}\ind_{\{ (x_1,\dots,x_k)\in\cR_{N} \}}\Big]^2  & \leq C  \Big( \sum_{x\in \bbZ^d} \bP(x\in \cR_N)^2\Big)^k
 + e^{-c' (\log N)^4}  \Big( \sum_{x\in \bbZ^d} \bP(x\in \cR_N)\Big)^k
\end{align*}
All together, going back to~\eqref{firstERN2},
we obtain that when $d=2$ the bound~\eqref{YNd=3}
is replaced with
\[
\bbE[\bY_{N}^{2}]\leq  C \sum_{k\geq 2} \big( \bbE[(\gep_0)^2] J_N\big)^k 
+ e^{-c' (\log N)^4} \sum_{k\geq 2} \big( \bbE[(\gep_0)^2] \bE[|\cR_N|]\big)^k \, .
\]
Now, we have that $\bbE[(\gep_0)^2] \leq C e^{(\log \log N)^{\eta}} \gb_N^{\ga}$  by Lemma~\ref{lem:moments}, that $J_N \leq C\frac{N}{(\log N)^2}$ by Lemma~\ref{lem:JN} and that $\bE[|\cR_N|] \leq C \frac{N}{\log N}$ by \cite[Eq.~(5.3.39)]{X15}.
%In particular, $\bbE[(\gep_0)^2] J_N \leq  c e^{(\log \log N)^{\eta}} /(\log N)^2 <1$ and $\bbE[(\gep_0)^2] \bE[|\cR_N|] \leq c e^{(\log \log N)^{\eta}} /\log N <1$ for $N$ large. 
Using that  $v_N =\log N$, we obtain
%we therefore get that 
\begin{equation}
\label{YNto0}
 \Big(\frac{v_N}{ \gb_N N^{\frac{d}{2\ga}}} \Big)^2 \bbE[\bY_{N}^{2}]
\leq C (\log N)^2 e^{2 (\log \log N)^{\eta}} \gb_N^{2(\ga-1)} N^{2-\frac{d}{\ga} } \Big( \frac{1}{(\log N)^4} + e^{-c' (\log N)^4} \frac{1}{(\log N)^2}\Big) \, ,
\end{equation}
and note that $\gb_N^{2(\ga-1)} N^{2-d/\ga} \leq c$.
This term goes to $0$, which concludes the proof when $d=2$.

\smallskip
{\it  First terms in~\eqref{polychaos}.}
We can rewrite these terms as
\begin{align}
\bE& \big[  e^{\lambda_N |\cR_{N} |} -1 \big]  + \sum_{x\in \bbZ^d} \gep_x \bE\big[ \big( e^{\lambda_N |\cR_{N}|}  -1\big) \ind_{\{x\in \cR_N\}}  \big] + \sum_{x\in \bbZ^d} \gep_x \bP(x\in \cR_N) \notag \\
& =
\bE \big[  e^{\lambda_N |\cR_{N} |} -1  - (1-e^{-\lambda_N}) |\cR_N|\big] +  \sum_{x\in \bbZ^d} \gep_x \bE\big[ \big( e^{\lambda_N |\cR_{N}|}  -1\big) \ind_{\{x\in \cR_N\}}  \big]
\label{threetermsd=3} \\
& \hspace{8cm}  +e^{-\lambda_N} \sum_{x\in \bbZ^d} (e^{\gb_N \tilde \go_x} -1) \bP(x\in \cR_N) \,, \notag
\end{align}
where we used that $\gep_x = e^{-\lambda_N} (e^{\gb_N \tilde \go_x} -1) -1+e^{-\lambda_N}$ for the last identity.
Let us denote $\mathbf{I}$, $\mathbf{II}$ and~$\mathbf{III}$ the three terms in~\eqref{threetermsd=3}.
We show that the first two terms are negligible.

\smallskip
{\it Term {\bf I}.}
For the first term in~\eqref{threetermsd=3}, let us first treat the case of dimension $d=3$, which is simpler.
Using that $\lambda_N \to 0$ and $\lambda_N |R_N| \leq \lambda_N N \to 0$, a Taylor expansion gives that it is bounded by a constant times $(\lambda_N N)^2$.
Hence, using that  $\lambda_N \leq C e^{c(\log N)^{\eta}} \gb_N^{\ga}$, we get that
% $d=3$ (recall $v_N =\sqrt{N}$)
\[
\frac{v_N}{\gb_N N^{\frac{d}{2\ga}}} \mathbf{I}  
\leq  C'  e^{2c(\log N)^{\eta}}  v_N \gb_N^{2\ga-1} N^{2-\frac{d}{2\ga}} \leq   C'  e^{c(\log N)^{\eta}}  N^{\frac52 - d} \,,
\]
where we used that $v_N=N^{1/2}$ in dimension $d=3$. This goes to $0$ as $N\to\infty$.

The case of dimension $d=2$ is a bit more delicate.
On the event $\{|\cR_N| \leq 2C' N/\log N\}$, we can again use a Taylor expansion, since $\lambda_N N/\log N \to 0$, so that the term inside the expectation is bounded in absolute value by a constant times
$\lambda_N^2 N^2 / (\log N)^2$.
On the event $\{|\cR_N| >  2C' N/\log N\}$, we bound the absolute value of the term 
inside the expectation by $e^{N\lambda_N} - 1+c \lambda_N N  \leq C \lambda_N N e^{o(\log N)}$.
We therefore get that 
\begin{align*}
|\mathbf{I}| \leq  C \frac{\lambda_N^2 N^2}{(\log N)^2} +  
  \lambda_N N e^{o(\log N)} \bP\Big( |\cR_N| >  2C' \frac{N}{\log N}\Big) 
\leq \frac{e^{ 2 (\log \log N)^{\eta}} }{(\log N)^2} \gb_N^{2 \ga} N^2
 +  e^{-c' (\log N)^4}  \gb_N^{\ga} N\, ,
\end{align*}
where we used~\eqref{probadev} for the last inequality.
Since $v_N =\log N$, we get that 
\begin{equation}
\label{termI}
\frac{v_N}{\gb_N N^{\frac{d}{2\ga}}} \mathbf{I}   \leq  
\frac{e^{ 2 (\log \log N)^{\eta}} }{\log N}  \gb_N^{2 \ga-1} N^{2-\frac{d}{2\ga}}
 + \log N e^{-c' (\log N)^4}  \gb_N^{\ga-1} N^{1-\frac{d}{2\ga}}
\end{equation}
which goes to $0$ as $N\to\infty$, noticing that $\gb_N^{\ga-1} N^{1-d/2\ga} \leq c$,  $\gb_N^{2 \ga-1} N^{2-\frac{d}{2\ga}}\leq c$ and $\eta\in(\frac{d}{2\ga},1)$.

\smallskip
{\it Term {\bf II}.}
We prove that $v_N \mathbf{II} / (\gb_NN^{d/2\ga})$ goes to $0$ in probability, by controlling its second moment.
Since the $\epsilon_x$ are centered and independent, we have
\begin{equation}\label{IIIsquare}
\bbE[(\mathbf{II})^{2}]\leq\bbE[\epsilon_0^2]\sum_{x\in\bbZ^{d}}\bE\Big[\big(e^{\lambda_N|\cR_N|}-1\big)\mathbbm{1}_{\{x\in\cR_N\}}\Big]^2\, .
\end{equation}

We start with the simpler case of dimension $d=3$.
Using that $e^{\lambda_N|\cR_N|}-1 \leq c \lambda_N N$ and recalling the definition $J_N$, we get that
\[
\bbE[(\mathbf{II})^2] \leq c \bbE[(\epsilon_0)^2]  (\lambda_N N)^2 J_N 
\leq C e^{ 3c (\log N)^{\eta}} \gb_N^{3\ga} N^{\frac 52} \,,
\]
where we used the bounds~\eqref{lambdaN}-\eqref{epsilonN}
together with the fact that $J_N\leq N^{1/2}$ in dimension $d=3$, 
see Lemma~\ref{lem:JN}.
 Hence, using also that $v_N=N^{1/2}$
we get that 
\begin{equation*}
\Big(\frac{v_N }{\beta_N N^{d/2\alpha}}\Big)^2\bbE[(\mathbf{II})^{2}]
\leq C'  e^{ c'  (\log N)^{\eta}} \gb_N^{3\ga-2} N^{ \frac{7}{2} -\frac{d}{\ga} } \leq  C'  e^{ c'  (\log N)^{\eta}} N^{ \frac{7}{2}-\frac{3d}{2}} \, ,
\end{equation*}
which goes to $0$ as $N\to\infty$.

In the case of dimension $d=2$, we proceed as above.
Writing $e^{\lambda_N |\cR_N|} -1 \leq \lambda_N |\cR_N| e^{\lambda_N |\cR_N|}$ and 
decomposing according to whether we have  $|\cR_N| \leq 2C' N/\log N$ or not,
we get similarly to~\eqref{xinRd=2}
\begin{align*}
\bE\left[\left(e^{\lambda_N|\cR_N|}-1\right)\mathbbm{1}_{\{x\in\cR_N\}}\right]^2
\leq C\Big( \frac{\lambda_N N}{\log N}\Big)^2 \bP(x\in \cR_N)^2 + \lambda_N N e^{\lambda_N N}  \bP(x\in \cR_N) \bP\Big( |\cR_N| \geq 2C' \frac{N}{\log N}\Big) \, .
\end{align*}
Summing over $x$ and using the bound~\eqref{probadev} together with $\lambda_NN =o(\log N )$ and $\bE[|\cR_N|] \leq  N$, we get from~\eqref{IIIsquare}
that 
\begin{align*}
\bbE[(\mathbf{II})^{2}]
& \leq C \bbE[\epsilon_0^2]  \frac{(\lambda_N N)^2 }{(\log N)^2}  J_N + C \bbE[\epsilon_0^2]  \lambda_N N^2 e^{- c (\log N)^4} \\
& \leq C' e^{3(\log \log N)^{\eta}}  \gb_N^{2\ga} N^2 \Big( (\log N)^{-4} + e^{- c' (\log N)^4}  \Big)\, .
\end{align*}
For the second inequality, we used the bounds~\eqref{lambdaN}-\eqref{epsilonN}  (and also that $\gb_N^{\ga}N \leq c$)
and that $J_N\leq C N/(\log N)^2$, see Lemma~\ref{lem:JN}.
 Since $v_N =\log N$, we get that 
\begin{equation}
\label{termII}
\Big(\frac{v_N }{\beta_N N^{d/2\alpha}} \Big)^2\bbE[(\mathbf{II})^{2}] \leq 
 C' e^{3(\log \log N)^{\eta}}  \gb_N^{2(\ga-1)} N^{2-\frac{d}{\ga}} \Big( (\log N)^{-2} + (\log N)^2e^{- c' (\log N)^4}  \Big) \, ,
\end{equation}
which goes to $0$ as $N\to\infty$, using that  
$\gb_N^{\ga-1} N^{1- d/2\ga} \leq c$

\smallskip
{\it Term {\bf III}.}
We show the following lemma to control the third term in~\eqref{threetermsd=3}.
\begin{lemma}
\label{lem:termIII}
Let $d=2,3$ and $\ga \in (\frac d2,2)$. We have the following convergence in distribution:
\begin{equation}
\label{conv:termIII}
\frac{v_N }{\beta_N N^{d/2\alpha}}  \sum_{x\in \bbZ^d} (e^{\gb_N \tilde \go_x} -1) \bP(x\in \cR_N)
 \xrightarrow{(d)} \cW_{\gb}\, ,
\end{equation}
with $\cW_{\gb}$ defined in~\eqref{eq:defWbeta}.
\end{lemma}

Going back to~\eqref{polychaos}, and combining~\eqref{YNd=3}-\eqref{YNto0} with~\eqref{threetermsd=3} and the estimates~\eqref{termI}-\eqref{termII}, then Lemma~\ref{lem:termIII} 
concludes the proof of Lemma~\ref{lem:reduction11},
noting also that $e^{-\lambda_N}$ goes to $1$ as $N\to\infty$.
\end{proof}

\begin{proof}[Proof of Lemma~\ref{lem:termIII}]
To prove~\eqref{conv:termIII},
we adapt the method developed in \cite[Thm.~1.4]{DZ16}.
We consider only the case $\lim_{N\to\infty} \beta_N N^{d/2\alpha} =\gb \in (0,+\infty)$; the case $\gb=0$ can be dealt with similarly, one simply needs to keep track of the dependence in $\gb$ in all estimates --- note also that we have $\cW_{\gb} \to \cW_0$ as $\gb\to 0$.
Also, for $x\in \bbZ^d$, we use the notation $f_N(x) = v_N \bP(x\in \cR_N)$.

We prove the convergence of $ \sum_{x\in\bbZ^d} (\gb_N N^{d/2\ga})^{-1} (e^{\beta_N\tilde{\omega}_{x}}-1 ) f_N(x)$. %; one simply needs to divide by $\gb$ to get the result.
As in Equation~(39) in~\cite{DZ16}, we fix $\gep>0$, $K>0$,
and we define $\tilde{\omega}_{x}^{\epsilon}=(\omega_x-\mu)\mathbbm{1}_{\{(\omega_x-\mu)\leq\epsilon N^{d/2\ga}\}}$. Recall the definition of $\tilde{\omega}_{x}$ in \eqref{def:kN2}.
We use the following decomposition of 
$\sum_{x\in\bbZ^d}  (\gb_N N^{d/2\ga})^{-1}  (e^{\beta_N\tilde{\omega}_{x}}-1 ) f_N(x)$:
\begin{equation}\label{decomp:main}
\begin{split}
 &  \sumtwo{\|x\|\leq K\sqrt{N} }{ (\omega_x-\mu)>\epsilon N^{d/2\ga} }    \frac{1}{\gb_N N^{d/2\ga}}\left(e^{\beta_N\tilde{\omega}_{x}}-1\right) f_N(x) +N^{-d/2\ga} \bbE[\tilde{\omega}_{0}^{\gep} ]\sum\limits_{\|x\| \leq K\sqrt{N}} f_N(x) \\
&  + \sum\limits_{\|x\| \leq K\sqrt{N}}  \frac{1}{\gb_N N^{d/2\ga}}\left( e^{\gb_N \tilde \go_x^{\gep}}  - \bbE\big[ e^{\gb_N \tilde \go_0^{\gep}} \big]\right) f_N(x) \\
&  +   
\frac{1}{\gb_N N^{d/2\ga}}\bbE\left[e^{\beta_N\tilde{\omega}_{0}^{\epsilon}}-1-\beta_N\tilde{\omega}_{0}^{\epsilon}\right]\sum\limits_{\|x\|\leq K\sqrt{N}} f_N(x) 
 + \sum\limits_{\|x\| >K \sqrt{N}}  \frac{1}{\gb_N N^{d/2\ga} }\left(e^{\beta_N\tilde{\omega}_{x}}-1\right) f_N(x) \, .
\end{split}
\end{equation}
We control each term separately: the first two terms bring the main contribution; we show that the last three terms can be made arbitrarily small by taking $\gep$ small or $K$ large.

\smallskip
\textit{$1$st term in~\eqref{decomp:main}.}
We split the term into
\begin{equation}\label{splitterm1}
\sumtwo{\gep^\ga\sqrt{N}\leq\|x\|\leq K\sqrt{N} }{ (\omega_x-\mu)>\epsilon N^{d/2\ga} }    \frac{1}{\gb_N N^{d/2\ga}}\left(e^{\beta_N\tilde{\omega}_{x}}-1\right) f_N(x)+\sumtwo{\|x\|\leq \gep^\ga\sqrt{N} }{ (\omega_x-\mu)>\epsilon N^{d/2\ga} }    \frac{1}{\gb_N N^{d/2\ga}}\left(e^{\beta_N\tilde{\omega}_{x}}-1\right) f_N(x).
\end{equation}

For the first term we partition $B_{K} \times(\epsilon,\infty)$ into rectangles with diameter $\delta>0$, where $B_K := \{x \in \bbR^d: \|x\|\leq K\}$. By denoting this partition $\sP_{\delta}$, $\pi$ a patch of $\sP_{\delta}$ and $(x_{\pi},w_{\pi})$ the center of $\pi$, we can write the first term above as 
\begin{equation}
\label{eq:1sttermPoisson}
\begin{split}
\sum\limits_{\pi \in \sP_{\gd}} \sum_{\gep^\ga\sqrt{N}\leq\|x\| \leq K\sqrt{N}} &
\frac{1}{\gb_N N^{d/2\ga}}(e^{\gb_N \tilde \go_x} -1) f_N(x) \ind_{\{ (\frac{x}{\sqrt{N}},\frac{\go_x}{N^{d/2\ga} } ) \in \pi\}} \\
& = 
 \sum\limits_{\pi \in \sP_{\gd}}  (1+o_\gd(1))   \frac{1}{\gb}(e^{\gb w_{\pi}} -1) f(x_{\pi}) 
 \sum_{\gep^\ga\sqrt{N}\leq\|x\| \leq K\sqrt{N}} \ind_{\{ (\frac{x}{\sqrt{N}},\frac{\go_x}{N^{d/2\ga} } ) \in \pi\}} \, ,
 \end{split}
\end{equation}
where we also used that $\lim_{N\to\infty} \gb_N N^{-d/2\ga} = \gb \in [0,\infty)$ (with by convention $\frac{1}{\gb}(e^{\gb w_{\pi}} -1) = w_{\pi}$ when $\gb=0$) and  $\lim_{N\to\infty} f_N( z \sqrt{N} ) = f(z)$, see~\eqref{def:f}; here $o_{\delta}(1)$ denotes errors that are negligible as $\delta \searrow 0$ for fixed $\epsilon$.

It is now not hard to check that (see \cite[p.~4036]{DZ16})
\begin{equation}
\label{eq:2ndtermPoisson}
\sum_{0<\|x\| \leq K\sqrt{N}} \ind_{\{ (\frac{x}{\sqrt{N}},\frac{\go_x}{N^{d/2\ga} } ) \in \pi\}}  
\longrightarrow
\cP(\pi)\quad\text{weakly},
\end{equation}
where $\cP(\pi)$ is a Poisson point measure on $\pi$ with intensity $\eta(\dd x,\dd w)=\alpha w^{-(\alpha+1)}\dd x\dd w$.

Thus, by first sending $N\to\infty$ and then letting $\delta\to0$, we have the following convergence in distribution:
\begin{equation}
\label{term1}
\sumtwo{\epsilon^{\alpha}\sqrt{N}\le \|x\|\leq K\sqrt{N} }{ (\omega_x-\mu)>\epsilon N^{d/2\ga} } \frac{1}{\gb_N N^{d/2\ga}} \left(e^{\beta_N\tilde{\omega}_{x}}-1\right) f_N(x)
\xrightarrow{(d)}
\int_{ (B_K\backslash B_{\gep^\ga}) \times(\epsilon,\infty)} \frac{1}{\gb}(e^{\beta w}-1)f(x)\cP(\dd x,\dd w) \, .
\end{equation}
Note that \cite[Theorem 10.15]{K97} ensures that the integral is well-defined (in particular at $w=+\infty$, recall also the proof of Proposition~\ref{prop:finiteW} in Section~\ref{sec:wellposedWbeta}).

%For the second term in \eqref{splitterm1} we show that
%note that
%\begin{equation}\label{localerror}
%|f_N( x) - f(x/\sqrt{N})|  = \Big| v_N \bP(x\in \cR_N) - f(x/\sqrt{N}) \Big| \leq c f(x/\sqrt{N})
%\end{equation}
%for all  $x\in \bbZ^d \setminus \{0\}$ with $\|x\| \leq Const. \sqrt{N}$ by \cite[Thm.~1.8]{Uch11}. 
%%We have that by positivity for all terms,
%\begin{equation}
%\label{eq:1sttermPoisson+}
%\begin{split}
%&\sum\limits_{\pi \in \sP_{\gd}} \sum_{\|x\| \leq \epsilon'\sqrt{N}} 
%\frac{1}{\gb_N N^{d/2\ga}}(e^{\gb_N \tilde \go_x} -1) f_N(x) \ind_{\{ (\frac{x}{\sqrt{N}},\frac{\go_x}{N^{d/2\ga} } ) \in \pi\}} \\
%\leq& 
%\sum\limits_{\pi \in \sP_{\gd}}  C_{\gb}(e^{\gb w_{\pi}} -1) f(x/\sqrt N) 
%\sum_{0<\|x\| \leq \epsilon'\sqrt{N}} \ind_{\{ (\frac{x}{\sqrt{N}},\frac{\go_x}{N^{d/2\ga} } ) \in \pi\}} \overset{(d)}{\longrightarrow}C_\beta\int_{B_{\epsilon'}\times(\epsilon,\infty)}(e^{\beta\omega}-1)f(x)\cP(\dd x,\dd\omega)\, .
%\end{split}
%\end{equation}
%Note that we excluded the term $x=0$ in the sum, since $\tilde \go_0$ is equal to $0$ with high $\bbP$-probability and again by \cite[Theorem 10.15]{K97}, the integral is well-defined (in particular at $x=0$).
%
%Finally, letting $\epsilon'\to0$ and by monotone convergence, we get that
%\begin{equation}
%\label{term1+}
%\sumtwo{\|x\|\leq K\sqrt{N} }{ (\omega_x-\mu)>\epsilon N^{d/2\ga} } \frac{1}{\gb_N N^{d/2\ga}} \left(e^{\beta_N\tilde{\omega}_{x}}-1\right) f_N(x)
%\xrightarrow{(d)}
%\int_{ B_K \times(\epsilon,\infty)} \frac{1}{\gb}(e^{\beta w}-1)f(x)\cP(\dd x,\dd w) \, .
%\end{equation}
%}
For the second term in \eqref{splitterm1} we show that uniformly on $N\in \N$
\begin{equation}\label{term1_II}
\bbP\Bigg(\sumtwo{\|x\|\leq \gep^\ga\sqrt{N} }{ \bar \omega_x>\epsilon N^{d/2\ga} }    \frac{1}{\gb_N N^{d/2\ga}}\left(e^{\beta_N\tilde{\omega}_{x}}-1\right) f_N(x) >\gep^{\ga/2}\Bigg)\le c_\alpha \gep^{\ga/2}\log (1/\gep).
\end{equation}
%By Lemma \ref{lem:moments} $
%\bbE\Big[ \big| e^{\beta_N\bar \omega_{x} \ind_{\{ \bar \go_x \leq 1/\gb_N \}}}-1 \big| \Big] \leq c'  \gb_N^{\ga},$
%the sum has $L^1$ norm bounded by 
%\begin{align*}
% \bbE\left[  \sum\limits_{\|x\| \le \epsilon ' \sqrt{N}} \frac{1}{\gb_N N^{d/2\ga}}\big| e^{\beta_N\bar \omega_{x} \ind_{\{ \bar \go_x \leq 1/\gb_N \}}}-1 \big| f_N(x)  \right]
%& \leq  (\beta_N N^{d/2\alpha})^{\alpha-1} N^{-\frac d2}  \sum\limits_{\|x\| \le \gep' \sqrt{N}} f_N(x) 
%\\ & \leq  c'   \int_{\|x\|\le \epsilon '} f(x) \dd x \, ,
%\end{align*}
%where we used $\gb_N \leq c N^{-d/2\ga}$ and the same Riemann-sum approximation
%as in~\eqref{locallimitf}.} Then  from asymptotic estimates on $f$, see~\eqref{asympf0}, we easily obtain that this is bounded by a constant times $(\epsilon')^2  \log(1/\epsilon')$.
%Hence, applying Markov's inequality, 
%we get that the probability that this part of the sum is larger than $\epsilon'$ is bounded by $\epsilon'  \log(1/\epsilon')$. 
Since $\tilde \omega_x\le \bar \omega_x$, we replace $\tilde \omega_x$ by $\bar \omega_x$. Moreover, we consider the event $\mathcal A=\Big\{ \max\limits_{\|x\|\leq \gep^\ga\sqrt{N}}\bar{\omega}_{x}\le N^{d/2\ga} \Big\}$ and split \eqref{term1_II} according to $\mathcal A$: 
the probability in the left-hand side of~\eqref{term1_II}
is bounded by
\begin{align}
%\bbP\Bigg(\sumtwo{\|x\|\leq \gep^\ga\sqrt{N} }{\bar \omega_x>\epsilon N^{d/2\ga} }  &  \frac{1}{\gb_N N^{d/2\ga}}\left(e^{\beta_N\bar{\omega}_{x}}-1\right) f_N(x) >  \gep^{\ga/2}\Bigg)\\ 
\label{term1-II-I}
\bbP\Bigg(\sum_{\|x\|\leq \gep^\ga\sqrt{N} }   \frac{1}{\gb_N N^{d/2\ga}}\left(e^{\beta_N\bar{\omega}_{x}}-1\right) \ind_{\{{ \bar \omega_x> \epsilon N^{d/2\ga} } \}}f_N(x) > \gep^{\ga/2}\, ;\,  \mathcal A \Bigg) +\mathbb P\big(\mathcal A^c\big).
\end{align}
Since $\beta_N N^{d/2\alpha}\le c$, we have that on $\mathcal A$, $(e^{\beta_N\bar{\omega}_{x}}-1) \le  C\, \beta_N N^{d/2\alpha}$. Then, note that we have 
\begin{equation}\label{localerror}
|f_N( x) - f(x/\sqrt{N})|  = \Big| v_N \bP(x\in \cR_N) - f(x/\sqrt{N}) \Big| \leq c f(x/\sqrt{N})
\end{equation}
for all  $x\in \bbZ^d \setminus \{0\}$ with $\|x\| \leq K \sqrt{N}$ by \cite[Thm.~1.8]{Uch11} (in dimension $d\geq 3$) 
and \cite[Thm.~1.6]{Uch11}  (in dimension $d=2$) and by a Riemann sum approximation (and dominated convergence), we have
\begin{equation*}\begin{split}
& \bbE\Bigg[\sum_{\|x\|\leq \gep^\ga\sqrt{N} }\ind_{\{\bar \omega_x>\epsilon N^{d/2\alpha}\}} f_N(x) \Bigg]
\le C \gep^{-\alpha }N^{-d/2}\sum_{\|x\|\leq \gep^\ga\sqrt{N} }  f_N(x) \le C' \gep^{-\alpha } \int_{B_{\gep^{\alpha}}}f(x) \dd x,
\end{split}
\end{equation*}
where we used that $\bbP\big(\bar \omega_x>\epsilon N^{d/2\ga}\big)\le C\epsilon^{-\alpha}N^{-d/2}$. Then  from asymptotic estimates on $f$, see~\eqref{asympf0}, we easily obtain that this is bounded by a constant times $\gep^{2\alpha}  \log(1/\epsilon)$.
Therefore, by Markov's inequality, the first term of \eqref{term1-II-I} is bounded by $C''\epsilon^{-\alpha} \epsilon^{3\ga /2}  \log(1/\epsilon)\le \epsilon^{\ga /2}  \log(1/\epsilon)$. 

Finally, since $\mathbb P\big(\mathcal A^c\big)=\mathbb P\big( \max\limits_{\|x\|\leq \epsilon^\ga\sqrt{N}}\bar{\omega}_{x}> N^{d/2\ga} \big) \le c \gep^{\ga d}$, we complete the proof of \eqref{term1_II}.

\smallskip
\textit{$2$nd term in~\eqref{decomp:main}.}
Using that $\omega_0-\mu$ is centered, we easily get from~\eqref{eq:tailOmega} that
\begin{equation*}
N^{\frac{d}{2} (1-\frac{1}{\ga})} \bbE[\tilde{\omega}_{0}^{\epsilon}]=- N^{\frac{d}{2} (1-\frac{1}{\ga})} \bbE\big[(\omega_0-\mu)\mathbbm{1}_{\{(\omega_0-\mu)>\epsilon N^{d/2\alpha}\}}\big]\overset{N\to\infty}{\longrightarrow}-\alpha\int_{\epsilon}^{\infty} w  \cdot w^{-(1+\ga)}\dd w \, .
\end{equation*}
%where we used that $\lim_{N\to\infty} \gb_N N^{d/2\ga} =\gb$.

Again by \eqref{localerror} and a Riemann-sum approximation (and dominated convergence), we get that
\begin{equation}
\label{locallimitf}
\lim_{N\to\infty} N^{-\frac{d}{2}}  \sum_{\|x\| \leq K\sqrt{N}} f_N(x)
 = \int_{\|x\|\leq K} f(x) \dd x \, .
\end{equation}
All together, we get that the second term in~\eqref{decomp:main}
verifies
\begin{equation}
\label{term2}
\lim_{N\to\infty}  N^{-d/2\ga} \bbE[\tilde{\omega}_{0}^{\gep} ]\sum\limits_{\|x\| \leq K\sqrt{N}} f_N(x) = - \int_{ B_K \times (\gep,+\infty)} w f(x) \eta (\dd x, \dd w) \,.
\end{equation}
Combining~\eqref{term1} with~\eqref{term2}, we obtain that 
the sum of the first term in \eqref{splitterm1} and the second term in~\eqref{decomp:main} converges in distribution to
\begin{equation}
\label{terms12}
\begin{split}
\int_{(B_K\backslash B_{\epsilon^\alpha}) \times (\gep,+\infty)} \frac{1}{\gb} & (e^{\gb w} -1-\gb w) f(x)
 \cP(\dd x, \dd w) + \int_{(B_K\backslash B_{\epsilon^\ga}) \times (\gep,+\infty)} w f(x)
 (\cP-\eta)(\dd x, \dd w)\\
&-\int_{B_{\gep^\ga} \times(\epsilon,\infty)}\omega f(x)\eta(\dd x,\dd\omega) \,.
\end{split}
\end{equation}
Note that the third integral above is bounded by $c_\ga\epsilon^{1+\ga}\log(1/\epsilon)$. Letting $\gep \downarrow 0$ and $K\uparrow+\infty$ and by \eqref{term1_II}, one recovers
$\cW_{\gb}$, see~Proposition~\ref{prop:finiteW};
when $\gb=0$, the first term is equal to zero.
It therefore remains to control the remaining terms in~\eqref{decomp:main}.

\smallskip
{\it $3$rd term in~\eqref{decomp:main}.}
We control the second moment of this term, since it has zero mean.
By a Taylor expansion, using that $|\beta_N\tilde{\omega}_x^\epsilon|\leq C\epsilon\beta$, we have that
\[
\bbE\Big[ \big(e^{\beta_N\tilde{\omega}_{0}^{\epsilon}}-\bbE\big[e^{\beta_N\tilde{\omega}_{0}^{\epsilon}}\big] \big)^2 \Big]
= \bbE\big[ e^{2 \beta_N\tilde{\omega}_{0}^{\epsilon}} \big] -
\bbE\big[e^{\beta_N\tilde{\omega}_{0}^{\epsilon}}\big]^2
\leq C\beta_N^2\bbE\left[(\tilde{\omega}_{0}^\epsilon)^2\right]
 \leq C' \gep^{2-\ga} \gb_N^2 N^{\frac{d}{2\ga} (2-\ga)} \, ,
\]
where the last inequality holds since $\ga<2$.
Hence,  recalling that $f_N(x) = v_N \bP(x\in \cR_N)$, 
we get that the third term of~\eqref{decomp:main} has a second moment 
\begin{equation}
\label{term3}
\bbE\bigg[ \Big( \sum\limits_{\|x\| \leq K\sqrt{N}}  \frac{1}{\gb_N N^{d/2\ga}} \left( e^{\gb_N \tilde \go_x^{\gep}}  - \bbE\big[ e^{\gb_N \tilde \go_x^{\gep}} \big]\right) f_N(x) \Big)^2 \bigg] \leq c  \gep^{2-\ga} N^{-\frac{d}{2}}   v_N^2  J_N \leq c' \gep^{2-\ga} \, ,
\end{equation}
where for the last inequality we used the definition of $v_N$ (see Section~\ref{results:C}) together with Lemma~\ref{lem:JN} for $d=2,3$.

\smallskip
{\it $4$th term in~\eqref{decomp:main}}.
Again, by a Taylor expansion, and using $\bbE\left[(\tilde{\omega}_{0}^\epsilon)^2\right] \leq c \gep^{2-\ga} N^{\frac{d}{2\ga}(2-\ga)}$, we have
\begin{equation*}
\frac{1}{\gb_N N^{d/2\ga}} \bbE\big[e^{\beta_N\tilde{\omega}_{0}^{\epsilon}}-1-\beta_N\tilde{\omega}_{0}^{\epsilon}\big]\leq C \frac{\gb_N}{N^{d/2\ga}} \bbE[(\tilde{\omega}_{0}^{\epsilon})^2] \leq 
C \epsilon^{2-\alpha} \gb_N N^{\frac{d}{2\ga}} \,   N^{-\frac{d}{2}} \, .
\end{equation*}
Now, using~\eqref{locallimitf} and the fact that $\gb_N N^{d/2\ga} \leq c$, we get that the $4$th term in~\eqref{decomp:main} verifies
\begin{equation}
\label{term4}
0\leq \frac{1}{\gb_N N^{d/2\ga}}   \bbE\big[e^{\beta_N\tilde{\omega}_{0}^{\epsilon}}-1-\beta_N\tilde{\omega}_{0}^{\epsilon}\big] \sum\limits_{\|x\|\leq K\sqrt{N}} f_N(x) \leq C  \epsilon^{2-\alpha} \,.
\end{equation}

\smallskip
{\it $5$th term in~\eqref{decomp:main}.}
Here, we follow the line of proof of~\cite[Lem.~5.1]{DZ16}:
 we prove that (recall  the notation $\bar \go_x = \go_x-\mu$)
\begin{equation}
\label{term5}
\bbP\Big( \sum\limits_{\|x\| >K \sqrt{N}} \frac{1}{\gb_N N^{d/2\ga}} \left(e^{\beta_N \bar \omega_{x}}-1\right) f_N(x)  > e^{-K}  \Big) \leq   C  K^{d-2\ga} \,,
\end{equation}
which can be made arbitrarily small by taking $K$ large, since $\ga>d/2$.
Note that since $\tilde \go_x \leq \bar\go_x$, this enables us to control the last term in~\eqref{decomp:main}.

To prove~\eqref{term5}, we decompose the sum according to whether $\bar \go_x$
is smaller or larger than $1/\gb_N$.

To control the part of the sum with $\bar \go_x \leq 1/\gb_N$,
we use Lemma \ref{lem:moments} to get that 
$$
\bbE\Big[ \big| e^{\beta_N\bar{\omega}_{x} \ind_{\{\bar \go_x \leq 1/\gb_N\}}}-1 \big| \Big] \leq c'  \gb_N^{\ga} .
$$
%\leq c'  N^{-d/2}, since $\gb_N \leq c N^{-d/2\ga}$.
Hence,  the sum over the terms with $\bar \go_x \leq 1/\gb_N$ has $L^1$ norm bounded by 
\begin{align*}
 \bbE\left[  \sum\limits_{\|x\| >K \sqrt{N}} \frac{1}{\gb_N N^{d/2\ga}}\big| e^{\beta_N\bar \omega_{x} \ind_{\{ \bar \go_x \leq 1/\gb_N \}}}-1 \big| f_N(x)  \right]
& \leq (\beta_N N^{d/2\alpha})^{\alpha-1} N^{-\frac d2}  \sum\limits_{\|x\| >K \sqrt{N}} f_N(x) 
\\ & \leq  c'   \int_{\|x\|>K} f(x) \dd x \, ,
\end{align*}
where we used $\gb_N \leq c N^{-d/2\ga}$ and the same Riemann-sum approximation
as in~\eqref{locallimitf}. Then  from asymptotic estimates on $f$, see~\eqref{asympfinfty}, we easily obtain that this is bounded by a constant times $e^{- c K^{2}}$.
Hence, applying Markov's inequality, 
we get that the probability that this part of the sum is larger than $e^{-K}/2$ is bounded by $c e^{K} e^{-c K^2} \leq c K^{d-2\ga}$.

Let us now control the sum over the terms  with $\bar \go_x >1/\gb_N$.
We write
\begin{multline}
\label{eq:probasum}
\bbP \bigg(   \sum\limits_{\|x\| >K \sqrt{N}} \frac{1}{\gb_N N^{d/2\ga}} \big( e^{\beta_N\bar{\omega}_{x} }-1 \big)  \ind_{\{ \bar \go_x  > 1/\gb_N \}} f_N(x)  >  e^{-K}/2 \bigg) \\
\leq \sum_{j=1}^{K^{-1} \sqrt{N}}  \bbP\bigg(  \sum\limits_{\|x\| \in (j , j+1 ] K\sqrt{N}} e^{\beta_N\bar{\omega}_{x} } \ind_{\{ \bar \go_x  > 1/\gb_N \}}   f_N(x)  > \gb_N N^{d/2\ga} e^{-K}   2^{-j-2 }\bigg) \,.
\end{multline}
Now,  we set $N_{j}^K = \big|\{x \in \bbZ^d \,,  \bar \go_x > 1/\gb_N\,,   \|x\| \in(j, j+1] K \sqrt{N} \}\big|$,
and we bound each probability in the sum by
\begin{multline}
\label{twoproba}
\bbP\Big( N_{j}^K > e^{K}  j^{1+d}  \gb_N N^{d/2\ga} \Big)\\
 + \bbP\bigg(  \sum\limits_{\|x\| \in (j , j+1] K \sqrt{N}}  e^{\beta_N\bar{\omega}_{x} }  \ind_{\{ \bar \go_x  > 1/\gb_N \}}   f_N(x)  > \gb_N N^{d/2\ga}  e^{-K} 2^{-j-2 } \, ;\, N_{j}^K \leq e^{K} j^{1+d}  \gb_N N^{d/2\ga} \bigg)\,.
\end{multline}
First of all, by Markov's inequality, since
the number of sites verifying $ \|x\| \in (j, j+1] K \sqrt{N}$ is bounded by  a constant times 
$ K^d j^{d-1} N^{d/2}$ and since $\bbP(\bar \go_x > 1/\gb_N) \leq c \gb_N^{\ga}$, we get that
\begin{equation}\label{boundN}
\bbP\Big( N_{j}^K > e^K j^{d+1}  \gb_N N^{d/2\ga} \Big) \leq c e^{-K} j^{-(d+1)}    (\gb_N N^{d/2\ga})^{-1}
\times K^{d} j^{d-1} N^{\frac{d}{2}}  \gb_N^{\ga} \leq c'  K^d e^{-K} j^{-2}\, ,
\end{equation}
where we used that $\gb_N^{\ga-1} \leq c N^{d (\ga-1)/2\ga}$.
For the second probability in~\eqref{twoproba}, we use that $f_N(x)$ is bounded above by $e^{- c j^{2} K^2} $ uniformly for $\|x\| >  j K \sqrt{N}$ (recall~\eqref{asympfinfty}), we bound the number of terms in the sum by $e^K j^{1+d} \gb_N N^{d/2\ga}$ (using the condition on $N_j^K$) and we bound $e^{\gb_N \bar\go_x}$ by its maximum.
We then obtain that the second probability in~\eqref{twoproba} is bounded by
\[
 \bbP\Big(  e^{K} j^{1+d} e^{- c j^{2} K^2}  \max_{\|x\| \in (j,j+1] K\sqrt{N}} \big\{  e^{\beta_N\bar{\omega}_{x} }  \big\}  >  e^{-K}  2^{-j-2 }  \Big)   \, .
 \] 
Now, using that  $e^{-2K} j^{-(1+d)}2^{-j} e^{ c j^{2} K^2} \geq  e^{c' j^2 K^2}$  uniformly in $j\geq 1$ and noting that  $N^{d/2} \gb_N^{\ga}\leq c$,
we get by a union bound that the above probability is bounded by
\begin{equation}
\label{boundmax}
 c j^{d-1} K^d N^{\frac{d}{2}} 
 \bbP\big(    e^{ c \gb_N \bar \go_x}    >    e^{c' j^2 K^2} \big)  
 \leq   c j^{d-1 -2\ga}  K^{d-2\ga} N^{\frac{d}{2}} \gb_N^{\ga}  \leq  c' j^{d-1 -2\ga}  K^{d-2\ga} \, .
\end{equation}

All together, summing over $j$ for \eqref{boundN}  and~\eqref{boundmax} (recall that $\ga>d/2$), we get that the probability in~\eqref{eq:probasum} is bounded by
a constant times $K^{d} e^{-K}+ K^{d-2\ga}$. This concludes the proof of~\eqref{term5}.

\smallskip
{\it Conclusion of the proof.}
%Collecting all the above estimates, we get that in~\eqref{decomp:main},
%the first and second term converge to 
%\begin{multline*}
%\int_{ B_K \times(\epsilon,\infty)} (e^{\beta w}-1)f(x)\cP(\dd x,\dd w)  - \gb \int_{ B_K \times (\gep,\infty)} w f(x) \eta (\dd x, \dd w)\\
%=  \int_{ B_K \times(\epsilon,\infty)} (e^{\beta w}-1-\gb w)f(x)\cP(\dd x,\dd w) 
%+ \gb \int_{ B_K \times (\gep,\infty)} w f(x) (\cP- \eta) (\dd x, \dd w) \,,
%\end{multline*}
%recall~\eqref{term1}-\eqref{term2}.
%Letting $\gep\searrow 0$ and $K\nearrow \infty$,
%we therefore recover~$\gb \cW_{\gb}$, recalling that these limits exist
%thanks to Proposition~\ref{prop:finiteW}.
Recall that the sum of the first two terms in~\eqref{decomp:main}
converge to~\eqref{terms12}.
Then, we let $\gep\downarrow 0$ and we use \eqref{term3}-\eqref{term4}
to get that the $3$rd and $4$th term in~\eqref{decomp:main}
become negligible.
Finally, letting $K\uparrow \infty$, \eqref{term5}
makes sure that the $5$th term in~\eqref{decomp:main}
goes to $0$ in probability.

\subsection{Transversal fluctuations}
\label{fluct4}
Let us first show that the transversal fluctuations are \textit{at least} of order $\sqrt{N}$.  Since $\bar{Z}_{N,\beta_N}(M_N\leq\eta\sqrt{N})=\bar{Z}_{N,\beta_N}\overline{\bP}_{N,\beta_N}(M_N\leq\eta\sqrt{N})$ and because $\bar{Z}_{N,\beta_N}\to1$ in $\bbP$-probability, we can focus on $\bar{Z}_{N,\beta_N}(M_N\leq\eta\sqrt{N})$.

By the same argument as \eqref{truncated}-\eqref{probatrunc}, $\bar{Z}_{N,\beta_N}(M_N\leq\eta\sqrt{N})-Z_{N,\beta_N}^{\tilde\omega}(M_N\leq\eta\sqrt{N})\to0$ in $\bbP$-probability. We then have
\begin{equation*}
\begin{split}
\bbP\big(\bar{Z}_{N,\beta_N}(M_N\leq\eta\sqrt{N})>\epsilon\big)\leq&\bbP\bigg(\big|\bar{Z}_{N,\beta_N}(M_N\leq\eta\sqrt{N})-Z_{N,\beta_N}^{\tilde\omega}(M_N\leq\eta\sqrt{N})\big|>\frac{\epsilon}{2}\bigg)\\
&+\bbP\bigg(Z_{N,\beta_N}^{\tilde\omega}(M_N\leq\eta\sqrt{N})>\frac{\epsilon}{2}\bigg),
\end{split}
\end{equation*}
where the second term is bounded above by $C_\epsilon(1-e^{-c\eta^2})$, thanks to the Markov's inequality and reasoning as in \eqref{reduction3}, which can be made arbitrarily small by choosing small enough $\eta$: this concludes the proof that transversal fluctuations are at least of order $\sqrt{N}$.

The proof for transversal fluctuations are \textit{at most} of order $\sqrt{N}$ is very similar, where one just needs to restrict the path to $\{M_N\leq A_N\sqrt{N\log N}\}$ first (see Lemma \ref{lem:truncate} for more details), so we omit it.

\end{proof}

%%%%%%%%%%%%%%%%%%%%%%%%%%%%%%%%%%%%%%%%%%%%%%%%%%%%%%%%%%%%%%%%%%%%%%%%%%%%%%%

%%%%%%%%%%%%%%%%%%%%%%%%%%%%%%%%%%%%%%%%%%%%%%%%%%%%%%%%%%%%%%%%%%%%%%%%%%%%%%%

\section{Region C: proof of Theorem~\ref{thm:C2}}
\label{sec:C2}

Here we have $\lim_{N\to\infty} \gb_N N^{\frac{d}{\ga}-1} = \gb \in [0,+\infty )$,
and  we work conditionally on the event $\gb\leq \gb_c$,
\textit{i.e.}\ $\hat \cT_{\gb}=0$ in Theorem~\ref{thm:A}.
%For simplicity, we will work in the case $\gb>0$, which contains all the difficulty in the proof; some adaptations are needed for the case $\gb=0$, but this is actually simpler.
Recall that if $\ga>1$ we set $\mu = \bbE[\go_0]$,
whereas when $\ga<1$ we let $\mu$ be any real number;
we again use the notation $\bar Z_{N,\gb_N} = Z_{N,\gb_N}^{\go,h=\mu}$.

\subsection{Preliminary: paths cannot stay at scale $N$ nor at an intermediate scale}

\label{sec:regCprel8}

First of all, let us stress that conditionally on having $\hat \cT_{\gb} =0$, 
paths cannot stay at scale $N$.
Indeed, similarly to what is done in~Section~\ref{sec:regionA} (see in particular~\eqref{convergenceeta}), we have that for any $\eta>0$
\[
\frac{1}{\gb_N N^{d/\ga}} \log Z_{N,\gb_N}^{\go,h} \big( M_N \geq  \eta N \big)  
\longrightarrow  \hat \cT_{\gb,\geq \eta} :=    \sup_{ s\in \cD, \sup_{t\in [0,1]}\|s(t)\|\geq \eta , \hatent(s) <+\infty} 
\big\{ \pi(s) -  \tfrac{1}{\gb} \hatent(s) \big\} \, .
\]
Now, on the event that $\hat \cT_{\gb}=0$
we have $\hat \cT_{\gb,\geq \eta}<0$ a.s., see Lemma \ref{lemma:techHatTbeta} in Appendix.
Hence, if $\lim_{N\to\infty} \gb_N N^{d/\ga -1} =\gb>0$, we get that for any $\eta>0$,
\begin{equation}
\label{notscaleN}
\lim_{N\to\infty} \bbP\Big(   Z_{N,\gb_N}^{\go,h} \big( M_N \geq  \eta N \big)  \geq e^{- N^{1/2}} \, \big| \,  \hat \cT_{\gb} =0\Big) =0\, .
\end{equation}
If  $\lim_{N\to\infty} \gb_N N^{d/\ga -1} =0$,  in view of Remark~\ref{rem:1/beta}, one has
\[
\lim_{N\to\infty} \frac1N \log Z_{N,\gb_N}^{\go, h}\big( M_N \geq  \eta N \big)
 = \lim_{N\to\infty} \frac1N \log \bP\big( M_N \geq  \eta N \big) <0 \, ,
\]
so the conclusion~\ref{notscaleN} remains valid.

Let us now prove here the analogous of Proposition~\ref{prop:Z>Ar}, stating that paths cannot stay at some intermediate scale between $N^{1/2}$ and $N$.

\begin{lemma}
\label{lem:intermscale}
Assume that $\lim_{N\to\infty} \gb_N N^{\frac{d-\ga}{\ga}} = \gb \in [0,+\infty )$. Then, there exists some $\nu'>0$ and a constant $c_1$ such that for any  $A_N \in [\sqrt{\log N}, N^{1/4}]$ and any $\eta \in(0,1)$
we have that for $N$ sufficiently large
\[
\bbP\Big( \log \bar Z_{N,\gb_N} \big( M_N \in [A_N \sqrt{N}, \eta N ) \big)  > - c_1 A_N^{2}   \Big)  \leq  c \eta^{\nu'}  + c A_N^{-\nu'}\, .
\]
\end{lemma}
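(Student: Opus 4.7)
The plan is to adapt the dyadic decomposition argument used to establish Proposition~\ref{prop:Z>Ar}, replacing the transversal scale $N^{\xi}$ by the diffusive scale $\sqrt{N}$. The upper truncation $\eta N$ replaces $N$ because trajectories at the true scale $N$ are controlled separately by~\eqref{notscaleN}; the price of this truncation is precisely what produces the factor $\eta^{\nu'}$ in the bound.

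First I would partition the interval $[A_N \sqrt{N}, \eta N)$ into dyadic blocks $B_{k,N} := [2^{k-1} \sqrt{N}, 2^k \sqrt{N})$ with $k$ ranging from $\lceil \log_2 A_N \rceil + 1$ up to $\lfloor \log_2(\eta \sqrt{N}) \rfloor$. Applying Cauchy--Schwarz exactly as in~\eqref{CSineq1} and the Gaussian tail~\eqref{CSineq2} for the random walk yields, for each block,
\[
\bar Z_{N,\gb_N}(M_N \in B_{k,N})^2 \leq C_d \, e^{-c_d \, 2^{2k}} \, \bar Z_{N,2\gb_N}\big(M_N \leq 2^k \sqrt{N}\big) \,.
\]
The second factor is controlled by Lemma~\ref{lem:S<qr} applied with $h_N := 2^k \sqrt{N}$, which satisfies $h_N^2/N = 2^{2k} \to \infty$ since $2^k \geq A_N \geq \sqrt{\log N}$.

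The computational crux is estimating the relevant small parameter in Lemma~\ref{lem:S<qr}. Using $\gb_N \leq C N^{-(d-\ga)/\ga}$, a direct calculation yields
\[
\gep_{N,k} := N \gb_N h_N^{d/\ga - 2} \leq C \, (2^k/\sqrt{N})^{d/\ga - 2} \leq C \, \eta^{d/\ga - 2} \,,
\]
where the final inequality uses $2^k \leq \eta \sqrt{N}$ together with $d/\ga - 2 > 0$ (which holds since $\ga < d/2$). Fixing $c_0 < c_d/2$ in Lemma~\ref{lem:S<qr} and taking a union bound over $k$ gives
\[
\sum_k \bbP\Big( \bar Z_{N,2\gb_N}(M_N \leq 2^k \sqrt{N}) > e^{c_0 2^{2k}} \Big) \leq c \sum_k \gep_{N,k}^{\nu} + \sum_k 2^{-2k\nu} \,.
\]
The second sum is geometric, dominated by the smallest index $k \asymp \log_2 A_N$, producing $\leq c A_N^{-2\nu}$; the first sum is also geometric but (since $d/\ga-2>0$) is dominated by the \emph{largest} index $k \asymp \log_2(\eta\sqrt{N})$, producing $\leq c \, \eta^{\nu(d/\ga-2)}$. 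Setting $\nu' := \min\{2\nu, \nu(d/\ga - 2)\}$ yields the bound $c\eta^{\nu'} + c A_N^{-\nu'}$ of the statement.

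On the complementary event, the two estimates combine to $\bar Z_{N,\gb_N}(M_N \in B_{k,N}) \leq C_d^{1/2} e^{-(c_d - c_0) 2^{2k-1}}$, and summing over $k$ gives $\bar Z_{N,\gb_N}(M_N \in [A_N\sqrt{N}, \eta N)) \leq C e^{-c_1' A_N^2}$, whence $\log \bar Z_{N,\gb_N} \leq -c_1 A_N^2$ for $N$ large (using $A_N \geq \sqrt{\log N} \to \infty$ to absorb the additive constant). The main technical obstacle is the two-sided scaling check for $\gep_{N,k}$: we need $(2^k)^{\nu(d/\ga-2)}$ to both sum geometrically in $k$ \emph{and} be controlled by the cutoff $\eta\sqrt{N}$, and this is made possible precisely by the regime $\ga < d/2$ with the sharp scaling $\gb_N \asymp N^{-(d-\ga)/\ga}$, which makes the $\eta$-dependence and the $A_N$-dependence decouple cleanly.
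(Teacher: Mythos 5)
Your proof is correct and amounts to a streamlined version of the paper's argument. The key ideas are identical: dyadic decomposition of $[A_N\sqrt{N},\eta N)$, Cauchy--Schwarz against the Gaussian tail of $M_N$, and Lemma~\ref{lem:S<qr} applied with $h_N = 2^k\sqrt{N}$. Where you differ is the bookkeeping: the paper first splits at the intermediate scale $N^{3/4}$, treating the two regions $[A_N\sqrt{N}, N^{3/4})$ and $[N^{3/4}, \eta N)$ separately. In the first region they bound $\gep_N \leq c_\gb N^{-\frac14(d/\ga-2)}$ uniformly in $k$ using $h_N\leq N^{3/4}$, and in the second they reparametrize downward from $\eta N$ with blocks $[2^{-k-1}N, 2^{-k}N)$ and use $h_N^2/N \geq N^{1/2}$; each region then produces an additional $(\log N)\cdot N^{-\text{const}}$ error term that is absorbed into $A_N^{-\nu'}$ via the hypothesis $A_N \leq N^{1/4}$. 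You bypass the split entirely: by tracking the $k$-dependence $\gep_{N,k} \asymp (2^k/\sqrt{N})^{d/\ga-2}$ exactly, you observe that $\sum_k\gep_{N,k}^\nu$ is geometric and dominated by its top term $\eta^{\nu(d/\ga-2)}$ (rather than bounding $\gep_{N,k}$ uniformly, which would have cost an extra $\log N$ factor), while $\sum_k 2^{-2k\nu}$ is geometric and dominated by its bottom term $A_N^{-2\nu}$. This gives the conclusion in one pass, with no leftover $(\log N)N^{-\text{const}}$ terms to absorb. Both routes rely implicitly on a uniform-in-$k$ version of ``for $N$ large'' in Lemma~\ref{lem:S<qr}, which is harmless here; your decoupling observation at the end correctly identifies why $\ga<d/2$ and $\gb_N\lesssim N^{-(d-\ga)/\ga}$ are exactly what make the two geometric sums concentrate at opposite ends of the range.
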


\begin{proof}
The proof is very similar to that of Proposition~\ref{prop:Z>Ar}.
%;we  use the same notation here. In particular, $\bar Z_{N,\gb_N} = Z_{N,\gb_N}^{\go,h=\mu}$, recalling that $\mu=\bbE[\go_0]$ when $\ga>1$ and $\mu$ can be any real number when $\ga\in (0,1)$.
First,
we divide the partition function as
\begin{equation}
\label{split8}
\bar Z_{N,\gb_N} \big( M_N \in [A_N \sqrt{N}, \eta N ) \big)
= \bar Z_{N,\gb_N} \big( M_N \in [A_N \sqrt{N}, N^{3/4} ) \big)
+\bar Z_{N,\gb_N} \big( M_N \in [N^{3/4}, \eta N) \big) \, .
\end{equation}

For the first term, we divide the partition function as
\begin{equation}
\label{decomp8}
\begin{split}
\bar Z_{N,\gb_N} \big( M_N \in [A_N \sqrt{N}, N^{3/4} ) \big)
 & = \sum_{k= \log_2 A_N}^{ \frac14 \log_2 N} \bar Z_{N,\gb_N} \big( M_N \in [ 2^k \sqrt{N}, 2^{k+1} \sqrt{N} ) \big)  \\
& \leq \sum_{k= \log_2 A_N}^{ \frac14 \log_2 N}  e^{ - c 2^{2k} } \sqrt{\bar Z_{N,\gb_N} \big( M_N \leq  2^{k+1} \sqrt{N}  \big)} \, .
\end{split}
\end{equation}
where we used Cauchy--Schwarz inequality for the second inequality,
together with the fact that $\bP(M_N \geq 2^{k} \sqrt{N}) \leq \exp(- c 2^{2k})$.
Now, applying Lemma~\ref{lem:S<qr} with $h_N = 2^{k} \sqrt{N}$, we get for the range of $k$ considered that for any fixed constant $c_0>0$
\[
\bbP\Big( \bar Z_{N,\gb_N} \big( M_N \leq  2^{k+1} \sqrt{N}  \big)  \geq \exp( c_0 2^{2k} \big) \Big) \leq    c'' N^{-\frac{1}{4} (\frac{d}{\ga} -2)\nu} + c' 2^{- 2\nu k}\, ,
\]
where we used that $\gep_N  = N \gb_N h_N^{d/\ga-2} \leq c_{\gb} N^{-\frac14 (d/\ga -2)}$ since $h_N \leq N^{3/4}$ and $\ga<\frac{d}{2}$.

Together with~\eqref{decomp8},
a union bound therefore shows that, provided that $c_1$ has been chosen small enough, we have
\begin{equation}
\label{8.1-term1}
\bbP\Big(  \bar Z_{N,\gb_N} \big( M_N \in [A_N \sqrt{N}, N^{3/4} ) \big)  \geq e^{- c_1 A_N^2}\Big) \le   c'' A_N^{- 2\nu}  + c (\log N) N^{-\frac14 (\frac d\ga -2)} \, .
\end{equation}
Note that the second term is bounded by $A_N^{- \nu'}$ since $A_N \leq N^{1/4}$.

For the second term in~\eqref{split8}, we proceed analogously: we decompose it as
\begin{equation}
\label{decomp8-2}
\begin{split}
\bar Z_{N,\gb_N} \big( M_N \in [N^{3/4} , \eta N) \big)
 & = \sum_{k= -\log_2 \eta}^{ \frac14 \log_2 N} \bar Z_{N,\gb_N} \big( M_N \in [ 2^{-k-1} N, 2^{-k} N ) \big)  \\
& \leq \sum_{k= -\log_2 \eta}^{ \frac14 \log_2 N}  e^{ - c 2^{ -2k} N } \bar Z_{N,\gb_N} \big( M_N \leq  2^{- k} N \big) \, .
\end{split}
\end{equation}
Again, we can apply Lemma~\ref{lem:S<qr} with $h_N = 2^{-k} N$ to get for the range of~$k$ considered that
\begin{equation}
\bbP\Big( \bar Z_{N,\gb_N} \big( M_N \leq  2^{- k} N \big) \geq \exp\big(  c_0 2^{-2k } N  \big) \Big) \leq c 2^{-k(\frac{d}{\ga} -2)\nu} + N^{- \nu/2}\,,
\end{equation}
where we used $\gep_N = N\gb_N h_N^{d/\ga-2}  \leq c_{\gb} 2^{-k(d/\ga-2}$, together with the fact that $h_N^2/N \geq N^{1/2}$.

Again, together with~\eqref{decomp8-2},
a union bound therefore shows that, provided that $c_1$ has been chosen small enough, we have
\begin{equation}
\label{8.1-term2}
\bbP\Big(  \bar Z_{N,\gb_N} \big( M_N \in [N^{3/4} , \eta N) \big)  \geq e^{- c_1 N^{1/2}}\Big) \le   c'' \eta^{(\frac{d}{\ga}-2) \nu}  + c (\log N) N^{-\nu/2}\, .
\end{equation}
Note again that the second term is bounded by $A_N^{- \nu'}$ since $A_N \leq N^{1/4}$.
Combining~\eqref{8.1-term1} and \eqref{8.1-term2}  therefore concludes the proof of Lemma~\ref{lem:intermscale}.
\end{proof}

\subsection{The case $\ga\in (0,1)$}

We first prove~\eqref{convC2-2}, in the case $\ga\in (0,1)$.
We show the following: if $\ga\in (0,1)$, then for any $h\in \bbR$,
\begin{equation}
\label{convergence<1}
\frac{v_N}{\beta_N N^{d/2\alpha}}  \Big( Z_{N,\gb_N}^{\go,h} - 1 \Big)
\overset{(d)}{\longrightarrow}\cW_{0}  \, .
\end{equation}
Recall that $v_N = \log N$ in dimension $d=2$ and $v_N = N^{\frac{d}{2}-1}$ in dimension $d\geq 3$.

\begin{proof}[Proof of~\eqref{convergence<1}]
First of all, using~\eqref{notscaleN} and Lemma~\ref{lem:intermscale}, we get that 
for any $a>0$ there is some $A>0$ such that
\[
\lim_{N\to\infty} \bbP\Big( N^{a} \big( Z_{N,\gb_N}^{\go, h}  - Z_{N,\gb_N}^{\go, h} (M_N \leq A\sqrt{N\log N})  \big)  >\gep  \, \Big| \, \hat \cT_{\gb} =0 \Big) =0 \, .
\]
Therefore, we may focus on the convergence of $Z_{N,\gb_N}^{\go, h} (M_N \leq A\sqrt{N\log N} )$.
 We have
\[
e^{- |h|\gb_N N } \leq  \frac{ Z_{N,\gb_N}^{\go, h} \big(M_N \leq A\sqrt{N\log N}  \big)  }{Z_{N,\gb_N}^{\go, h=0} \big(M_N \leq A\sqrt{N\log N} \big) }\leq   e^{ |h| \gb_N N}  \, ,
\]
so that, for any $h\in \bbR$, noting that $\beta_N N\leq cN^{2-d/\ga}\to0$, we have
\begin{equation}
\label{eq:comparethetwo}
\bigg|  \frac{Z_{N,\gb_N}^{\go, h} \big( M_N \leq A\sqrt{N\log N}  \big)   }{Z_{N,\gb_N}^{\go, h=0} \big( M_N \leq A\sqrt{N\log N} \big)  }- 1  \bigg| \leq  C |h|\gb_N N  \, .
\end{equation}
Notice that $v_N /( \beta_N N^{d/2\alpha} ) \times  \gb_N N \leq N^{\frac{d}{2}(1-\frac{1}{\ga})}  \log N$  and goes to $0$ as $N\to\infty$
(the factor $\log N$ factor is present only in dimension $d=2$).
Hence, in view of \eqref{eq:comparethetwo}, we therefore only have to prove the result for $h=0$, that is
\begin{equation}
\label{eq:casealpha<1}
\frac{v_N}{\beta_N N^{d/2\alpha}}  \Big(Z_{N,\gb_N}^{\go, h=0} \big( M_N \leq A\sqrt{N\log N}  \big) -1 \Big)\overset{(d)}{\longrightarrow}\cW_{0}  \, ,
\end{equation}
since it also implies that $Z_{N,\gb_N}^{\go, h=0} \big( M_N \leq A\sqrt{N\log N}  \big)$ goes to $1$ in probability.

Now, since the $\go$'s are non-negative, on the event $M_N \leq A \sqrt{N\log N}$
we have
\begin{align*}
\bbP \Big(  \sum_{x\in \cR_N}  \go_x  > N^{\frac{d}{2\ga}} (\log N)^{\frac{d}{\ga}} \Big)
& \leq \bbP \Big(  \sum_{\|x\| \leq A\sqrt{N\log N}}  \go_x  >  N^{\frac{d}{2\ga}} (\log N)^{\frac{d}{\ga}} \Big) \\
& \leq  (A\sqrt{N\log N})^{d}  \bbP\big( \go_0 > N^{\frac{d}{2\ga}} (\log N)^{\frac{d}{\ga}} \big) 
\leq  C A^d   (\log N)^{-\frac{d}{2}} \,,
\end{align*}
where we used a union bound for the second inequality.
Hence, with $\bbP$ probability going to $1$,
we have that $\gb_N\sum_{x\in \cR_N} \go_x  \leq \gb_N N^{\frac{d}{2\ga}} (\log N)^{\frac{d}{\ga}} \to 0$
on the event $\cA_N:=\{M_N \leq A\sqrt{N\log N}\}$.  By a Taylor expansion, 
we therefore have 
\begin{equation*}
\Big|  Z_{N,\gb_N}^{\go, h=0} \big( M_N \leq A\sqrt{N\log N})  \big) -  \bP(\cA_N) - \gb_N \bE\Big[ \sum_{x\in \cR_N}  \go_x \ind_{\{M_N \leq A\sqrt{N\log N}\}}\Big]\Big|
\leq  \big( \gb_N N^{\frac{d}{2\ga}} (\log N)^{\frac{d}{\ga}} \big)^2 \,,
\end{equation*}
with $\bbP$-probability going to $1$ as $N\to\infty$.
Notice that since $\gb_N \leq c N^{1- d/\ga}$ and $v_N \leq N^{d/2-1} \log N$, we get that
\[
\frac{v_N}{\beta_N N^{d/2\alpha}}  \times  \big( \gb_N N^{\frac{d}{2\ga}} (\log N)^{\frac{d}{\ga}} \big)^2
\leq N^{ \frac{d}{2} (1-\frac{1}{\ga})}  (\log N)^{\frac{2d}{\ga}}\,,
\]
which goes to $0$ as $N\to\infty$ since $\ga <1$.

 Note that $1-\bP(\cA_N)\leq N^{-cA^2}$ and $A$ is large enough. All together, it boils down to proving that 
\begin{equation}
\label{eq:lastconvergence}
\frac{v_N}{N^{d/2\ga}} \bE\Big[ \sum_{x\in \cR_N}  \go_x \ind_{\{M_N \leq A\sqrt{N\log N}\}}\Big] 
 \xrightarrow{(d)} \cW_0 \, .
\end{equation}
Recalling the notation $f_N(x) = v_N \bP(x\in \cR_N)$,
we have the following bound
\begin{multline*}
\bigg| \frac{v_N}{N^{d/2\ga}} \sum_{x\in \cR_N}  \go_x \bP(x\in \cR_N ,  M_N \leq A\sqrt{N\log N} \Big]  -  \sum_{\|x\| \leq A\sqrt{N\log N}} \frac{\go_x}{N^{d/2\ga}} f_N (x) \bigg|\\
 \leq 
 \frac{v_N}{N^{d/2\ga}} \max_{\|x\| \leq A\sqrt{N\log N}} \{\go_x\} \bE\big[ |\cR_N| \ind_{ \{M_N > A\sqrt{N\log N} \} } \big] \\
 \leq N^{\frac{d}{2}} \max_{\|x\| \leq A\sqrt{N\log N}} \Big\{ \frac{\go_x}{N^{d/2\ga}} \Big\}  \left(\bP( M_N > A\sqrt{N\log N})+\log N\bP(|\cR_N|>AN/\log N)\mathbbm{1}_{d=2}\right)\, ,
\end{multline*}
which goes to $0$ in probability provided that $A$ has been fixed large enough.
Therefore, the convergence~\eqref{eq:lastconvergence} is equivalent to showing
\begin{equation}
\label{lastconvergence<1}
\sum_{\|x\| \leq A\sqrt{N\log N}} \frac{\go_x}{N^{d/2\ga}} f_N (x) 
 \xrightarrow{(d)} \cW_0 = \int_{\bbR^d \times \bbR^+} w f(x) \cP(\dd x , \dd w) \, ,
\end{equation}
with $\cP$ a Poisson point process on $\bbR^d \times \bbR^+$ with intensity $\ga w^{-\ga-1} \dd x \dd w$. 
The proof is very similar to the proof of Lemma~\ref{lem:termIII} --- see in particular the method used in~\eqref{eq:1sttermPoisson}-\eqref{eq:2ndtermPoisson}.

Since $w^{-\ga}$ is integrable around $0$ when $\ga\in (0,1)$, by the same argument as in \eqref{eq:1sttermPoisson}--\eqref{eq:2ndtermPoisson}, we have analogously to~\eqref{term1}
\begin{equation}
\label{8.13}
\sum\limits_{\|x\|\leq K\sqrt{N}} \frac{\go_x}{N^{d/2\ga}} f_N(x)\overset{(d)}{\longrightarrow}\int_{B_K \times \bbR_+} w f(x)\cP(\dd x,\dd w).
\end{equation}
For the remaining term in~\eqref{lastconvergence<1}, we get that it verifies
\begin{equation}
\label{8.14}
\bbP\bigg(   \sum\limits_{ K\sqrt{N} < \|x\| \leq A\sqrt{N\log N}}  \frac{\go_x}{N^{d/2\ga}}  f_N(x) > K^{-1} \bigg)
 \leq\sum\limits_{k=1}^{\infty}\bbP\bigg(\sum\limits_{\|x\|\in(2^{k-1},2^k]K\sqrt{N}}  \!\! \omega_x f_N(x)>2^{-k}K^{-1}N^{\frac{d}{2\ga}}\bigg) \, .
\end{equation}
Since $f_N(x) \leq C e^{ - c \|x\|^2 /N}$, we get that this probability
is bounded by a constant times
\begin{equation}
\label{8.15}
\sum_{k=1}^{\infty} 2^{kd}K^d N^{\frac{d}{2}} \bbP\left(\omega_0>c'2^{-k}K^{-1}e^{c2^{2(k-1)}K^2}N^{\frac{d}{2\ga}}\right)\leq c \sum_{k=1}^{\infty} (2^{k}K)^{d+\alpha}e^{-c_\alpha 2^k K^2}  \leq c e^{- c' K^2}\, .
\end{equation}
Combining this with~\eqref{8.13} we obtain~\eqref{lastconvergence<1} by letting $N\to\infty$ and then $K\to\infty$.
\end{proof}

\subsection{The case $\ga \in (1 ,\frac{d}{2})$}
We now prove~\eqref{convC2-1} and~\eqref{convC2-2} in the case $\ga \in (1,\frac d2)$; in particular $d\geq 3$.
The proof is similar to what is done in Section~\ref{sec:regC}: first we truncate the environment
and then we prove the convergence of the partition function with truncated environment.

\smallskip
{\bf Step 1. Truncation of the environment. }
Let us define the truncation level 
\begin{equation}\label{k_N}
k_N := (\log N)^{\frac{d}{\ga}} 
N^{\frac{d}{2\alpha}}\, ,
\end{equation}
and the truncated environment $\tilde \go  = (\go-\mu) \ind_{\{\go \leq k_N\}}$.
Let us stress that $\gb_N k_N \leq c (\log N)^{d/\ga} N^{1-d/2\ga} $ goes to $0$ as $N\to\infty$, since $\ga<d/2$.
We again define $\lambda_N := \log \bbE[e^{\gb_N \tilde \go}]$ and we notice that thanks to Lemma~\ref{lem:moments} (in the case $\gb_N k_N <1$)
we have 
\begin{equation}
\label{lambdaN2}
\lambda_N \leq C \gb_N k_N^{1-\ga} \leq  C  N^{1-\frac{d}{2} - \frac{d}{2\ga} } (\log N)^{C}\, .
\end{equation}
In particular, $\lambda_N N$ goes to $0$ as $N\to\infty$ since $\ga<d/2$.

\begin{lemma}
\label{lem:restrict<d/2}
Let $\ga \in (1,\frac d2)$.
Assume that $\lim_{N\to\infty} \gb_N N^{\frac{d-\ga}{\ga}} = \gb \in [0,+\infty )$.
Then for any $a>0$, conditionally on $\hat \cT_{\gb} =0$,  
$N^{a} \big( \bar Z_{N,\gb_N} - Z_{N,\gb_N}^{\tilde \go}  \big)$ goes to $0$ in probability.
\end{lemma}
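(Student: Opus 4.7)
The strategy follows the one already used for Lemma~\ref{lem:truncate} in Region~\textbf{C}: (i)~reduce both partition functions to trajectories with $M_N\le A_N\sqrt{N\log N}$ for a well-chosen slowly growing $A_N$; (ii)~on this event show that the truncation $\tilde\omega_x=\omega_x-\mu$ holds for every visited site with $\bbP$-probability $1-o(1)$. The main new ingredient compared with Section~\ref{sec:regC} is that, since $\alpha<d/2$, the standard random-walk deviation bounds are not strong enough on their own to rule out trajectories of scale $\ge A_N\sqrt{N\log N}$: one has to eliminate paths of scale~$N$ using the conditioning on $\{\hat\cT_{\gb}=0\}$, and paths of intermediate scale using the technical estimate of Section~\ref{sec:regCprel8}.

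\emph{Step 1: restricting to scale $\sqrt{N\log N}$.} Fix a sequence $A_N\to\infty$ slowly (for instance $A_N=\log\log N$). Write
\begin{equation*}
\bar Z_{N,\gb_N}\bigl(M_N>A_N\sqrt{N\log N}\bigr) \le \bar Z_{N,\gb_N}\bigl(M_N\in[A_N\sqrt{N\log N},\eta N)\bigr)+\bar Z_{N,\gb_N}\bigl(M_N\ge\eta N\bigr).
\end{equation*}
The second term is controlled, conditionally on $\{\hat\cT_{\gb}=0\}$, by~\eqref{notscaleN}: it is smaller than $e^{-N^{1/2}}$ with probability going to $1$. For the first term, I would apply Lemma~\ref{lem:intermscale} with $A_N\sqrt{\log N}$ in the role of its sequence ``$A_N$'' (which lies in the admissible range $[\sqrt{\log N},N^{1/4}]$), yielding
\begin{equation*}
\bbP\Bigl(\bar Z_{N,\gb_N}\bigl(M_N\in[A_N\sqrt{N\log N},\eta N)\bigr)\ge e^{-c_1 A_N^2\log N}\Bigr)\le c\,\eta^{\nu'}+c A_N^{-\nu'}\,,
\end{equation*}
a quantity that tends to $0$ (first $N\to\infty$, then $\eta\to 0$). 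Hence $N^a\bar Z_{N,\gb_N}(M_N>A_N\sqrt{N\log N})\to 0$ in probability conditionally on $\{\hat\cT_{\gb}=0\}$, for every $a>0$.

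\emph{Step 2: the truncated partition function on the same complement.} For $Z^{\tilde\omega}_{N,\gb_N}$ no conditioning is needed: by Cauchy--Schwarz as in \eqref{reduction3},
\begin{equation*}
\bbE\bigl[Z^{\tilde\omega}_{N,\gb_N}(M_N>A_N\sqrt{N\log N})\bigr]\le \bE\bigl[e^{2\lambda_N|\cR_N|}\bigr]^{1/2}\bP\bigl(M_N>A_N\sqrt{N\log N}\bigr)^{1/2}.
\end{equation*}
The bound~\eqref{lambdaN2} gives $\lambda_N N\to 0$ (here we use $\alpha<d/2$), so the exponential-moment factor is $O(1)$; the random-walk factor is $\le e^{-c A_N^2\log N}=N^{-cA_N^2}$. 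Markov's inequality and $A_N\to\infty$ then yield $N^a Z^{\tilde\omega}_{N,\gb_N}(M_N>A_N\sqrt{N\log N})\to 0$ in $\bbP$-probability.

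\emph{Step 3: identification on $\{M_N\le A_N\sqrt{N\log N}\}$.} A union bound together with assumption~\eqref{eq:tailOmega} and the definition $k_N=(\log N)^{d/\alpha}N^{d/2\alpha}$ gives, analogously to~\eqref{probatrunc},
\begin{equation*}
\bbP\Bigl(\exists\,x,\ \|x\|\le A_N\sqrt{N\log N},\ \tilde\omega_x\neq\omega_x-\mu\Bigr)\le C A_N^{d}(N\log N)^{d/2}k_N^{-\alpha}\le C' A_N^{d}(\log N)^{-d/2},
\end{equation*}
which vanishes for $A_N=\log\log N$. On the complementary event we have the pathwise identity $\bar Z_{N,\gb_N}(M_N\le A_N\sqrt{N\log N})=Z^{\tilde\omega}_{N,\gb_N}(M_N\le A_N\sqrt{N\log N})$, so combining Steps~1--3 concludes the proof.

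\emph{Main obstacle.} The only delicate point is Step~1: the range $\alpha\in(1,d/2)$ rules out the Gaussian-tail argument used when $\alpha>2\vee d/2$, because the partition function is not controlled merely by exponential moments of $|\cR_N|$. The elimination of intermediate scales therefore \emph{must} go through Lemma~\ref{lem:intermscale} (itself built on Lemma~\ref{lem:S<qr}), and the elimination of scale~$N$ must use the conditioning on $\{\hat\cT_{\gb}=0\}$ via~\eqref{notscaleN}. Once these two inputs are in place the rest of the argument is routine.
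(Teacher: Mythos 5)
Your proposal follows the paper's proof almost line by line: eliminate scale $N$ via~\eqref{notscaleN}, eliminate intermediate scales via Lemma~\ref{lem:intermscale}, bound the truncated side by Markov's inequality using $\lambda_N N\to 0$, and identify the two partition functions on $\{M_N\le A\sqrt{N\log N}\}$ with the same union bound on $k_N$. The only (cosmetic) deviations are that the paper fixes a constant $A$ large depending on $a$ instead of taking $A_N=\log\log N\to\infty$, and that for the truncated partition function it bounds $e^{\lambda_N|\cR_N|}\le e^{\lambda_N N}$ directly rather than going through Cauchy--Schwarz; both choices lead to the same estimates, so your argument is correct.
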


\begin{proof}
Using~\eqref{notscaleN} and Lemma~\ref{lem:intermscale}, we get that 
for any $a>0$ there is some $A>0$ such that
\[
\lim_{N\to\infty} \bbP\Big( N^{a} \big( \bar Z_{N,\gb_N} - \bar Z_{N,\gb_N} (M_N \leq A\sqrt{N\log N})  \big)  >\gep  \, \Big| \, \hat \cT_{\gb} =0 \Big) =0 \, .
\]
Now, using the definition~\eqref{k_N}, we have that 
\begin{equation*}
\bbP\Big( \exists \, x , \|x\| \leq A\sqrt{N\log N} \,, 
\tilde \go_x \neq \go_x-\mu \Big)
\leq A^d (N \log N)^{d/2} \bbP(\go >k_N) \leq c'(\log N)^{- \frac{d}{2}}
\end{equation*}
Therefore, we get that 
\[
\lim_{N\to\infty} \bbP\Big( \bar Z_{N,\gb_N} (M_N \leq A\sqrt{N\log N})  \big)  \neq Z_{N,\gb_N}^{\tilde \go} (M_N \leq A\sqrt{N\log N})  \big)  \Big) =0 \, .
\]
It therefore only remains to observe that, by Markov's inequality,
\begin{align*}
\bbP\Big( N^a Z_{N,\gb_N}^{\tilde \go} (M_N > A\sqrt{N\log N})  >\gep  \Big) 
& \leq \gep^{-1} N^a \bbE\big[ Z_{N,\gb_N}^{\tilde \go} (M_N > A\sqrt{N\log N}) \big] \\
&\leq \gep^{-1} e^{\lambda_N N}  N^a \bP\big(M_N > A\sqrt{N\log N} \big)\,  .
\end{align*}
Hence, we get that 
\[
\bbP\Big( N^a Z_{N,\gb_N}^{\tilde \go} (M_N > A\sqrt{N\log N})  >\gep  \Big)  \leq C \gep^{-1} N^a \bP(M_N > A\sqrt{N\log N}),
\]
which goes to zero provided $A$ is fixed large enough. This concludes the proof.
\end{proof}

\medskip
\noindent
{\bf Step 2. Convergence of the truncated partition function.}
Thanks to Lemma~\ref{lem:restrict<d/2}, it therefore only remains to prove the convergence of the truncated partition function.

Let us stress that when $d\geq 5$ and $\ga \in (\frac{d}{d-2} ,\frac{d}{2})$, then we may apply Lemma~\ref{lem:reduction0}: the only requirement  is to have $\gb_N \leq c N^{-d/2\ga}$  and  $\gb_N k_N \leq c(\log N)^{\eta}$ for some $\eta<1$ (to apply the bounds~\eqref{lambdaN}-\eqref{epsilonN}), which is clearly the case here.
This proves the convergence~\eqref{convC2-1} in Theorem~\ref{thm:C2}.

\smallskip
We therefore focus on the case $\ga\in (1,\frac{d}{2}\wedge \frac{d}{d-2})$: 
to conclude the proof of the convergence~\eqref{convC2-2} in Theorem~\ref{thm:C2}, we need to prove the following:
\begin{equation}
\label{conv:<d/2}
\frac{v_N}{\beta_N N^{d/2\alpha}} \big(Z_{N,\gb_N}^{\tilde \go} -1 \big)\overset{(d)}{\longrightarrow}\cW_{0} \,,
\end{equation}
with $\cW_0$ defined in~\eqref{def:W0}.
This is analogous to Lemma~\ref{lem:reduction11}, and let us stress that this lemma uses the condition $\ga >d/2$ only to control~\eqref{term5}.
Let us rapidly go over the different steps of the proof, the main changes occuring in the proof of Lemma~\ref{lastconv} below, which is the analogous of Lemma~\ref{lem:termIII}.

Recall that we define $\lambda_N:=\log\bbE[e^{\beta_N\tilde{\omega}}]$ and $\epsilon_x:=e^{\beta_N\tilde{\omega}_x-\lambda_N}-1$;
let us stress that using that $\gb_N k_N$ goes to $0$, a simple Taylor expansion gives that  
$\bbE[\gep_0^2] \leq C \gb_N^2  \bbE[\tilde \go_x^2 ] +C\lambda_N^2 \leq C' \gb_N^2 k_N^{2-\ga}$. 

We perform the same chaos expansion as \eqref{polychaos} to get
\begin{equation}\label{chaos}
Z_{N,\beta_N}^{\tilde{\omega}}-1=\bE\big[e^{\lambda_N|\cR_N|}-1\big]+\sum\limits_{x\in\bbZ^d}\epsilon_x\bE\big[e^{\lambda_N|\cR_N|}\mathbbm{1}_{\{x\in\cR_N\}}\big]+\bY_N,
\end{equation}
where $\bY_N$ is defined as \eqref{def:RN}.

Note that $\bY_N$ is centered, and following the same computation as in \eqref{YNd=3}-\eqref{secondERN2}, we have that
$\bbE[\bY_N^2]\leq C(\bbE[(\epsilon_0)^2]J_N)^2\leq C\beta_N^4 k_N^{4-2\alpha}  J_N^2$
Hence, in view of the definition of $k_N = (\log N)^{d/\ga} N^{d/2\ga} $, of the definition of $v_N = N^{d/2-1}$ and $J_N \leq c N^{1/2}$ (recall $d\geq 3$), we get that
\begin{equation}\label{Y_N^2}
\Big(\frac{v_N}{\beta_N N^{d/2\alpha}}\Big)^2\bbE[\bY_N^2]\leq C     \gb_N^{2}  N^{\frac d\ga-\frac32}  (\log N)^{C}  \leq C' N^{\frac12-\frac{d}{\ga}}  (\log N)^{C}  \, ,
\end{equation}
which goes to $0$ as $N\to\infty$ since $\ga<d$. Therefore, $ v_N \bY_N / \gb_N N^{d/2\ga}$ goes to $0$ in probability.

We now handle the first two terms on the right-hand side of \eqref{chaos}: we use the same decomposition as in~\eqref{threetermsd=3} and the same notation $\mathbf{I}$, $\mathbf{II}$, $\mathbf{III}$ for the three terms. We now control the different terms.

For the term $\mathbf{I}$, using \eqref{lambdaN2} (in particular $\lambda_N N\to 0$) we have that 
$\mathbf{I} \leq c (\lambda_NN)^2 \leq N^{2} \gb_N^2 k_N^{2-2ga}$.
Using the definition of $v_N$ and  $k_N$ and since $\gb_N \leq c N^{1-d/\ga}$, we therefore get that 
\begin{equation}
\label{I}
\frac{v_N}{\beta_N N^{d/2\alpha}} \mathbf{I} 
 \leq C  N^{2- \frac{d}{2\ga}  -\frac{d}{2}} (\log N)^{C}  \leq 
 C' N^{\frac12- \frac{d}{\ga}} (\log N)^{C} \,,
\end{equation}
which goes to $0$ since $\ga<d$.
 
For the term $\mathbf{II}$,
it is centered and we have
$\bbE[(\mathbf{II})^2] \leq C\bbE[(\epsilon_0)^2] (\lambda_N N)^2 J_N\leq C' \gb_N^{4} k_N^{4-3\ga} N^2 J_N$.
Using the definition of $v_N$ and $k_N$, we therefore get that
\begin{equation}
\label{II}
\Big( \frac{v_N}{\beta_N N^{d/2\alpha}} \Big)^2 \bbE[(\mathbf{II})^2] 
 \leq C \gb_N^2 N^{\frac{d}{\ga} -\frac{d}{2}} J_N  (\log N)^C
 \leq N^{2 - \frac{d}{2}-\frac{d}{\ga}}   (\log N)^{C} \leq N^{2-\frac{d}{2}-(2\vee d-2)+\epsilon},
\end{equation}
where we used that $J_N \le c N^{1/2}$ and $\ga<\frac{d}{2}\wedge\frac{d}{d-2}$.
This goes to $0$ as $N\to\infty$ since $d\geq3$.

%
%For the second term, which we denote by $\bA_N$ for simplicity, we will perform the same second moment estimate as for $\bY_N$ to show that $v_N\bA_N/\beta_N N^{d/2\alpha}\longrightarrow0$ in $\bbP$-probability.
%
%It is easy to see, as in \eqref{IIIsquare} that
%\begin{equation*}
%\bbE[A_N^2]\leq c\bbE[(\epsilon_0)^2](\lambda_N N)^2 J_N\leq c\lambda_N^4 N^2 J_N.
%\end{equation*}
%Then noting that $\gep_0 = e^{\gb_N \tilde \go -\lambda_N}$
%\begin{equation*}
%\frac{v_N^2}{\beta_N^2 N^{d/\ga}}\lambda_N^4 N^2 J_N\leq(v_N^2 J_N)\beta_N^2 k_N^{4(1-\alpha)}N^{2-\frac{d}{\alpha}}\leq(N^{d-1})N^{4-2d-\frac{d}{\alpha}}(\log N)^{\frac{4d}{\alpha}(1-\ga)}\longrightarrow0,
%\end{equation*}
%since $3-d-\frac{d}{\alpha}<1-d<0$ by $2\alpha<d$.

For the term $\mathbf{III}$, it is easy to see that since $\gb_N \tilde \go_0 \leq \gb_N k_N \to 0$, we have by a Taylor expansion
\begin{equation*}
\Big| e^{\lambda_N} \mathbf{III} -  \gb_N \sum_{x\in \bbZ^d}  \tilde \go_x \bP(x\in \cR_N) \Big| 
\leq  c \gb_N^2 \sum_{x\in \bbZ^d} \tilde \go_x^2 \bP(x\in \cR_N) \,. 
\end{equation*}
Hence, using that $\bbE[\go_x^2] \leq k^{2-\ga} $ we obtain that 
\begin{equation}
\label{III}
\frac{v_N}{\beta_N N^{d/2\ga}}
 \bbE\Big[ \Big| e^{\lambda_N} \mathbf{III} -  \gb_N \sum_{x\in \bbZ^d}  \tilde \go_x \bP(x\in \cR_N) \Big| \Big]
 \leq c\frac{v_N }{N^{d/2\alpha}}\beta_N k_N^{2-\alpha} N \leq cN^{1-\frac{d}{2\ga}}(\log N)^{C} \,,
\end{equation}
which goes to $0$ as $N\to\infty$ since $\alpha<d/2$.

\smallskip
All together, the proof of~\eqref{conv:<d/2} boils down to showing the following lemma
\begin{lemma}
\label{lastconv}
If $\ga\in (1,\frac{d}{2})$ and $d<\frac{2\ga}{\ga-1}$ (or equivalently $\ga\in (1,\frac{d}{2}\wedge \frac{d}{d-2}$), we have the convergence
\begin{equation}
\frac{v_N}{N^{d/2\ga}}\sum\limits_{x\in\bbZ^d}\tilde{\omega}_{x}\bP(x\in\cR_N)\overset{(d)}{\longrightarrow}\cW_0 \,.
\end{equation}
\end{lemma}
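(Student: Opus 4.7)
The plan is to follow the strategy of Lemma~\ref{lem:termIII}, which handles the closely related sum $\sum_x(e^{\beta_N\tilde\omega_x}-1)f_N(x)/(\beta_N N^{d/2\alpha})$, but only for $d=2,3$. Here the summand is simply linear in $\omega$, and the weaker hypothesis $\alpha<d/(d-2)$ permits arbitrarily large $d$, so the main new task is to replace the variance bound~\eqref{term3} by an $L^p$-moment bound at an auxiliary exponent $p\in(\alpha,d/(d-2))$. Writing $f_N(x):=v_N\bP(x\in\cR_N)$, I exploit throughout the local limit bound $f_N(x)\le c\,f(x/\sqrt N)$ and the pointwise convergence $f_N(z\sqrt N)\to f(z)$ for $z\neq 0$. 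Since $\bbE[\tilde\omega_0]=-\bbE[(\omega_0-\mu)\ind_{\{\omega_0>k_N\}}]\sim -\tfrac{\alpha}{\alpha-1}k_N^{1-\alpha}$ while $\sum_x f_N(x)=O(N^{d/2})$, an elementary computation using $k_N=(\log N)^{d/\alpha}N^{d/2\alpha}$ gives $\bbE[\tilde\omega_0]\cdot N^{-d/2\alpha}\sum_x f_N(x)=O((\log N)^{d(1-\alpha)/\alpha})\to 0$, and it is therefore enough to prove the analogous convergence for the recentered quantity $\tilde S_N:=N^{-d/2\alpha}\sum_x(\tilde\omega_x-\bbE[\tilde\omega_0])f_N(x)$.

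Fix $\epsilon>0$ small and $K>0$ large, and set $\tilde\omega_x^\epsilon:=(\omega_x-\mu)\ind_{\{\omega_x-\mu\le \epsilon N^{d/2\alpha}\}}$. I decompose $\tilde S_N=\mathbf{I}+\mathbf{II}+\mathbf{III}+\mathbf{IV}$ exactly as in~\eqref{decomp:main}, where $\mathbf{I}$ collects the large-weight contribution $N^{-d/2\alpha}\sum_{\|x\|\le K\sqrt N}(\omega_x-\mu)\ind_{\{\omega_x-\mu>\epsilon N^{d/2\alpha}\}}f_N(x)$; $\mathbf{II}$ is the compensating mean $N^{-d/2\alpha}\bbE[\tilde\omega_0^\epsilon]\sum_{\|x\|\le K\sqrt N}f_N(x)$; $\mathbf{III}$ is the centered small-weight sum $N^{-d/2\alpha}\sum_{\|x\|\le K\sqrt N}(\tilde\omega_x^\epsilon-\bbE[\tilde\omega_0^\epsilon])f_N(x)$; and $\mathbf{IV}$ is the tail $N^{-d/2\alpha}\sum_{\|x\|>K\sqrt N}(\tilde\omega_x-\bbE[\tilde\omega_0])f_N(x)$. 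The joint convergence $\mathbf{I}+\mathbf{II}\overset{(d)}{\longrightarrow}\int_{B_K\times(\epsilon,\infty)}wf(x)(\cP-\eta)(\dd x,\dd w)$ is obtained by exactly the Poisson point process argument and Riemann-sum approximation of~\eqref{eq:1sttermPoisson}--\eqref{term2} (the upper truncation at $k_N$ is harmless since $k_N/N^{d/2\alpha}\to\infty$). Term $\mathbf{IV}$ is controlled as in~\eqref{term5}: splitting into $\omega_x\le 1$ (bounded by its $L^1$-norm using $\bbE[|\tilde\omega_0|]\le C\beta_N^{\alpha-1}$) and $\omega_x>1$ (bounded by Markov's inequality on annular shells combined with the Gaussian decay $f(x)=O(e^{-c\|x\|^2})$), one obtains $|\mathbf{IV}|\le e^{-K}$ with probability at least $1-cK^{d-2\alpha}$, which can be made arbitrarily small since $\alpha>d/2$ does not hold here but $\alpha<d$ still gives a usable decay.

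The main obstacle is term $\mathbf{III}$: the naive variance estimate $\mathrm{Var}(\mathbf{III})\le C\epsilon^{2-\alpha}N^{-d/2}v_N^2 J_N$ from~\eqref{term3} is only $O(\epsilon^{2-\alpha})$ when $d=2,3$ and diverges for $d\ge 4$, because $f\notin L^2_{\mathrm{loc}}$ near the origin in dimension $\ge 4$. To remedy this I would pick an auxiliary exponent $p\in(\alpha,d/(d-2))$ (the interval being non-empty precisely by hypothesis) and apply the Marcinkiewicz--Zygmund inequality for independent centered variables at $p\in[1,2]$,
\begin{equation*}
\bbE\big[|\mathbf{III}|^p\big]\le C_p\sum_{\|x\|\le K\sqrt N}\bbE\big[\big|\tilde\omega_0^\epsilon-\bbE[\tilde\omega_0^\epsilon]\big|^p\big]\Big(\frac{f_N(x)}{N^{d/2\alpha}}\Big)^p.
\end{equation*}
A direct tail computation gives $\bbE[|\tilde\omega_0^\epsilon|^p]\le C_p(\epsilon N^{d/2\alpha})^{p-\alpha}$ for $p>\alpha$ (with $|\bbE\tilde\omega_0^\epsilon|^p$ negligible since $|\bbE\tilde\omega_0^\epsilon|=O((\epsilon N^{d/2\alpha})^{1-\alpha})$), while a Riemann-sum approximation based on $f_N(z\sqrt N)\to f(z)$ yields $N^{-d/2}\sum_{\|x\|\le K\sqrt N}f_N(x)^p\to\int_{B_K}f(y)^p\dd y$, finite precisely because $p<d/(d-2)$. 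Combining these,
\begin{equation*}
\bbE\big[|\mathbf{III}|^p\big]\le C_p\,\epsilon^{p-\alpha}\,N^{-d/2}\sum_{\|x\|\le K\sqrt N}f_N(x)^p\le C_{p,K}\,\epsilon^{p-\alpha},
\end{equation*}
which vanishes as $\epsilon\downarrow 0$ uniformly in $N$; Markov's inequality then gives $\mathbf{III}\to 0$ in probability after letting $\epsilon\to 0$. Sending first $\epsilon\to 0$ and then $K\to\infty$ in the limit of $\mathbf{I}+\mathbf{II}$ recovers $\cW_0=\int wf(x)(\cP-\eta)(\dd x,\dd w)$, whose finiteness is guaranteed under $\alpha<d/(d-2)$ by Proposition~\ref{prop:finiteW0}.
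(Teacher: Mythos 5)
Your overall architecture matches the paper's: decompose $N^{-d/2\alpha}\sum_x\tilde\omega_x f_N(x)$ into a far-field tail $\|x\|>K\sqrt N$ and, in the bulk, into a high-weight Poissonian piece, a compensating mean, and a centered small-weight remainder; then pass to the Poisson point process limit and let $\epsilon\downarrow 0$, $K\uparrow\infty$. Your treatment of the centered small-weight term $\mathbf{III}$ is, however, genuinely different from the paper's and is arguably cleaner. The paper proves the bound~\eqref{whattoprove} by splitting the sum at a threshold $M_\epsilon N^{d/2\alpha}$, controlling the heavy part by Markov's inequality and the light part by an exponential Chernoff bound with an auxiliary exponent $\zeta\in(\alpha,d/(d-2))$. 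Your one-line application of von Bahr--Esseen (Marcinkiewicz--Zygmund at order $p\in(\alpha,d/(d-2))$) computes $\bbE[|\mathbf{III}|^p]\leq C_{p,K}\,\epsilon^{p-\alpha}$ directly, provided one justifies the Riemann-sum bound $N^{-d/2}\sum_{\|x\|\leq K\sqrt N}f_N(x)^p\leq C$ (which works by the pointwise domination $f_N(x)\leq c\,f(x/\sqrt N)$ and $\int_{B_K}f^p<\infty$ for $p<d/(d-2)$, plus a separate estimate on the $O(1)$ lattice sites near the origin). This makes the role of the hypothesis $\alpha<d/(d-2)$ completely transparent and avoids the two-stage split. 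It is a valid alternative that the paper could have used.

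There is, however, a genuine flaw in your treatment of the far-field term $\mathbf{IV}$. You claim a bound of the form $|\mathbf{IV}|\leq e^{-K}$ with probability at least $1-cK^{d-2\alpha}$, obtained by the argument of~\eqref{term5}. But that estimate is only useful if $d-2\alpha<0$, i.e., $\alpha>d/2$ --- the opposite of the present regime. You acknowledge this yourself ("$\alpha>d/2$ does not hold here") but the phrase "$\alpha<d$ still gives a usable decay" is not a proof: as stated, $cK^{d-2\alpha}\to\infty$ as $K\to\infty$, so the event you want cannot be made to hold with high probability. The correct argument, which the paper uses (see the reference to~\eqref{8.14}--\eqref{8.15}), is a dyadic annular decomposition of $\{\|x\|>K\sqrt N\}$: on the $k$-th annulus one has $f_N(x)\leq Ce^{-c2^{2k}K^2}$, and a union bound together with the power-law tail of $\omega$ gives a total failure probability $\leq c\,e^{-c'K^2}$, with no sign constraint on $d-2\alpha$. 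That exponential bound is what you should invoke. One minor additional point: $\tilde\omega_x$ is signed (it is centered by $\mu$), so the positivity used in~\eqref{8.14}--\eqref{8.15} must be supplemented by a short argument to control the negative part, e.g., using $|\tilde\omega_x|\leq|\omega_x-\mu|$ and the bound $\bbE[|\omega_0-\mu|]<\infty$ together with the Gaussian decay of $f$ in the far field.
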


\begin{proof}
The following procedure is similar to the proof of Lemma~\ref{lem:termIII}. Note that Lemma~\ref{lem:termIII} uses the condition $\ga>d/2$, in particular to ensure that the limit $\cW_{\gb}$ is finite. Here, the condition $\ga>d/(d-2)$
also appears naturally, in particular to ensure that the limit $\cW_{0}$ is finite, see Proposition~\ref{prop:finiteW0}.

We write $f_N(x) =v_N\bP(x\in\cR_N)$ and
we use the following decomposition: for a fixed constant $K$ (large) 
\begin{equation*}
\label{decomp1}
\frac{v_N}{N^{d/2\ga}}\sum\limits_{x\in\bbZ^d}\tilde{\omega}_x\bP(x\in\cR_N)=\frac{1}{N^{d/2\ga}}\sum\limits_{\|x\|\leq K\sqrt{N}}\tilde{\omega}_x f_N(x)+\frac{1}{N^{d/2\ga}}\sum\limits_{\|x\|> K\sqrt{N}}\tilde{\omega}_x f_N(x) \, .
\end{equation*}

For the second term, we proceed as in~\eqref{8.14}-\eqref{8.15} to get that it is bounded by $K^{-1}$
with probability larger than $1- c e^{-c'K^2}$, uniformly in $N$.

For the first term,
then defining $\tilde{\omega}_x^{\epsilon}:=(\omega_x-\mu)\mathbbm{1}_{\{\omega_x\leq\epsilon N^{d/2\alpha}\}}$, we decompose it further as
\begin{equation}\label{decomp2}
\frac{1}{N^{d/2\alpha}}  \!\! \sum\limits_{\|x\|\leq K\sqrt{N}\atop(\omega_x-\mu)>\epsilon N^{d/2\ga}} 
\!\!\! \tilde{\omega}_{x}f_N(x)
+  \frac{1}{N^{d/2\alpha}}  \sum_{\|x\| \leq K \sqrt{N}} \bbE[\tilde \go_x^{\gep}]  f_N(x) + \frac{1}{N^{d/2\ga}}\sum\limits_{\|x\|\leq K\sqrt{N}}(\tilde{\omega}_x^{\epsilon}-\bbE[\tilde{\omega}_x^{\epsilon}])f_N(x).
\end{equation}

 By the same argument as for \eqref{term1} and \eqref{term2} in Lemma~\ref{lem:termIII}, we get that the sum of the first two terms converge in distribution to
\begin{equation}\label{conv:(1,2)}
\int_{ B_K \times (\epsilon,\infty)} w f(x)(\cP-\eta)(\dd x,\dd w).
\end{equation}
Note that it is then valid to send $\epsilon\downarrow 0$ and $K\uparrow\infty$
in~\eqref{conv:(1,2)}, since $\alpha < d/(d-2)$ (recall Proposition~\ref{prop:finiteW0}).
It therefore remains to prove that the last term in~\eqref{decomp2} is negligible.

To conclude the proof, we will show that  for any $\gep>0$, there exists $M_{\gep}$ which verifies $\lim_{\gep\downarrow 0} M_{\gep} = +\infty$, such that 
\begin{equation}
\label{whattoprove}
\bbP\bigg(  \sum_{ \|x\| \leq K\sqrt{N}}   N^{-\frac{d}{2\ga}} \widebar \go_x^{\gep}  f_N(x)  \geq  M_{\gep}^{-\frac12(\ga-1)} \bigg) \leq  C M_{\gep}^{-\frac12 (\ga-1)} \, ,
\end{equation} 
where we denoted $\widebar \go_x^{\gep} = \tilde \go_x^{\gep} - \bbE[\tilde \go_x^{\gep}] $ for simplicity. 
Then, sending $N\to\infty$ and then letting $\gep \downarrow 0$ and $ K \uparrow \infty$, this concludes the proof of Lemma~\ref{lastconv}.

To prove~\eqref{whattoprove}, we split the sum according to whether $\bar\go_{\gep} f_n(x)$ is larger or smaller than $M_{\gep} N^{d/2\ga}$.
For the first sum, we use Markov's inequality to get that 
\begin{equation}
\label{whattoprove1}
\begin{split}
\bbP\Big( \sum_{ \|x\| \leq K\sqrt{N}}   N^{-\frac{d}{2\ga}} \widebar \go_x^{\gep} f_N(x)  & \ind_{\{ \widebar \go_x^{\gep} f_N(x)  >   M_{\gep} N^{d/2\ga}  \}} \geq  \frac12 M_{\gep}^{-\frac12 (\ga-1)} \Big) \\
&\leq  M_{\gep}^{\frac{1}{2} (\ga-1)} \sum_{ \|x\| \leq K\sqrt{N}} f_N(x) N^{-\frac{d}{2\ga}} \bbE\big[ \widebar \go_x^{\gep} \ind_{\{ \widebar \go_x^{\gep} >M_{\gep} N^{d/2\ga} f_N(x)^{-1} \}} \big] \, .
\end{split}
\end{equation}
Notice that $f_N(x) \leq v_N  = N^{d/2-1}$, so $N^{d/2\ga}  f_N(x)^{-1} \geq N^{1- \frac{d}{2\ga}(\ga-1)}$ and goes to $+\infty$ as $N\to\infty$ since we have $d < 2\ga/(\ga-1)$: 
hence we can use the following estimate
\begin{equation}
\label{meangepx}
\bbE\big[ \widebar \go_x^{\gep} \ind_{\{ \widebar \go_x^{\gep} >  M_{\gep} N^{d/2\ga} f_N(x)^{-1} \}} \big] \leq 
C  M_{\gep}^{1-\ga} N^{\frac{d}{2\ga} (1-\ga )} f_N(x)^{\ga-1} \, .
\end{equation}
This shows that \eqref{whattoprove1} is bounded by 
\begin{equation*}
 C  M_{\gep}^{-\frac{1}{2} (\ga-1)} N^{-\frac{d}{2}} \sum_{ \|x\| \leq K\sqrt{N}} f_N(x)^{\ga}  \leq C  M_{\gep}^{-\frac{1}{2} (\ga-1)}  \, ,
\end{equation*}
where we have used an integral comparison, using that  $\int_{\bbR^d} f(x)^\ga \dd x <\infty$ since $\ga (d-2)<d$,
recall~\eqref{asympf0}\eqref{asympfinfty}.

It remains to control the sum with terms $\bar\go^{\gep} f_N(x)$ smaller than $M_{\gep} N^{d/2\ga}$.
With an exponential Chernov's bound (and using the independence of the different terms in the sum), 
we get
\begin{multline}
\label{Chernov}
\bbP\bigg( \sum_{ \|x\| \leq K\sqrt{N}}   N^{-\frac{d}{2\ga}} \widebar \go_x^{\gep} f_N(x)  \ind_{\{ \widebar \go_x^{\gep} f_N(x)  \leq M_{\gep} N^{d/2\ga}  \}}  \geq M_{\gep}^{-\frac{1}{2} (\ga-1) } \bigg) \\
 \leq \exp\Big( - M_{\gep}^{\frac12 (\ga-1) } \Big)  \prod_{\|x\| \leq K \sqrt{N}} \bbE\Big[ \exp\Big(  M_{\gep}^{ \ga-1}    N^{-\frac{d}{2\ga}} \widebar \go_x^{\gep} f_N(x) \ind_{\{ \widebar \go_x^{\gep} f_N(x)  \leq  M_{\gep} N^{d/2\ga}  \}} \Big) \Big] \, .
\end{multline}
We now estimate the exponential moment.
Let $ \zeta \in (1,2)$ be such that $\ga<\zeta < \frac{d}{d-2}$.
We write that 
\begin{multline*}
 \bbE\Big[ \exp\Big(  M_{\gep}^{ \ga-1}    N^{-\frac{d}{2\ga}} \widebar \go_x^{\gep} f_N(x) \ind_{\{ \widebar \go_x^{\gep} f_N(x)  \leq  M_{\gep} N^{d/2\ga}  \}} \Big) \Big]\\
 \leq 1 +  M_{\gep}^{\ga-1} N^{-\frac{d}{2\ga}} f_N(x) \bbE\big[  \widebar \go_x^{\gep} \ind_{\{ \widebar \go_x^{\gep} f_N(x)  \leq  M_{\gep} N^{d/2\ga}  \}}   \big] + C M_{\gep}^{\zeta(\ga-1)}  N^{-\frac{d}{2\ga} \zeta} \bbE \big[  |\widebar \go_x^{\gep}|^{\zeta}  \big] f_N(x)^{\zeta} \, ,\\
 \leq 1  + C' M_{\gep}^{2(\ga-1)}  \gep^{\zeta-\ga}  N^{-\frac{d}{2}} f_N(x)^{\zeta} \,,
\end{multline*}
 where the first term is negative for large enough $N$, since $\bbE[\widebar \go_x^{\gep}]=0$, $\bbE[\tilde \go_x^{\gep}]\to0$ as $N\to\infty$ by $\ga>1$, and $N^{d/2\ga}f_N(x)^{-1}\to\infty$ as $N\to\infty$, and
that $\bbE \big[  |\widebar \go_x^{\gep}|^{\zeta}  \big]  \leq C \gep^{\zeta-\ga} N^{\frac{d}{2\ga} (\zeta-\ga)}$ for the second term. 
Going back to~\eqref{Chernov}, and using that $1+z\leq e^{z}$ for any $z\in \bbR$, we get that the probability in the left-hand side of~\eqref{Chernov} is bounded by
\begin{multline*}
 \exp\bigg( - M_{\gep}^{\frac12 (\ga-1)}  + C' M_{\gep}^{2 (\ga-1)} \gep^{\zeta-\ga}  N^{-\frac{d}{2}} \sum_{\|x\|\leq K\sqrt{N}}  f_N(x)^{\zeta}  \bigg) 
 \leq  c  \exp\Big(- M_{\gep}^{\frac12 (\ga-1)}  +C'' M_{\gep}^{2 (\ga-1)}  \gep^{\zeta-\ga} \Big) \, ,
\end{multline*}
where we used again a sum integral comparison together 
with the fact that $\int_{\bbR^d} f(x)^{a} \dd x <+\infty$ for any $a<d/(d-2)$.
Therefore, choosing $M_{\gep} = \gep^{-(\zeta-\ga)/(\ga-1)}$,
the second term is a constant and we get that~\eqref{Chernov}
is bounded by $\exp( - M_{\gep}^{(\ga-1)/2})$,  which concludes the proof of~\eqref{whattoprove}.
\end{proof}

\subsection{Transversal fluctuations} The proof for transversal fluctuations are of order $\sqrt{N}$ is a mimic of Subsection \ref{fluct4} and we omit it.

\appendix
\section{Entropy-controlled Last-Passage Percolation (E-LPP)}\label{sec:ELPP}

Recall that $\mathbf{\Lambda}_r$ is the ball of radius $r$
in $\R^d$, and that $\Lambda_r = \mathbf{\Lambda}_r \cap \bbZ^d$.
For any $m\in \bbN$, 
let $\mathbf{\gU}_m^{(r)}$
denote a set of~$m$ independent random variables $(Z_i)_{i=1}^m$
uniform in $\mathbf{\Lambda}_r$,
and let $\gU_m^{(r)}$
denote a set of~$m$ \emph{distinct} sites taken uniformly in $\Lambda_r$.
We define the Entropy-controlled Last-Passage  Percolation
(E-LPP) as  the maximal cardinality
of a subset $\Delta$ of $\mathbf{\Upsilon}_{m}^{(r)}$ or $\Upsilon_{m}^{(r)}$ with entropy smaller than $B$ (recall the definition~\ref{eq:entropydef}).

More precisely, we define the continuous and discrete E-LPP as follows
\begin{equation}\label{eq:LPP}
\begin{split}
\sL_{m}^{(B)}(r)&:=\max\{|\Delta|: \Delta \subset \mathbf{\Upsilon}_m^{(r)}, \ent(\Delta)\leq B\},\\
L_{m}^{(B)}(r)&:=\max\{|\Delta|: \Delta \subset {\Upsilon}_m^{(r)}, \ent(\Delta)\leq B\}\, .
\end{split}
\end{equation}
This continuous non-directed E-LPP has been considered in~\cite[Sec.~3]{BT19c}, in dimension $d=2$.
The generalization of \cite[Thm.~3.1]{BT19c}
to the discrete E-LPP and to higher dimensions is straightforward
(we give a brief proof below for the sake of completeness).

\begin{theorem}\label{thm1}
There exists  a constant $c_d>0$ such that, for any $B>0$ we have that 
for any $k\geq 1$
\begin{align}
\label{eq:LPPcont}
\bbP\Big(\sL_m^{(B)}(r) > k\Big)  \le \Big(   \frac{c_d \,B^{1/2} m^{1/d}}{r k}\Big)^{d k}\,, \qquad 
\bbP\Big(L_m^{(B)}(r) > k\Big)  \le \Big(  \frac{ c_d \, B^{1/2 } m^{1/d}}{r k}\Big)^{d k}\,.
\end{align}
\end{theorem}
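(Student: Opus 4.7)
The plan is to reduce both inequalities in~\eqref{eq:LPPcont} to a single geometric estimate obtained via a union bound over the choice of $k$ points among the $m$, together with a volume computation of a suitable ``path of prescribed total length'' region in $(\R^d)^k$. The starting observation is that, thanks to Cauchy--Schwarz applied to the definition~\eqref{eq:entropydef}, any ordered set $\Delta=(x_1,\dots,x_k)$ with $\ent(\Delta)\le B$ satisfies
\[
\sum_{i=1}^{k}\|x_i-x_{i-1}\|\;\le\; \sqrt{2B/d}=:L\, ,
\]
so controlling $\sL_m^{(B)}(r)$ amounts to controlling the number of $k$-subsets of $\mathbf{\Upsilon}_m^{(r)}$ that admit an ordering with total increment length at most $L$.

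First I would handle the continuous case. By a union bound on the choice of indices and ordering,
\[
\bbP\big(\sL_m^{(B)}(r)>k\big)\;\le\;\binom{m}{k}\,k!\,\sup_{\sigma}\bbP\bigg(\sum_{i=1}^{k}\|Z_{\sigma(i)}-Z_{\sigma(i-1)}\|\le L\bigg)\, ,
\]
where $Z_{\sigma(0)}=0$. For a fixed permutation the vectors $Z_{\sigma(i)}$ are i.i.d.\ uniform on $\mathbf{\Lambda}_r$ (volume $V_d r^d$), so the probability above is bounded by the Lebesgue volume of $\{(x_1,\dots,x_k)\in(\R^d)^k:\sum\|x_i-x_{i-1}\|\le L\}$ divided by $(V_d r^d)^k$. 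The change of variables $y_i=x_i-x_{i-1}$ followed by spherical coordinates in each $y_i$ and a Dirichlet integral yields
\[
\int_{\{\sum\|y_i\|\le L\}} dy_1\cdots dy_k \;=\; (dV_d)^{k}\,\frac{\Gamma(d)^{k}}{\Gamma(dk+1)}\,L^{dk}\;=\;\frac{(d!\,V_d)^{k}}{(dk)!}\,L^{dk}\, .
\]
Combining and using Stirling on $(dk)!$ and $\binom{m}{k}k!\le m^{k}$, one obtains
\[
\bbP\big(\sL_m^{(B)}(r)>k\big)\;\le\; m^{k}\cdot\frac{(d!)^{k}\,L^{dk}}{(dk)!\,r^{dk}}\;\le\;\bigg(\frac{c_d\,L\,m^{1/d}}{k\,r}\bigg)^{dk}\, ,
\]
which, recalling $L=\sqrt{2B/d}$, is precisely the desired bound for $\sL_m^{(B)}(r)$.

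For the discrete E-LPP, the argument is essentially identical with minor bookkeeping. Conditioning on the (uniformly random) unordered support of $\Upsilon_m^{(r)}$ and on a choice of $k$ distinct points in $\Lambda_r$, the probability for a fixed ordering $(y_1,\dots,y_k)$ of distinct lattice points of satisfying $\sum\|y_i-y_{i-1}\|\le L$ is at most
\[
\frac{\#\{(y_1,\dots,y_k)\in\Lambda_r^k\text{ distinct}:\sum\|y_i-y_{i-1}\|\le L\}}{|\Lambda_r|(|\Lambda_r|-1)\cdots(|\Lambda_r|-k+1)}\, .
\]
Since $|\Lambda_r|\asymp V_d r^d$ and the numerator is bounded (by comparison to unit cubes) by a dimensional constant times the corresponding Lebesgue volume, the identical Dirichlet--Stirling estimate goes through up to a change in the constant $c_d$. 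The final union bound over the $\binom{m}{k}k!\le m^k$ choices of ordered $k$-tuple of sites in $\Upsilon_m^{(r)}$ then yields the second inequality in~\eqref{eq:LPPcont}.

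The only delicate step is the Dirichlet volume computation and its coupling with Stirling's bound $(dk)!\ge (dk/e)^{dk}$ to absorb all factorial factors into a clean $k^{-dk}$; once that is in place the rest is routine. I expect no additional obstacle for the discrete case beyond handling the distinctness constraint and the lattice-vs-ball volume comparison, both of which only modify the constant $c_d$.
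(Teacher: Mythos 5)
Your argument is essentially the same as the paper's: both reduce the tail of $\sL_m^{(B)}(r)$ to a counting/volume computation over ordered $k$-tuples (you phrase it as a direct union bound over $\binom{m}{k}k!$ ordered choices, the paper phrases it as a first-moment bound on the number $\mathcal N_k$ of admissible subsets and then applies Markov, but these are the same estimate), then compute the Lebesgue volume of $\{(y_1,\dots,y_k):\sum\|y_i\|\le L\}$ by a Dirichlet integral, and close with Stirling on $(dk)!=\Gamma(dk+1)$. One minor remark: the step ``$\ent(\Delta)\le B \Rightarrow \sum_{i}\|x_i-x_{i-1}\|\le\sqrt{2B/d}$'' does not require Cauchy--Schwarz — it is an exact reformulation of the definition~\eqref{eq:entropydef}, which already reads $\ent(\Delta)=\frac d2\big(\sum_i\|x_i-x_{i-1}\|\big)^2$ (Cauchy--Schwarz is only used in the continuous entropy~\eqref{def:entropyCont}, which is not needed here). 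Otherwise both the continuous estimate and the discrete adaptation (lattice-point count vs.\ ball volume absorbed into $c_d$) match the paper's proof.
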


We have the following 
corollary, whose proof is also given below.
\begin{corollary}\label{cor:moment}
For any $b\le d$, there exists a constant $c_{b,d}$, only depending on $b$ and the dimension~$d$, such that for any $B>0$, any $m\geq1$ and any $r$,
\begin{equation}
\bbE\left[\left(\frac{\sL_{m}^{(B)}(r)}{(B^{1/2}m^{1/d}/r)\wedge m}\right)^{b}\right]\leq c_{b,d},\quad\bbE\left[\left(\frac{L_{m}^{(B)}(r)}{(B^{1/2}m^{1/d}/r)\wedge m}\right)^{b}\right]\leq c_{b,d}.
\end{equation}
\end{corollary}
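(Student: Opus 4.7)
The plan is as follows. By Jensen's inequality (since $b\leq d$), it suffices to prove the bound for $b=d$. Write $a := B^{1/2}m^{1/d}/r$ and $A := a\wedge m$. In the easy regime $A = m$ (i.e.\ $a\geq m$), the inequality $\sL_m^{(B)}(r)\leq m = A$ holds pointwise, immediately giving $\bbE[(\sL_m^{(B)}(r)/A)^d]\leq 1$; the same argument works for $L_m^{(B)}(r)$. The substantial case is $a\leq m$, where $A = a$, and the task reduces to proving $\bbE[\sL_m^{(B)}(r)^d]\leq c_d\, a^d$.

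The strategy will be to combine the tail bound of Theorem~\ref{thm1} with a complementary volumetric estimate for $\bbP(\sL_m^{(B)}(r)\geq 1)$, plugging both into the Abel-summation identity
\[
\bbE[\sL_m^{(B)}(r)^d] = \bbP(\sL_m^{(B)}(r)\geq 1) + \sum_{k\geq 2}\bigl(k^d-(k-1)^d\bigr)\,\bbP(\sL_m^{(B)}(r)\geq k).
\]
For $k\geq 2$, Theorem~\ref{thm1} gives $\bbP(\sL_m^{(B)}(r)\geq k) \leq (c_d a/(k-1))^{d(k-1)}$, which decays super-exponentially once $k > 2ec_d a$. For the $k=1$ term, I will use that $\{\sL_m^{(B)}(r)\geq 1\}$ requires some $Z_i\in\mathbf{\Upsilon}_m^{(r)}$ to satisfy $\|Z_i\|\leq\sqrt{2B/d}$ (so that the single-point path $\{Z_i\}$ already has entropy $\leq B$); a union bound over the $m$ points yields $\bbP(\sL_m^{(B)}(r)\geq 1)\leq \min(1, m\cdot c_d B^{d/2}/r^d) \leq c_d\, a^d$, recalling $a^d = B^{d/2}m/r^d$.

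To finish, I will handle the regimes $a\geq 1$ and $a\leq 1$ slightly differently. When $a\geq 1$, set $k_0 := \lceil 2ec_d a\rceil$: for the bulk range $2\leq k\leq k_0$ the trivial bound $\bbP(\sL_m^{(B)}(r)\geq k)\leq 1$ suffices, giving $\sum_{k=2}^{k_0}(k^d-(k-1)^d)\leq k_0^d \leq C_d a^d$; for $k > k_0$ the tail bound gives $(c_d a/(k-1))^{d(k-1)} \leq (2e)^{-d(k-1)}$ and the remaining sum is $O_d(1)$. When $a\leq 1$, I will bypass the volumetric comparison for $k\geq 2$ and apply the tail bound directly, factoring $(c_d a)^d$ out of each summand via $(c_d a/(k-1))^{d(k-1)} = (c_d a)^d\cdot (c_d a)^{d(k-2)}\,(k-1)^{-d(k-1)}$; the remaining factor is summable in $k$ (uniformly in $a\leq 1$) because of the super-exponential decay of $(k-1)^{-d(k-1)}$. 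Either way one arrives at $\bbE[\sL_m^{(B)}(r)^d]\leq c_d\, a^d$, proving the corollary after division by $A^d=a^d$. The discrete counterpart for $L_m^{(B)}(r)$ follows by the identical argument, replacing $\mathbf{\Upsilon}_m^{(r)}$ by $\Upsilon_m^{(r)}$ and using the matching tail estimate from Theorem~\ref{thm1}.

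The main technical point I anticipate is the uniform handling of the small-$a$ regime: a naive use of the tail bound alone would lose a factor $a^{-d}$ after normalization, and it is precisely the volumetric bound $\bbP(\sL_m^{(B)}(r)\geq 1)\leq c_d a^d$ that closes this gap and makes the constant truly dimensional.
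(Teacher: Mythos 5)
Your proof is correct.  It follows the same backbone as the paper's argument (sum/integral representation of the moment, the tail bound of Theorem~\ref{thm1}, and a case split according to whether $a:=B^{1/2}m^{1/d}/r$ is small or large), but the execution differs in a few genuine ways.  \textbf{(i)}~You first reduce to $b=d$ by Jensen, whereas the paper carries a general $b\le d$ through, which is why it needs the constant $c_b\geq c_d k^{(b-1)/k}$ and the comparison $c_b a^d\le(c_ba^d)^{b/d}$ at the end.  \textbf{(ii)}~You use the exact Abel identity $\bbE[X^d]=\sum_{k\ge1}(k^d-(k-1)^d)\bbP(X\ge k)$, while the paper uses the cruder $\bbE[X^b]\le\sum C k^{b-1}\bbP(X\ge k)$ in the small-$a$ regime and the integral formula $\bbE[(\sL/a)^b]=\int_0^\infty\bbP(\sL/a\ge u^{1/b})\,\dd u$ in the large-$a$ regime.  \textbf{(iii)}~You treat the $k=1$ term by a separate volumetric union bound.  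This is actually the most substantive difference: as stated, Theorem~\ref{thm1} controls $\bbP(\sL>k)$, so the index shift $\bbP(\sL\ge k)=\bbP(\sL>k-1)\le(c_da/(k-1))^{d(k-1)}$ degenerates at $k=1$; the paper, in contrast, implicitly uses the form $\bbP(\sL\ge k)\le(c_d a^d/k^d)^k$ valid for $k\ge 1$ (which is in fact what the \emph{proof} of Theorem~\ref{thm1} establishes via $\bbP(\cN_k\ge1)\le\bbE[\cN_k]$, the statement being slightly loose with $>$ versus $\ge$).  Your explicit $k=1$ argument derives exactly the same estimate $\bbP(\sL\ge1)\le c_d a^d$ from first principles (a union bound over $\|Z_i\|\le\sqrt{2B/d}$), so it is consistent with and clarifies the theorem.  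Both proofs therefore buy the same result; yours is a self-contained reading of the theorem as literally stated, the paper's is more compressed.  One small point worth making explicit in the $a\geq1$ case: the $O_d(1)$ remainder from $k>k_0$ needs the observation that $O_d(1)\le O_d(a^d)$ because $a\ge1$ — you implicitly use it, but it should be said.
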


\begin{proof}[Proof of Theorem~\ref{thm1}]
The proof relies on a first moment estimate.
We prove the bound only in the continuous setting, 
the computations being the same in the discrete setting,
using sum integral comparisons.

For any $B>0$ we let
$E_k(B)=\big\{\Delta=(x_i)_{i=1}^k \colon \ent(\Delta)\le B \big\}$ and we denote $\mathcal N_k$ the number of sets $\Delta \subset \mathbf{\gU}_m^{(r)}$ with $|\Delta|=k$, that have entropy at most~$B$. Markov's inequality gives that
\[\bbP\big( \sL_m^{(B)}(r)\geq k \big) = \bbP(\mathcal N_k \geq 1) \leq \bbE[\mathcal N_k] \, .\]
By exchangeability of the points in $\mathbf{\gU}_m^{(r)}$, we get  that 
\begin{equation}
\label{Nfirstmoment}
\bbE[\mathcal N_k] = \binom{m}{k} \bbP\Big( \exists \ \sigma\in \mathfrak{S}_k\ s.t. \  (Z_{\sigma(1)},\ldots,  Z_{\sigma(k)}) \in E_k(B) \Big) =  \frac{m!}{(m-k)!} \, \frac{\mathrm{Vol}(E_k(B))}{ (c_dr^d)^{k}}\, ,
\end{equation}
where we used that 
$ Z_1, \ldots , Z_k$ are independent uniform random variables in the domain $\boldsymbol \Lambda_{r}$, which has volume $c_d r^d$.
%
%We then have that
%\[\bbP \Big(  \exists \ \sigma\in \mathfrak{S}_k\ s.t. \  (Z_{\sigma(1)},\ldots,  Z_{\sigma(k)}) \in E_{k}(B)\Big) = k! \, \frac{\mathrm{Vol}(E_k(B))}{ (c_dr^d)^{k}} \, . \]
%Since $\binom{m}{k} \leq m^k /k!$, we then obtain that 
%$$\bbE[\mathcal N_k]\le \, \Bigg(\frac{m}{ c_dr^d}\Bigg)^k \mathrm{Vol}(E_k(B)).$$
Then, we can compute the volume of $E_k(B)$  by iteration,
and we find that
%$$\text{Vol}(E_k(B))= \int_0^{\sqrt{\frac{2B}{d}}} \frac{d\pi^{\frac{d}{2}}}{\Gamma(\frac{d}{2}+1)}v^{d-1}\text{Vol}\left(E_{k-1}\left(\frac{d}{2}\left(\sqrt{\frac{2B}{d}}-v\right)^{2}\right)\right) dv\,,
%$$
%which gives that 
\[
\text{Vol}(E_k(B))=
\Big( \frac{\pi^{d/2 }}{\Gamma(\frac{d}{2}+1)}\Big)^k \frac{\Gamma(d)^k}{\Gamma(dk+1)}B^{dk /2} \leq \Big(\frac{C_d B^{d/2}}{k^d}\Big)^k \,.
\]
Going back to~\eqref{Nfirstmoment}, and using that $\frac{m!}{(m-k)!}  \leq m^k$, this concludes the proof.
%
%
%\par \textit{Discrete setting, \eqref{eq:LPPdisc}.}
%We can use the computations made in the continuous setting, together with the comparison between finite sums and Riemann integrals (write details). 
\end{proof}

\begin{proof}[Proof of Corollary \ref{cor:moment}]
We only prove the continuous case. The discrete case follows by the same argument. Note that if $m\leq B^{1/2}m^{1/d}/r$, the result is trivial. Hence, we assume that $m>B^{1/2}m^{1/d}/r$.
Choose $c_{b}$ such that $c_b \geq c_dk^{(b-1)/k}$ for all $k$,
with $c_d$ the constant in~\eqref{eq:LPPcont}.

$\ast$ If $c_{b}B^{d/2}m/r^{d}\leq1/2$, then
\begin{equation}\label{eq:Lb1}
\begin{split}
\bbE\left[\left(\sL_{m}^{(B)}(r)\right)^{b}\right]&\leq\sum\limits_{k=1}^{\infty}Ck^{b-1}\bbP\left(\sL_{m}^{(B)}(r)\geq k\right)\leq\sum\limits_{k=1}^{\infty}Ck^{b-1}\Big( \frac{c_d B^{d/2} m}{r^{d}k^{d}}\Big)^{k}\\
& \leq C\sum\limits_{k=1}^{\infty}\Big(c_{b}\frac{B^{d/2}m}{r^{d}k^{d}}\Big)^{k}\leq2Cc_{b}\frac{B^{d/2}m}{r^{d}}\leq2C\Big(c_{b}\frac{B^{d/2}m}{r^{d}}\Big)^{b/d}.
\end{split}
\end{equation}

$\ast$ If $c_{b}B^{d/2}m/r^{d}\geq1/2$, note that $c_{b}u^{d/b}>c\vee1$ for large $u$, then
\begin{equation}\label{eq:Lb2}
\bbE\bigg[\bigg(\frac{\sL_{m}^{(B)}(r)}{B^{1/2}m^{1/d}/r}\bigg)^{b}\bigg]  =\int_{0}^{\infty}\bbP\bigg(\frac{\sL_{m}^{(B)}(r)}{B^{1/2}m^{1/d}/r}\geq u^{1/b}\bigg)du \leq C+\int_{C}^{\infty}\left(c_d u^{-1/b}\right)^{d u^{1/b} B^{1/2} m^{1/d} /r} \dd u \, .
\end{equation}
Since $d u^{1/b} B^{1/2} m^{1/d} /r \geq c'_d u^{1/b}$, we get that the last integral is bounded by a constant.
%Let us stress that the constant $C$ that we get in \eqref{eq:Lb2} depends only on $b,d$, for any $b>0$. The restriction on $b\le d$ comes from \eqref{eq:Lb1}.
\end{proof}

\subsection{Applications of E-LPP}

We now apply Theorem~\ref{thm1}
to prove Lemma~\ref{lemma1} and Proposition~\ref{tail:disT}.

\subsubsection*{Proof of Lemma~\ref{lemma1}.}

Let us start by recalling Remark \ref{rem1}: if $s\in \cD$ is such that $\ent(s)\le 2$, then we have $s\in \cD_{2}$.
For any $m\in \N$, we denote $\mathbf{\gU}_m^{(2)} := \{ \bY_i^{(2)} , 1\leq i \leq m\}$,
where we recall that $(\bY_i^{(2)})_{i\geq 1}$ 
are the positions of the order statistics of $(w,x) \in \cP_2 = \{ (w,x) \in \cP , \|x\|\leq 2\}$ and are i.i.d.\ uniform 
random variables in $\mathbf{\Lambda}_2$. 
Recalling that the ordered statistics of weights $(\bM_i^{(2)})_{i\geq 1}$ is a.s.\ decreasing, using \eqref{eq:energy1} we have that
\begin{equation}
\label{eq:decomppi}
\pi_{2}(s)=\sum\limits_{(w,x)\in\cP_2} w\mathbbm{1}_{\{x\in s([0,1])\}}
=\sum_{j=0}^\infty \sum_{i=2^j}^{2^{j+1}} \bM_{i}^{(2)} \mathbbm{1}_{\{\bY_i^{(2)}\in s([0,1])\}} \le 
\sum_{j=0}^\infty \bM_{2^j}^{(2)}\sL_{2^{j+1}}^{(2)}(2)\, ,
\end{equation}
where $\sL_m^{(2)}(2)$ is the continuous E-LPP defined
above in~\eqref{eq:LPP}. 
Let $\delta>0$ be such that $\frac1\ga-\frac1d>\delta$. A union bound gives that for any $t>0$,
\begin{equation}\label{pr:unionbound}
\bbP \Big(\sup\limits_{s: \ent(s)<2 }\{\pi_{2}(s)\} > t\Big) \le 
\sum_{j=0}^\infty \bbP \Big(\bM_{2^j}^{(2)}\sL_{2^{j+1}}^{(2)}(2) >  C  t 2^{j(\frac{1}{d}-\frac{1}{\ga}+\delta)}\Big).
\end{equation}
where $1/C=\sum_{j\in \N} 2^{j(\frac{1}{d}-\frac{1}{\ga}+\delta)}$.
Then, \eqref{pr:unionbound} is smaller than 
\begin{equation}\label{eq:sumlemma1}
 \sum_{j=0}^\infty\Big[   \bbP\Big(\sL_{2^{j+1}}^{(2)}(2) > C_0\log_2(2+t)2^{(j+1)/d}\Big) + \bbP\Big(\bM_{2^j}^{(2)} > C_1 \frac{t}{\log_2(2+t)} 2^{j(-\frac{1}{\ga}+\delta)}\Big)
\Big]\,,
\end{equation}
where $C_0$ is a constant chosen (large enough) below, and $C_1$ depends on $C$ and $C_0$. We use \eqref{eq:LPPcont} with $m=2^{j+1}$ and $k=C_0\log_2 t2^{(j+1)/d }$. 
Then if $C_0$ is taken large enough, we have 
\[
\bbP\Big(\sL_{2^{j+1}}^{(2)}(2) > C_0\log_2(2+t)2^{(j+1)/d}\Big)\le
\Big(\frac{1}{2}\Big)^{dC_0 \log_2(2+t) 2^{(j+1)/d } }\,.
\]
For the second term in \eqref{eq:sumlemma1}, we recall that $\bM_{i}^{(2)}\overset{(d)}{=}(2\sqrt 2)^{d/\ga}\textrm{Gamma}(i)^{-1/\ga}$, so that $\bbE\big[ (\bM_{i}^{(2)})^\ga \big]\leq c i^{-1}$, where $c$ is a constant independent of $i$. Therefore, using Markov's inequality we get that
\[
\bbP\Big(\bM_{2^j}^{(2)} > C_1 \frac{t}{\log_2(2+t)}2^{j(-\frac{1}{\ga}+\delta)}\Big)\le c \log_2(2+t)^{\ga} t^{-\ga} 2^{-j\delta \ga}\,.
\]
All together, we get that
\begin{align*}
\bbP \Big(\sup\limits_{s: \ent(s)<2 }\{\pi_{2}(s)\} > t\Big) 
&\leq \sum_{j=0}^{\infty} \bigg(  c \log_2(2+t)^\ga t^{-\ga} 2^{- j \gd a} +2^{ - dC_0(\log_2(2+t))2^{(j+1)/d } } \bigg)\\
& \leq c' \log_2(2+t)^\ga t^{-\ga}  + c' t^{- c'' C_0}
\leq  c'''\log_2(2+t)^\ga t^{- \ga} \, ,
\end{align*}
where the constant $c''$ depends only on $d$, and $C_0$
has been fixed large enough so that $c'' C_0 >\ga$.
This concludes the proof, by replacing  the exponent $\ga$
by some $a<\ga$ and getting rid of the term $\log_2(2+t)^{\ga}$ by adjusting the constant.
\qed

\subsubsection*{Proof of Proposition \ref{tail:disT}.}\label{sec:varprobDisc}
To prove Proposition \ref{tail:disT}, we start by bounding the tail of $\Omega_{r}^{(\ell)}(\Delta)$, $\Omega_{r}^{(>\ell)}(\Delta)$ in Lemmas \ref{lem:tail<ell} and \ref{lem:tail>ell} respectively.

\begin{lemma}\label{lem:tail<ell}
Let $\ga \in (0,d)$. There exists a constant $c$, such that for any $B>0$, $r>0$ and $t>1$,
\begin{equation*}
\bbP\bigg(\sup\limits_{\ent(\Delta)\leq B}\Omega_{r}^{(\ell)}(\Delta)\geq t\times r^{\frac{d}{\ga} -1} \, B^{1/2}\bigg)\leq ct^{- \alpha d / (\alpha+d)} \, .
\end{equation*}
\end{lemma}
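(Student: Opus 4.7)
The plan is to control the supremum by a dyadic decomposition of the order statistics and then to apply moment bounds, exploiting the independence between weights and positions together with the E-LPP estimates from Corollary \ref{cor:moment}.

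First, since $(M_i^{(r)})_{i \geq 1}$ is non-increasing, for any ordered $\Delta$ with $\ent(\Delta) \leq B$ one has
\[
\Omega_r^{(\ell)}(\Delta) \leq \sum_{j \geq 0} M_{2^j}^{(r)} \big| \Delta \cap \{Y_i^{(r)} : 2^j \leq i < 2^{j+1}\} \big|.
\]
The key geometric observation is that removing points from an ordered path can only decrease $\ent(\Delta)$, by the triangle inequality applied to the definition \eqref{eq:entropydef}. Consequently, for any $\Delta$ with $\ent(\Delta) \leq B$, the subset $\Delta \cap \Upsilon_{2^{j+1}}^{(r)}$ (with induced order) still has entropy at most $B$, and hence $|\Delta \cap \Upsilon_{2^{j+1}}^{(r)}| \leq L_{2^{j+1}}^{(B)}(r)$, with $L_m^{(B)}(r)$ the discrete E-LPP of \eqref{eq:LPP}. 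This yields the uniform-in-$\Delta$ (and uniform-in-$\ell$) bound
\[
\sup_{\ent(\Delta) \leq B} \Omega_r^{(\ell)}(\Delta) \leq \sum_{j \geq 0} M_{2^j}^{(r)} \, L_{2^{j+1}}^{(B)}(r).
\]

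Next, I would use the independence of the sequences $(M_i^{(r)})_i$ and $(Y_i^{(r)})_i$, which holds because, by exchangeability of the i.i.d.\ weights, the ordered positions form a uniformly random permutation of $\Lambda_r$, independent of the ordered values. For any $p \in (0, \alpha)$ with $p \leq d$, a standard heavy-tail order-statistics estimate (analogous to Lemma~5.1 of \cite{BT19b}, see \eqref{tail:orderstat}) gives $\bbE[(M_{2^j}^{(r)})^p]^{1/p} \leq c_p\, r^{d/\alpha} 2^{-j/\alpha}$, while Corollary \ref{cor:moment} yields $\bbE[(L_{2^{j+1}}^{(B)}(r))^p]^{1/p} \leq c_p\, B^{1/2} 2^{j/d}/r$. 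Combining these by independence and applying Minkowski's inequality (or subadditivity of $x \mapsto x^p$ when $p \leq 1$),
\[
\bbE\bigg[\Big(\sum_{j \geq 0} M_{2^j}^{(r)} L_{2^{j+1}}^{(B)}(r)\Big)^p\bigg]^{1/p} \leq c_p \, r^{d/\alpha - 1} B^{1/2} \sum_{j \geq 0} 2^{j(1/d - 1/\alpha)} \leq C_p \, r^{d/\alpha - 1} B^{1/2},
\]
the geometric series converging because $\alpha < d$.

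Markov's inequality then yields $\bbP(\sup_\Delta \Omega_r^{(\ell)}(\Delta) \geq t\, r^{d/\alpha - 1} B^{1/2}) \leq C_p t^{-p}$ for any admissible $p$. Taking $p = \alpha d/(\alpha + d)$, which lies in $(0, \alpha) \cap (0, d]$, produces the claimed exponent. The main technical hurdles I anticipate are: (i) verifying the $p$-th moment bound for $M_i^{(r)}$ uniformly in $i$, which follows from $\bbP(M_i^{(r)} > u) \leq (c |\Lambda_r|/(i u^\alpha))^i$ and a direct integration valid for $p < \alpha$; and (ii) absorbing the $\wedge m$ regime in Corollary \ref{cor:moment} (relevant when $B^{1/2} m^{1/d}/r > m$, i.e.\ for small $j$), which only improves the per-term bound and does not disrupt the summability in $j$.
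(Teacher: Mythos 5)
Your proof is correct, and it takes a genuinely different route from the paper's, although both start from the same dyadic decomposition $\sup_{\ent(\Delta)\le B}\Omega_r^{(\ell)}(\Delta)\le\sum_{j\ge0}M_{2^j}^{(r)}L_{2^{j+1}}^{(B)}(r)$. The paper controls this sum by a two-stage union bound: it splits the event $\{M_{2^j}^{(r)}L_{2^{j+1}}^{(B)}(r)\ge(\text{level})_j\}$ into $\{M_{2^j}^{(r)}\gtrsim t^{1-\epsilon}\cdot\dots\}\cup\{L_{2^{j+1}}^{(B)}(r)\gtrsim t^{\epsilon}\cdot\dots\}$, uses the order-statistics tail bound \eqref{tail:orderstat} (exponent $\ga 2^j$) for the first event and Markov with Corollary~\ref{cor:moment} at $b=d$ for the second, then optimizes $\epsilon$ to balance the resulting decays $t^{-(1-\epsilon)\ga}$ and $t^{-\epsilon d}$, landing on $\epsilon=\ga/(\ga+d)$. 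You instead exploit directly the independence of the ordered values $(M_i^{(r)})_i$ and the ordered positions $(Y_i^{(r)})_i$ to factor $\bbE[(M_{2^j}^{(r)}L_{2^{j+1}}^{(B)}(r))^p]=\bbE[(M_{2^j}^{(r)})^p]\,\bbE[(L_{2^{j+1}}^{(B)}(r))^p]$, control the two factors by the moment versions of the same two inputs, sum via Minkowski (for $p\ge1$) or subadditivity (for $p\le1$), and finish with a single Markov step. This removes the optimization over $\epsilon$ entirely. It is worth pointing out that your argument actually proves something slightly stronger than the lemma: the moment computation is valid for any fixed $p\in(0,\ga)$ (the constraint $p\le d$ from Corollary~\ref{cor:moment} is automatic since $\ga<d$, and the constraint $p<\ga$ comes from integrability of $M_1^{(r)}$), so you obtain the tail bound $c_p\,t^{-p}$ for every $p<\ga$, with $c_p$ uniform in $B,r,\ell,t$ but blowing up as $p\uparrow\ga$; the paper's exponent $\ga d/(\ga+d)<\ga$ is just one admissible choice. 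The union bound, by contrast, genuinely gets stuck at $\ga d/(\ga+d)$ because its two error terms pull in opposite directions as $\epsilon$ varies. Your two anticipated technicalities are both handled correctly: the $p$-th moment of $M_\ell^{(r)}$ follows from the tail bound \eqref{tail:orderstat} by integration (and is uniform in $\ell\ge1$ because $\ga\ell-p\ge\ga-p>0$), and the $\wedge m$ normalization in Corollary~\ref{cor:moment} can only decrease $\bbE[(L_m^{(B)}(r))^p]^{1/p}$, so dropping it is harmless. One small point you should make explicit in a write-up: the monotonicity of $\ent$ under deletion of points relies on $x_0=0$ being a fixed endpoint not belonging to $\Delta$, so that deleting any $x_j$ (including $x_1$) contracts the total length by the triangle inequality.
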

\begin{proof}
The proof is the discrete version of Lemma~\ref{lemma1} proven above: using the discrete E-LPP defined in \eqref{eq:LPP},
we can write
\begin{equation}\label{Decomp<ell}
\sup\limits_{\ent(\Delta)\leq B}\Omega_{r}^{(\ell)}(\Delta)\leq\sum\limits_{j=0}^{\log_{2}\ell}\sum\limits_{i=2^{j}}^{2^{j+1}-1}M_{i}^{(r)}\mathbbm{1}_{\{Y_{i}^{(r)}\in\Delta\}}\leq\sum\limits_{j=0}^{\log_{2}\ell}M_{2^{j}}^{(r)}L_{2^{j+1}}^{(B)}(r)\, .
\end{equation}

The proof then follows the same lines as above, but we keep some details to help keep track of the different parameters.
Let $\delta>0$ be such that $1/\alpha-1/d>2\delta$ and let $1/C=\sum_{j=0}^{\infty}2^{j(1/d-1/\alpha+2\delta)}$.
For $\gep \in [0,1]$, by two consecutive union bounds we get 
\begin{equation}
\label{equationforOmega}
\begin{split}
\bbP\bigg(\sup\limits_{\ent(\Delta)\leq B}\Omega_{r}^{(\ell)}& (\Delta)  \geq t \times r^{\frac{d}{\ga} -1}  B^{1/2}\bigg)\leq\sum\limits_{j=0}^{\log_{2}\ell}\bbP\left(\frac{M_{2^{j}}^{(r)}}{r^{d/\ga}}\frac{L_{2^{j+1}}^{(B)}(r)}{B^{1/2}/r}\geq 
 C t \, 2^{j(\frac{1}{d}-\frac{1}{\alpha}+2\delta)}\right)  \\
 & \leq \sum\limits_{j=0}^{\log_{2}\ell} \bbP\bigg(\frac{M_{2^{j}}^{(r)}}{r^{d/\ga}} >C_{1} t^{1-\epsilon} 2^{-\frac{j}{\alpha}+j\delta}\bigg)+\sum\limits_{j=0}^{\log_{2}\ell}  \bbP\bigg(\frac{L_{2^{j+1}}^{(B)}(r)}{B^{\frac{1}{2}}/r}>C_{2}t^{\epsilon}(2^{j+1})^{\frac{1}{d}+\delta}\bigg). 
 \end{split}
\end{equation}
%Then for any $\epsilon\in[0,1]$
%\begin{equation}\label{MnL:O<ell}
%\begin{split}
%&\bbP\left(\frac{M_{2^{j}}^{(r)}}{m(r^{d})}\frac{L_{2^{j+1}}^{(B)}(r)}{B^{\frac{1}{2}}/r}\geq \frac{t(2^{j})^{\frac{1}{d}-\frac{1}{\alpha}+2\delta}}{C}\right)\\
%\leq&\bbP\left(\frac{M_{2^{j}}^{(r)}}{m(r^{d})}>C_{1}t^{1-\epsilon}(2^{j})^{-\frac{1}{\alpha}+\delta}\right)+\bbP\left(\frac{L_{2^{j+1}}^{(B)}(r)}{B^{\frac{1}{2}}/r}>C_{2}t^{\epsilon}(2^{j+1})^{\frac{1}{d}+\delta}\right).
%\end{split}
%\end{equation}
For the first term above, we can use Lemma~5.1 in \cite{BT19b}
to get that tor any $u>0$ and $\ell\leq|\Lambda_{r}|$, we have
\begin{equation}
\label{tail:orderstat}
\bbP\left(  M_{\ell}^{(r)} >u\,  r^{d/\ga} \ell^{-1/\ga} \right) < (cu)^{-\alpha \ell}.
\end{equation}
For the second term, we apply Makov's inequality
together with Corollary~\ref{cor:moment} with $b=d$.
All together, we get that the left-hand side of~\eqref{equationforOmega} is bounded by
\begin{equation}
\label{MnL:O<ell}
\sum_{j=0}^{\infty}  ( c t^{1-\gep} )^{-\ga 2^j}  2^{-j 2^{j} \gd \ga } +\sum_{j=0}^{\infty} c'  ( C_{2}2^{(j+1)\delta}t^{\epsilon})^{-d}  \leq C' t^{(1-\gep) \ga} + C' t^{- \gep d} \, .
\end{equation}
Finally, we choose $\epsilon=\alpha/(\alpha+d)$ and the proof is completed.
\end{proof}

\begin{lemma}\label{lem:tail>ell}
For any $\ell \le |\Lambda_r|$, there exists a constant $c$, such that for any $B>0$, $r>0$ and $t>1$,
\begin{equation*}
\bbP\left(\sup\limits_{\ent(\Delta)\leq B}\Omega_{r}^{(>\ell)}\geq t \times  r^{\frac{d}{\ga}-1} \, \ell^{ \frac{1}{d} -\frac{1}{\ga} } \, B^{1/2}\right)\leq ct^{-\frac{\alpha\ell d}{\alpha\ell+d} }.
\end{equation*}
\end{lemma}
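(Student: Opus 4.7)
The proof proceeds in close parallel to that of Lemma~\ref{lem:tail<ell}, with the main novelty being that the dyadic decomposition now starts at the $\ell$-th largest weight, and the splitting parameter between the two tail sources must depend on $\ell$.

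The plan is to write a dyadic decomposition analogous to~\eqref{Decomp<ell}, but starting the sum at $i = \ell$:
\[
\sup_{\ent(\Delta) \leq B} \Omega_r^{(>\ell)}(\Delta) \;\leq\; \sum_{j=0}^{\infty} \sum_{i = 2^j \ell}^{2^{j+1}\ell - 1} M_i^{(r)} \mathbbm{1}_{\{Y_i^{(r)} \in \Delta\}} \;\leq\; \sum_{j=0}^{\infty} M_{2^j \ell}^{(r)}\, L_{2^{j+1}\ell}^{(B)}(r)\,.
\]
Using the typical scales $M_{2^j\ell}^{(r)} \sim r^{d/\alpha} (2^j\ell)^{-1/\alpha}$ (from \eqref{tail:orderstat}) and $L_{2^{j+1}\ell}^{(B)}(r) \sim (B^{1/2}/r)(2^{j+1}\ell)^{1/d}$ (from Corollary~\ref{cor:moment}), each term is of order $r^{d/\alpha - 1} B^{1/2}\, \ell^{\frac1d - \frac1\alpha}\, 2^{j(\frac1d - \frac1\alpha)}$; the geometric factor is summable precisely because $\alpha < d$ (this is where the condition~$\alpha < d$ enters, analogously to the proof of Lemma~\ref{lemma1}).

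Fix $\delta > 0$ small enough that $\frac{1}{d} - \frac{1}{\alpha} + 2\delta < 0$ and set $C^{-1} := \sum_{j \geq 0} 2^{j(\frac1d - \frac1\alpha + 2\delta)}$. A first union bound then gives, for any $\epsilon \in (0,1)$, that the probability to be estimated is bounded by
\[
\sum_{j=0}^{\infty} \bbP\!\left( M_{2^j\ell}^{(r)} > C_1 t^{1-\epsilon}\, r^{d/\alpha} (2^j\ell)^{-1/\alpha} 2^{j\delta}\right) + \sum_{j=0}^{\infty} \bbP\!\left( L_{2^{j+1}\ell}^{(B)}(r) > C_2 t^{\epsilon}\, (B^{1/2}/r)\, (2^{j+1}\ell)^{1/d} 2^{j\delta}\right).
\]
The first sum is controlled by \eqref{tail:orderstat}, yielding a bound $(c t^{1-\epsilon})^{-\alpha \cdot 2^j \ell} 2^{-j\alpha\delta\cdot 2^j\ell}$ whose double-exponential decay in $j$ makes the series dominated by the $j=0$ term, hence at most $C (t^{1-\epsilon})^{-\alpha \ell}$. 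The second sum is handled via Markov's inequality at order $d$ combined with Corollary~\ref{cor:moment}, giving a bound $c (C_2 t^{\epsilon} 2^{j\delta})^{-d}$ per term, summable in $j$ to at most $C' t^{-\epsilon d}$.

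The final step is the optimization: choosing $\epsilon = \alpha\ell/(\alpha\ell + d)$ balances the two exponents so that $(1-\epsilon)\alpha\ell = \epsilon d = \alpha\ell d/(\alpha\ell + d)$, yielding the claimed bound $c\, t^{-\alpha\ell d/(\alpha\ell + d)}$. The only subtle point -- and the main deviation from Lemma~\ref{lem:tail<ell}, where $\epsilon = \alpha/(\alpha+d)$ was fixed -- is that here $\epsilon$ depends on $\ell$; one must check that the implicit constants arising in the dyadic bounds remain uniform in this choice, but this is harmless because the $\ell$-dependence of $\epsilon$ does not enter the geometric sums (it only rescales the effective threshold~$t$).
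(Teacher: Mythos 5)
Your proof is correct and mirrors the paper's own argument: the same dyadic decomposition $\sup\Omega_r^{(>\ell)}\leq\sum_{j\geq 0}M_{2^j\ell}^{(r)}L_{2^{j+1}\ell}^{(B)}(r)$, the same use of \eqref{tail:orderstat} and Corollary~\ref{cor:moment}, and the same $\ell$-dependent choice $\epsilon=\alpha\ell/(\alpha\ell+d)$ to balance the two exponents. The paper leaves the final optimization implicit by pointing to the analogue of \eqref{MnL:O<ell}; your version just spells it out, and your remark on the uniformity of constants (which only depend on $\delta$, $\alpha$, $d$) is accurate.
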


\begin{proof}
The proof is similar to that of the previous lemma.
Similarly to \eqref{Decomp<ell}, we can write
\begin{equation*}
\sup\limits_{\ent(\Delta)\leq B}\Omega_{r}^{(>\ell)}(\Delta)\leq\sum\limits_{j=0}^{\log_{2}(|\Lambda_{r}|/\ell)}M_{2^{j}\ell}^{(r)}L_{2^{j+1}\ell}^{(B)}(r).
\end{equation*}
Using~\eqref{tail:orderstat}, we have
\begin{equation}
\bbP\left(\frac{M_{2^{j}\ell}^{r}}{r^{d/\ga} \ell^{1/\ga}}>C_{1} t^{1-\epsilon}(2^{j})^{-\frac{1}{\alpha}+\delta}\right)\leq (c t^{1-\gep})^{-2^j \ga \ell}2^{-j2^{j}\alpha\delta\ell}
\end{equation}
and by Markov's inequality and Corollary~\ref{cor:moment}
\begin{equation}
\bbP\left(\frac{L_{2^{j+1}\ell}^{(B)}(r)}{\ell^{\frac{1}{d}}B^{\frac{1}{2}}/r}>C_{2}t^{\epsilon}(2^{j+1})^{\frac{1}{d}+\delta}\right)\leq (C_{2}2^{(j+1)\delta})^{-d}t^{-\epsilon d}.
\end{equation}
The result follows by a similar bound to \eqref{MnL:O<ell}.
\end{proof}

\begin{proof}[Proof of Proposition \ref{tail:disT}]
Let us introduce
\begin{equation}
\label{decom:disT}
\cT_{N,r}^{\beta,(\ell)}([a,b)):=\max\limits_{\Delta\subset\Lambda_{r},\ent(\Delta)\in[a,b)}\left\{\beta\Omega_{r}^{(\ell)}(\Delta)-\ent_N(\Delta)\right\}.
\end{equation}

For any $b>0$, we can then decompose the variational problem as
\begin{align*}
\cT_{N,r}^{\beta,(\ell)}&= \max\Big\{ \cT_{N,r}^{\beta,(\ell)}([0,b)) ,  \,  \sup_{k\geq 0} \big\{ \cT_{N,r}^{\beta,(\ell)}([2^{k}b,2^{k+1}b)) \big\} \Big\}\\
&\leq \max\limits_{\Delta\subset\Lambda_{r},\ent(\Delta)\leq b}\beta\Omega_{r}^{(\ell)}(\Delta)  \vee \sup\limits_{k\geq0}\left\{\beta\max\limits_{\Delta\subset\Lambda_{r},\ent(\Delta)
\leq2^{k+1} b} \gO_r^{(\ell)} - \frac{2^k b}{N} \right\}
\end{align*}

Choosing $b = t (N\beta r^{\frac{d}{\ga}-1})^{2}$ 
and applying a union bound, we get
\begin{align*}
\bbP\left(\cT_{N,r}^{\beta,(\ell)}\geq tN \times(\beta r^{\frac{d}{\ga}-1})^{2}\right) 
\leq  \sum_{k=0}^{\infty} \bbP\Big(  \max\limits_{\Delta\subset\Lambda_{r},\ent(\Delta)
\leq 2^{k} t (N\beta r^{\frac{d}{\ga}-1})^{2} } \gO_r^{(\ell)}  \geq 2^{k-1}   tN \gb (r^{\frac{d}{\ga}-1})^{2}  \Big) \, 
\end{align*}
%
%Let $b=c\tilde{\beta}$ with $\tilde{\beta}:=(\beta m(r^{d})/r)^{2}$. Note that $\tilde{\beta}=\beta m(r^{d})\sqrt{\tilde{\beta}}/r$ and a union bound gives that
%\begin{equation}
%\begin{split}
%\bbP\left(\cT_{N,r}^{\beta,(\ell)}\geq b\tilde{\beta}\right)&\leq\sum\limits_{k=0}^{\infty}\bbP\left(\max\limits_{\Delta\subset\Lambda_{r},\ent(\Delta)\leq2^{k}b\tilde{\beta}}\Omega_{r}^{(\ell)}\geq2^{k-1}b\frac{\tilde{\beta}}{\beta}\right)\\
%&=\sum\limits_{k=0}^{\infty}\bbP\left(\max\limits_{\Delta\subset\Lambda_{r},\ent(\Delta)\leq 2^{k}b\tilde{\beta}}\Omega_{r}^{(\ell)}\geq2^{k-1}bm(r^{d})\frac{\tilde{\beta}^{\frac{1}{2}}}{r}\right)
%\end{split}
%\end{equation}
Therefore, applying Lemma~\ref{lem:tail<ell} (with $B=  2^{k} t (N\beta r^{\frac{d}{\ga}-1})^{2}$ and $t'= 2^{k/2-1} t^{1/2}$),
we obtain that
\begin{equation}\label{finish:T<ell}
\bbP\left(\cT_{N,r}^{\beta,(\ell)}\geq tN\times(\beta r^{\frac{d}{\ga}-1})^{2} \right)\leq c \sum_{k=0}^{+\infty} t^{-\frac{\alpha d}{2(\alpha+d)}} 2^{-\frac{\alpha d}{2(\alpha+d)}k}\leq c'\, t^{-\frac{\alpha d}{2(\alpha+d)}} \, .
\end{equation}

For \eqref{tail:T>ell}  the proof follows the same lines as \eqref{decom:disT}-\eqref{finish:T<ell}, by applying Lemma \ref{lem:tail>ell} instead of Lemma~\ref{lem:tail<ell}.
\end{proof}

 Let us now prove another technical result: we show that the paths that maximize $\hat\cT_\gb$ are concentrated around the argmax of the variational problem, which is needed to get~\eqref{notscaleN} ---~this is the analogous of Lemma 4.1 in \cite{AL11}. 
 %More precisely,
\begin{lemma}\label{lemma:techHatTbeta}
On the event $\{\hat\cT_\gb=0\}$,  for any $\eta \in(0,1)$ we have that 
\begin{equation}
\label{defhatTeta}
\hat \cT_{\gb,\geq \eta} :=    \sup_{ s\in \cD_1 , \sup \|s\| \geq \eta , \hatent(s) <+\infty} 
\big\{ \pi(s) -  \tfrac{1}{\gb} \hatent(s) \big\} <0,\qquad a.s.
\end{equation}
\end{lemma}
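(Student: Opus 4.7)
The plan is to argue by contradiction, using the almost-sure uniqueness of the maximizer of $\hat\cT_\gb$ (established analogously to \cite[Sec.~4.6]{BT19b}, and already invoked in the proof of Lemma \ref{lemma:prel}). On the event $\{\hat\cT_\gb=0\}$, the constant path $s\equiv 0$ realizes $\pi(0)-\gb^{-1}\hatent(0)=0$, so by uniqueness it \emph{is} the unique maximizer; in particular, any maximizer of $\hat\cT_\gb$ satisfies $\sup\|s\|=0$.

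Suppose for contradiction that $\hat\cT_{\gb,\geq\eta}\geq 0$ on a set of positive probability inside $\{\hat\cT_\gb=0\}$. I would then pick a sequence $s_n\in\cD_1$ with $\sup_t\|s_n(t)\|\geq\eta$, $\hatent(s_n)<\infty$ and $\pi(s_n)-\gb^{-1}\hatent(s_n)\to 0$, and first simplify it as follows. Let $\Delta_n:=\{(x,w)\in\cP\colon x\in s_n([0,1])\}$, and pick $y_n\in s_n([0,1])$ with $\|y_n\|=\eta$; define $\tilde s_n$ to be the piecewise-linear path through the positions of $\Delta_n$ together with $y_n$, visited in the same order as under $s_n$, with the optimal parametrization realizing the infimum in \eqref{eq:hatentropydef}. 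Then $\pi(\tilde s_n)=\pi(s_n)$, $\hatent(\tilde s_n)\leq\hatent(s_n)$ and $\sup\|\tilde s_n\|\geq\eta$, so the reduced sequence still approaches the supremum. For a priori bounds, any $s\in\cD_1$ with $\hatent(s)<\infty$ has $\ell^1$-length at most $1$ (since $\mathrm{J}_d(x)=+\infty$ when $\|x\|_1>1$), hence $\ent(s)\leq d/2$; Lemma \ref{lemma1} then gives $\pi(\tilde s_n)\leq\sup_{s\in\cD,\,\ent(s)\leq d/2}\pi(s)<\infty$ almost surely, and consequently $\hatent(\tilde s_n)\leq \gb\pi(\tilde s_n)+1$ is uniformly bounded for $n$ large.

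Next, thanks to the $\ell^1$-length constraint, I reparametrize $\tilde s_n$ to have uniform $\ell^1$-speed, so that it becomes $\sqrt{d}$-Lipschitz with values in $\mathbf\Lambda_1$. Arzelà-Ascoli then yields a uniformly convergent subsequence $\tilde s_{n_k}\to s_*$ with $\sup\|s_*\|\geq\eta$. Lower semicontinuity of $\hatent$ (standard for the convex integrand $\mathrm J_d$) gives $\hatent(s_*)\leq\liminf_k\hatent(\tilde s_{n_k})$. To control the energy in the limit, I split at weight level $\gd>0$ into $\pi=\pi^{>\gd}+\pi^{\leq\gd}$: the large-weight part $\pi^{>\gd}$ involves only the finitely many points of $\cP\cap(\mathbf\Lambda_1\times(\gd,\infty))$, so Hausdorff convergence of the images $\tilde s_{n_k}([0,1])\to s_*([0,1])$ gives $\limsup_k\pi^{>\gd}(\tilde s_{n_k})\leq\pi^{>\gd}(s_*)$; for the residual, the same decomposition as in the proof of Lemma \ref{lemma1} applied to the truncated Poisson field shows that $\sup_{s\in\cD,\,\ent(s)\leq d/2}\pi^{\leq\gd}(s)\downarrow 0$ almost surely as $\gd\downarrow 0$. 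Sending $k\to\infty$ and then $\gd\downarrow 0$ gives $\pi(s_*)\geq\limsup_k\pi(\tilde s_{n_k})$, hence $\pi(s_*)-\gb^{-1}\hatent(s_*)\geq 0=\hat\cT_\gb$. Thus $s_*$ is a maximizer of $\hat\cT_\gb$ which is not the constant path (since $\sup\|s_*\|\geq\eta$), contradicting the almost-sure uniqueness of the argmax.

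The main obstacle is the upper semicontinuity argument for $\pi$ in the last step, since $\pi$ is inherently discontinuous: it detects whether the path passes through individual discrete locations of $\cP$, and is not continuous under uniform convergence of paths. The truncation at weight level $\gd$ is the natural device to separate the finitely-many-large-weights regime (controlled by discreteness of $\cP$) from the small-weight residual (dominated via the E-LPP tail estimate of Lemma \ref{lemma1}).
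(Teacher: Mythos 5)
Your proposal is correct in substance and takes essentially the same route as the paper's proof: both reduce to equi-Lipschitz paths, apply Arzel\`a--Ascoli, use lower semicontinuity of $\hatent$ and an approximate upper semicontinuity of $\pi$ obtained by separating finitely many large weights from a uniformly small residual (your level-$\gd$ truncation is equivalent to the paper's rank-$\ell$ truncation and its quantity $\rho^{(\ell)}$), and then contradict the a.s.\ uniqueness of the maximizer $\hat s_\gb\equiv 0$. One technical blemish: the reduction to the piecewise-linear path $\tilde s_n$ through ``the positions of $\Delta_n$'' is not well defined as written, since $\Delta_n=\{(x,w)\in\cP: x\in s_n([0,1])\}$ is generically an \emph{infinite} set (only the number of its points with $w>\gd$ is finite; small weights can accumulate on the curve). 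Fortunately this step is also unnecessary: since $\hatent(s_n)<\infty$ forces $\ell^1$-length at most $1$, you can reparametrize $s_n$ itself at constant $\ell^1$-speed to obtain a $1$-Lipschitz path with $\sup\|s_n\|\ge\eta$ unchanged, and proceed directly to Arzel\`a--Ascoli and the $\gd$-truncation argument, exactly as the paper does.
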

\begin{proof}
Recall the definition~\eqref{def:Dq} of $\cD_1$ and~\eqref{eq:energy2} of $\pi_1^{(\ell)}$ (recall that $\hatent(s)=+\infty$ if $\sup_{[0,1]}\|s\|>1$).
For $\ell \geq 1$, we define 
\[
\rho^{(\ell)}:=\sup_{s \in \cD_1 }\big| \pi(s)-\pi^{(\ell)}(s) \big| \,,
\qquad 
\hat \cT_{\gb,\geq \eta}^{(\ell)} :=    \sup_{ s\in \cD_1 , \sup \|s\| \geq \eta , \hatent(s) <+\infty} 
\big\{ \pi_1^{(\ell)}(s) -  \tfrac{1}{\gb} \hatent(s) \big\} \, .
\]
For all $\ell$ we have that
$\hat\cT_{\gb, \geq \eta}\le \hat\cT_{\gb, \geq \eta}^{(\ell)}+\rho^{(\ell)}$, so
the proof is a consequence of the following: 
\begin{equation}
\label{sumrest}
\lim_{\ell\to+\infty}\rho^{(\ell)}=0, \quad a.s., \qquad 
\limsup_{\ell \to +\infty }  \hat\cT_{\gb, \geq \eta}^{(\ell)}<0\qquad a.s.
\end{equation}

\smallskip
To prove the first part of \eqref{sumrest},
reasoning as in the proof of (4.13) in \cite{BT19b}, an integration by part gives that
 \begin{align}
  \pi(s)-\pi^{(\ell)}(s) 
%  &= \sum_{i> \ell } \bM_{i}^{(1)} \ind_{\{\bY_i^{(1)}\in s\}} = \lim_{n\to\infty} \sum_{i= \ell+1 }^n\bM_{i}^{(1)}  \big( |\Delta_i| - |\Delta_{i-1}| \big)\nonumber\\
%  &=\lim_{n\to\infty} \bigg(\sum_{i=\ell+ 1}^{n-1} |\Delta_i|(\bM_{i}^{(1)} -\bM_{i+1}^{(1)} )+\bM_{n}^{(1)}  |\Delta_n|  -  \bM_{\ell}^{(1)}  |\Delta_{\ell}|\bigg)  \nonumber\\
  &\le  \sum_{i=\ell+1}^{\infty} \sL_i   (\bM_{i} -\bM_{i+1} )+\limsup_{n\to\infty} \bM_{n}  \sL_n .
 \end{align}
where $\bM_i:=\bM_i^{(1)}$ and $\sL_n:=\sL_n^{(d/2)}(1)$ are defined in \eqref{eq:energy3} and \eqref{eq:LPP} respectively; note that the restriction $\hatent(s) <+\infty$ implies that $s$ has length at most $1$, so $\ent(s)\leq d/2$. The law of large numbers gives  $ \lim_{n\to\infty} n^{1/\ga} \bM_{n}  = c_1$ a.s., and Theorem~\ref{thm1} gives $\limsup_{n\to\infty} n^{-1/d} \sL_n  <+\infty$ a.s. Since $\ga<d$, we therefore conclude that $ \limsup_{n\to\infty} \bM_{n}  \sL_n = 0$ a.s.

To complete the proof of the first part of~\eqref{sumrest}, we let $U_\ell := \sum_{i >\ell} \sL_i   (\bM_{i} -\bM_{i+1} )$ and we show that
$U_{\ell} <+\infty$ a.s., by showing that
$\bbE [U_{\ell}^2]$ is finite. For any $\gep>0$, by Cauchy-Schwarz inequality we have that
\begin{align*}
U_{\ell} \le \Big(\sum_{i> \ell} \big(i^{-\frac12 -\gep} \big)^2 \Big)^{1/2} \Big( \sum_{i>\ell} \big( i^{\frac12 +\gep}  \sL_i   (\bM_{i} -\bM_{i+1} )^2\Big)^{1/2}\, .
\end{align*}
Then, we get that for $\ell$ large enough
\begin{align*}
\bbE [U_{\ell}^2] &\le C \sum_{i > \ell} i^{1+2\gep} \bbE\big[ (\sL_i )^2\big] \bbE \big[  (\bM_{i} -\bM_{i+1} )^2 \big]\le C' \sum_{i >\ell} i^{2\gep+2/d-2/\ga-1} <+\infty \, ,
\end{align*}
where we used Corollary \ref{cor:moment} and the fact that $\bbE \big[ (\bM_{i} -\bM_{i+1} )^2 \big] \le c i^{-2-2/\ga}$ (see for instance Equation  (7.2)  in \cite{HM07}). Provided $\gep$ is small enough so that $\gep +1/d- 1/\ga <0$, we obtain that $\bbE [ U_{\ell}^2]<\infty$ so $U_{\ell} <\infty$ a.s. We therefore get that $\lim_{\ell\to\infty}U_{\ell}=0$, which proves the first part of \eqref{sumrest}.

\smallskip
We prove the second part of \eqref{sumrest} by contradiction.  Note that by \eqref{def:entropyCont2}-\eqref{phi}, if $\hatent(s)<+\infty$, there exists some parametrization $\varphi\in\Phi$ such that $(s\circ\varphi)(t)$ is 1-Lipschitz on $[0,1]$ and $\pi(s)=\pi(s\circ\varphi)$ for all $\varphi\in\Phi$. Hence, it is enough to consider only 1-Lipschitz paths $\{s\in\cD_1: \|s'\|\leq1\}$.
Let us suppose that there exists a sequence $\tilde s^{(\ell)}$ such that $\sup\|\tilde s^{(\ell)}\|\geq \eta$ and $\limsup \cZ_\gb^{(\ell)}(\tilde s^{(\ell)}) \ge 0$, where we set $\cZ_\gb^{(\ell)}(s) :=\pi^{(\ell)}(s)-\frac1\gb \hatent(s)$ (and $\cZ_{\gb}(s)$ when $\ell=+\infty$). 

We observe that $s\mapsto \cZ_\gb^{(\ell)}(s)$ is upper semi-continuous because $\hatent(s)$ is lower semi-continuous (it is a rate function of a large deviation principle) and $s\mapsto \pi^{(\ell)}(s)$ is upper semi-continuous by construction (we refer to Lemma 4.4 in \cite{BT19b} for more details).
Since the set of $1$-Lipshitz functions on~$[0,1]$ is compact for the uniform norm by Ascoli-Arzel\`{a} theorem, we can suppose that $\tilde s^{(\ell)}$ converges uniformly to $\tilde s$ as $\ell \to +\infty$. Then, using the upper semi-continuity we get
\begin{align*}
\hat \cT_{\gb} \geq  \cZ_\gb(\tilde s)\ge \limsup_{\ell\to +\infty} \cZ_\gb(\tilde s^{(\ell)})\ge \limsup_{\ell\to +\infty} \cZ_\gb^{(\ell)}(\tilde s^{(\ell)})\ge 0 \, .
\end{align*}
Using that when $\hat{\cT}_\gb=0$ the maximiser is unique and is given by $\hat s_\gb \equiv 0$, we conclude that $\tilde s=0$,
which contradicts the fact that $\sup\|\tilde s\|\geq \eta$.
\end{proof}

\begin{remark}\rm
In general,
we can define
\[
\hat \cT_{\gb, \|s-\hat s_{\gb}\|\geq \eta} :=    \sup_{ s\in \cD,\, \sup\|s-\hat s_\gb\|\geq \eta , \, \hatent(s) <+\infty} 
\big\{ \pi(s) -  \tfrac{1}{\gb} \hatent(s) \big\} \,,
\]
where $\hat s_\gb$ is the maximiser of $\hat\cT_\gb$; and similarly for $\cT_{\gb,\|s-s_{\gb}\|\geq \eta}$ with $s_{\gb}$ the maximizer of~$\cT_{\gb}$.
This requires the existence and uniqueness of the maximisers $\hat s_{\gb}, s_{\gb}$, which follows similarly to Section~4.6 in \cite{BT19b}.
Then, one has the analogous of Lemma~\ref{lemma:techHatTbeta}, that is $\hat \cT_{\gb, \|s-\hat s_{\gb}\|\geq \eta} <\hat{\cT}_\beta$ and $\cT_{\gb,\|s-s_{\gb}\|\geq \eta} <\cT_{ \beta}$ a.s.
\end{remark}

\section{Simple random walk estimates}
\label{Appendix}

\noindent
Let us collect
some technical results on $d$-dimensional
simple random walks that we use in the article.

\subsection{Large and moderate deviations for the simple random walk}

\begin{lemma}
\label{lem:rate}
Let $(S_n)_{n\geq 0}$ be a simple symmetric random walk on $\bbZ^d$. 
Let $\xi\in (\frac12,1]$ be fixed.
Then, for any $x\in \R^d$, denote $x_{N}^{(\xi)}$ the element of $\bbZ^d$ with the same parity as $N$ closest to $x N^{\xi}$. Then we have the local large deviation result.
\[
\lim_{N\to\infty} -\frac{1}{N^{2\xi-1}} \log\bP \big( S_{N} = x_N^{(\xi)} \big)
 = \begin{cases}
 \mathrm{J}_d(  x ) & \quad  \text{ if } \xi=1 \, ,\\
 \frac{ d}{ 2} \|x\|^2 &\quad  \text{ if } \xi \in (\frac12,1) \,,
 \end{cases}
\]
where
$\mathrm{J}_d(\cdot)$ is a given rate function, which verifies
$\mathrm{J} (\|x\|_1)   \leq \mathrm{J}_d(x) \leq \mathrm{J} (\|x\|_1) +\log d$, 
where $\mathrm{J}(t) = \frac12 (1+t)\log(1+t) + \frac12 (1-t) \log (1-t)$ is the large deviation rate function for the random walk in dimension $d=1$;
in particular, we have $\mathrm{J}_d(x) <+\infty$ if $\|x\|_1\leq 1$ and $\mathrm{J}_d(x) =+\infty$ if $\|x\|_1 > 1$.
We also have $\mathrm{J}_d(x) \geq \frac{1}{2} \|x\|^2$ for all $x\in \bbR^d$, and $\mathrm{J}_d(x) \sim \frac{d}{2} \|x\|^2$ as $\|x\| \downarrow 0$.
\end{lemma}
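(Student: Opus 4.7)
The plan is to identify $\mathrm{J}_d$ as the Legendre transform of the cumulant generating function of a single step, namely $\Lambda_d(\lambda) := \log \bE[e^{\langle \lambda, S_1\rangle}] = \log\big(\tfrac{1}{d}\sum_{i=1}^d \cosh \lambda_i\big)$, so that $\mathrm{J}_d(x) := \sup_{\lambda \in \bbR^d}\{\langle \lambda, x\rangle - \Lambda_d(\lambda)\}$. Since $\Lambda_d$ is smooth and strictly convex with $\nabla \Lambda_d(0) = 0$ and $\mathrm{Hess}\,\Lambda_d(0) = \tfrac{1}{d}\mathrm{Id}$, Legendre duality immediately yields the near-origin expansion $\mathrm{J}_d(x) = \tfrac{d}{2}\|x\|^2 + O(\|x\|^4)$ as $\|x\| \downarrow 0$, proving the last assertion of the lemma.

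For the case $\xi = 1$, I would decompose $S_N$ via the multinomial vector $(M_1,\ldots,M_d)$ of step counts along each axis (conditionally on which the $i$th coordinate is an independent $1$d simple symmetric walk $\tilde S^{(1)}$ of length $M_i$), giving
\[
\bP(S_N = x_N^{(1)}) = \sumtwo{M_1+\cdots+M_d = N}{M_i \geq 0}\binom{N}{M_1,\ldots,M_d}d^{-N}\prod_{i=1}^d \bP\big(\tilde S^{(1)}_{M_i} = (x_N^{(1)})_i\big).
\]
Stirling gives $\binom{N}{M_1,\ldots,M_d}d^{-N} \asymp N^{-(d-1)/2}\exp(-N D(p\|U_d))$ with $p_i = M_i/N$ and $D(p\|U_d) = \sum_i p_i \log(p_i d) \geq 0$, vanishing iff $p_i \equiv 1/d$. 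Combined with the uniform 1d local LDP $\bP(\tilde S^{(1)}_M = y) \asymp M^{-1/2}\exp(-M \mathrm{J}(y/M))$, Laplace's method on the finite-dimensional simplex yields
\[
\lim_{N\to\infty} -\frac{1}{N}\log \bP(S_N = x_N^{(1)}) = \inf_{p \in \Delta_d}\Big\{ D(p\|U_d) + \sum_{i=1}^d p_i \mathrm{J}(x_i/p_i)\Big\},
\]
and a Fenchel duality computation identifies this infimum with $\mathrm{J}_d(x)$. The case $\xi \in (\tfrac12, 1)$ is then a moderate-deviation local estimate: the sharp local CLT for lattice sums (see e.g.\ Petrov's monograph) gives, uniformly for $y \in \bbZ^d$ of the correct parity with $\|y\|/N \to 0$,
\[
\bP(S_N = y) = (1+o(1))\, c_d\, N^{-d/2}\exp\!\big(-N\mathrm{J}_d(y/N)\big).
\]
Applying this to $y = x_N^{(\xi)}$ (so that $y/N = xN^{\xi-1}(1+o(1))$) and inserting the Taylor expansion from the first paragraph, one obtains $N\mathrm{J}_d(y/N) = \tfrac{d}{2}\|x\|^2 N^{2\xi-1}(1+o(1))$; the polynomial prefactor contributes only $O(\log N)$ to the logarithm and is negligible after dividing by $N^{2\xi-1}$ since $\xi > 1/2$, yielding the stated limit $\tfrac{d}{2}\|x\|^2$.

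The remaining structural properties of $\mathrm{J}_d$ all follow from the variational formula derived above. The sandwich $\mathrm{J}(\|x\|_1) \leq \mathrm{J}_d(x) \leq \mathrm{J}(\|x\|_1) + \log d$ splits into: a lower bound by Jensen's inequality applied to the convex even function $\mathrm{J}$, using $\sum_i p_i \mathrm{J}(x_i/p_i) \geq \mathrm{J}\big(\sum_i p_i |x_i|/p_i\big) = \mathrm{J}(\|x\|_1)$ combined with $D(p\|U_d) \geq 0$; and an upper bound via the explicit choice $p_i = |x_i|/\|x\|_1$, which forces $\sum_i p_i \mathrm{J}(x_i/p_i) = \mathrm{J}(\|x\|_1)$ and $D(p\|U_d) \leq \log d$ (with the convention $0 \cdot \mathrm{J}(0/0) = 0$). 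The domain identity $\{\mathrm{J}_d < \infty\} = \{\|x\|_1 \leq 1\}$ is forced by the deterministic constraint $\|S_N\|_1 \leq N$ (which gives $\mathrm{J}_d = \infty$ for $\|x\|_1 > 1$) and by the upper bound (which gives finiteness for $\|x\|_1 \leq 1$). Finally, $\mathrm{J}_d(x) \geq \tfrac12\|x\|^2$ follows from the sub-Gaussian bound $\Lambda_d(\lambda) \leq \tfrac12\|\lambda\|^2$, itself a consequence of $\cosh t \leq e^{t^2/2}$ together with $\tfrac{1}{d}\sum_i e^{\lambda_i^2/2} \leq e^{\max_i \lambda_i^2/2} \leq e^{\|\lambda\|^2/2}$, upon taking Legendre transforms.

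The main technical obstacle is the uniform local estimate required in the moderate-deviations case: its validity as $y/N \to 0$ demands the full strength of a sharp local CLT, because $N\mathrm{J}_d(y/N)$ must reproduce the Gaussian leading term $\tfrac{d}{2}\|y\|^2/N$ with an error that vanishes after division by $N^{2\xi-1}$. In the $\xi = 1$ case the multinomial computation can be avoided by invoking Cramér's theorem on $\bbR^d$ directly for the upper bound and combining it with a trivial polynomial lower bound from the lattice local CLT---this is considerably cleaner when only the logarithmic asymptotic is needed.
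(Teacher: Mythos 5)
Your proposal is correct, and it tracks the paper's core idea --- the multinomial decomposition over per-axis step counts --- for the $\xi=1$ rate, arriving at the identical variational formula
\[
\mathrm{J}_d(x)=\inf_{p\in\Delta_d}\Big\{D(p\|U_d)+\sum_{i=1}^d p_i\,\mathrm{J}(x_i/p_i)\Big\},
\]
and proving the sandwich bound by the same two ingredients (Jensen on the convex even function $\mathrm{J}$ for the lower bound; the choice $p_i=|x_i|/\|x\|_1$ for the upper bound). Where you diverge is in how you establish the auxiliary structural facts, and there your route is cleaner and more self-contained. The paper deduces the near-origin asymptotic $\mathrm{J}_d(x)\sim\frac{d}{2}\|x\|^2$ by an informal argument that the minimizing $u_i$ must approach $1/d$ when $\|x\|\downarrow 0$; you instead recognize $\mathrm{J}_d$ as the Legendre transform of $\Lambda_d(\lambda)=\log\big(\tfrac1d\sum_i\cosh\lambda_i\big)$ and read off $\mathrm{Hess}\,\mathrm{J}_d(0)=(\mathrm{Hess}\,\Lambda_d(0))^{-1}=d\,\mathrm{Id}$, which is rigorous and quantitative. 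Similarly, the paper gets $\mathrm{J}_d\ge\frac12\|x\|^2$ from $\mathrm{J}(t)\ge\frac12 t^2$ together with Jensen, while you get it from the sub-Gaussian bound $\Lambda_d(\lambda)\le\frac12\|\lambda\|^2$ (via $\cosh t\le e^{t^2/2}$) and Legendre duality; both are fine. For the moderate-deviation regime $\xi\in(\frac12,1)$ the paper simply declares the argument ``analogous'' to $\xi=1$, whereas you invoke a sharp uniform local CLT with rate $\mathrm{J}_d$ in the exponent and then Taylor-expand --- this is a genuinely different mechanism and, if anything, makes the needed uniformity more explicit. The one step you leave implicit is the Fenchel-duality identification of the multinomial variational formula with the Legendre transform of $\Lambda_d$; the paper sidesteps this by simply \emph{defining} $\mathrm{J}_d$ via the variational formula, so if you wanted your write-up to match the lemma statement exactly you should either do the same or carry out that (standard) inf-convolution computation. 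None of this affects correctness.
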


\begin{proof}
This is a standard result, that directly derives from local large deviation results for the simple random walk in dimension $d=1$,
by decomposing over the number of steps in each direction.
Let us just treat the case of $\xi=1$, the case $\xi\in (\frac12,1)$ being analogous.
The rate function $\mathrm{J}_d$ can be related to the rate function $\mathrm{J}$ in the following way, decomposing over the 
proportion of the time spent in each direction
\begin{equation}
\label{eq:ratefunctiond}
\mathrm{J}_d(x) = \inftwo{u_1, \ldots, u_d \in [0,1]}{u_1+\cdots+u_d =1}  \sum_{i=1}^d u_i \Big( \mathrm{J}\big( \tfrac{x_i}{u_i} \big)  + \log (d u_i) \Big) \, ,
\end{equation}
where the term $\sum_{i=1}^d u_i \log (du_i)$ comes from the entropic cost of spending a proportion $u_i$ of the time in direction $i$; more precisely, we have $\lim_{N\to\infty} - \frac{1}{N} \log ( \frac{N!}{ (u_1 N)! \cdots (u_d N)!} d^{-N})  = \sum_{i=1}^d u_i \log (du_i)$.

The rate function $\mathrm{J}_d$ does not appear to have a nice 
expression, but some properties can be derived. Note that $\mathrm{J}(\cdot)$ being an even function we can replace
 $\mathrm{J}(x_i/u_i)$ by $\mathrm{J}(|x_i|/u_i)$ in the above expression.
Since $\mathrm{J}(\cdot)$
is a convex function we get that 
\[
\sum_{i=1}^d u_i  \mathrm{J}\big( \tfrac{|x_i|}{u_i} \big) 
\geq \mathrm{J}\Big( \sum_{i=1}^d |x_i|\Big) = \mathrm{J}(\|x\|_1).
\]
Hence, using also that $\sum_{i=1}^{d} u_i \log(du_i) \geq 0$
for all $u_1,\ldots, u_d \in [0,1]$ with $u_1+\cdots+u_d =1$, we get that
$\mathrm{J}_d(x)  \geq \mathrm{J}(\|x\|_1)$.
On the other hand, 
since $\sum_{i=1}^{d} u_i \log(du_i) \leq \log d$,
taking $u_i = |x_i|/\|x\|_1$ we obtain that
$\mathrm{J}_d(x)  \leq \mathrm{J}(\|x\|_1) +\log d$.

To prove the last inequality, let us simply notice that $\mathrm{J}(t) \geq \frac12 t^2$ for all $t\in \bbR$,
so we get that 
$\mathrm{J}_d(x)  \geq  \frac{1}{2} \|x\|_1^2 \geq \frac12 \|x\|^2$.
Also, notice that when $\|x\| \downarrow 0$,
we have that $\mathrm{J}_d(x) \to 0$: for instance, taking $u_i=\frac1d$ in \eqref{eq:ratefunctiond},
we get that $\mathrm{J}_d(x) \leq \frac1d\sum_{i=1}^d \mathrm{J} (d x_i)$.
Since  the infimum in \eqref{eq:ratefunctiond}
goes to $0$, it means that it is attained for some $u_i$
approaching $\frac1d$ (so that $\log (d u_i)$ goes to $0$),
and we therefore get that 
$\mathrm{J}_d(x) \sim \frac1d\sum_{i=1}^d \mathrm{J} (d x_i) $ as $\|x\| \downarrow 0$.
Using that $\mathrm{J}(t) \sim \frac12 t^2$ as $t\downarrow0$,
we get that $\mathrm{J}_d(x) \sim \frac{d}{2} \|x\|^2$.
%NOTE: the inequality $\mathrm{J}_d(x) \geq \frac{d}{2} \|x\|^2$
%is not true. For example, take $x=(1,0,\ldots, 0)$.
%We have $\mathrm{J}_d(x) = \log(2d) < \frac{d}{2}$ (at least for $d\geq 5$).
\end{proof}

\subsection{Probabilities that a given set of points is visited}

Here, for clarity, we separate the cases $\xi \in(\frac12,1)$
and $\xi =1$.
For a set of (ordered) points $\Delta = (y_1,\ldots, y_k) \in \bbZ^d$, with some abuse of notation we write $\Delta \subset \cR_N$
to mean that the points of $\Delta$ are visited in their order by the random walk before time $N$:
put otherwise
\begin{equation}
\label{def:Deltasubset}
\{\Delta \subset \cR_N\} = \big\{\exists~0 < t_{1}<\cdots<t_{k}\leq N, \mbox{s.t.}~S_{t_{1}}=y_1 ,\cdots,S_{t_{k}}=y_k \big\}\, .
\end{equation}
We now state large deviation principles for 
events $\{\Delta \subset \cR_N\} $.

\begin{lemma}\label{AppendixL1}
Let $\xi \in(\frac12 ,1)$.
For any ordered set
$\Delta=(x_{1},\cdots,x_{k})\subset\bbR^{d}$ of distinct points,
we have (omitting integer parts for notational simplicity)
\[
\lim\limits_{N\to\infty} - \frac{1}{N^{2\xi-1}} \log\bP\left( (x_{1}N^{\xi} ,\ldots, x_{k}N^{\xi} ) \subset \cR_N\right)= \ent(\Delta) \, ,
\]
where the entropy $\ent(\Delta)$
is defined in~\eqref{eq:entropydef}.
\end{lemma}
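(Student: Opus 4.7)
The plan is to sandwich $\bP((x_1 N^\xi,\ldots,x_k N^\xi)\subset\cR_N)$ between matching exponential bounds whose rate is given by a simple variational problem. The key input is a moderate-deviation local limit theorem: for $\xi\in(\tfrac12,1)$ and $y\in\bbR^d\setminus\{0\}$ fixed, uniformly in $t$ in a range $[\gep N,N]$ and in the lattice point $y_{t,N}$ of correct parity closest to $yN^\xi$,
\[
\log\bP(S_t = y_{t,N}) \;=\; -\frac{d}{2}\,\frac{\|y\|^2 N^{2\xi}}{t}\bigl(1+o(1)\bigr),
\]
with the error vanishing as $N\to\infty$. This follows from the local CLT combined with the exact exponential decay in the moderate-deviation regime $\sqrt t\ll\|yN^\xi\|\ll t$, which is our regime since $\xi<1$.

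For the upper bound, I would start with the trivial union bound (writing $x_0=0$, $t_0=0$, and $\tau_i=t_i-t_{i-1}$)
\[
\bP(\Delta_N \subset \cR_N) \;\leq\; \sum_{0<t_1<\cdots<t_k\leq N}\prod_{i=1}^{k}\bP\bigl(S_{\tau_i}=(x_i-x_{i-1})N^\xi\bigr),
\]
which is a consequence of the Markov property applied at the successive visit times. Applying the moderate-deviation estimate to each factor and bounding the combinatorial factor $N^k$ by $e^{o(N^{2\xi-1})}$, the leading exponential cost is $N^{2\xi-1}\inf\sum_{i}\tfrac{d}{2}\|x_i-x_{i-1}\|^2/u_i$ where the infimum is over $u_i>0$ with $\sum u_i\leq 1$. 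For the lower bound, I would fix the specific splitting
\[
t_i^\ast \;:=\; \Bigl\lfloor N\,\frac{\sum_{j=1}^{i}\|x_j-x_{j-1}\|}{\sum_{j=1}^{k}\|x_j-x_{j-1}\|}\Bigr\rfloor
\]
(adjusted by $O(1)$ to have correct parity with respect to the target), apply the Markov property to get $\bP(\Delta_N\subset\cR_N)\geq\prod_i\bP(S_{t_i^\ast-t_{i-1}^\ast}=(x_i-x_{i-1})N^\xi)$, and use the moderate-deviation asymptotics again to recover a matching lower bound.

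The variational step is then elementary: by the Cauchy--Schwarz inequality,
\[
\sum_{i=1}^{k}\frac{\|x_i-x_{i-1}\|^2}{u_i}\;\geq\;\frac{\bigl(\sum_{i=1}^{k}\|x_i-x_{i-1}\|\bigr)^2}{\sum_{i=1}^{k}u_i}\;\geq\;\Bigl(\sum_{i=1}^{k}\|x_i-x_{i-1}\|\Bigr)^2,
\]
with equality attained for $u_i\propto\|x_i-x_{i-1}\|$ (which is exactly the splitting chosen above). Multiplied by $\tfrac{d}{2}$ this gives exactly $\ent(\Delta)$, closing the sandwich.

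The main obstacle is purely technical and lies in controlling the error terms in the moderate-deviation local limit theorem uniformly over the relevant range of $\tau_i$: on the one hand the union bound forces us to handle small values of $\tau_i$ (where the local CLT error is worst), which I would deal with by splitting off the contribution of $\tau_i\leq\delta N$ and showing by a Hoeffding-type estimate that reaching a point at distance of order $N^\xi$ in a time $\ll N^\xi$ already costs strictly more than $\ent(\Delta)N^{2\xi-1}$ (and hence is negligible); on the other hand one must absorb the $O(1)$ parity correction in the targets, which affects the exponent only at order $o(N^{2\xi-1})$ and is therefore harmless.
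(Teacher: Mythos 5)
Your proposal is correct and follows essentially the same route as the paper: a union bound plus the Markov property for the upper bound, a concrete choice of intermediate times proportional to the accumulated path length for the lower bound, local moderate-deviation asymptotics from a theorem of Stone (the paper cites \cite[Theorem 3]{S67}) for the individual transition probabilities, and the Cauchy--Schwarz optimization that identifies the rate with $\ent(\Delta)$. The only cosmetic differences are that the paper works out the case $k=2$ and declares the general case analogous, and that you are a bit more explicit about absorbing the contribution of small increments $\tau_i\leq \delta N$ via a crude Gaussian-type bound (the paper instead relies on the uniformity of the local estimate over the full range of time indices).
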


\begin{proof}
We only treat the case $k=2$  to lighten notation; the general case is analogous.
%We recall that the entropy function for a set is defined in \eqref{eq:entropydef}.

\smallskip
\noindent
\textit{Lower bound}: 
By a local large deviation, see \textit{e.g.} \cite[Theorem 3]{S67},
we get 
\begin{align*}
\bP\Big(\exists~1\leq s<t\leq N,\ & \mbox{s.t.}~S_{s}=x N^{\xi}, S_{t}=y N^{\xi}\Big)
 \geq\max\limits_{1\leq s< N}\bP(S_{s}=x N^{\xi},S_{N}=y N^{\xi} )\\
& =\max\limits_{0<u\leq1}\frac{c_d}{(u(1-u))^{\frac{d}{2}}N^{d}} \, \exp\bigg( -  N^{2\xi-1} \frac{d}{2}\Big(\frac{\|x\|^{2}}{u}+\frac{\|y-x\|^{2}}{1-u} +o(1) \Big) \bigg)
\end{align*}
Now the term $\frac{\|x\|^{2}}{u}+\frac{\|y-x\|^{2}}{1-u}$
is minimal for $u= \frac{\|x\|}{ \|x\|+\|y-x\|}$ and
choosing that specific $u$ gives that 
\begin{equation*}
\bP\Big(\exists~1\leq s<t\leq N,\  \mbox{s.t.}~S_{s}=x N^{\xi}, S_{t}=y N^{\xi}\Big) \geq 
 \frac{ c_{d,x,y}}{ N^d}\,  e^{-N^{2\xi-1}(1+o(1))\ent(\Delta)} \, ,
\end{equation*}
which proves the lower bound. Note that if $\|x\|$ and $\|y-x\|$ are bounded away from $0$ and $\infty$, then so is $c_{d,x,y}$.

\smallskip
\noindent
\textit{Upper bound}: Using again \cite[Theorem 3]{S67}, we have that
\begin{align}
\bP&\left(\exists~1\leq s<t\leq N, \mbox{s.t.}~S_{s}=xN^{\xi}, S_{t}=yN^{\xi}\right)
\leq \sum\limits_{s=1}^{N}\sum\limits_{t=s+1}^{N}\bP(S_{s}=xN^{\xi},S_{t}=yN^{\xi})\notag\\
\label{eq:lemmaA1}
&\qquad \leq \sum\limits_{s=1}^{N}\sum\limits_{t=s+1}^{N}\frac{c_d}{(s(t-s) )^{\frac{d}{2}}}
 \, \exp\bigg( - N^{2\xi-1}\frac{d}{2}\Big(\frac{\|x\|^{2}}{s/N}+\frac{\|y-x\|^{2}}{(t-s)/N}   +o(1)\Big)\bigg)\, ,
\end{align}
where the $o(1)$ is uniform in $N$.
To get an upper bound for \eqref{eq:lemmaA1}, we minimize
the quantity in the exponent, which is
$N^{2\xi-1}\ent(\Delta)$ (as above).
Then for any $d\geq1$, \eqref{eq:lemmaA1} is bounded above by
a constant times
\begin{align*}
&e^{- (1+o(1)) N^{2\xi-1} \ent(\Delta ) }\sum\limits_{s=1}^{N}\sum\limits_{t=s+1}^{N} \frac{1}{(s(t-s))^{d/2}}
\leq  c N\,  e^{- (1+o(1)) N^{2\xi-1} \ent(\Delta ) } \,,
\end{align*}
which gives the upper bound.
\end{proof}

Let us state the analogous 
result in the case $\xi=1$.
We do not prove it, since it is analogous to~Lemma~\ref{AppendixL1}.

\begin{lemma}\label{AppendixL1bis}
For any ordered set
$\Delta=(x_{1},\cdots,x_{k})\subset\bbR^{d}$ of distinct points,
we have (omitting integer parts for notational simplicity)
\[
\lim\limits_{N\to\infty}  -\frac{1}{N} \log\bP\big( (x_{1}N ,\ldots, x_{k}N ) \subset \cR_N \big)=  \hatent(\Delta) \, ,
\]
where the entropy $\hatent(\Delta)$
is defined in~\eqref{eq:hatentropydef}.
\end{lemma}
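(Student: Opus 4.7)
\medskip

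\noindent\textbf{Proof plan for Lemma~\ref{AppendixL1bis}.}
The strategy is identical in spirit to the proof of Lemma~\ref{AppendixL1}, replacing the Gaussian-type rate function $\frac{d}{2}\|\cdot\|^2$ by the full large-deviation rate function $\mathrm{J}_d$ supplied by Lemma~\ref{lem:rate}. For clarity I will describe the case $k=2$, with $\Delta = (x,y)$; the general case follows from a verbatim extension using the Markov property at each intermediate time. Write $x_0 := 0$ by convention, and recall
\[
\hatent(\Delta) = \inf_{0 = u_0 < u_1 < \cdots < u_k \leq 1}  \sum_{i=1}^k (u_i - u_{i-1})\, \mathrm{J}_d\!\left( \frac{x_i - x_{i-1}}{u_i - u_{i-1}}\right).
\]

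\emph{Lower bound.} Let $(u_1^\star,\ldots,u_k^\star)$ achieve (or approximate within $\varepsilon$) the infimum defining $\hatent(\Delta)$. Setting $t_i^\star := \lfloor u_i^\star N \rfloor$ (adjusting parities as in Lemma~\ref{lem:rate}), the Markov property gives
\[
\bP\bigl((x_1 N, \ldots, x_k N) \subset \cR_N\bigr) \geq \prod_{i=1}^{k} \bP\!\left( S_{t_i^\star - t_{i-1}^\star} = (x_i - x_{i-1})N \right).
\]
Applying Lemma~\ref{lem:rate} to each factor (with $\xi = 1$, after rescaling time by $u_i^\star - u_{i-1}^\star$) yields, after taking logarithms and dividing by $N$,
\[
-\tfrac{1}{N} \log \bP\bigl((x_1 N, \ldots, x_k N) \subset \cR_N\bigr) \leq \sum_{i=1}^k (u_i^\star - u_{i-1}^\star)\, \mathrm{J}_d\!\left(\frac{x_i - x_{i-1}}{u_i^\star - u_{i-1}^\star}\right) + o(1),
\]
which gives the bound $\limsup_N [\,-\tfrac1N\log \bP(\ldots)\,] \leq \hatent(\Delta) + \varepsilon$ and, since $\varepsilon$ is arbitrary, $\leq \hatent(\Delta)$.

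\emph{Upper bound.} Here one writes
\[
\bP\bigl((x_1 N, \ldots, x_k N) \subset \cR_N\bigr) \leq \sum_{0 < t_1 < \cdots < t_k \leq N} \prod_{i=1}^k \bP\bigl(S_{t_i - t_{i-1}} = (x_i - x_{i-1}) N\bigr).
\]
Only the terms where each $t_i - t_{i-1} \geq \|x_i - x_{i-1}\|_1 N$ contribute, since otherwise the increment is unreachable; in particular the sum is nontrivial only when $\sum_i \|x_i - x_{i-1}\|_1 \leq 1$, consistently with $\hatent(\Delta) < \infty$. By the uniform local CLT/LDP estimate (cf.~\cite[Thm.~3]{S67} or its $d$-dimensional version), for every $\eta>0$ we have $\bP(S_n = z) \leq e^{-n\, \mathrm{J}_d(z/n)(1-\eta)}$ uniformly for $n$ large and $z/n$ bounded in $\ell^1$-norm away from the boundary of the simplex. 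Writing $t_i = u_i N$ (integer parts tacit), this yields
\[
\bP\bigl((x_1 N, \ldots, x_k N) \subset \cR_N\bigr) \leq N^k \sup_{0 < u_1 < \cdots < u_k \leq 1} \exp\!\left( - N(1-\eta)\sum_{i=1}^k (u_i - u_{i-1})\, \mathrm{J}_d\!\left(\frac{x_i - x_{i-1}}{u_i - u_{i-1}}\right) \right).
\]
The factor $N^k$ is absorbed on the scale $1/N$, and taking logarithms and sending $N \to \infty$ and then $\eta \to 0$ gives $\liminf_N [\,-\tfrac1N\log \bP(\ldots)\,] \geq \hatent(\Delta)$.

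\emph{Main obstacle.} The only delicate point is the uniformity of the local LDP across all admissible choices of $(u_1,\ldots,u_k)$. Near the boundary of the simplex $\{u_i - u_{i-1} = \|x_i - x_{i-1}\|_1\}$, the rate function $\mathrm{J}_d\!\left(\tfrac{x_i-x_{i-1}}{u_i-u_{i-1}}\right)$ blows up, and one must verify that the error term in Sanov/Bahadur--Rao type expansions remains $o(N)$ uniformly. This is handled by cutting off a boundary layer of $\ell^1$-distance $\delta$ from the simplex boundary, applying the uniform local LDP on the interior (where $\mathrm{J}_d$ is bounded), and using the trivial bound $\bP(S_n=z)\leq d^{-\|z\|_1}$ on the boundary layer, which is already sharper than needed. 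Letting $\delta \to 0$ closes the argument, since the optimizer in the definition of $\hatent(\Delta)$ lies in the interior whenever $\hatent(\Delta) < \infty$.
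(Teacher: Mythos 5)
Your proposal is correct and follows the same route the paper indicates: the paper states only that Lemma~\ref{AppendixL1bis} is ``analogous to Lemma~\ref{AppendixL1}'' and gives no further details, and your argument (Markov property plus local LDP for the lower bound; union bound over intermediate times plus local LDP for the upper bound) is precisely that analogy with the moderate-deviation rate replaced by $\mathrm{J}_d$.

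One remark on your ``main obstacle'': the uniformity concern near the boundary of the simplex is not actually an issue. The Chernoff bound $\ind_{\{S_n=z\}}\le e^{\langle\lambda,S_n-z\rangle}$ for every $\lambda\in\bbR^d$ gives the \emph{exact, non-asymptotic} estimate
\[
\bP(S_n=z)\ \le\ \exp\bigl(-n\,\mathrm{J}_d(z/n)\bigr)\qquad\text{for all }n\ge 1,\ z\in\bbZ^d,
\]
valid without any prefactor, without a $(1-\eta)$ loss, and without restricting $z/n$ away from the boundary (the bound degenerates to the trivial $\bP\le 0^{-0}$ interpretation exactly where $\mathrm{J}_d=+\infty$, which is the unreachable case). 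Feeding this into your union bound gives directly
\[
\bP\bigl((x_1N,\ldots,x_kN)\subset\cR_N\bigr)\ \le\ N^k\,e^{-N\,\hatent(\Delta)},
\]
so no boundary-layer cut-off and no limit in $\eta$ are needed; the upper bound is immediate. The lower bound via a near-optimal choice of $(u_1^\star,\ldots,u_k^\star)$ and Lemma~\ref{lem:rate} is as you wrote it.
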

Notice here that the definition~\eqref{eq:hatentropydef} of the entropy $\hatent(\Delta)$ includes the infimum over the choices of times $0< t_1<\cdots <t_k \leq 1$ at which the random walk visits the points.

To conclude,
we  give a uniform upper bound on the probability
that a given set is visited.

\begin{lemma}\label{AppendixL2}
There are positive constants $C_{1}:=C_{1}(d)$ and $C_{2}:=C_{2}(d)$  such that for any
set  $\Delta=(x_{1},\cdots,x_{\ell})\subset\bbZ^{d}$ of distinct points, we have in dimension $d\geq 2$
\begin{equation*}
\bP(\Delta\subset \cR_N) \leq (C_{1})^{\ell} e^{-\frac{C_{2}}{N}\ent (\Delta)} ,
\end{equation*}
where $\ent_N(\Delta)$ is defined in \eqref{def:entropyN}.
\end{lemma}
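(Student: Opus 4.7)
The plan is to reduce the event $\{\Delta \subset \cR_N\}$ to a statement about a sum of independent random-walk hitting times, and then to apply a Chernov inequality. Set $\tau_0 := 0$ and recursively $\tau_i := \inf\{k > \tau_{i-1} \colon S_k = x_i\}$ for $i = 1,\dots,\ell$; by the definition \eqref{def:Deltasubset} we have $\{\Delta \subset \cR_N\} = \{\tau_\ell \leq N\}$. By the strong Markov property, the increments $\sigma_i := \tau_i - \tau_{i-1}$ are independent, and each $\sigma_i$ is distributed as the hitting time of $x_i - x_{i-1}$ by an independent simple random walk started from the origin. A Chernov inequality then gives, for any $\lambda > 0$,
\[
\bP(\Delta \subset \cR_N) \;\leq\; \bP\Big(\sum_{i=1}^\ell \sigma_i \leq N\Big) \;\leq\; e^{\lambda N} \prod_{i=1}^\ell \bE\big[e^{-\lambda \sigma_i}\big].
\]

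The analytical heart of the argument is to establish the uniform Laplace-transform bound
\[
\bE\big[e^{-\lambda \sigma_y}\big] \;\leq\; C \exp\big(-c\sqrt{\lambda}\,\|y\|\big), \qquad y \in \bbZ^d,\ \lambda > 0.
\]
I would derive this from the sub-Gaussian hitting-time estimate $\bP(\sigma_y \leq t) \leq C' \exp(-c' \|y\|^2/t)$, which follows by applying the reflection principle coordinate-wise: $\bP(\sigma_y \leq t) \leq \bP(\max_{k \leq t} \|S_k\|_\infty \geq \|y\|_\infty)$, and each coordinate is a one-dimensional process whose maximum admits a Gaussian tail via Hoeffding (or the reflection principle). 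Writing $\bE[e^{-\lambda \sigma_y}] = \int_0^\infty \lambda e^{-\lambda t} \bP(\sigma_y \leq t)\,dt$ and bounding the integrand by $\lambda C'\exp(-\lambda t - c'\|y\|^2/t)$, a saddle-point computation at $t^{*} = \|y\|/\sqrt{c'\lambda}$ produces the required $\sqrt{\lambda}\,\|y\|$ decay (with a harmless overall constant).

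Combining the two ingredients, and writing $L := \sum_{i=1}^\ell \|x_i - x_{i-1}\|$, the previous two displays yield
\[
\bP(\Delta \subset \cR_N) \;\leq\; C^\ell \exp\big(\lambda N - c\sqrt{\lambda}\,L\big).
\]
Optimizing in $\lambda$ by choosing $\sqrt{\lambda} = cL/(2N)$ gives $\bP(\Delta \subset \cR_N) \leq C^\ell \exp(-c^2 L^2/(4N))$, which is exactly the claimed inequality with $C_1 = C$ and $C_2 = c^2/(2d)$, since $\ent(\Delta) = \tfrac{d}{2} L^2$ by \eqref{eq:entropydef}.

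The main obstacle is the uniform Laplace-transform estimate in the second step; the Chernov reduction and the subsequent optimization in $\lambda$ are routine. Note that this hitting-time approach works uniformly in all $d \geq 2$ and avoids the logarithmic loss in $d = 2$ that would otherwise arise from the more direct route of bounding $\bP(\Delta \subset \cR_N) \leq \sum_{0 < t_1 < \cdots < t_\ell \leq N} \prod_i \bP(S_{t_i - t_{i-1}} = x_i - x_{i-1})$ via the local CLT estimate $\bP(S_s = y) \leq C s^{-d/2} \exp(-c\|y\|^2/s)$ and Cauchy--Schwarz: that strategy would require $\sum_{s\geq 1} s^{-d/2} < +\infty$, which only holds for $d \geq 3$.
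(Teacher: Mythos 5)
Your proof is correct, and it takes a genuinely different route from the paper's. The paper proceeds by a union bound over the visit times $0 < t_1 < \cdots < t_\ell \le N$ and then splits the analysis according to dimension: for $d\ge 3$ it uses the local CLT bound $\bP(S_t = x) \le c\,t^{-d/2}e^{-c'\|x\|^2/t}$, pulls out the exponential factor via the convexity inequality $\sum_i\frac{\|x_i-x_{i-1}\|^2}{u_i-u_{i-1}} \ge \ent(\Delta)$, and bounds the remaining time-sum by $\zeta(d/2)^\ell$; for $d=2$ this fails (the harmonic-type series diverges), so the paper switches to a hitting-time decomposition and invokes Uchiyama's estimate $\bP(H_x=k)\le C\,\frac{\log(1+\|x\|)}{k(\log(1+k))^2}e^{-c\|x\|^2/k}$, whose extra logarithmic decay restores summability. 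Your Chernov/Laplace-transform argument on the independent hitting-time increments $\sigma_i$ unifies all dimensions $d\ge 2$ in a single stroke and uses only elementary inputs (strong Markov property, an Azuma--Hoeffding tail bound for $\bP(H_y\le t)$, and a one-dimensional optimization), sidestepping Uchiyama entirely. The only place requiring a little more care than your sketch indicates is the Laplace-transform estimate $\bE[e^{-\lambda H_y}]\le Ce^{-c\sqrt\lambda\|y\|}$: the naive saddle-point asymptotics of $\int_0^\infty\lambda e^{-\lambda t - c'\|y\|^2/t}\,\dd t$ carry a polynomial prefactor $(\lambda\|y\|^2)^{1/4}$, which must either be absorbed into a smaller exponential rate, or avoided outright via the elementary pointwise bound $\lambda t + c'\|y\|^2/t \ge \sqrt{c'\lambda}\,\|y\| + \tfrac{1}{2}\lambda t$ followed by integrating out the remaining $e^{-\lambda t/2}$. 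Either fix is routine, so the argument stands; it also degenerates correctly when $x_1=0$, since the bound trivializes there. Your closing remark about why the direct route fails in $d=2$ matches exactly the obstruction the paper encounters and circumvents.
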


\begin{proof}
We write $x_{0}=0$ and $k_{0}=0$ by convention. 
Let us start with the case of dimension $d\geq 3$.
By a union bound (recalling~\eqref{def:Deltasubset}), we have
\[
\bP\big( (x_1,\ldots, x_\ell)  \subset \cR_N \big) \leq \sum\limits_{k_{1}=1}^{N}\cdots\sum\limits_{k_{\ell}= k_{\ell-1}+1}^{N}\prod_{i=1}^{\ell}\bP(S_{k_i}-S_{k_{i-1}}=x_{k}-x_{k-1}).
\]
Now, by standard local large deviation results, there exist
constants $c_1, c_2$ such that  for any $t\geq 1$ and $x\in \mathbb{Z}^d$, 
\begin{equation*}
\bP(S_{t}=x)\leq \frac{c_{1}}{t^{d/2}}e^{-c_2 \|x\|^{2} /t } \, .
\end{equation*}
Thus, since for all $0=u_0<u_1<\cdots <u_\ell \leq 1$ we have
\begin{equation}
\label{eq:boundentrop}
\sum_{i=1}^{\ell} \frac{\|x_i-x_{i-1}\|^2}{u_i-u_{i-1}} \geq  \ent(\Delta) \, , 
\end{equation}
we get that
\begin{align*}
\bP\big( (x_1,\ldots, x_\ell)  \subset \cR_N \big) 
&\leq c_{1}^{\ell} \, e^{-\frac{C_{2}}{N}\ent(\Delta) } \sum_{k_{1}=1}^{N}\cdots\sum\limits_{k_{\ell}=k_{\ell-1}+1}^{N}\prod_{i=1}^{\ell}\frac{1}{(k_{i}-k_{i-1})^{d/2}}
\le (C_{1} )^{\ell} e^{-\frac{C_{2}}{N}\ent (\Delta)},
\end{align*}
which concludes the proof in dimension $d\geq 3$.

For the case of the dimension $d=2$, instead of the union bound, we use that, by Markov'a property
\[
\bP\big( (x_1,\ldots, x_\ell)  \subset \cR_N \big) \leq \sum\limits_{k_{1}=1}^{N}\cdots\sum\limits_{k_{\ell}= k_{\ell-1}+1}^{N}\prod_{i=1}^{\ell}\bP( H_{x_{i}-x_{i-1}} = k_i-k_{i-1}), 
\]
where $H_x = \min\{t \geq 1, S_t =x \}$ is the hitting time of the site $x$. 
Now, from Uchiyama's \cite[Thm.~1.4]{Uch11}, we get that uniformly for $x\in \bbZ^d\setminus \{0\}$ 
\[
\bP(H_x = k) \leq C \frac{\log(1+ \|x\|)}{ k (\log(1+ k))^2} e^{- c \|x\|^2/k}   \ind_{\{ k \geq  c\|x\|\}} \,  .
\]
Using again~\eqref{eq:boundentrop}, we get 
\begin{align*}
\bP\big( (x_1,\ldots, x_\ell)  \subset \cR_N \big) 
 \leq c_{1}^{\ell} e^{-\frac{C_{2}}{N}\ent(\Delta) }  \prod_{i=1}^{\ell} 
\bigg( \sum_{k_i=k_{i-1} + c\|x_i-x_{i-1}\|}^N \frac{ \log(1+ \|x_i-x_{i-1}\|) }{(k_{i}-k_{i-1}) \log (1+ k_i-k_{i-1})^2} \bigg) \, .
\end{align*}
All the sums are bounded, 
so this concludes the proof in the case of the dimension $d=2$.
\end{proof}

\subsection*{Intersection of ranges of independent random walks}

Recall that  $J_N := \sum_{x\in \bbZ^d} \bP(x\in \cR_N)^2 $.

\begin{lemma}
\label{lem:JN}
We have the following asymptotics, as $N\to+\infty$,
\[
J_N  = 
(1+o(1))
\begin{cases}
c_2 \frac{N}{(\log N)^2}  & \text{ if } d=2 \,,\\
 c_3 \sqrt{N} & \text{ if } d=3 \,,\\
\end{cases} 
\qquad 
J_N  = 
(1+o(1))
\begin{cases}
 c_4 \log N  & \text{ if } d=4 \,,\\
 c_d     & \text{ if } d=5 \,.\\
\end{cases} 
\]
\end{lemma}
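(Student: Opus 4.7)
\smallskip
\noindent
\textbf{Proof proposal.} The plan is to use Uchiyama's local limit theorem \cite[Thms.~1.6--1.7]{Uch11}, which is precisely the statement~\eqref{def:f}, to replace $\bP(x\in \cR_N)$ with $v_N^{-1} f(x/\sqrt N)$ in the bulk, and then interpret the resulting sum as a Riemann sum. More precisely, one expects
\[
J_N = \sum_{x\in \bbZ^d} \bP(x\in \cR_N)^2 \;\approx\; \frac{1}{v_N^2} \sum_{x\in \bbZ^d} f\!\Big(\frac{x}{\sqrt N}\Big)^{\!2}
 \;\approx\; \frac{N^{d/2}}{v_N^2} \int_{\bbR^d} f(y)^2\,\dd y\,,
\]
whenever the integral $\|f\|_2^2$ converges. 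Using the asymptotics~\eqref{asympf0}--\eqref{asympfinfty} of $f$, one checks that this integral is indeed finite for $d=2,3$ and $d\geq 5$, but divergent at the origin for $d=4$ (since $f(x)^2 \sim c\|x\|^{2(2-d)}$ near $0$, which fails to be integrable exactly when $2(d-2)\geq d$, i.e.\ $d\geq 4$). For $d=4$ the divergence is only logarithmic, while for $d\geq 5$ the small-scale behavior actually forces $J_N$ to converge to a finite constant.

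For $d=2,3$, the plan is to split $J_N$ at scales $\|x\|\leq \eta \sqrt N$, $\eta\sqrt N\leq \|x\|\leq K\sqrt N$, and $\|x\|\geq K\sqrt N$. On the middle range, uniform convergence in~\eqref{def:f} (see~\cite[Thms.~1.6--1.7]{Uch11}) turns the sum into a Riemann sum approximating $\int_{\eta\leq \|y\|\leq K} f(y)^2 \dd y$, multiplied by $N^{d/2}/v_N^2$. For the small-$x$ contribution, I would use the crude bound $\bP(x\in \cR_N)\leq C/(\log N)$ in $d=2$ (coming from~\eqref{def:f} together with $f(y) = O(\log(1/\|y\|))$ as $\|y\|\to 0$) and $\bP(x\in \cR_N)\leq G(x)\leq C\|x\|^{-1}$ in $d=3$; both yield a contribution of order $o(N/(\log N)^2)$ and $o(\sqrt N)$ respectively when we send $\eta\downarrow 0$. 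The large-$x$ contribution is handled via the Gaussian tail $\bP(M_N\geq K\sqrt N)\leq e^{-cK^2}$, which can be made arbitrarily small by taking $K$ large, matching the tail $f(y)=O(e^{-c\|y\|^2})$ of the continuous object. Sending $\eta\downarrow 0$ and $K\uparrow\infty$ yields the announced constants $c_2,c_3$.

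For $d\geq 5$, the cleanest route is to note that $\bP(x\in \cR_N)\uparrow \bP(x\in \cR_\infty)$ monotonically for each $x\in \bbZ^d$, and that by the renewal/Green function identity $\bP(x\in \cR_\infty)\leq \mathrm{G}(x)\leq C(1+\|x\|)^{2-d}$ we have $\sum_{x\in \bbZ^d} \bP(x\in \cR_\infty)^2<\infty$ because $2(d-2)>d$. Dominated convergence then gives $J_N\to c_d:=\sum_{x\in \bbZ^d} \bP(x\in \cR_\infty)^2\in(0,\infty)$.

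The main difficulty is $d=4$, where $\|f\|_2^2$ is infinite at the origin and the scaling limit interpretation breaks down. The plan here is to split $J_N$ at an \emph{intermediate} scale $r_N$ with $1\ll r_N\ll \sqrt N$: for $\|x\|\leq r_N$ the contribution is close to $\sum_{\|x\|\leq r_N}\bP(x\in \cR_\infty)^2$ and one uses the sharp asymptotics $\bP(x\in \cR_\infty)\sim c\|x\|^{-2}$ (coming from $\bP(x\in \cR_\infty)=\mathrm G(x)/\mathrm G(0)$ and the classical estimate $\mathrm G(x)\sim c\|x\|^{2-d}$) to obtain via an integral comparison the logarithmic term $\tfrac12 c \log N$; for $\|x\|\geq r_N$ one uses~\eqref{def:f} and the Riemann-sum argument of the previous paragraph, whose contribution is $N^{-2} \sum_{r_N\leq \|x\|\leq K\sqrt N} f(x/\sqrt N)^2$, essentially $\int_{r_N/\sqrt N\leq \|y\|\leq K} f(y)^2\dd y\sim c\log(\sqrt N/r_N)$. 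The key point, and the main obstacle, is to verify that these two pieces glue together at the intermediate scale so that their logarithmic prefactors combine into $c_4\log N$ with an explicit $c_4$; this requires a uniform control $\bigl|\bP(x\in \cR_N)-v_N^{-1} f(x/\sqrt N)\bigr|=o(\|x\|^{-(d-2)})$ down to scales $\|x\|\gg 1$, which is exactly the content of the quantitative forms of \cite[Thms.~1.7--1.8]{Uch11}.
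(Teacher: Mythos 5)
Your route is genuinely different from the paper's. The paper rewrites $J_N = \bE^{\otimes 2}[\cJ_N]$ with $\cJ_N := |\cR^{(1)}_N \cap \cR^{(2)}_N|$ the intersection of the ranges of two independent walks, then cites Chen's book \cite{X15} for the weak convergence of the suitably normalized $\cJ_N$ in $d=2,3,4$ (with uniform integrability to upgrade to convergence of the first moment), and uses monotone convergence for $d\geq 5$. You instead work directly with Uchiyama's local limit theorem and turn the sum into a Riemann approximation of $\|f\|_2^2$ where it makes sense. Both are reasonable: the paper's approach is short because it imports deep results on range intersections, yours is more self-contained but requires more analytic bookkeeping, especially at short distances.

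There is a real gap in your $d=2$ estimate for the near-origin piece. You claim the ``crude bound'' $\bP(x\in\cR_N)\leq C/\log N$ holds for $\|x\|\leq\eta\sqrt N$, citing $f(y)=O(\log(1/\|y\|))$. But near the origin $f$ \emph{diverges}, so what \eqref{def:f} gives is $\bP(x\in\cR_N)\approx (\log N)^{-1}\log(\sqrt N/\|x\|)$, which for $\|x\|$ of polynomial order $N^a$ with $a<1/2$ is a fixed \emph{constant}, not $O(1/\log N)$ (indeed, $\bP(x\in\cR_N)\to 1$ for fixed $x$ by recurrence). With only $\bP(x\in\cR_N)\leq C$, the contribution of $\{\|x\|\leq\eta\sqrt N\}$ is bounded by $C\eta^2 N$, which is much larger than $N/(\log N)^2$ and does not vanish as $\eta\downarrow 0$. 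The fix is to use the uniform estimate $v_N\bP(x\in\cR_N)\leq C f(x/\sqrt N)$ for all $x\neq 0$, $\|x\|\leq K\sqrt N$ (this is precisely \eqref{localerror}, coming from \cite[Thm.~1.6]{Uch11}), so that the small-$x$ contribution is controlled by $(\log N)^{-2}\sum_{\|x\|\leq\eta\sqrt N}f(x/\sqrt N)^2 \approx \frac{N}{(\log N)^2}\int_{\|y\|\leq\eta}f(y)^2\,\dd y$, which vanishes as $\eta\downarrow 0$ because $(\log(1/r))^2$ is integrable against $r\,\dd r$ near $0$. Your $d=3$ and $d\geq 5$ arguments are fine. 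The $d=4$ case is, as you say, only a sketch; the paper sidesteps the matching of the two logarithmic regimes entirely by appealing to \cite[Thm.~5.5.1]{X15}, which is arguably the efficient move there.
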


\begin{proof}
First of all,  we can rewrite $ J_N = \bE^{\otimes 2} [ \cJ_N]$,
with  $\cJ_N := |\cR_N^{(1)} \cap \cR_N^{(2)}|$ the intersection of ranges $\cR_N^{(1)}$ and $\cR_N^{(2)}$ of two independent random walks $S^{(1)}$ and $S^{(2)}$.
In Chen's book~\cite{X15}, the weak convergence of $\cJ_N$
is considered:
in dimension $d=2$, $\frac{(\log N)^2}{N} \cJ_N$ is shown to converge in distribution, see \cite[Thm.~5.3.4]{X15};
in dimension $d=3$, $\frac{1}{\sqrt{N}} \cJ_N$ is shown to converge in distribution, see \cite[Thm.~5.3.4]{X15};
in dimension $d=4$, $\frac{1}{\log N} \cJ_N$ is shown to converge in distribution, see \cite[Thm.~5.5.1]{X15}.
The convergence of the expectation $J_N =\bbE[\cJ_N]$
then comes as a consequence of the uniformity in the integrability of $\cJ_N$, see~\cite[\S~6.2]{X15}.

In dimension $d=5$, $\cJ_N$ converges to $|\cR_{\infty}^{(1)} \cap \cR_{\infty}^{(2)}|$, which is a.s.\ finite.
The convergence of its expectation $J_N$ follows by monotone convergence, the fact that $\bbE[|\cR_{\infty}^{(1)} \cap \cR_{\infty}^{(2)}|]<+\infty$ being given in~\eqref{eq:series1} (with $\ga=2$).
\end{proof}

\section{Technical estimate on the environment}
\label{app:envir}

We collect here some estimates that are needed along the paper.

\begin{lemma}
\label{lem:moments}
For any non-negative integer $p<\ga$ and any sequence $k_N \geq 1$, we have:
\begin{itemize}
\item If $\gb_N k_N \geq 1$,
\begin{equation}
\label{moments-1}
\Big| \bbE \big[\exp(\beta_N (\go-\mu) \ind_{\{\go \leq k_N\}} ) \big] - 1 - \sum_{i=1}^{p} \frac{\gb_N^i}{i!} \bbE[ (\go-\mu)^i] \Big|
\leq C e^{\gb_N k_N} 
 \begin{cases}
 \gb_N^{ \ga} & \text{ if } p+1 >\ga \, ,\\
\gb_N^{\ga} \log k_N  & \text{ if } p+1 =\ga \, ,\\
 \gb_N^{p+1}  & \text{ if } p +1 <\ga \, ;
 \end{cases}
\end{equation}
\item If $\gb_N k_N <1$, in the case $\ga < p+1$, we get 
\begin{equation}
\label{moments-2}
\Big| \bbE \big[\exp(\beta_N (\go-\mu) \ind_{\{\go \leq k_N\}} ) \big] - 1 - \sum_{i=1}^{p} \frac{\gb_N^i}{i!} \bbE[ (\go-\mu)^i] \Big|
\leq 
 C \gb_N k_N^{1-\ga} 
\end{equation}
with an extra factor $\log k_N$ in the upper bound if $\ga=p+1$.
\end{itemize}
\end{lemma}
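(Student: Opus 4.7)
The plan is to combine a Taylor expansion of the exponential with the tail asymptotics~\eqref{eq:tailOmega}, isolating the moment-truncation error from the analytic remainder. Write $\tilde \go := (\go - \mu) \ind_{\{\go \leq k_N\}}$ and $\psi_{p+1}(x) := e^x - \sum_{i=0}^p x^i/i!$. The identity $\tilde \go^i = (\go-\mu)^i \ind_{\{\go \leq k_N\}}$ yields
\[
\bbE[e^{\beta_N \tilde\go}] - 1 - \sum_{i=1}^p \frac{\beta_N^i}{i!}\bbE[(\go-\mu)^i]
= -\sum_{i=1}^p \frac{\beta_N^i}{i!}\bbE[(\go-\mu)^i \ind_{\{\go > k_N\}}] + \bbE[\psi_{p+1}(\beta_N \tilde\go)],
\]
where the moments on the left are finite since $p < \alpha$; I plan to estimate the two summands on the right separately.

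First I would handle the truncation sum: integration by parts together with~\eqref{eq:tailOmega} gives $|\bbE[(\go-\mu)^i\ind_{\{\go > k_N\}}]| \leq C k_N^{i-\alpha}$ for $1 \leq i \leq p$, so the sum is controlled by $C k_N^{-\alpha} \sum_{i=1}^p (\beta_N k_N)^i/i!$. When $\beta_N k_N \geq 1$ the index $i=p$ dominates, and using $p<\alpha$ to deduce $(\beta_N k_N)^{p-\alpha} \leq 1$ this is bounded by $C\beta_N^\alpha$; when $\beta_N k_N <1$ the index $i=1$ dominates, yielding $C\beta_N k_N^{1-\alpha}$, which is already the bound claimed in~\eqref{moments-2}.

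The hard part will be the analytic remainder $\bbE[\psi_{p+1}(\beta_N\tilde \go)]$ in the subcase $p+1 > \alpha$: a direct use of the standard bound $|\psi_{p+1}(x)| \leq |x|^{p+1} e^{|x|}/(p+1)!$ together with $\bbE[|\tilde\go|^{p+1}] \leq C k_N^{p+1-\alpha}$ produces a spurious factor $(\beta_N k_N)^{p+1-\alpha}$ not permitted by~\eqref{moments-1}. To overcome this I will split the expectation according to whether $|\tilde\go| \leq 1/\beta_N$ or not. On $\{|\tilde\go| \leq 1/\beta_N\}$, the argument $\beta_N\tilde\go$ stays in $[-1,1]$ so that the polynomial bound $|\psi_{p+1}(\beta_N\tilde\go)| \leq C\beta_N^{p+1}|\tilde\go|^{p+1}$ holds without any exponential factor; the truncated moment $\bbE[|\go-\mu|^{p+1}\ind_{\{|\go-\mu|\leq 1/\beta_N\}}]$ is bounded via~\eqref{eq:tailOmega} by $C$, $C\log(1/\beta_N)$, or $C\beta_N^{\alpha-p-1}$ according to whether $p+1<\alpha$, $p+1=\alpha$ or $p+1>\alpha$, producing contributions $C\beta_N^{p+1}$, $C\beta_N^\alpha\log(1/\beta_N)$ and $C\beta_N^\alpha$ respectively (and $\log(1/\beta_N)\leq \log k_N$ since we are in the regime $\beta_N k_N \geq 1$). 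On the complementary event $\{|\tilde\go| > 1/\beta_N\}$, I will use only the crude bound $|\psi_{p+1}(\beta_N\tilde\go)| \leq C e^{\beta_N k_N}$, which is valid because $|\tilde\go| \leq k_N + |\mu|$; multiplying by $\bbP(|\go-\mu|>1/\beta_N) \leq C\beta_N^\alpha$ from~\eqref{eq:tailOmega} gives $Ce^{\beta_N k_N}\beta_N^\alpha$, which absorbs the first contribution in all three subcases. Combined with the truncation sum, this yields~\eqref{moments-1}.

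Finally, for~\eqref{moments-2}, the assumption $\beta_N k_N <1$ makes $|\beta_N\tilde\go| \leq 1 + \beta_N|\mu|$ uniformly bounded, so the polynomial bound $|\psi_{p+1}(\beta_N\tilde\go)| \leq C\beta_N^{p+1}|\tilde\go|^{p+1}$ applies globally without any splitting. With $p+1 > \alpha$ one has $\bbE[|\tilde\go|^{p+1}]\leq C k_N^{p+1-\alpha}$, so the remainder is at most $C\beta_N^{p+1} k_N^{p+1-\alpha} = C\beta_N k_N^{1-\alpha}\cdot (\beta_N k_N)^p \leq C\beta_N k_N^{1-\alpha}$; combined with the truncation estimate of the second paragraph this gives~\eqref{moments-2}.
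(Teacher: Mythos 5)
Your proof is correct, and it actually handles the analytic remainder more carefully than the paper does. The paper applies the global bound $|\psi_{p+1}(x)|\leq |x|^{p+1}e^{|x|}$ directly to $x=\gb_N\tilde\go$, which yields $e^{\gb_N k_N}\gb_N^{p+1}\bbE[|\tilde\go|^{p+1}]$; in the subcase $p+1>\ga$ with $\gb_N k_N\geq 1$ this produces $e^{\gb_N k_N}\gb_N^{\ga}(\gb_N k_N)^{p+1-\ga}$, and the paper's claimed step of ``bounding $k_N^{p+1-\ga}$ by $\gb_N^{\ga-p-1}$'' is actually in the wrong direction (when $\gb_N k_N\geq 1$ and $p+1>\ga$ one has $k_N^{p+1-\ga}\geq\gb_N^{\ga-p-1}$). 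The stated inequality survives only because $p+1-\ga\in(0,1)$ forces $(\gb_N k_N)^{p+1-\ga}\leq e^{\gb_N k_N}$, at the price of a larger constant in the exponent — harmless for the paper's applications, but not what the proof literally says. Your split of the expectation at $|\tilde\go|=1/\gb_N$ sidesteps this: on the bulk the argument of $\psi_{p+1}$ lies in $[-1,1]$, so the exponential factor disappears and the truncated moment at level $1/\gb_N$ gives $C\gb_N^{\ga}$ directly; on the tail the event has probability $O(\gb_N^{\ga})$ and only there do you pay $e^{\gb_N k_N}$. This delivers the stated $Ce^{\gb_N k_N}\gb_N^{\ga}$ cleanly. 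Your handling of the truncation sum (dominant index $i=p$ when $\gb_N k_N\geq 1$, dominant index $i=1$ when $\gb_N k_N<1$) and of the $\gb_N k_N<1$ regime matches the paper. The only cosmetic point worth a word: you should note that $\log(1/\gb_N)\leq\log k_N$ uses $\gb_N k_N\geq 1$, which you do, and that the degenerate possibility $k_N\approx 1$ is ruled out because $\gb_N\to 0$ forces $k_N\to\infty$ in that regime.
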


\begin{proof}
We can use that for any $x \in \bbR$, we have
$|e^x - \sum_{i=0}^{p} \frac{x^i}{i!} | \leq |x|^{p+1} e^{|x|}$.
Hence, we get that
\begin{align*}
\Big| \bbE \big[\exp(\beta_N (\go-\mu) \ind_{\{\go \leq k_N\}} ) \big] - &  1 - \sum_{i=1}^{p} \frac{\gb_N^i}{i!} \bbE\big[ (\go-\mu)^i\big] \Big| \\
&\leq \sum_{i=1}^{p} \frac{\gb_N^i}{i!} \bbE\big[ |\go-\mu|^i \ind_{\{\go >k_N\}} \big]  +
e^{\gb_N k_N}  \gb_N^{p+1} \bbE\big[ |\go-\mu|^{p+1} \ind_{\{\go \leq k_N\}}\big]  \, .
\end{align*}
And since $p<\ga$, we have for any $i \leq p$
$\bbE [ |\go-\mu|^i \ind_{\{\go >k_N\}} ] \leq C k_N^{i-\ga}$.
On the other hand, 
if $\ga >  p+1$, then 
$\bbE[ |\go-\mu|^{p+1} \ind_{\{\go \leq k_N\}}] \leq C$;
if $\ga<p+1$, then
$\bbE[ |\go-\mu|^{p+1} \ind_{\{\go \leq k_N\}}] \leq C k_N^{p+1-\ga}$;
if $\ga =p+1$, then
$\bbE[ |\go-\mu|^{p+1} \ind_{\{\go \leq k_N\}}] \leq C \log k_N$.
We therefore get that
\begin{align*}
&\Big| \bbE \big[\exp(\beta_N (\go-\mu) \ind_{\{\go \leq k_N\}} ) \big] -   1 - \sum_{i=1}^{p} \frac{\gb_N^i}{i!} \bbE\big[ (\go-\mu)^i\big] \Big| \\
&\qquad \leq C \big(e^{\gb_n k_N} -1 \big)k_N^{-\ga} + C e^{\gb_N k_N} (\gb_N k_N)^{p+1}
 \begin{cases}
 k_N^{- \ga} & \text{ if } p+1 >\ga \, ,\\
 k_N^{-\ga} \log k_N  & \text{ if } p+1 =\ga \, ,\\
  k_N^{-(p+1)}  & \text{ if } p +1 <\ga \, .
 \end{cases}
\end{align*}
Then, in the case where $\gb_N k_N \geq  1$,
bounding $k_N^{p+1-\ga}$ by $\gb_N^{\ga-p-1}$ in the case $p+1>\ga$, we get \eqref{moments-1}.
On the other hand, if $\gb_N k_N \leq 1$, considering only the case 
$\ga<p+1$, we get ~\eqref{moments-2}.
\end{proof}

\subsection*{Acknowledgments} The authors would like to thank the referees for their comments and suggestions, which helped improve the quality of the article.

\bibliographystyle{abbrv}
\bibliography{references}

\end{document}